\newcommand{\Rmnum}[1]{\expandafter\@slowromancap\romannumeral #1@}
\begin{document}
\title{The Sharp Erd\H{o}s-Tur\'an Inequality}
\author{Ruiwen Shu and Jiuya Wang}
\newcommand{\Addresses}{{
	    \footnotesize		
		Ruiwen Shu, \textsc{Mathematical Institute, University of Oxford, Oxford OX2 6GG, UK
		}\par\nopagebreak
		\textit{E-mail address}: \texttt{shu@maths.ox.ac.uk}	
		
		\bigskip
		\footnotesize		
		Jiuya Wang, \textsc{Department of Mathematics, University of Georgia, Boyd Graduate Studies Research Center, Athens, GA 30601, USA
		}\par\nopagebreak
		\textit{E-mail address}: \texttt{jiuya.wang@uga.edu}	
	}}
\maketitle	
	\begin{abstract}
Erd\H{o}s and Tur\'an proved a classical inequality on the distribution of roots for a complex polynomial in 1950, depicting the fundamental interplay between the size of the coefficients of a polynomial and the distribution of its roots on the complex plane.  Various results have been dedicated to improving the constant in this inequality, while the optimal constant remains open. In this paper, we give the optimal constant, i.e., prove the sharp Erd\H{o}s-Tur\'an inequality. To achieve this goal, we reformulate the inequality into an optimization problem, whose equilibriums coincide with a class of energy minimizers with the logarithmic interaction and external potentials. This allows us to study their properties by taking advantage of the recent development of energy minimization and potential theory, and to give explicit constructions via complex analysis. Finally the sharp Erd\H{o}s-Tur\'an inequality is obtained based on a thorough understanding of these equilibrium distributions.
	\end{abstract}
\pagenumbering{arabic}
\blfootnote{\bf Keywords. \normalfont Erd\H{o}s-Tur\'an inequality, energy minimization, Hilbert transform, discrepancy, potential theory}
\vspace{-0.5 cm}
\tableofcontents
\section{Introduction}
\subsection{Main Results and Consequences}
Erd\H{o}s and Tur\'an prove a beautiful result on the distribution of roots for polynomials $f(z) \in \C[z]$ in their 1950 work \cite{ET}, that is, roughly speaking, if $|f(z)|$ attains small value on the unit circle $|z|=1$, then firstly the magnitudes of all roots of $f(z)$ are close to $1$, secondly the angular distribution of the roots is close to equidistribution on $\bT = \mathbb{R}/\mathbb{Z}$. See \cite[Figure 1]{Sound} for a pictorial illustration. The statement on the magnitude of roots is a consequence of Jensen's formula in complex analysis, see e.g.\cite[Theorem 1]{Sound} for a compact treatment. The statement on the angular distribution requires much more work. 

The main goal of this paper is to give a sharp inequality characterizing this phenomenon of equidistribution in angles. Let $f(z) = \sum_{k=0}^{n} a_k z^k$ be a degree $n$ polynomial with complex coefficients and $a_0\neq 0$. We denote its roots by $r_j e^{2\pi i\theta_j}$ for $1\le j\le n$ with $\theta_j \in \bT =\mathbb{R}/\mathbb{Z}$. For $ \alpha\le \beta < \alpha+1$, we write $N_f(\alpha, \beta)$ to be the number of roots with $\theta_j \in [\alpha, \beta]$ when considered as a subset in $\bT$. The theorem relates two quantities of $f(z)$. We first define the \emph{discrepancy} of a polynomial $f$ to be
\begin{equation}
	\mathcal{D}[f]:= \max_{ \alpha\le \beta< \alpha+1} \frac{N_f(\alpha, \beta)}{n}-(\beta-\alpha),
\end{equation}
measuring the deviation of the angle distribution of roots away from the uniform distribution on the unit circle, then we define the \emph{height} of a polynomial by
\begin{equation}
	\mathcal{H}[f]:=\frac{1}{n} \log \frac{\max_{|z|=1}|f(z)|}{\sqrt{|a_0a_n|}},
\end{equation}
measuring how large $f$ is on the unit circle up to a normalizing factor. Our main theorem is the following,
\begin{theorem}[Sharp Erd\H{o}s-Tur\'an Inequality]\label{thm:main}
	For any polynomial $f(z)\in \C[z]$ with $f(0)\neq 0$, we have
	\begin{equation}\label{eqn:main}
		\mathcal{D}[f] \le \sqrt{2} \cdot \sqrt{\mathcal{H}[f]}.
	\end{equation}
	Moreover, this inequality is sharp. 
\end{theorem}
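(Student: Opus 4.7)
I begin by reducing to the case where all roots of $f$ lie on the unit circle. Writing $f(z)=a_n\prod_j(z-r_j e^{2\pi i\theta_j})$ and using $\prod_j r_j=|a_0/a_n|$, a direct expansion yields
\[
\log|f(e^{2\pi i\theta})|-\log\sqrt{|a_0 a_n|}=\sum_{j=1}^n K(r_j,\theta-\theta_j),\qquad K(r,t):=\tfrac12\log\bigl(r+r^{-1}-2\cos 2\pi t\bigr).
\]
Since $r+r^{-1}\ge 2$ for $r>0$, one has $K(r,t)\ge K(1,t)=\log|2\sin\pi t|$ for all $r,t$. Radially projecting each root onto $|z|=1$ therefore does not increase $\mathcal{H}[f]$ while preserving $\mathcal{D}[f]$, so it suffices to establish (\ref{eqn:main}) when every $r_j=1$. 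In that case
\[
\mathcal{H}[f]=\max_\theta U_{\mu_f}(\theta),\qquad U_\mu(\theta):=\int_{\bT}\log|2\sin\pi(\theta-t)|\,d\mu(t),
\]
where $\mu_f:=\tfrac1n\sum_j\delta_{\theta_j}$ is the empirical angle measure.

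\textbf{Continuous reformulation.} Since empirical measures are a subclass of probability measures on $\bT$, it suffices to prove the continuous analogue: for every probability measure $\mu$ on $\bT$ and every arc $I\subseteq\bT$ of length $\ell\in(0,1)$,
\[
\mu(I)-\ell\le\sqrt{2\,\max_\theta U_\mu(\theta)}.
\]
Equivalently, for every $\ell\in(0,1)$ and $d>0$, the constrained minimum
\[
h^*(\ell,d):=\inf\bigl\{\max_\theta U_\mu(\theta):\ \mu\text{ probability on }\bT,\ \mu(I)\ge\ell+d\bigr\}
\]
must satisfy $h^*(\ell,d)\ge d^2/2$, with asymptotic equality attained.

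\textbf{Solving the energy minimization.} By rotation invariance I fix $I=[-\ell/2,\ell/2]$. Euler--Lagrange analysis (with KKT multipliers for the obstacle $\mu\ge 0$ and the mass constraint) identifies the minimizer as $\mu^*=\mathrm{Leb}_{\bT}+\nu^*$, where $\nu^*$ is a balanced signed measure that is nonnegative on $I$ with total mass $d$ and nonpositive on the contact set $S:=\{\mu^*=0\}\subseteq\bT\setminus I$. The optimality conditions read: $U_{\mu^*}\equiv h^*$ on the support of $\nu^*$ inside $I$, and $U_{\mu^*}$ equals a second constant on $S$. Using the Fourier expansion $\log|2\sin\pi t|=-\sum_{k\ge 1}\frac{\cos 2\pi kt}{k}$, these reduce to coupled singular integral equations on $I$ and on $S$, which I solve in closed form via the Hilbert transform on $\bT$ together with Szeg\H{o}-type factorization. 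Evaluating the extremal potential then yields $h^*(\ell,d)\ge d^2/2$ for every admissible $\ell$.

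\textbf{Sharpness and main obstacle.} For sharpness I discretize the extremal continuous measure $\mu^*_{\ell,d}$ into $n$ atoms via inverse-CDF (quantile) sampling, producing a sequence of polynomials $f_n$ with $\mathcal{D}[f_n]\to d$ and $\mathcal{H}[f_n]\to d^2/2$ and thereby saturating (\ref{eqn:main}) in the limit. The hardest step is the third: the one-sided constraint $\mu(I)\ge\ell+d$ creates an unknown free boundary between the ``plateau'' region inside $I$ (where $U_{\mu^*}=h^*$) and the contact set $S$ outside. Correctly locating this free boundary and solving the resulting singular integral system to extract the remarkable identity $h^*=d^2/2$---from which the sharp constant $\sqrt{2}$ emerges---is the technical heart of the argument.
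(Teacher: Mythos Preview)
Your reduction to roots on the unit circle and the passage to continuous measures are correct and match the paper. The serious gap is in your third step.

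First, the structural picture of the minimizer is wrong. You describe $\mu^*=\mathrm{Leb}+\nu^*$ with $\nu^*$ absolutely continuous and $U_{\mu^*}\equiv h^*$ on the support of $\nu^*$ inside $I$. The paper proves (Theorem~\ref{thm:minimizer-shape}) that the minimizer necessarily carries Dirac masses at the two endpoints $\pm M$ of the optimal arc, so that $U_{\mu^*}=-\infty$ there rather than the constant $h^*$. The sediment condition $U_{\mu^*}=h^*$ only holds on $\supp\mu^*\setminus\{\pm M\}$. This is not a cosmetic point: the presence of the Dirac masses is established by a local transport argument (``microscopic diffusion'', Lemma~\ref{lem:micro-diffusion}) exploiting the strict convexity of $W$, and it is what parametrizes the entire family of candidate minimizers by the pair $(M,m)$.

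Second, your assertion of ``the remarkable identity $h^*=d^2/2$'' is false. The paper shows $\cH[\rho]/\cD[\rho]^2>1/2$ \emph{strictly} for every $\rho$; the ratio only tends to $1/2$ along the family $\rho_{\ti,m}$ as $m\to 0^+$ (see \eqref{Wrhoi} and the remark after the proof in Section~\ref{ssec:final}). So there is no closed-form identity to extract, and your proposed route via ``Szeg\H{o}-type factorization'' cannot land on an equality that does not exist.

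Third, even after identifying the correct two-parameter family of sediment distributions, the paper's verification that $\cH/\cD^2>1/2$ is far from a single evaluation: it passes through an auxiliary optimization on $\mathbb{R}$ (Section~\ref{sec:min-value-R}), a periodization comparison (Section~\ref{sec:R-to-T}), and for a range of the parameter $R$ relies on numerical evaluation (Table~1). Your paragraph hides the actual technical content of four sections behind a phrase; this is where the proof has to be done, and your outline does not indicate how.
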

Theorem \ref{thm:main} gives a sharp improvement upon the original inequality of Erd\H{o}s and Tur\'an, where the constant $\sqrt{2}$ is replaced with $16$. By a family of polynomials constructed in \cite{AmMig}, it is clear that $\sqrt{\cH[f]}$ is the optimal scaling. Therefore the last unknown component is the constant, which is solved by Theorem \ref{thm:main}.

Historically, one of the motivations for Erd\H{o}s and Tur\'an to study this type of inequality is from number theory. The question of bounding the number of real solutions of a polynomial is of great interest. One perspective for the Erd\H{o}s-Tur\'an inequality is to give an error term for the angular distribution when the uniform distribution is considered as the main term in $n$
\begin{equation}
	N_f(\alpha, \beta) = (\beta-\alpha)\cdot n +O_f(\sqrt{n}).
\end{equation}
The dependence of $f$ in the error term can be bounded by $\sqrt{2}\cdot \log^{1/2} (\| f\|_{L^{\infty}(|z|=1)}/\sqrt{|a_0a_n|})$ by Theorem \ref{thm:main}. Bloch and P\'olya \cite{bloch} first investigated how to bound the number of real zeros of a polynomial $f(z)$ by the size of its coefficients. Schmidt \cite{schmidt} and Schur \cite{schur} refined the result by giving a bound in terms of a certain height just involving the degree $n$ and sum of coefficients. 
By applying the inequality to small intervals near the $x$-axis, Erd\H{o}s and Tur\'an recover the result of Schmidt and Schur as an immediate consequence. Moreover due to a different choice of height function, the upper bound of Erd\H{o}s and Tur\'an gives a slight improvement up to the constant. 
%
%
%
%
We denote $N_{\theta}(f)$ (respectively $N_{+}(f)$ and $N_{-}(f)$) to be the number of roots with angle $\theta$ (respectively positive and negative) of $f(z)\in \C[z]$. As a by-product of Theorem \ref{thm:main}, we also obtain sharp estimates for the number of real roots with a given sign in terms of $\mathcal{H}[f]$. 
\begin{theorem}[Sharp Estimates for Signed Real Roots]\label{thm:real-root}
	For any polynomial $f(z)\in \C[z]$ with $f(0)\neq 0$ and any $\theta \in \bT$, we have the sharp inequality
	\begin{equation}
		N_{\theta}(f) \le \sqrt{2} \cdot \sqrt{\cH[f]} \cdot n.
	\end{equation}
	In particular, we have the sharp estimate 
	\begin{equation}
	N_+(f)\le \sqrt{2} \cdot \sqrt{\mathcal{H}[f]}\cdot n, \quad \quad N_-(f)\le \sqrt{2}\cdot \sqrt{\mathcal{H}[f]}\cdot n,
	\end{equation}
	for the number of positive real roots of $f(z)$ and the number of negative real roots of $f(z)$.
\end{theorem}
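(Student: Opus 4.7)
The plan is to derive the inequality as a quick corollary of Theorem \ref{thm:main}, with sharpness obtained by transferring an extremizing sequence for the discrepancy bound to the angle $\theta$. The starting observation is that the discrepancy $\cD[f]$ dominates the normalized count $N_\theta(f)/n$ for every $\theta\in\bT$. Indeed, the definition of $\cD[f]$ allows the degenerate interval $\alpha=\beta=\theta$, for which
\begin{equation*}
\cD[f]\ \ge\ \frac{N_f(\theta,\theta)}{n}-(\theta-\theta)\ =\ \frac{N_\theta(f)}{n}.
\end{equation*}
Combining this with Theorem \ref{thm:main} gives $N_\theta(f)/n\le\cD[f]\le\sqrt{2}\cdot\sqrt{\cH[f]}$, which is the claimed inequality. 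Specializing to $\theta=0$ and $\theta=1/2$ recovers the estimates for $N_+(f)$ and $N_-(f)$.

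For sharpness, the plan is to feed the extremal sequences for Theorem \ref{thm:main} into the observation above and then use a rotation to place the concentration at the desired angle. Concretely, if $\{f_k\}$ is a sequence attaining equality in \eqref{eqn:main} in the limit, the extremal polynomials should come from an equilibrium measure whose discrepancy spike is realized on a shrinking interval around a single angle $\theta_0$; the inequality $\cD[f_k]\ge N_{\theta_0}(f_k)/n$ above is then saturated in the limit, yielding $N_{\theta_0}(f_k)/n\to\sqrt{2\cH[f_k]}$. To pass from $\theta_0$ to an arbitrary $\theta$, I would use the rotation $g(z):=f_k(e^{2\pi i(\theta_0-\theta)}z)$, which preserves $\max_{|z|=1}|f|$ as well as $|a_0|$ and $|a_n|$, so that $\cH[g]=\cH[f_k]$, while translating the angles of all roots by $\theta-\theta_0$; in particular $N_\theta(g)=N_{\theta_0}(f_k)$. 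Choosing $\theta_0-\theta\in\{0,\tfrac12\}$ then gives the sharp examples for positive and negative real roots.

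The main obstacle is the sharpness half, and inside it the crux is confirming that the extremizers produced in the proof of Theorem \ref{thm:main} genuinely concentrate the discrepancy at one angle rather than spreading it over an interval of positive length. This should be visible from the potential-theoretic construction alluded to in the abstract, where an external logarithmic potential forces a Dirac mass to appear in the equilibrium measure at the angle dual to the location of the extremal $|f|$. If instead the extremal measure is absolutely continuous everywhere, a perturbation argument would be required: one would add a factor $(z-e^{2\pi i\theta})^{\lfloor\varepsilon n\rfloor}$ to a near-extremizer, check that the increment in $\cH[f]$ is quadratic in $\varepsilon$ while the gain in $N_\theta(f)/n$ is linear, and optimize. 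Provided the concentration property of the equilibrium measure holds, however, the sharpness claim follows immediately from the sharpness of Theorem \ref{thm:main} together with the rotation above.
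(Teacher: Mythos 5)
Your proposal is correct and follows essentially the same route as the paper: the inequality step (degenerate interval $\alpha=\beta=\theta$ plus Theorem \ref{thm:main}) and the sharpness step (extremizers concentrating the discrepancy at a single Dirac mass, transferred by rotation) are both exactly what the paper does. The concentration property you flag as the crux does hold — the paper's extremal family $\rho_{\ti,m}$ of Proposition \ref{prop_rhoi} has its only Dirac mass $2m\delta$ at the origin and $\rho_{\ti,c}<1$ elsewhere, so $\cD[\rho_{\ti,m}]$ is witnessed by the singleton $\{0\}$ and your bound is saturated — hence no fallback perturbation argument is needed.
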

Although the sharp estimate for the total number of real roots (including both positive and negative) does not follow directly from Theorem \ref{thm:main}, we will prove in a separate forthcoming note the sharp estimate for the number of real roots in terms of $\cH[f]$ from a similar approach we prove Theorem \ref{thm:main}. 

This inequality also has close connections to the theory of complex analysis and harmonic functions. In  \cite{Ganelius}, Ganelius first made the connection of this inequality to harmonic functions. Let $u$ and $v$ be a pair of conjugate harmonic functions inside the unit circle. Ganelius proved a theorem bounding the variation of $v$ by the maximal value of $u$ and $\partial v/\partial \theta$ in the unit disk. He then applies it  to deduce the inequality of Erd\H{o}s and Tur\'an with a better constant. Mignotte \cite{Mignotte} slightly refined the result by replacing the upper bound of $u$ with the integral of $u_+$ (the positive part of $u$) on the unit circle. We manage to show that Ganelius' conjugate function problem turns out to be equivalent to the Erd\H{o}s and Tur\'an inequality, thus Theorem \ref{thm:main}, in turn, implies a sharp estimate on harmonic functions. To our knowledge, such an equivalence has been neither noticed nor utilized before.
\begin{theorem}[Sharp Estimates for Harmonic Functions]\label{thm:conjugate-functions}
	Let $f(z)= u(z)+i v(z)$ be an analytic function in $|z|<1$ with $f(0)=0$. Suppose $u\le H$ and $\partial v /\partial \theta \le K$ in $|z|<1$ where $H, K>0$. Then we prove the following sharp estimate 
	\begin{equation}\label{thm_gane_1}
		|v(z_1)-v(z_2)|\le \sqrt{2\pi} \cdot \sqrt{HK},  \quad \quad \text{ for } |z_1|, |z_2| <1.
	\end{equation}
\end{theorem}

\subsection{Questions and History}
In an earlier work \cite{ETReal}, Erd\H{o}s and Tur\'an studied the relation between the discrepancy of a distribution $\{ z_i \} \subseteq [-1,1]$ and the values of $f(z) = \prod_j (z-z_j)$ on $[-1,1]$. This might be the origin of the inequality we are considering in this paper. Actually we can consider Theorem \ref{thm:main} as an analogue by replacing the interval $[-1,1]$ with the unit circle. Indeed 
%
it has been observed by \cite{schur} that in order to prove Theorem \ref{thm:main} it suffices to consider polynomials with unimodular roots. Therefore the question at the core is fundamentally the following: given $f(z) = \prod_j (z- z_j)$ with $z_j= e^{2\pi i \theta_j}$, what is the minimal value of $\max_{|z|=1} |f(z)|$?

With such a simple and clean form of this question, the inequality of Erd\H{o}s and Tur\'an inevitably has connections to many questions in different areas. Besides the application to bounding the number of real roots of polynomials, in \cite{ET} this inequality is also applied to recover the result of Jentzsch \cite{jent} and Szeg\H{o} \cite{szego} in complex analysis that every point of the circle of convergence for a power series is a limit point of zeros of its partial sums. It is noted by \cite{ETReal} that such an equidistribution theorem has a potential theoretic characterization that dates back to Hilbert in 1897. We can imagine that the roots of a polynomial are negatively charged particles and they repel each other with a force, and equidistribution corresponds to a state where the energy is minimized. Indeed, one of our main tools in this paper will be to apply recent development in modern potential theory and energy minimization to review this old question. Potential theoretic approach has also appeared in studies of closely related questions, for example in Bilu's result on equidistribution of roots for $f(z)\in \mathbb{Z}[x]$ \cite{Bilu}, see \cite{GranExpo} for a nice exposition. This result also has applications towards small points on abelian varieties \cite{zhang1,zhang}.  In \cite{Fekete}, this inequality is applied to study distribution of roots of Fekete polynomials, which has applications on the distribution of $L$-function values. 

After the fundamental works of Erd\H{o}s and Tur\'an \cite{ETReal,Erdosearly,ET} in the 1940s, questions on equidistribution and discrepancy have been generalized in various directions. We refer interested audiences to \cite{Dbook} for a collection of results on generalizations. We give some examples, far from being exhaustive, in the following. Instead of using this notion of height $\cH[f]$, one can bound the discrepancy $\cD[f]$ using other quantities. Mignotte in \cite{Mignotte} gives a bound of $\cD[f]$ in terms of $h[f]:=\frac{1}{n}\int_{\mathbb{T}}(\log\frac{|f(z)|}{\sqrt{|a_0a_n|}})_+ \rd{\theta} \le \mathcal{H}[f]$, which is equivalent to take $L^1(\bT)$ norm of $\log |f(z)|/\sqrt{|a_0a_n|}$ whereas $\cH[f]$ is close to a $L^{\infty}$ norm of $\log |f(z)|/\sqrt{|a_0a_n|}$. See also \cite{Sound,aim} for inequalities involving $h[f]$. In \cite{Bilu}, Bilu uses the Mahler measure for an integral polynomial $M[f] =\exp( \int_{\bT} \log |f(z)| \rd{\theta})$ to bound the discrepancy. This is also generalized to higher dimensions. H\"uesing \cite{huesing} has used energy to bound discrepancy for signed measures. Instead of working over $\bT$, \cite{curve,and97,andrievskii1999erdos} studies the discrepancy of $\mu$ on a general quasiconformal curve and bound the discrepancy in terms of the value $\log |f(z)|$ on the curve.  On the segment $[-1,1]$, \cite{blatt1992distribution,totik1993distribution} studies the equidistribution distribution of simple roots of a polynomial. Results in higher dimensions are also studied in \cite{sjogrenhighdim,gotz}. More recently, in \cite{steinerwass}, Steinerberger studies the bound for Wasserstein distance from the uniform distribution, which generalizes the notion of discrepancy on $\bT$. We also mention works on roots distribution of polynomials related to Erd\H{o}s-Tur\'an inequality. For more refined discussion on the modulus of roots for polynomials, see \cite{Tamas}. Erd\H{o}s-Tur\'an inequality is also applied to study roots of polynomials with small coefficients \cite{Poonen}, Fekete polynomials \cite{Fekete}, and random polynomials \cite{Hugh,Eofdisc}.

Despite the huge amount of generalization, the original form of Erd\H{o}s and Tur\'an inequality hasn't been improved much ever since the early 1950s. As commented in \cite{Dbook}, the constant remains the only factor that is not sharp in this inequality. Early work of Ganelius \cite{Ganelius} improved this constant to $\sqrt{2\pi/k} \approx 2.5619$ where $k$ is the Catalan constant. In a very recent work \cite{Sound}, Soundararajan uses $h[f]$ to bound $\mathcal{D}[f]$ by an elegant Fourier argument. As a consequence it sharpens the constant to $8/\pi \approx 2.5464$. In the AIM workshop in 2021, a group involving the second author sharpens the tools of Soundararajan. In \cite{aim}, the group improves the constant to $4/\sqrt{\pi} \approx 2.2567$ by giving a complete solution of a Fourier optimization problem. This seems to be the best one can extract from this Fourier argument. 
%
 We remark that optimizing the inequality with respect to $h[f]$ will be a different problem, as the optimal polynomial will probably be different. It is observed by Mignotte \cite{Mignotte}, an unpublished note of Soundararajan and the recent AIM work \cite{aim}, that the optimal constant for bounding $\mathcal{D}[f]$ by $h[f]$ is at least about $1.7593$, which is achieved when $f(z) = (z-1)^n$. We mention that Mignotte and Amoroso \cite{AmMig} constructed a sequence of polynomials with $\mathcal{H}[f]/\mathcal{D}[f]^2$ approaching to $1/2$, indicating that 
the optimal constant in the Erd\H{o}s-Tur\'an inequality cannot be smaller than $\sqrt{2}$, which is exactly the constant we prove in Theorem \ref{thm:main}.

\subsection{Methods}
\subsubsection{Schur's Observation}
A crucial simplification for Theorem \ref{thm:main}, Theorem \ref{thm:real-root} and Erd\H{o}s-Tur\'an's original inequality is one observation due to Schur. In \cite{schur} Schur showed that to state Theorem \ref{thm:main} for all polynomials, it suffices to prove it for $f(z)$ where all roots are unimodular. Indeed, given $f(z) = a_n\prod_j(z-r_j e^{2\pi i \theta_j})$ and $\tilde{f}(z) = \prod_j (z- e^{2\pi i\theta_j})$, one can observe that
\begin{equation}\label{eqn:schur}
 \frac{|f(z)|}{\sqrt{|a_0a_n|}}\le |\tilde{f}(z)|,  \text{ for } |z| = 1,
\end{equation}
%
whereas the discrepancy remains the same, see for example \cite[Section 3]{Sound}. Therefore to study the optimal constant, it suffices to focus on $f$ with all roots on the unit circle $\mathbb{T}$. After normalization to monic polynomials, we can assume $|a_0|=a_n=1$, so $\mathcal{H}[f]$ is simply the maximal value of $ \log(|f(z)|^{1/n})$ on unit circle $\mathbb{T}$.

To demonstrate the phenomenon quantified by this inequality, we look at two contrasting polynomials $(z-1)^n$ and $z^n-1$ with all roots on the unit circle. The polynomial $f(z) = (z-1)^n$ is the most discrepant polynomial with degree $n$ since $z=1$ is a root of multiplicity $n$ and the angular distribution is supported on a single point $\theta = 0$, which is far away from being equidistributed. Meanwhile $|f(z)|$ attains the maximal value $2^n$ at $z = -1$, which is exponentially large in the degree, and $\mathcal{D}[f]$ is achieved at $\alpha=\beta =0$ with the value $\mathcal{D}[f]=1$. Theorem \ref{thm:main} predicts $1\le \sqrt{2\cdot \log 2}\approx 1.18$. The other polynomial $f(z) = z^n-1$ has all roots that are $n$-th roots of unity, and is the most evenly distributed polynomial with degree $n$. The discrepancy is $\mathcal{D}[f] = 1/n$ and $\max_{|z|=1} |f(z)|$ is as small as $2$. In this case Theorem \ref{thm:main} says $1/n \le \sqrt{2} \sqrt{\log 2/n}$. This also implies that $\cH/\cD^2$ cannot be bounded from above.

%

\subsubsection{From Discrete to Continuous}
Although this question is originally stated as one about roots distribution of polynomials, it can be considered as a question just about distributions of $n$ points on the unit circle, thanks to Schur's observation and the convenient translation between polynomials and their roots. Since the degree of $f$ can be arbitrarily large, the extreme case will clearly take place when $n$ is large enough. This leads to one of the main ideas in proving Theorem \ref{thm:main}, that is, to solve the discrete problem using continuous methods. Instead of considering discrete measures supported on $n$ points, we will enlarge the class of distributions of consideration to all probability measures $\cM$ on $\bT$.

Under the language of probability measures, we can describe the root distribution of $f(z)$ by an \emph{empirical measure} of $n$ points 
\begin{equation}
	\rho_f = \frac{1}{n} \sum_{j} \delta_{\theta_j},
\end{equation}
where $\delta_{\theta_j}$ is the Dirac function at $\theta_j$.
The concepts of discrepancy $\mathcal{D}[f]$ and height $\mathcal{H}[f]$ of a polynomial can now be naturally extended to an arbitrary probability measure $\rho$ by defining
\begin{equation}\label{eqn:define-D-H-continuous}
	\mathcal{D}[\rho]:= \sup_I \int_I (\rho-1) \rd\theta, \quad \quad \mathcal{H}[\rho]:= -\underset{\mathbb{T}}{\ess\inf} (W*\rho),
\end{equation}
where $I\subset \mathbb{T}$ is any closed interval on the unit circle and $W(x) = -\log|1-e^{2\pi i x}| =- \log|2\sin(\pi x)|$. 

We then extend the question to an optimization problem of $\mathcal{H}[\rho]/\mathcal{D}[\rho]^2$ for general $\rho \in\cM$. Notice that we can also conveniently approximate a probability distribution by empirical measures with large $n$, see Section \ref{sec:approximation} for the details. Therefore it suffices for us to prove the following.  
\begin{theorem}\label{thm:probability-distribution}
	For all probability measure $\rho \in \mathcal{M}$ on $\bT$, we have the sharp inequality
	\begin{equation}\label{eqn:probability-distribution}
		\mathcal{D}[\rho] \le \sqrt{2} \cdot \sqrt{\mathcal{H}[\rho]}.
	\end{equation}
\end{theorem}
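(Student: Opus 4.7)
The plan is to recast \eqref{eqn:probability-distribution} as a constrained optimization problem over probability measures on $\mathbb{T}$, derive Euler-Lagrange conditions that identify extremizers as equilibrium measures for a logarithmic-kernel energy with piecewise-constant external potential, and then use complex analysis to solve those equations and extract the sharp constant. By rotation and reflection symmetry of the functionals $\mathcal{D}$ and $\mathcal{H}$, I may reduce to the case where a discrepancy-achieving interval has the form $I_\ell=[-\ell/2,\ell/2]$ and the extremal $\rho$ is symmetric. The inequality is then equivalent to showing that for every $\ell\in(0,1)$, $d\ge 0$, and every $\rho\in\mathcal{M}$ with $\int_{I_\ell}\rho\ge\ell+d$ one has $\mathcal{H}[\rho]\ge d^2/2$. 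Existence of a minimizer (or minimizing sequence) in this convex sub-class should follow from weak-$*$ compactness of probability measures together with lower semicontinuity of $\mathcal{H}[\rho]$.

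After introducing a slack variable for $\mathcal{H}[\rho]$, the sub-problem becomes a linear program, and convex/LP duality on measures produces a dual probability measure $\mu$ on $\mathbb{T}$ together with multipliers $\lambda\ge 0$, $\nu\in\mathbb{R}$, satisfying, at the optimum,
\begin{equation*}
W*\rho(x)=-\mathcal{H}[\rho]\ \text{on}\ \operatorname{supp}\mu,\qquad W*\mu(x)+\lambda\mathbf{1}_{I_\ell}(x)+\nu=0\ \text{on}\ \operatorname{supp}\rho,
\end{equation*}
with inequality elsewhere. These are exactly the Euler-Lagrange equations coupling $\rho$ and $\mu$ as equilibrium measures for log-kernel energies, with external potentials supported respectively on $\operatorname{supp}\mu$ and (essentially) on $I_\ell$, precisely the variational framework alluded to in the abstract.

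To solve this equilibrium system explicitly, a natural symmetric ansatz is $\operatorname{supp}\rho=[-L/2,L/2]$ for some $L\ge\ell$, with $\operatorname{supp}\mu$ contained in the complementary arc, and the jump of $\lambda\mathbf{1}_{I_\ell}$ at $\pm\ell/2$ dictating specific endpoint singularities of $\mu$. The equilibrium conditions then reduce to singular integral equations for the Hilbert transform on arcs of $\mathbb{T}$, which can be solved by conformal mapping of the relevant arc to a standard straight interval together with classical Chebyshev/Akhiezer-type formulas for logarithmic-kernel equilibria, producing closed-form expressions for the densities of $\rho$ and $\mu$ parametrized by $(\ell,L)$. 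Substituting back yields $\mathcal{D}[\rho]=d(\ell,L)$ and $\mathcal{H}[\rho]=h(\ell,L)$ in closed form, and optimizing the ratio over the two parameters produces the minimum $h/d^2=1/2$, attained in the limit matching the Amoroso-Mignotte sequence and yielding the sharp constant $\sqrt{2}$.

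The main obstacle will be the last step: correctly identifying the support structure of the extremal $\rho$ and $\mu$, handling the jump discontinuity of the external potential $\lambda\mathbf{1}_{I_\ell}$ at $\pm\ell/2$ (which forces a delicate endpoint behavior of $\mu$), and executing the singular-integral calculation cleanly. By contrast, the variational setup and the LP-duality derivation of the Euler-Lagrange system are fairly standard applications of convex analysis on measures, and once the explicit extremals are in hand the final verification of the constant $\sqrt{2}$ reduces to a direct computational check.
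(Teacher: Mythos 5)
Your high-level blueprint — reduce to a constrained variational problem, derive equilibrium conditions, solve via log-potential theory and complex analysis, and extract the constant — is the right framework and is broadly what the paper does. But the concrete ansatz you propose for the extremal measure is wrong, and this is a genuine gap: with $\operatorname{supp}\rho=[-L/2,L/2]$ a single arc containing $I_\ell$, your Euler–Lagrange/sediment condition $W*\rho\le \operatorname*{ess\,inf}(W*\rho)$ would hold on all of $[-L/2,L/2]$ (the potential $W*\rho$ is lower-semicontinuous and right/left continuous at the endpoints of the support gap, so the condition extends to the closed arc), and then, since $W*\rho$ is convex on the complementary arc $\mathbb{T}\setminus[-L/2,L/2]$ and $\le \operatorname*{ess\,inf}$ at both endpoints, it is $\le \operatorname*{ess\,inf}(W*\rho)$ everywhere. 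That forces $W*\rho\equiv 0$, hence (by $\widehat{W}(k)>0$ for $k\ne 0$) $\rho\equiv 1$, contradicting $\cD[\rho]\ge d>0$. So a single-interval support cannot be extremal.

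What actually happens — and what drives the entire second half of the paper — is that the minimizer \emph{must} carry Dirac masses at the two endpoints $\pm M$ of the discrepancy-maximizing interval $I=[-M,M]$, with the remaining continuous part supported on two disjoint arcs, one strictly inside $I$ and one strictly inside $I^c$, \emph{neither of which contains $\pm M$}. The constraint $\int_I \rho\ge \ell+d$ is what protects the atoms at $\pm M$ from being diffused away: microscopic mass transport is admissible at every other point of the support (which is why the potential is flat there), but not across $\pm M$. In your LP-duality picture, the jump of $\lambda\mathbf 1_{I_\ell}$ does not produce ``endpoint singularities of $\mu$'' — it produces point masses in $\rho$ at $\pm\ell/2$. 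Your Chebyshev/Akhiezer-type conformal computation would therefore be set up on the wrong support geometry and would not produce the paper's extremal densities (compare \eqref{typeirho} and \eqref{typeiirho}, which have a $\sqrt{\cdot}$-type vanishing density on arcs flanked by point masses, with the limit $M\to 0$, $m\to 0^+$ realizing $\cH/\cD^2\to 1/2$). Separately, you are underestimating the final quantitative step: once the two-parameter family of candidates is in hand, the paper does not optimize the ratio on $\mathbb{T}$ directly but instead periodizes to transfer the problem to $\mathbb{R}$ (where scaling invariance removes a parameter), proves two comparison inequalities, and still needs an interval-by-interval numerical certificate on the intermediate range of the remaining parameter; a clean closed-form two-parameter minimization on $\mathbb{T}$ is not what happens.

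Two smaller points. The LP/complementary-slackness route you sketch is a legitimate alternative to the paper's ``microscopic diffusion'' argument, and if executed carefully it should reproduce the same sediment characterization; but you would need to justify strong duality in this infinite-dimensional setting rather than assert it. And the infimum $1/2$ is not attained by any $\rho\in\cM$ — the sharpness is only along a degenerating sequence — so the last step must be phrased as showing $\cG[\rho]>1/2$ for every $\rho$ together with a construction making the ratio approach $1/2$, rather than finding a minimizer.
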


\subsubsection{Potential Theory}
As an optimization problem, we see that the optimizer is not among the trivially extremal ones, in view of the two previous examples. So one of the main challenges is to construct distributions with the optimal value. The main tool for us to characterize the shape of the optimal distribution is potential theory and energy minimization. 

We start by giving a potential theoretic description of Theorem \ref{thm:probability-distribution}. The reader can also find basic potential theoretic facts in \cite{Saffbook}. As we noted before $\mathcal{H}[\rho] =-\ess \inf (W*\rho)$, where $W(x) = -\log|2\sin(\pi x)|$. We can view $W$ as a pairwise interaction potential among particles. Its negative sign stands for the repulsion when two particles are close. For a probability distribution $\rho\in \cM$, we denote $V[\rho]:=W*\rho$ to be the potential field generated by $\rho$ with the logarithmic potential $W$, then the total potential energy of $\rho$ is
\begin{equation}
	\mathcal{E}[\rho]:=\frac{1}{2}\int_{\mathbb{T}} (W*\rho)\cdot \rho \rd \theta.
\end{equation}
Since $W(x)$ is mean zero over $\mathbb{T}$, we see
$V[1_{\mathbb{T}}] = W*1_{\mathbb{T}}\equiv 0$ and it is easy to show that $1_{\mathbb{T}}$ is the unique probability measure with $\mathcal{E}[1_{\mathbb{T}}] \le 0$. Such a probability measure $\rho$ that minimizes the total energy is called the \emph{energy minimizer} or \emph{equilibrium}. On the other hand, it was shown in \cite{Wass} that $\mathcal{D}[\rho]:= 2\cdot d_{\infty}(\rho, 1_{\mathbb{T}})$, which is the Wasserstein-infinity distance between $\rho$ and the uniform distribution $1_{\mathbb{T}}$. We mention that Wasserstein distances are the most natural distance in studies like optimal transport. 

Then (\ref{eqn:probability-distribution}) as
\begin{equation}
	 d_{\infty}(1_{\mathbb{T}}, \rho)\le \frac{1}{\sqrt{2}} \cdot \| (V[1_{\mathbb{T}}]-V[\rho])_+ \|_{\infty}^{1/2} =\frac{1}{\sqrt{2}} \cdot \sqrt{-\ess \inf V[\rho]}.\\	  	 
\end{equation}
Now we can interpret Theorem \ref{thm:probability-distribution} as saying if the potential generated by $\rho$ is close to that of the equilibrium, then $\rho$ is close to the equilibrium under the $d_{\infty}$ metric. In a forthcoming work, the authors will exactly use this formulation and give an application of Erd\H{o}s-Tur\'an type inequality towards stability of energy minimizers. 

\subsubsection{Energy Minimization}
In this part, we characterize the property for the optimal distributions. We first restrict this optimization problem of $\cH/\cD^2$ from $\rho \in \mathcal{M}$ to $\rho \in \cM_{\cD\ge d}$, where $\mathcal{D}[\rho]\ge d$. Then the optimal value for $\cH/\cD^2$ can be approximated as $d$ approaches $0$,
%
%
%
\begin{equation}
	\inf_{\rho \in \cM} \cH[\rho]/\cD[\rho]^2 = \lim_{d\to 0+} \inf_{\rho \in \cM_{\cD\ge d}} \cH[\rho]/\cD[\rho]^2.
\end{equation}

Then in Section \ref{sec:minimizer-characterization}, our main goal is to translate the problem of optimizing $\mathcal{H}[\rho]/\mathcal{D}[\rho]^2$ with fixed $d$ into an energy minimization problem with certain fixed external potentials $U$. Given an external potential $U$, we define 
\begin{equation}
V_U[\rho]:= U+ W*\rho, \quad\quad \mathcal{E}_U[\rho]:= \frac{1}{2}\int_{\mathbb{T}} (W*\rho)(x)\cdot \rho(x) \rd x + \int_{\mathbb{T}} U(x)\rho(x) \rd x,
\end{equation}
for arbitrary $\rho\in \cM_{m}$, which is the set of measures over $\bT$ with total mass $m$. We study the properties of energy minimizers in Section \ref{ssec:energy-min}. The theory of energy minimization guarantees that for fixed $m$ and $U$, if $W$ and $U$ are both nice, then there exists a unique $\rho \in \cM$ with minimal $\cE_U$. Moreover, such an energy minimizer $\rho$ always satisfies the condition
\begin{equation}\label{eqn:sediment}
	V_U[\rho](x) \le \ess \inf (V_U[\rho]),  \quad \quad \forall x \in \supp \rho.
\end{equation}
We say that a measure $\rho$ satisfying \ref{eqn:sediment}  is \emph{sediment} with respect to $U$. Intuitively this condition is saying that all of the mass of $\rho$ is resting at the bottom of the total potential $V_U[\rho]$ generated by $\rho$ and $U$. It is shown that such a sediment distribution $\rho$ is unique. Moreover among all $\rho \in \cM_m$, the unique sediment $\rho$ sees the highest bottom, i.e. has the largest $\ess\inf V_U[\rho]$. See Proposition \ref{prop:energy-min} for detailed treatment.

On the other hand, we develop a transport plan called \emph{microscopic diffusion} in Section \ref{ssec:micro-diff} to study the minimizer of $\cH/\cD^2$ in $\cM_{\cD\ge d}$. It shows that if a certain $\rho\in \cM_{\cD\ge d}$ has $(W*\rho)(x_0)> \ess \inf (W*\rho)$ for some $x_0\in \supp \rho$, then we can always construct a local diffusion such that $W*\rho$ increases for $x$ away from $x_0$. Therefore as long as the local operation stays away from the endpoints of $I$ witnessing $\cD$, we will be able to decrease $\cH/\cD^2$. The key of this argument lies in the convexity of our potential function $W(x) = -\log|2\sin(\pi x)|$. What it implies is an ultimate surprise and stunning connection, the minimizer of $\cH/\cD^2$ in $\cM_{\cD\ge d}$ is the unique sediment distribution with respect to the external potential  $U$ generated by Dirac mass(es) at endpoints of $I$. This leads to the characterization of minimizer(s) for $\cH/\cD^2$ in Theorem \ref{thm:main-characterization}. For this part, our method will also work in general for $\cH/\cD^{\alpha}$ for any $\alpha\ge 0$ and for a more general class of interaction potentials $W$.

\subsubsection{Construction of Minimizers}
Next, our goal is to construct explicit distributions satisfying the characterization in Theorem \ref{thm:main-characterization}. For each Dirac mass(es) configuration $\rho_d$ (position $M$ and mass $m$), there exists a unique sediment distribution with respect to $U = W*\rho_d$. Therefore it suffices to construct sediment distributions for each $M$ and $m$ in order to find a pool of candidates for the minimizer. In Section \ref{sec:construction-stationary}, we start with constructing a larger family of distributions, called \emph{stationary} distribution with respect to $U$, which satisfies
\begin{equation}
	V_U[\rho]'(x) = 0, \quad \quad x \in \supp \rho.
\end{equation}
Intuitively these are the distributions where all mass of $\rho$ are stationary and has total force $0$. The construction heavily depends on the fact that the kernel of the Hilbert transform is derivative of logarithm. Due to this elegant connection between logarithmic potential and Hilbert transform, we are able to construct these distributions $\rho$ as the real part $\Re(g)$ of some nice analytic functions $g(z)$, while $\Im(g) =-\frac{1}{\pi} (W* \Re(g))'=-\frac{1}{\pi} V_U[\rho]'$ is the derivative of the generated potential. By constructing analytic functions $g(z)$ which are either totally real or totally imaginary, we thus obtain $\rho$ where $(W* \Re(g))'=V_U[\rho]'$ is $0$ on $\supp \rho$. In order to evaluate the minimal value of $\cH/\cD^2$ conveniently, we also give the analogous construction for a similar class of distributions over $\R$, using the Hilbert transform over $\R$. 

From this construction, we obtain a family of distributions over $\bT$ parametrized by $M$, $m$ and another parameter $m_1 = \int_{(-M, M)} \rho(x) \rd{x}$. By imposing the condition that $\rho$ is sediment with respect to its Dirac masses configuration, we then get rid of the parameter and obtain a family just parametrized by $M$ and $m$. We can discuss distributions over $\R$ similarly, and obtain a class of distributions called \emph{admissible distribution}, also parametrized by two parameters. 

We expect this construction to be useful for various types of optimization problems.
%
\subsubsection{Estimation of Min Value}
Now given this two-parameter family of energy minimizers ${\rho}_{M,m}$ where the external potentials $U = W*\rho_{d}$ with $\rho_{d} = m(\delta_M + \delta_{-M})$. Our goal is to show that $1/2$ is a lower bound for   $\cH[\rho]/\cD[\rho]^2$ for every $\rho = \rho_{m, M}$. 

In order to reduce the complexity of the problem over $\mathbb{T}$, we make a comparison between sediment distributions over $\bT$ and admissible distributions over $\R$. A natural operation to get a distribution over $\bT$ from a distribution $\mu$ over $\R$ is to take the periodization $\rho_{\circ}$, see \eqref{rhocirc0},\eqref{rhocirc}. Notice that both families are parametrized by their Dirac mass(es) configurations. Therefore we can associate each sediment distribution $\rho$ with a periodization $\rho_{\circ}$ of a unique admissible distribution $\mu$ over $\R$. Then in Section \ref{sec:R-to-T}, we firstly relate two functionals $\tilde{\cH}$ and $\tilde{\cD}$ for distributions over $\R$ (see Section \ref{sec:min-value-R} for the definitions of $\tilde{\cH}$ and $\tilde{\cD}$ for distributions over $\R$ ) to $\cH$ and $\cD$ over $\bT$, and prove the first comparison theorem
\begin{equation}
	\frac{	\tilde{\cH}[\mu]}{\tilde{\cD}[\mu]^2} \le  \frac{\cH[\rho_{\circ}]}{\cD[\rho_{\circ}]^2},
\end{equation}
for periodizations in Section \ref{ssec:periodization}, see Theorem \ref{prop:rho-circ-H-D}. This reduces the optimization problem over $\bT$ to an optimization problem over $\R$, where a great benefit is that we can get rid of one more parameter due to scaling invariant, see Section \ref{ssec:dimension-reduction}. We solve this new optimization problem for admissible distributions over $\R$ in Section \ref{sec:min-value-R} and show that 
\begin{equation}
\frac{1}{2}\le 	\frac{\tilde{\cH}[\mu]}{\tilde{\cD}[\mu]^2},
\end{equation}
for admissible $\mu$. Finally in Section \ref{sec:R-to-T}, via a technical convexity argument, see Section \ref{ssec:compare1}, we prove the second comparison theorem 
\begin{equation} 
\frac{\cH[\rho_{\circ}]}{\cD[\rho_{\circ}]^2} \le \frac{\cH[\rho]}{\cD[\rho]^2},
\end{equation}
where $\rho_{\circ}$ and $\rho$ are both distributions over $\bT$ and share the Dirac masses configuration, see Theorem \ref{thm_Dcomp}. This concludes the chain of comparisons and proves the inequality in Theorem \ref{thm:probability-distribution}. 

\subsection{Notations and Preliminaries}\label{sec:notation}
We will denote the torus $\mathbb{T} = \mathbb{R}/\mathbb{Z}$, and use a real number $x$ to indicate a point in $\mathbb{T}$. Similarly, we will use an interval $[a,b]\subseteq \mathbb{R}$ with length less than 1 to indicate an interval $I$ in $\mathbb{T}$. For an interval $I$ in $\bT$, we use $I^c$ to denote its complement. 

By measures over $\bT$, we always mean Borel measures on $\mathbb{T}$. We denote $\cM_{m}$ to be the set of all measures $\rho$ with total mass $\int_{\mathbb{T}} \rho = m$, thus $\cM = \cM_{1}$ is the set of all probability measures on $\mathbb{T}$. For a point $x\in \bT$, we will denote a Dirac function located at $x$ by $\delta_x$. A sequence $\{\rho_n\}$ of probability measures \emph{weakly converges to} $\rho$ if for all $f\in C(\mathbb{T})$, we have $\lim_{n\to\infty} \int_{\mathbb{T}} f\rd \rho_n = \int_{\mathbb{T}} f \rd \rho$, we will write $\rho_n \wc \rho$. Since $\bT$ is compact, it is a well known fact that every sequence of probability measures has a weakly convergent subsequence. 

We will always use $W$ to denote a potential function that describes the pairwise interaction between particles. In some occasions, there is also an \emph{external potential} which we denote by $U$. For $\rho \in \cM$, we denote $V[\rho]:= W*\rho$ to be the potential generated by $\rho$ and $V_U[\rho]:= W*\rho + U$ to be the total potential. 

For $\rho \in \cM$, for  $W$ a potential function, several important functionals we use are
$$\cD[\rho]:= \sup_I \int_I (\rho-1) \rd{x}, \quad \cH_W[\rho]:= -\ess \inf_{\bT} (W*\rho), \quad \cG[\rho]:= \frac{\cH_W[\rho]}{\cD[\rho]^2},$$
where $\ess \inf (f)  = \sup \{ a \in \R: |\{ x\in \bT: f(x)<a \}|= 0 \}$ is the essential infimum of $f$. We will suppress the dependence of $W$ in $\cH_W[\rho]$ when there is no confusion. We denote
\begin{equation}
	\cM_{\cD\ge d}:=\{\rho\in\cM:\cD[\rho]\ge d\},
\end{equation}
for some positive real number $0<d\le 1$.

For us, the Fourier transform (respectively Fourier coefficients) of $u$ is defined by
\begin{equation}
	\hat{u}(\xi) = \int_{\mathbb{R}}u(x)e^{-2\pi i x\xi}\rd{x},\quad \xi\in\mathbb{R}, \quad\quad 	\hat{u}(k) = \int_{\mathbb{T}}u(x)e^{-2\pi i xk}\rd{x},\quad k\in\mathbb{Z}
\end{equation}
respectively when $u$ is a function over $\R$ and $\bT$.

As an auxillary mollifier, we define $\psi$ as
\begin{equation}\label{psialpha}
	\psi(x) = \max\{1-|x|,0\},\quad \psi_a(x) = \frac{1}{a}\psi(\frac{x}{a}),\,a>0
\end{equation}
on $\mathbb{R}$. It is even and supported on $[-a, a]$. For $a\le 1/2$, $\psi_a$ can be interpreted as a function on $\mathbb{T}$ via identifying $\mathbb{T}$ with $[-1/2,1/2)$, and then $\hat{\psi}_a(k) = \hat{\psi}(ak)$ for $k\in \mathbb{Z}$ where $\hat{\psi}$ is the Fourier transform of $\psi$ on $\mathbb{R}$. 

\subsection{Organization of Contents}
In Section \ref{sec:minimizer-characterization}, we characterize minimizer(s) of $\cH/\cD^2$ in the class of probability measures $\cM_{\cD\ge d}$ for any $d>0$. We prove our main result Theorem \ref{thm:main-characterization} by relating minimizer(s) of $\cH/\cD^2$ and minimizer(s) of potential energy with logarithmic interaction and external potentials. Such a connection is established by showing that both minimizers are sediment distributions in Section \ref{ssec:micro-diff} and Section \ref{ssec:energy-min} respectively.

In Section \ref{sec:construction-stationary}, we construct sediment distributions, see in Propositions \ref{prop_rhoi}, \ref{prop_rhoii} and \ref{prop:2-parameter}. We achieve so by constructing a larger class of distributions by using the property of Hilbert transform. Meanwhile we also construct analogue distributions over $\R$ in Section \ref{ssec:stationary-R} and Section \ref{ssec:admissible-R}.

In Section \ref{sec:min-value-R}, we formulate an optimization problem over $\R$ analogous to minimizing $\cH/\cD^2$ over $\bT$, and then solve the minimal value for distributions constructed in Section \ref{ssec:admissible-R}. 

In Section \ref{sec:R-to-T}, we solve the optimization problem over $\bT$ by relating it to the optimization problem over $\R$ via two comparison arguments, Theorem \ref{prop:rho-circ-H-D} in Section \ref{ssec:periodization} and Theorem \ref{thm_Dcomp} in Section \ref{ssec:comparison2}.

Finally, we conclude the proofs of Theorems \ref{thm:main} and \ref{thm:probability-distribution} in Section \ref{ssec:final}, Theorem \ref{thm:real-root} in Section \ref{ssec:real-final}, and Theorem \ref{thm:conjugate-functions} in Section \ref{sec:conjugate-function}.

\vspace{0.5 cm}
\begin{center}
	\textbf{ \large Acknowledgement}
\end{center}

The first author was supported in part by NSF and ONR grants DMS1613911 and N00014-1812465. The first author was supported by the Advanced Grant Nonlocal-CPD (Nonlocal PDEs for Complex Particle Dynamics: Phase Transitions, Patterns and Synchronization) of the European Research Council Executive Agency (ERC) under the European Union's Horizon 2020 research and innovation programme (grant agreement No. 883363).

The second author would like to thank Theresa Anderson, Frank Thorne and Trevor Wooley for organizing the American Institute of Mathematics (AIM) workshop \emph{Arithmetic Statistics, Discrete Restriction, and Fourier Analysis} in 2021, and  Micah B. Milinovich and Emanuel Carneiro for introducing this problem to our knowledge. The second author would like to thank Emanuel Carneiro, Mithun Das, Alexandra Florea, Angel V. Kumchev, Amita Malik, Micah B. Milinovich and Caroline Turnage-Butterbaugh for helpful conversations. The authors would like to thank Jos\'e A. Carrillo and Dino J. Laurenzini for suggestions on earlier drafts. 

\section{Minimizer in $\cM_{\cD\ge d}$}\label{sec:minimizer-characterization}
In this section, we will study minimizer(s) of $\cG$ among probability measures over $\mathbb{T}$ with $\cD[\rho]\ge d$. In particular, for fixed $0<d<1$, we are interested in what the value $\inf_{\cM_{\cD\ge d} } \cG$ is, and whether it can be achieved and whether it is unique, and if it can be achieved or approximated how the minimizer(s) look like. 

We will focus on the last two questions above in this section, and save the discussion on the minimal value in Section \ref{sec:min-value-R} and Section \ref{sec:R-to-T}. Instead of just working with the functional $\cG[\rho] = \cH[\rho]/\cD[\rho]^2$ where $\cH[\rho] = -\ess \inf_{\mathbb{T}} (W*\rho)$ with $W = -\log |2\sin(\pi x)|$, in this section we will work in general with functional in the form
\begin{equation}
	\cG_{\alpha}[\rho]:= \cH[\rho]/\cD[\rho]^{\alpha},
\end{equation}
where $\alpha\ge 0$ is an arbitrary real number and $W: \mathbb{T}\to (-\infty, \infty]$ is any potential function satisfying the following assumptions:
\begin{itemize}
	\item {\bf (H1)}: $W\in L^1(\mathbb{T})\cap C^2(\mathbb{T}\backslash\{0\})$ with $\int_\mathbb{T}W\rd{x} = 0$.
	\item {\bf (H2)}: $W$ is an even function: $W(x)=W(-x)$.
	\item {\bf (H3)}: $\lim_{x\rightarrow0}W(x)=W(0)=\infty$.
	\item {\bf (H4)}: there exists a constant $C_1>0$ such that $W''(x)\ge C_1$ for $x\in \mathbb{T}\backslash\{0\}$.
	\item {\bf (H5)}: there exists a constant $C_2>0$ such that for all $0<r<1/2$ and $x\in \mathbb{T}$,  
\begin{equation}
	\frac{1}{2r}\int_{x-r}^{x+r}W(y)\rd{y}-2\inf W \le C_1 (W(x)-2\inf W).
\end{equation}
\end{itemize}

It is straightforward to verify that $W(x) = -\log|2\sin (\pi x)|$ satisfies {\bf (H1)} - {\bf (H5)}. We compute that $W'' = 2\pi/ (1- \cos(2\pi x)) =\pi /\sin ^2\pi x \ge \pi$. 
It follows from Jensen's formula that $W$ has mean value $0$. For {\bf (H5)}, it suffices to notice that $- c\log|x| \le W(x)-2 \inf W \le -C \log |x|$ for some positive constants $c$ and $C$ for $x\in [-1/2, 1/2]$ and $\log |x|$ satisfies the condition via a computation. 

Our main theorem for this section is the following:
\begin{theorem}\label{thm:main-characterization}
	Assume $W$ satisfies {\bf (H1)}-{\bf (H4)}. Let $\alpha\ge 0$, $0<d\le 1$. Then 
	\begin{enumerate}
		\item[\textnormal{(i)}] There exists a minimizer $\rho \in \cM_{\cD\ge d}$ for $\cG_{\alpha}$ that is even and $\cD[\rho] = \int_{I} (\rho-1) $ with $I=[-M,M]$ for some $0\le M<1/2$.
		\item[\textnormal{(ii)}] Let $\rho$ be a minimizer as in $1$. Then $\rho$ is in the form
		\begin{equation}
			\rho=m(\delta_M+\delta_{-M}) + \rho_1 
		\end{equation}
	   for some $0<m\le 1/2$, and $\rho_1 \in \cM_{1-2m}$ is supported on two (possibly empty) closed intervals $J\subsetneq I^c$ and $K\subsetneq I$.
      \item[\textnormal{(iii)}]
       Let $\rho$ be a minimizer as in $1$. Then $\rho$ is a sediment distribution with respect to $W$, i.e.,
		\begin{equation}\label{eqn:thm_chap1_1}
			V[\rho](x) \le  \ess \inf V[\rho],\,x\in \supp\rho\backslash \{\pm M\}.
		\end{equation}
	\item[\textnormal{(iv)}] Further assume that $W$ satisfies {\bf (H5)}. Let $\rho$ be a minimizer. Then $\rho$ is the unique probability measure satisfying (\ref{eqn:thm_chap1_1}) in the class of probability measures with the same Dirac mass configuration.
	\end{enumerate}
\end{theorem}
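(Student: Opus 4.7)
My approach is to handle the four parts in the order (i)\,$\to$\,(iii)\,$\to$\,(ii)\,$\to$\,(iv), using in turn: a compactness--symmetrization argument for existence of a nice minimizer, the microscopic diffusion of Section \ref{ssec:micro-diff} to extract the sediment condition, strict convexity of $V[\rho]$ on $\bT\setminus\supp\rho$ (forced by (H4)) to pin down the interval structure, and the abstract uniqueness of sediment distributions (Proposition \ref{prop:energy-min}, which requires (H5)) for (iv). For (i), I would take a minimizing sequence $\rho_n\in\cM_{\cD\ge d}$ and extract a weak-$*$ subsequential limit $\rho_*$. A Portmanteau-type argument gives weak upper semicontinuity of $\cD$, so $\cM_{\cD\ge d}$ is weakly closed, while $\cH$ is weakly lower semicontinuous because $W$ is lower semicontinuous by (H1) and (H3); hence $\rho_*$ is a minimizer. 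I would then pick an interval $I_*$ realizing $\cD[\rho_*]$, rotate so $I_*=[-M,M]$, and replace $\rho_*$ by its even symmetrization $\rho_\flat(x):=(\rho_*(x)+\rho_*(-x))/2$. Evenness of $W$ (H2) and subadditivity of $\ess\inf$ give $\cH[\rho_\flat]\le\cH[\rho_*]$, while the symmetric witness yields $\cD[\rho_\flat]\ge d$, producing the even minimizer required by (i).

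\textbf{Step (iii): sediment condition.} Suppose for contradiction that some $x_0\in\supp\rho\setminus\{\pm M\}$ satisfies $V[\rho](x_0)>\ess\inf V[\rho]$. Since $x_0\ne\pm M$, I can pick a small closed neighborhood $B$ of $x_0$ entirely contained in $I$ or in $I^c$. I would then apply the microscopic diffusion from Section \ref{ssec:micro-diff} to replace a small amount of mass near $x_0$ by a smeared-out version on $B$: because $B$ lies on one side of $\pm M$, the integral $\int_I\rho$ is preserved and so $\cD[\rho]\ge d$ remains valid. Strict convexity of $W$ (H4) forces such a diffusion to strictly raise $W*\rho$ at $x_0$ on average while not lowering $\ess\inf V[\rho]$ elsewhere, so $\cH[\rho]$ strictly drops and so does $\cG_\alpha[\rho]$, contradicting minimality.

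\textbf{Step (ii): support structure.} From (iii), any Dirac mass at $x_0\in\supp\rho\setminus\{\pm M\}$ would force $V[\rho](x_0)=+\infty$ by (H3), contradicting finite $\ess\inf V[\rho]$; hence $\rho=m(\delta_M+\delta_{-M})+\rho_1$ with $\rho_1$ atomless of total mass $1-2m$, and $m\le 1/2$ since $\rho$ is a probability measure. On any open component $(a,b)$ of $\bT\setminus\supp\rho$, $V[\rho]$ is smooth with $V[\rho]''=W''*\rho\ge C_1\int \rd\rho=C_1>0$ by (H4), hence strictly convex there. If neither $a$ nor $b$ is a Dirac point, (iii) gives $V[\rho](a)=V[\rho](b)=-\cH$; combined with $V[\rho]\ge-\cH$ on $(a,b)$ (continuity plus the definition of essinf), strict convexity forces $V[\rho]\equiv-\cH$ on $[a,b]$, contradicting $V[\rho]''>0$ unless $(a,b)$ is empty. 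Thus every non-trivial component of $\bT\setminus\supp\rho$ is adjacent to one of $\pm M$, and evenness of $\rho$ then gives $\supp\rho\cap I^c=J$ and $\supp\rho\cap(I\setminus\{\pm M\})=K$ each as a single (possibly empty) closed interval, properly contained in $I^c$ and $I$ because $V[\rho]\to+\infty$ at $\pm M$. Finally $m>0$: if $m=0$, the dichotomy forces $\supp\rho=\bT$, and then sediment with $\int_\bT W=0$ from (H1) forces $V[\rho]\equiv 0$ on $\bT$, whose only Fourier-theoretic solution is $\rho=1_\bT$, contradicting $\cD[\rho]\ge d>0$.

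\textbf{Step (iv) and main obstacle.} Setting $U:=W*(m\delta_M+m\delta_{-M})$, the sediment condition on $\rho$ is exactly that $\rho_1\in\cM_{1-2m}$ is a sediment distribution with external potential $U$. Under (H5), Proposition \ref{prop:energy-min} gives uniqueness of such sediment distributions once $U$ and total mass are fixed; this determines $\rho_1$ and hence $\rho$ within its Dirac-mass class. The main obstacle in the whole argument is the microscopic diffusion of step (iii): one must construct a mass-conserving, non-negative perturbation whose support lies strictly on one side of $\pm M$ and which yields a quantitative decrease in $\cH$ via strict convexity (H4). The delicate point is that $\ess\inf V[\rho]$ could in principle migrate to a new location when we perturb, so one needs a uniform estimate guaranteeing that $\ess\inf V[\rho]$ does not simply drop elsewhere while rising near $x_0$, and this is precisely where the quantitative strict convexity (H4) is essential.
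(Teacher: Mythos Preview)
Your overall strategy matches the paper's: compactness plus symmetrization for (i), microscopic diffusion for (iii), convexity of $V[\rho]$ on gaps for (ii), and Proposition~\ref{prop:energy-min} for (iv). Two genuine gaps deserve attention.

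\textbf{In (iii), the diffusion is described backwards.} The microscopic diffusion of Lemma~\ref{lem:micro-diffusion} does not smear mass out; it takes the mass of $\rho$ on a small interval $(x_0-\epsilon,x_0+\epsilon)$ and pushes it to the two endpoints $x_0\pm\epsilon$ (preserving the zeroth and first moments). By strict convexity of $W$ (H4), this \emph{raises} $W*\rho$ at every point \emph{outside} $[x_0-\epsilon,x_0+\epsilon]$ and may lower it inside. The argument is then: inside, $V[\rho]$ was already $\ge\ess\inf V[\rho]+c$, so a sufficiently small perturbation cannot push it below $\ess\inf V[\rho]$; outside, $V[\rho]$ only increases. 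Hence $\ess\inf V[\rho]$ strictly increases and $\cH$ strictly decreases. Your phrase ``strictly raise $W*\rho$ at $x_0$'' has the sign wrong, and a genuine ``smearing out'' would exploit convexity in the wrong direction.

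\textbf{In (ii), the argument for $m>0$ is circular and rests on a conflation.} The points excluded in (iii) are $\pm M$ (because any diffusion touching them could alter $\int_I\rho$), not ``Dirac points''. Hence when $m=0$ you cannot conclude from your gap dichotomy that $\supp\rho=\bT$: gaps with an endpoint at $\pm M$ are still permitted by what you have shown. Separately, your claim that $J\subsetneq I^c$ and $K\subsetneq I$ ``because $V[\rho]\to+\infty$ at $\pm M$'' already presupposes $m>0$. The paper closes this as follows: assume $\{\pm M\}\subset J\cup K$; extend the sediment equality $V[\rho]=\ess\inf V[\rho]$ to $\pm M$ using lower semicontinuity of $V[\rho]$ and one-sided continuity at gap endpoints (Proposition~\ref{prop:semi}); then strict convexity in the remaining gaps forces $V[\rho]\le\ess\inf V[\rho]$ on all of $\bT$, so $V[\rho]\equiv 0$ by $\int_\bT W=0$, whence $\rho=1_\bT$ via $\hat W(k)>0$ for $k\ne 0$ (Lemma~\ref{lem:W-hat-positive}), contradicting $d>0$. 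From $\{\pm M\}\not\subset J\cup K$ together with $\pm M\in\supp\rho$ one then reads off $m>0$, after which your $V[\rho]\to+\infty$ reasoning becomes legitimate.
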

Theorem \ref{thm:main-characterization} shows the existence of the minimizer(s) for $\cG_{\alpha}$ within $\cM_{\cD\ge d}$ and describes the shape of minimizer(s) to a great extent. Moreover, it shows that minimizer(s) have a characterizing property (\ref{eqn:thm_chap1_1}) and is the unique measure to satisfy this property with a fixed Dirac mass configuration. This translates the problem of constructing minimizers of $\cG_{\alpha}$ to a problem of constructing sediment distributions, which we will do in Section \ref{sec:construction-stationary}. 

We organize this section as the following. We first prove Theorem \ref{thm:main-characterization}(i)in Section \ref{ssec:D-H-property}, see Corollary \ref{coro:existence-even}. In Section \ref{ssec:micro-diff}, we prove (ii) and (iii) in Theorem \ref{thm:minimizer-shape} and Corollary \ref{coro:sediment-minimizer}. Finally we prove (iv) in Section \ref{ssec:energy-min} in Corollary \ref{cor_rhoform}. 

\subsection{Properties of $\cD$ and $\cH$}\label{ssec:D-H-property}
In this subsection, we will give some basic results about the functionals $\cD$ and $\cH$ in Lemma \ref{lem:D-H-property}. Then we will show that it implies Theorem \ref{thm:main-characterization}(i) as a corollary. 
\begin{lemma}\label{lem:D-H-property} Assume $W$ satisfies 
	{\bf (H1)}-{\bf (H4)} and $\alpha>0$. Let $\rho \in \cM$. 
	\begin{enumerate}
	\item[\textnormal{(i)}]
		There exists a closed interval $I$ such that $\cD[\rho] = \int_I (\rho-1) \rd {x}$.
		\item[\textnormal{(ii)}]
		If $\rho_n \wc \rho$ in $\cM$, then $\cD[\rho] = \lim_{n\to \infty} \cD[\rho_n] $.
		\item[\textnormal{(iii)}]
		If $\rho_n \wc \rho$ in $\cM$, then $\cH[\rho] \le \liminf_{n\to \infty} \cH[\rho_n]$.
		\item[\textnormal{(iv)}]
		If $\rho_n \wc \rho$ in $\cM$, then $\cG_{\alpha}[\rho] \le \liminf_{n\to \infty} \cG_{\alpha}[\rho_n]$.
	\end{enumerate}
\end{lemma}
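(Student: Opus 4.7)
The plan is to establish the four parts in order, using standard weak-convergence machinery together with the lower semicontinuity of $W$ as the main ingredients. For (i), I would parametrize closed intervals of $\bT$ by their endpoints $(\alpha,\beta)$ with $\alpha\le\beta\le\alpha+1$, a compact parameter space, and observe that $F(\alpha,\beta):=\rho([\alpha,\beta])-(\beta-\alpha)$ is upper semicontinuous: if $(\alpha_k,\beta_k)\to(\alpha,\beta)$, then any Hausdorff limit of $[\alpha_k,\beta_k]$ sits inside every open enlargement of $[\alpha,\beta]$, so outer regularity of the Borel measure $\rho$ bounds $\limsup_k\rho([\alpha_k,\beta_k])$ by $\rho([\alpha,\beta])$, while the length term is continuous. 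Compactness plus upper semicontinuity then yields a maximizing closed interval.

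For (ii), I would treat the two directions separately. For $\liminf_n\cD[\rho_n]\ge\cD[\rho]$, take $I=[\alpha,\beta]$ attaining $\cD[\rho]$ via (i), enlarge to the open interval $I_\epsilon=(\alpha-\epsilon,\beta+\epsilon)$, and apply the Portmanteau theorem for open sets to get $\liminf_n\rho_n(I_\epsilon)\ge\rho(I_\epsilon)\ge\rho(I)$, whence $\liminf_n\cD[\rho_n]\ge\cD[\rho]-2\epsilon$; then send $\epsilon\to0$. For $\limsup_n\cD[\rho_n]\le\cD[\rho]$, pick $I_n$ attaining $\cD[\rho_n]$, pass to a subsequence converging in Hausdorff metric to a closed interval $I^*$, enclose each $I_n$ in a slight closed enlargement $I^*_\epsilon$, and apply Portmanteau for closed sets before letting $\epsilon\to0$.

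For (iii), the key observation is that since $W$ is lower semicontinuous and bounded below on $\bT$ by \textbf{(H1)}--\textbf{(H3)}, Fatou's lemma ensures $W*\rho$ is lower semicontinuous for every $\rho\in\cM$; in particular the sublevel sets $\{W*\rho<c\}$ are open, so $\ess\inf(W*\rho)=\inf(W*\rho)$, and the infimum is finite because $\int_\bT W*\rho\,\rd x=0$. To bound $\limsup_n\inf(W*\rho_n)$ from above by $\inf(W*\rho)$, I would fix $\epsilon>0$, select a non-atom $x_0$ of $\rho$ with $(W*\rho)(x_0)\le\inf(W*\rho)+\epsilon$ (possible because the open strict sublevel set has positive measure while atoms are countable), and choose a continuous non-negative bump $\phi_r$ with $\int\phi_r=1$ supported in a small ball around $x_0$. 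Fubini and weak convergence against the continuous test function $W*\phi_r$ give
\[
\inf_\bT(W*\rho_n)\le\int_\bT\phi_r\,(W*\rho_n)\,\rd x=\int_\bT(W*\phi_r)\,\rd\rho_n\;\to\;\int_\bT\phi_r\,(W*\rho)\,\rd x,
\]
and the right-hand side tends to $(W*\rho)(x_0)$ as $r\to0$ by continuity of $W*\rho$ at the non-atom $x_0$. Sending $\epsilon\to0$ then yields $\cH[\rho]\le\liminf_n\cH[\rho_n]$.

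Part (iv) follows by combining (ii) and (iii): when $\cD[\rho]>0$, the denominator $\cD[\rho_n]^\alpha$ converges to the positive limit $\cD[\rho]^\alpha$, and dividing the liminf bound on $\cH$ by this positive limit yields the desired lower semicontinuity of $\cG_\alpha$; the degenerate case $\cD[\rho]=0$ forces $\rho=1_\bT$ and $\cH[\rho]=0$, which can be handled by convention or excluded since only non-uniform $\rho$ are relevant. The main obstacle I anticipate lies in (iii), namely verifying that $W*\rho$ is continuous at the chosen non-atom $x_0$ despite the singularity of $W$ at the origin; this reduces to $W\in L^1(\bT)$ together with the absence of a $\rho$-atom at $x_0$ via dominated convergence, and it is the only place where the singular behavior of the kernel must be handled with care.
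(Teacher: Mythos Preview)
Your treatment of (i), (ii), and (iv) is correct and essentially a repackaging of the paper's argument (Portmanteau in place of explicit tent functions, upper semicontinuity of $(\alpha,\beta)\mapsto\rho([\alpha,\beta])$ in place of a direct limit). The substantive issue is in (iii).

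First, a small slip: for a lower semicontinuous function it is the \emph{super}level sets $\{f>c\}$ that are open, not the sublevel sets $\{f<c\}$, so your deduction that $\ess\inf(W*\rho)=\inf(W*\rho)$ is not justified (and is false for general l.s.c.\ functions). This is harmless for the logic, since by the very definition of $\ess\inf$ the set $\{W*\rho<\ess\inf(W*\rho)+\epsilon\}$ has positive Lebesgue measure, which is all you actually need.

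The real gap is the claim that $W*\rho$ is continuous at a non-atom $x_0$ of $\rho$, and that this yields $\int\phi_r\,(W*\rho)\to(W*\rho)(x_0)$. The continuity claim is false in general: if $\rho$ has atoms at points $y_k\to x_0$ (with $x_0$ itself a non-atom and $(W*\rho)(x_0)<\infty$), then $(W*\rho)(y_k)=\infty$ for every $k$, so $W*\rho$ is not continuous at $x_0$. Your proposed justification via ``$W\in L^1$ plus no atom at $x_0$ and dominated convergence'' does not go through: rewriting $\int\phi_r\,(W*\rho)=\int(W*\phi_r)(x_0-y)\,\rd\rho(y)$, the integrand $(W*\phi_r)(x_0-y)$ converges to $W(x_0-y)$ for $y\neq x_0$, but to dominate it by a $\rho$-integrable function you need a bound of the type $(W*\phi_r)(z)\le C\,W(z)+C'$, which is precisely {\bf(H5)} --- and {\bf(H5)} is not among the hypotheses of the lemma.

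The clean fix, which is what the paper does, is to replace ``non-atom of $\rho$'' by ``Lebesgue point of $W*\rho$''. Almost every $x\in\bT$ is a Lebesgue point of $W*\rho\in L^1(\bT)$, so the positive-measure set $\{W*\rho<\ess\inf(W*\rho)+\epsilon\}$ contains such a point $x_0$; then $\int\phi_r\,(W*\rho)\to(W*\rho)(x_0)$ holds by definition of Lebesgue point, and the rest of your argument (weak convergence against the continuous test function $W*\phi_r$) goes through unchanged.
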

Before we prove this lemma, we first apply it to show Theorem \ref{thm:main-characterization} (i).

\begin{corollary}\label{coro:existence-even}
	Assume $W$ satisfies 
	{\bf (H1)}-{\bf (H4)}, $\alpha\ge 0$ and $0<d\le 1$. There exists a minimizer $\rho$ for $\cG_{\alpha}$ in $\cM_{\cD\ge d}$ that is even and $\cD[\rho] = \int_I (\rho-1)$ with $I = [-M,M]$ for some $0\le M <1/2$.
\end{corollary}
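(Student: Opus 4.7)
The plan is to produce a minimizer in $\cM_{\cD\ge d}$ by a standard compactness-plus-lower-semicontinuity argument, and then symmetrize it to obtain one that is even and for which a symmetric interval realizes the supremum in $\cD$.

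First I would take a minimizing sequence $\{\rho_n\}\subset\cM_{\cD\ge d}$ with $\cG_\alpha[\rho_n]\to \inf_{\cM_{\cD\ge d}}\cG_\alpha$; this infimum is finite and nonnegative, because $\int_\bT W=0$ forces $\ess\inf(W*\rho)\le 0$, i.e.\ $\cH[\rho]\ge 0$, while $\cD[\rho]\ge d>0$ keeps the denominator of $\cG_\alpha$ bounded away from zero. Weak compactness of $\cM$ yields (after relabelling) a subsequence $\rho_n\wc\rho$ in $\cM$. Lemma \ref{lem:D-H-property}(ii) then gives $\cD[\rho]=\lim\cD[\rho_n]\ge d$, so $\rho\in\cM_{\cD\ge d}$, and Lemma \ref{lem:D-H-property}(iv) (or Lemma \ref{lem:D-H-property}(iii) in the degenerate case $\alpha=0$) gives $\cG_\alpha[\rho]\le \liminf\cG_\alpha[\rho_n]$, so $\rho$ attains the infimum.

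Next I would symmetrize. Lemma \ref{lem:D-H-property}(i) produces a closed interval $I_0$ with $\cD[\rho]=\int_{I_0}(\rho-1)\rd x$. Rotations of the coordinate on $\bT$ preserve both $\cH$ and $\cD$ (since $W$ depends only on differences), so I may rotate $\rho$ to place the midpoint of $I_0$ at the origin, giving $I_0=[-M,M]$ for some $0\le M\le 1/2$. Let $\tilde\rho(A):=\rho(-A)$ and set $\rho_s:=(\rho+\tilde\rho)/2$, which is even by construction. Evenness of $W$ yields $(W*\tilde\rho)(x)=(W*\rho)(-x)$, so $\cH[\tilde\rho]=\cH[\rho]$ and $\cD[\tilde\rho]=\cD[\rho]$. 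The key pair of inequalities is then the upper bound $\cD[\rho_s]\le (\cD[\rho]+\cD[\tilde\rho])/2 = \cD[\rho]$ (the average of suprema dominates the sup of averages), combined with the lower bound $\int_{[-M,M]}(\rho_s-1)\rd x = \cD[\rho]$ obtained by plugging the symmetric interval into both halves. These force $\cD[\rho_s]=\cD[\rho]$ with $[-M,M]$ still witnessing the supremum. Similarly, $\cH[\rho_s]\le \cH[\rho]$ because the essential infimum of a sum dominates the sum of essential infima. Combining, $\cG_\alpha[\rho_s]\le\cG_\alpha[\rho]$, so $\rho_s$ is an even minimizer with the desired witnessing interval. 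The strict bound $M<1/2$ then follows because $M=1/2$ would make $[-M,M]=\bT$ and $\int_\bT(\rho_s-1)\rd x = 0$, contradicting $\cD[\rho_s]\ge d>0$.

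The only slightly delicate point, in my view, is the sandwich argument for $\cD[\rho_s]$: the generic upper bound via sup-of-averages is useless on its own, and the matching lower bound relies crucially on having first rotated so that the witnessing interval of $\rho$ is symmetric under $x\mapsto -x$, so that $\tilde\rho$ integrates exactly the same amount over $[-M,M]$ as $\rho$ does. Once that is in place, everything else is routine bookkeeping with Lemma \ref{lem:D-H-property}.
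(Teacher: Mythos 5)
Your proof is correct and follows essentially the same route as the paper's: compactness plus lower semicontinuity (Lemma \ref{lem:D-H-property}(ii) and (iv)) for existence, then symmetrization. The only genuine difference is in how equality $\cD[\rho_s]=\cD[\rho]$ is established. The paper only records the one-sided inequality $\cD[\bar\rho]\ge\cD[\rho]$ (by plugging in $I=[-M,M]$) and infers equality from minimality of $\rho$; this forces equality when $\alpha>0$, but when $\alpha=0$ the functional $\cG_0=\cH$ does not constrain $\cD$ at all, so that step is slightly looser there. Your explicit upper bound $\cD[\rho_s]\le\tfrac12(\cD[\rho]+\cD[\tilde\rho])$, via the sup-of-averages inequality, closes the sandwich directly and shows that $[-M,M]$ still realizes the supremum, uniformly for all $\alpha\ge 0$ and without invoking minimality. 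You also make the rotation step explicit and supply the short argument for $M<1/2$, both of which the paper leaves tacit. These are refinements, not a different method, but the sup-of-averages bound is worth stating since it is exactly what is needed to nail down the witnessing interval.
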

\begin{proof}
	Lemma \ref{lem:D-H-property}(ii) shows that if $\rho_n \wc \rho$ and $\rho_n\in \cM_{\cD\ge d}$, then $\rho\in \cM_{\cD\ge d}$. If $\{\rho_n\}$ is a minimizing sequence of $\cG = \cH/\cD^{\alpha}$ in the class $ \cM_{\cD\ge d}$, then we can take a weakly convergent subsequence, still denoted as $\{\rho_n\}$, and $\rho_n \wc \rho \in \cM_{\cD\ge d}$. Combined with Lemma \ref{lem:D-H-property}(iv), this shows the existence of a minimizer $\rho \in \cM_{\cD\ge d}$. 
%
	
	 We further show that $\rho$ can be made even. Without loss of generality, we assume that $I=[-M,M]$ is an interval witnessing $\cD[\rho]$. We define $\bar{\rho}(x) = (\rho(x)+\rho(-x))/2$. Then $\cD[\bar{\rho}] \ge \cD[\rho]$ since 
     \begin{equation}
     	\int_I (\bar{\rho}-1)\rd{x} = \int_I (\rho-1)\rd{x},
     \end{equation}
	and $\cH[\bar{\rho}]\le \cH[\rho]$ since
	\begin{equation}
		\ess \inf W*\bar{\rho} \ge \frac{1}{2} \Big( \ess \inf(W*\rho)+\ess \inf(W*\rho(-\cdot)) \Big)= \ess\inf(W*\rho).
	\end{equation}
Therefore $\cD[\bar{\rho}] = \cD[\rho]$ and $\cH[\bar{\rho}] = \cH[\rho]$ and $\bar{\rho}$ is the desired minimizer.
\end{proof}
Now we focus on proving Lemma \ref{lem:D-H-property}.
\begin{proof}[Proof of Lemma \ref{lem:D-H-property}]
	\textbf{Proof of (i)}:\\
	If $\cD[\rho]=0$, then $\rho=1$ which implies that $\cD[\rho]$ is achieved at any closed interval $I$. So we assume $D[\rho]>0$. Let $\{I_n\}$ be a maximizing sequence of $\cD[\rho]$ where each $I_n$ is a closed interval on $\mathbb{T}$. We may represent $I_n$ by $[a_n,b_n]$ for some $a_n\in [0,1),b_n\in [a_n,a_n+1)$. By compactness of $\bT$, one can form a subsequence, still denoted as $\{I_n=[a_n,b_n]\}$, such that $\lim_{n\rightarrow\infty}a_n=a$ and $\lim_{n\rightarrow\infty}b_n=b$. By dominated convergence theorem
	\begin{equation}
		\cD[\rho]=\lim_{n\to \infty}	\int_{I_{n}}(\rho-1)\rd{x}\le \lim_{n\to \infty}	\int_{I_{n}\cup \{a, b\}}(\rho-1)\rd{x} = \int_{I} (\rho-1) \rd{x},
	\end{equation}
	where $I = [a,b]$ is a closed interval witnessing $\cD[\rho]$. 

\noindent\textbf{Proof of (ii)}:\\
	Let $I=[a,b]$ be a closed interval witnessing $\cD[\rho]$ and $\rho_n \wc \rho \in \cM$. We first prove $\liminf_{n\rightarrow\infty} \cD[\rho_n] \ge \cD[\rho]$. For $\epsilon>0$ with $b-a<1-2\epsilon$ we define the continuous function
	\begin{equation}\label{psiab}
		\phi_{\epsilon}(x) = \left\{\begin{split}
			& 1,\quad a\le x \le b \\
			& 1-\frac{a-x}{\epsilon},\quad a-\epsilon\le x \le a\\
			& 1-\frac{x-b}{\epsilon},\quad b\le x \le b+\epsilon\\
			& 0\quad \text{otherwise.}
		\end{split}\right.
	\end{equation}
Then for any $n$ and $\epsilon>0$, by definition of $\cD$ and $\phi_{\epsilon}$ we have
\begin{equation}
\cD[\rho_n]\ge \int_{[a-\epsilon,b+\epsilon]} (\rho_n-1)\rd{x} \ge \int_{\bT} (\rho_n-1)\phi_{\epsilon} \rd{x} -2\epsilon.
\end{equation}
Therefore for any $\epsilon>0$, by weak convergence of the sequence
\begin{equation}
\lim \inf_n	\cD[\rho_n]\ge \int_{\bT} (\rho-1)\phi_{\epsilon} \rd{x} -2\epsilon,
\end{equation}
which implies $\lim\inf_n \cD[\rho_n] \ge \cD[\rho]$ as $\epsilon \to 0$. 
	
	Next we prove $\limsup_{n\rightarrow\infty} \cD[\rho_n] \le \cD[\rho]$. Let $I_n=[a_n,b_n]$ be the intervals witnessing $\cD[\rho_n]$. We may take a subsequence of $\{\rho_n\}$, still denoted as $\{\rho_n\}$, such that $\lim_{n\rightarrow\infty} \cD[\rho_n]=\limsup_{n\rightarrow\infty} \cD[\rho_n]$, and $\lim_{n\rightarrow\infty}a_n=a_0$, $\lim_{n\rightarrow\infty}b_n=b_0$ exist. For any $\epsilon>0$, let $\tilde{\phi}_{\epsilon}(x)$ be given as in \eqref{psiab} with $a,b$ replaced by $a_0-\epsilon,b_0+\epsilon$ respectively, then for sufficiently large $n$, we have
	\begin{equation}\label{abD}
		|a_n-a_0|<\epsilon,\quad |b_n-b_0|<\epsilon,\quad \cD[\rho_n] =\int_{[a_n,b_n]}(\rho_n-1)\rd{x} > \lim_{n\rightarrow\infty} \cD[\rho_n]-\epsilon.
	\end{equation}
and 
\begin{equation}
	\int_{\mathbb{T}}\tilde{\phi}_{\epsilon}(\rho-1)\rd{x} \ge \int_{\mathbb{T}}\tilde{\phi}_{\epsilon}(\rho_n-1)\rd{x}-\epsilon.
\end{equation}
since $\rho_n\wc \rho$. Therefore for any $\epsilon>0$ and $I_{\epsilon} = [a_0-\epsilon, b_0+\epsilon]$, we have
	\begin{equation}
		\cD[\rho] \ge \int_{I_{\epsilon}} (\rho-1)\rd{x} \ge \int_{\bT} (\rho-1)\tilde{\phi}_{\epsilon}\rd{x} -2\epsilon  \ge \int_{\mathbb{T}}\tilde{\phi}_{\epsilon}(\rho_n-1)\rd{x}-3\epsilon\ge \int_{[a_n,b_n]}(\rho_n(x)-1)\rd{x}-7\epsilon 
	\end{equation}
	when $n$ is sufficiently large. Since $\epsilon$ is arbitrary, we obtain $\limsup_{n\rightarrow\infty} \cD[\rho_n] \le \cD[\rho]$.
	
\noindent\textbf{Proof of (iii)}:\\
	Let $\rho_n \wc \rho \in \cM$. It suffices to prove
	\begin{equation}\label{claim1}
		(W*\rho)(x) \ge \limsup_{n\rightarrow\infty} \Big( \ess\inf (W*\rho_n) \Big),\quad a.e.\, x
	\end{equation}
    Assume on the contrary that \eqref{claim1} is false. Since $W*\rho \in L^1$, almost every $x\in \bT$ is a Lebesgue point for $W*\rho$. Without loss of generality, assume $x=0$ is a Lebesgue point and there exists $\epsilon>0$ and a subsequence $\{\rho_{n_k}\}$ such that
	\begin{equation}\label{alphag}
		(W*\rho)(0) <  \ess\inf (W*\rho_{n_k}) - 2\epsilon,\quad \forall k.
	\end{equation}
	By definition of Lebesgue points,
	\begin{equation}
		(W*\rho)(0) = \lim_{a\to 0} \int (W*\rho)(x) \psi_a(x) \rd{x},
	\end{equation}
	where $\psi_a$ is as in \eqref{psialpha}. Therefore, one can choose $a$ such that
	\begin{equation}\label{alphag1}
		\int (W*\rho)(x) \psi_a(x) \rd{x} <  \ess\inf (W*\rho_{n_k}) - \epsilon,\quad \forall k.
	\end{equation}
	Notice that
	\begin{equation}
		\int (W*\rho)(x) \psi_a(x) \rd{x} = (W*\rho*\psi_a)(0)  = \int (W*\psi_a)(x)\rho(x)\rd{x}
	\end{equation}
	and $W*\psi_a$ is a continuous function. Therefore, by the weak convergence of $\{\rho_{n_k}\}$, we have
	\begin{equation}
		\int (W*\rho)(x) \psi_a(x) \rd{x} = \lim_{k\rightarrow\infty}\int (W*\rho_{n_k})(x) \psi_a(x) \rd{x}
	\end{equation}
	Notice that $\int (W*\rho_{n_k})(x) \psi_a(x) \rd{x} \ge \ess\inf (W*\rho_{n_k})$. Therefore we get a contradiction with \eqref{alphag1}.

    Finally, (iv) on lower semi-continuity of $\cG_{\alpha}$ follows directly from (ii)  and (iii).
\end{proof}

\subsection{Microscopic Diffusion}\label{ssec:micro-diff}
In this section, we prove Theorem \ref{thm:main-characterization}(ii) and (iii) in Theorem \ref{thm:minimizer-shape} and Corollary \ref{coro:sediment-minimizer}. Let $\rho$ be an even minimizer of $\cG_{\alpha}$ in $\cM_{\cD\ge d}$. The core tool is what we call \emph{microscopic diffusion}. It is a local transport plan for probability measures that can decrease $\cH$ while maintaining or increasing $\cD$, thus decreasing $\cG_{\alpha}$ overall. 

Recall the setup and notations $U$, $V[\rho]$ and $V_U[\rho]$ in Section \ref{sec:notation}.
\begin{lemma}[Microscopic Diffusion]\label{lem:micro-diffusion}
	Assume $W$ satisfies {\bf (H1)}-{\bf (H4)} and $U$ is a function on $\mathbb{T}$ bounded from below. Given $\rho \in \cM$. For a given $x_0\in \mathbb{T}$ and $0<\epsilon<\frac{1}{2}$, if $V_U[\rho](x) \ge \ess\inf V_U[\rho]+c$ (possibly $\infty$) for $x\in [x_0-\epsilon,x_0+\epsilon]$ with some $c>0$ and $\supp\rho\cap (x_0-\epsilon/2,x_0+\epsilon/2)\ne \emptyset$, then we define 
	\begin{equation}
		\rho_\lambda(x) = \rho(x) + \lambda \left(m_1\delta_{x_0-\epsilon}+m_2\delta_{x_0+\epsilon} -\rho(x) \chi_{(x_0-\epsilon,x_0+\epsilon)}\right)
	\end{equation}
where $m_1,m_2> 0$ are determined by the moment conditions 
	\begin{equation}\label{lem_micro_1}
		\int_{(x_0-\epsilon,x_0+\epsilon)}\Big(m_1\delta_{x_0-\epsilon}(y)+m_2\delta_{x_0+\epsilon}(y)-\rho(y)\Big) \cdot y^k \rd{y} = 0, \quad k =0,1.
	\end{equation}
Then for $\lambda>0$ sufficiently small, $\rho_{\lambda}$ is a probability measure with
	\begin{equation}
		-\ess\inf V_U[\rho_\lambda] <- \ess\inf V_U[\rho].
	\end{equation}
\end{lemma}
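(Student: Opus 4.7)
The plan is to exploit the decomposition $V_U[\rho_\lambda] = V_U[\rho] + \lambda\, W*\sigma$, where $\sigma := m_1\delta_{x_0-\epsilon} + m_2\delta_{x_0+\epsilon} - \rho\chi_{(x_0-\epsilon,x_0+\epsilon)}$, and then to estimate $W*\sigma$ separately on $[x_0-\epsilon,x_0+\epsilon]^c$, where the strict convexity hypothesis {\bf (H4)} yields a uniform positive lower bound, and on $[x_0-\epsilon,x_0+\epsilon]$, where the buffer $V_U[\rho] \ge \ess\inf V_U[\rho] + c$ absorbs any negativity of order $\lambda$. First I would analyze the moment conditions \eqref{lem_micro_1}: the $k=0$ condition gives $m_1+m_2 = \rho((x_0-\epsilon,x_0+\epsilon)) > 0$, and the $k=1$ condition fixes the barycenter of $m_1\delta_{x_0-\epsilon}+m_2\delta_{x_0+\epsilon}$ at the barycenter of $\rho\chi_{(x_0-\epsilon,x_0+\epsilon)}$. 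Since $\supp\rho$ meets the strictly interior sub-interval $(x_0-\epsilon/2,x_0+\epsilon/2)$, this common barycenter lies strictly inside the open interval, forcing $m_1,m_2>0$. A direct expansion shows $\rho_\lambda = (1-\lambda)\rho + \lambda\tilde\rho$, where $\tilde\rho := \rho\chi_{\mathbb{T}\setminus(x_0-\epsilon,x_0+\epsilon)} + m_1\delta_{x_0-\epsilon} + m_2\delta_{x_0+\epsilon}$ is itself a probability measure, so $\rho_\lambda \in \cM$ for $\lambda\in[0,1]$.

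On the outside region, I would compare $y\mapsto W(x-y)$ on $[x_0-\epsilon,x_0+\epsilon]$ with its secant line $\ell_x$ through the endpoints. Because $x\not\in[x_0-\epsilon,x_0+\epsilon]$, this map is $C^2$ on the interval with second derivative $W''(x-y)\ge C_1$ by {\bf (H4)}, from which the standard secant-line estimate gives $W(x-y) \le \ell_x(y) - \tfrac{C_1}{2}(y-(x_0-\epsilon))((x_0+\epsilon)-y)$. The moment matching ensures $\int\ell_x\,\rd\nu = \int\ell_x\,\rd\mu$ for $\nu := m_1\delta_{x_0-\epsilon}+m_2\delta_{x_0+\epsilon}$ and $\mu := \rho\chi_{(x_0-\epsilon,x_0+\epsilon)}$, so
$$W*\sigma(x) \;\ge\; \frac{C_1}{2}\int_{x_0-\epsilon}^{x_0+\epsilon}(y-(x_0-\epsilon))((x_0+\epsilon)-y)\,\rho(y)\,\rd{y} \;=:\; \delta,$$
a lower bound independent of $x$. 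Since the integrand is at least $3\epsilon^2/4$ on $(x_0-\epsilon/2,x_0+\epsilon/2)$ and $\rho$ charges that sub-interval, $\delta>0$.

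On the inside region I would deliberately avoid any pointwise estimate of $W*\sigma$, using instead the convex-combination rewriting $V_U[\rho_\lambda] = (1-\lambda)V_U[\rho] + \lambda V_U[\tilde\rho]$. Hypotheses {\bf (H1)} and {\bf (H3)} imply that $W$ attains a finite minimum $W_{\min}$ on $\mathbb{T}$, which combined with $U$ being bounded below gives a finite lower bound $L := \inf U + W_{\min}$ for $V_U[\tilde\rho]$ on all of $\mathbb{T}$. Combined with the hypothesis $V_U[\rho]\ge \ess\inf V_U[\rho]+c$ on $[x_0-\epsilon,x_0+\epsilon]$, this yields $V_U[\rho_\lambda] \ge (1-\lambda)(\ess\inf V_U[\rho]+c) + \lambda L$ there, which strictly exceeds $\ess\inf V_U[\rho]$ as soon as $\lambda(\ess\inf V_U[\rho]+c-L)<c$. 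Together with the outside bound $V_U[\rho_\lambda] \ge V_U[\rho] + \lambda\delta$, this gives $\ess\inf V_U[\rho_\lambda] \ge \ess\inf V_U[\rho] + \min(\lambda\delta,\,c/2) > \ess\inf V_U[\rho]$ for all sufficiently small $\lambda>0$.

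The main obstacle I expect is the possibly singular behavior of $W*\sigma$ inside $[x_0-\epsilon,x_0+\epsilon]$: if $\rho$ has atoms there, the quantity $\int W(x-y)\rho(y)\,\rd{y}$ diverges and a direct pointwise bound on $W*\sigma$ is unavailable. The convex-combination identity $\rho_\lambda = (1-\lambda)\rho + \lambda\tilde\rho$ bypasses this issue entirely, converting the convolution estimate into a convexity-in-$\lambda$ inequality that interacts cleanly with the buffer $c$; this is the one step I expect to be essential, after which the outside estimate is a routine application of {\bf (H4)} and Jensen's inequality.
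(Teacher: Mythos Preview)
Your proposal is correct and matches the paper's proof essentially step for step: the outside estimate via strict convexity {\bf (H4)} (you phrase it as a secant-line/Jensen bound, the paper integrates $W''$ directly, but these are the same computation), and the inside estimate via the convex-combination identity $\rho_\lambda = (1-\lambda)\rho + \lambda\tilde\rho$ together with the global lower bound $V_U[\tilde\rho]\ge \inf W + \inf U$. The paper's proof is exactly this argument.
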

See Figure \ref{fig:Micro-Diff} for illustration. 
\begin{proof}
	We start by showing the sign of a finite difference using the convexity of $W$. For any $x\notin [-\epsilon,\epsilon]$ and $u\in (-\epsilon,\epsilon)$, if $a_1(u),a_2(u)$ are determined by the moment conditions
\begin{equation}\label{eqa1a2}
	\int_{(-\epsilon,\epsilon)}\Big((a_1\delta_{-\epsilon}(y)+a_2\delta_{\epsilon}(y))-\delta_u(y)\Big)\cdot  y^k \rd{y} = 0, \quad k = 0,1
\end{equation}
that is,
\begin{equation}
	a_1(u)=\frac{1}{2}(1-\frac{u}{\epsilon}),\quad a_2(u)=\frac{1}{2}(1+\frac{u}{\epsilon}),
\end{equation}
then the difference of potential from splitting the Dirac mass at $u$ is
\begin{equation}\label{eqn:a1a2}\begin{split}
		& \big(W*(a_1\delta_{-\epsilon}+a_2\delta_{\epsilon} - \delta_u)\big)(x) \\=&  a_1W(x+\epsilon)+a_2W(x-\epsilon)-W(x-u) \\
		= & a_1\int_{x-u}^{x+\epsilon}\int_{x-u}^y W''(z)\rd{z}\rd{y} + a_2\int_{x-\epsilon}^{x-u} \int_y^{x-u} W''(z)\rd{z}\rd{y} \\
		\ge &C_1 \left(a_1\int_{x-u}^{x+\epsilon}\int_{x-u}^y \rd{z}\rd{y} + a_2\int_{x-\epsilon}^{x-u} \int_y^{x-u} \rd{z}\rd{y}\right)>0,\\
\end{split}\end{equation}
where $c(\epsilon,u):=a_1\int_{x-u}^{x+\epsilon}\int_{x-u}^y \rd{z}\rd{y} + a_2\int_{x-\epsilon}^{x-u} \int_y^{x-u} \rd{z}\rd{y}>0$ and $C_1>0$ is as given in {\bf (H4)}. 

Integrating \eqref{eqn:a1a2} in $u\in (-\epsilon,\epsilon)$ with weight $\rho(x_0+u)$, we obtain \eqref{lem_micro_1} with 
\begin{equation}\label{eqn:m1m2}
	m_1 = \int_{(-\epsilon,\epsilon)}\frac{1}{2}(1-\frac{u}{\epsilon})\rho(x_0+u)\rd{u},\quad m_2 = \int_{(-\epsilon,\epsilon)}\frac{1}{2}(1+\frac{u}{\epsilon})\rho(x_0+u)\rd{u},
\end{equation}
which are both positive since $\supp\rho\cap (x_0-\epsilon/2,x_0+\epsilon/2)\ne \emptyset$. Here $m_1$ and $m_2$ are uniquely determined because the coefficient matrix in \eqref{lem_micro_1} is invertible. Now for any $x\notin [x_0-\epsilon,x_0+\epsilon]$
\begin{equation}\label{eqn:micro-diffusion-away}\begin{split}
\left(W*\left(m_1\delta_{x_0-\epsilon}+m_2\delta_{x_0+\epsilon}-\rho(x) \chi_{(x_0-\epsilon,x_0+\epsilon)}\right)\right)(x) 
		\ge  C_1\int_{(-\epsilon,\epsilon)} c(\epsilon,u)\rho(x_0+u)\rd{u} > 0.
\end{split}\end{equation}

For $x\in [x_0-\epsilon,x_0+\epsilon]$, notice that $\rho_\lambda = (1-\lambda)\rho + \lambda \rho_1$ we have
\begin{equation}
	V_U[\rho_\lambda](x) = (1-\lambda)V_U[\rho](x) + \lambda V_U[\rho_1](x) \ge (1-\lambda)(\ess\inf V_U[\rho] + c) + \lambda (\inf W + \inf U),
\end{equation}
which is strictly larger than $\ess\inf V_U[\rho]$ by taking $\lambda>0$ small enough. Therefore we obtain the conclusion.
\end{proof}

\begin{figure}
	\includegraphics[width=7cm, height = 5cm]{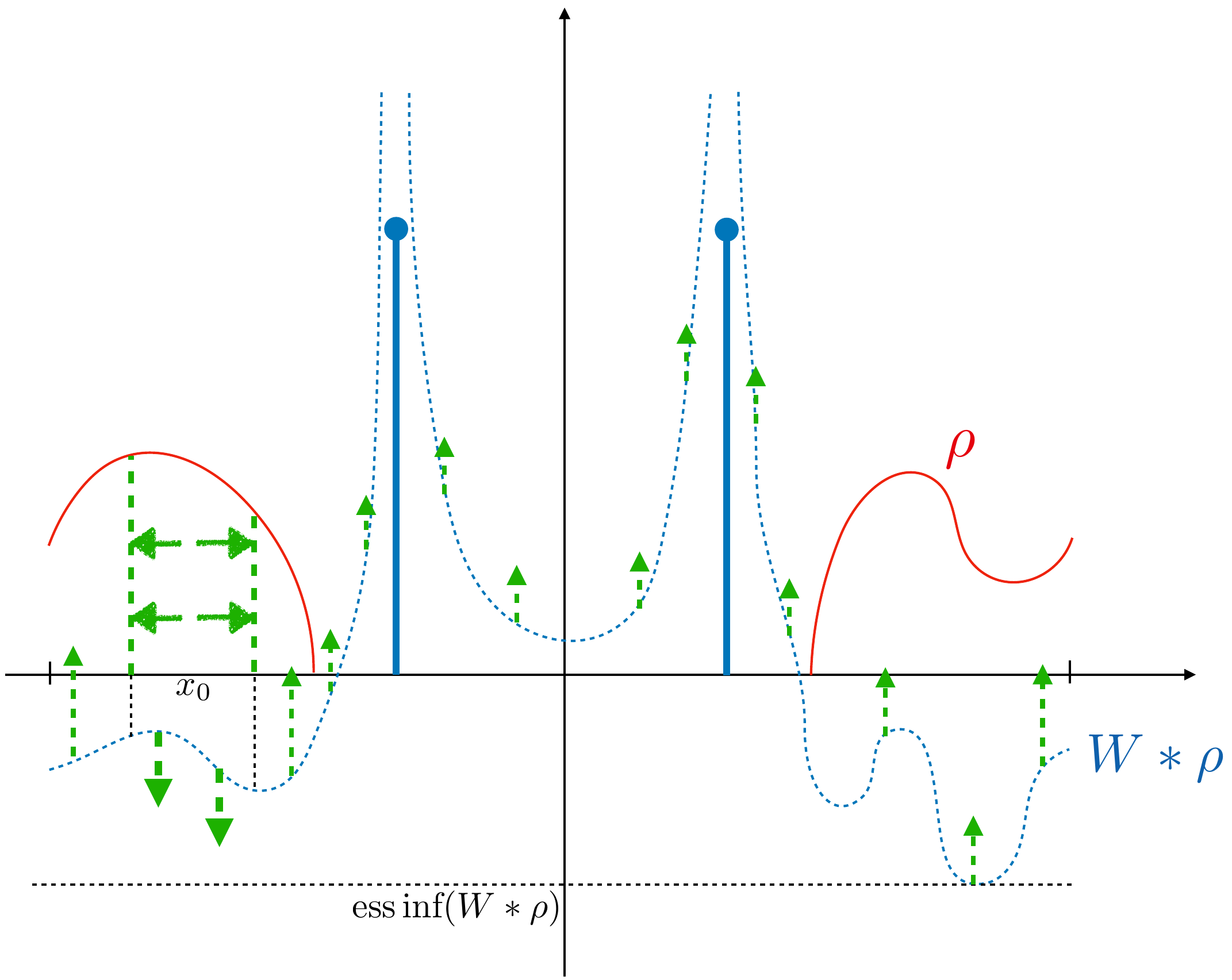}
	\caption{ Microscopic Diffusion: horizontal arrows indicate the diffusion of mass near $x_0$, vertical arrows indicate the change of the generated potential $W*\rho$, which is increasing away from $x_0$ and decreasing near $x_0$. }
	\label{fig:Micro-Diff}
\end{figure}
Microscopic diffusion operation basically implies that if $\rho$ is a minimizer of $\cG_{\alpha}$ with $I = [-M, M]$ witnessing $\cD[\rho]$, then $V[\rho](x)\le \ess\inf V[\rho]$ as long as $x\in \supp \rho \backslash \{ \pm M\}$. Suppose not, by Proposition \ref{prop:semi}, we can always find a small neighborhood $(x_0-\epsilon, x_0+\epsilon)$ for some $x_0 \in \supp \rho \backslash  \{\pm M \}$ to apply microscopic diffusion and strictly decrease $\cH$. Since the diffusion operation is local and away from $\pm M$, it won't decrease $\cD$, we then get a contradiction with $\rho$ being a minimizer. This proves Theorem \ref{thm:main-characterization}(iii).

\begin{corollary}\label{coro:sediment-minimizer}
	Assume $W$ satisfies {\bf (H1)}-{\bf (H4)}. Let $\alpha\ge 0$, $0<d\le 1$, $0\le M <1/2$. Let $\rho$ be an even minimizer of $\cG_{\alpha}$ in $\cM_{\cD\ge d}$ with $I = [-M, M]$ witnessing $\cD[\rho]$. Then $V[\rho](x) \le \ess\inf V[\rho]$ for $x \in \supp \rho \backslash\{ \pm M \}$.
\end{corollary}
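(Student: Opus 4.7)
The plan is to argue by contradiction, invoking the microscopic diffusion construction of Lemma~\ref{lem:micro-diffusion} with the external potential $U \equiv 0$. Suppose, toward a contradiction, that some $x_0 \in \supp\rho \setminus \{\pm M\}$ satisfies $V[\rho](x_0) > \ess\inf V[\rho]$, and set $c := V[\rho](x_0) - \ess\inf V[\rho] > 0$ (possibly $+\infty$). My first step is to extract a closed neighborhood of $x_0$ on which $V[\rho]$ stays above $\ess\inf V[\rho]$ by a definite amount: using the lower semicontinuity of $V[\rho] = W*\rho$ (Proposition~\ref{prop:semi}), I choose $\epsilon \in (0, 1/2)$ small enough that $V[\rho](x) \ge \ess\inf V[\rho] + c/2$ on $[x_0 - \epsilon, x_0 + \epsilon]$. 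By shrinking $\epsilon$ further I also arrange that $[x_0 - \epsilon, x_0 + \epsilon]$ is disjoint from $\{\pm M\}$ and lies entirely inside either $I = [-M, M]$ or $I^c$; this is possible precisely because $x_0 \notin \{\pm M\}$. Since $x_0 \in \supp \rho$, the smaller open interval $(x_0 - \epsilon/2, x_0 + \epsilon/2)$ necessarily meets $\supp \rho$, so every hypothesis of Lemma~\ref{lem:micro-diffusion} is in force.

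The second step is to apply Lemma~\ref{lem:micro-diffusion} to produce, for every sufficiently small $\lambda > 0$, a probability measure $\rho_\lambda$ with $\cH[\rho_\lambda] < \cH[\rho]$. The modification replaces $\rho\chi_{(x_0 - \epsilon, x_0 + \epsilon)}$ by the two Dirac masses $m_1 \delta_{x_0 - \epsilon} + m_2 \delta_{x_0 + \epsilon}$, and the $k=0$ moment condition in \eqref{lem_micro_1} forces total mass to be preserved inside $[x_0 - \epsilon, x_0 + \epsilon]$. Because that interval sits wholly on one side of $\{\pm M\}$, the mass that $\rho_\lambda$ assigns to $I$ coincides with the mass that $\rho$ assigns to $I$, so
\begin{equation*}
\cD[\rho_\lambda] \ge \int_I (\rho_\lambda - 1)\rd x = \int_I (\rho - 1)\rd x = \cD[\rho] \ge d.
\end{equation*}

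Finally, since $\cD[\rho] \ge d > 0$ forces $\rho \neq 1_\bT$, and $1_\bT$ is the unique probability measure with $\cH \le 0$, we have $\cH[\rho] > 0$. Combining the strict drop $\cH[\rho_\lambda] < \cH[\rho]$ with the bound $\cD[\rho_\lambda] \ge \cD[\rho]$ then gives
\begin{equation*}
\cG_\alpha[\rho_\lambda] = \frac{\cH[\rho_\lambda]}{\cD[\rho_\lambda]^\alpha} < \frac{\cH[\rho]}{\cD[\rho]^\alpha} = \cG_\alpha[\rho]
\end{equation*}
for every $\alpha \ge 0$, contradicting the minimality of $\rho$ in $\cM_{\cD \ge d}$. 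The main delicate point is the simultaneous choice of $\epsilon$: it must be small enough for lower semicontinuity to deliver the uniform gap above $\ess\inf V[\rho]$, small enough to avoid $\pm M$, and small enough to keep $[x_0 - \epsilon, x_0 + \epsilon]$ on a single side of $I$. All three reduce to routine smallness once $x_0 \notin \{\pm M\}$ is used, so the actual substance of the proof is already carried by Lemma~\ref{lem:micro-diffusion}.
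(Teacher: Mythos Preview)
Your argument is correct and follows precisely the approach sketched in the paper immediately before the corollary: assume a point $x_0\in\supp\rho\setminus\{\pm M\}$ with $V[\rho](x_0)>\ess\inf V[\rho]$, use lower semicontinuity (Proposition~\ref{prop:semi}) to obtain a neighborhood with a uniform gap, apply Lemma~\ref{lem:micro-diffusion} with $U=0$ to strictly decrease $\cH$, and observe that the local mass redistribution away from $\pm M$ leaves $\int_I(\rho-1)$ unchanged so $\cD$ does not drop. You have simply fleshed out the details the paper leaves implicit (the simultaneous smallness of $\epsilon$, the moment condition preserving mass on $I$, and the positivity of $\cH[\rho]$).
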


This in turn imposes a really strong restriction on the shape of $\supp \rho$ for a minimizer $\rho$. Suppose $(x_1, x_2)\subset (\supp \rho)^c$ and $x_1, x_2\in \supp \rho$ and $x_i$ are not $\pm M$. Then by Corollary \ref{coro:sediment-minimizer} $V[\rho](x_1) = V[\rho](x_2) \le \ess \inf V[\rho]$. The generated potential $V[\rho]$ is continuous and convex at $x \notin \supp \rho$, $V[\rho]$ is right continuous at $x_1$ and left continous at $x_2$ by Proposition \ref{prop:semi}. Therefore by convexity $V[\rho](x)< \ess \inf V[\rho]$ for $x \in (x_1, x_2)$, which is a contradiction. Therefore there exists no $(x_1, x_2) \subset (\supp \rho)^c$ with $x_i \in \supp(\rho) \backslash \{ \pm M \}$. This helps us to further characterize the shape of the minimizer in the following theorem.  
\begin{theorem}\label{thm:minimizer-shape}
	Assume $W$ satisfies {\bf (H1)}-{\bf (H4)}. Let $\alpha\ge 0$, $0<d\le 1$, $0\le M <1/2$.  Let $\rho$ be an even minimizer of $\cG_{\alpha}$ in $\cM_{\cD\ge d}$ with $I = [-M, M]$ witnessing $\cD[\rho]$. Then 
	\begin{equation}
		\rho=m (\delta_M+\delta_{-M}) + \rho_1 
	\end{equation}
	for some $0<m\le 1/2$, and $\rho_1 \in \cM_{1-2m}$ is supported and non-zero on two (possibly empty) closed intervals $J\subsetneq I^c$ and $K\subsetneq I$.
\end{theorem}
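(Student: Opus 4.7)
The strategy is to combine the sediment condition of Corollary \ref{coro:sediment-minimizer} with the strict convexity $W''\ge C_1>0$ from (H4) and the singularity $W(0)=+\infty$ from (H3) to pin down the support of $\rho$.

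I would first eliminate interior Dirac masses: any atom of $\rho$ at a point $x_0\in\supp\rho\setminus\{\pm M\}$ would give $V[\rho](x_0)=+\infty$ by (H3), contradicting the sediment bound $V[\rho](x_0)\le\ess\inf V[\rho]<+\infty$. Hence every atom of $\rho$ sits at $\pm M$, and by evenness $\rho=m(\delta_M+\delta_{-M})+\rho_1$ for some $m\ge 0$ and some atomless even $\rho_1$. Next, I would rule out any gap $(a,b)\subset\bT\setminus\supp\rho$ whose endpoints lie in $\supp\rho\setminus\{\pm M\}$: on such a gap $\rho$ vanishes, so $(V[\rho])''(x)=\int W''(x-y)\rd\rho(y)\ge C_1>0$ by (H4), making $V[\rho]$ strictly convex on $(a,b)$ and continuous up to $\{a,b\}$ (by atomlessness there). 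Sediment bounds $V[\rho](a),V[\rho](b)\le\ess\inf V[\rho]$, and strict convexity then forces $V[\rho]<\ess\inf V[\rho]$ on the interior of $(a,b)$, contradicting the continuous lower bound $V[\rho]\ge\ess\inf V[\rho]$ there. Consequently $\supp\rho_1\cap(-M,M)$ and $\supp\rho_1\cap(M,1-M)$ are each either empty or a single relatively closed interval, and evenness forces them to be symmetric, yielding closed intervals $K\subseteq I$ and $J\subseteq\overline{I^c}$.

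For the strict containment $K\subsetneq I$, I would argue by contradiction that $K=I$. If $m>0$, the Dirac part contributes $mW(x-M)\to+\infty$ as $x\to M^-$, so $V[\rho](x)\to+\infty$; but sediment on $(-M,M)\subset\supp\rho\setminus\{\pm M\}$ bounds $V[\rho]\le\ess\inf V[\rho]<+\infty$ there, a contradiction. If $m=0$, then $V[\rho]$ is continuous at $\pm M$ by atomlessness, so sediment extends by continuity to $V[\rho](\pm M)\le\ess\inf V[\rho]$; if $J\ne\overline{I^c}$, a component of $\overline{I^c}\setminus J$ furnishes the same strict-convexity contradiction as above, while if $J=\overline{I^c}$ then $\rho$ is atomless with $\supp\rho=\bT$ and $V[\rho]=\ess\inf V[\rho]$ everywhere, which contradicts strict convexity of $V[\rho]$ along any proper sub-arc of $\bT$ (apply Jensen's inequality with $y_0\in\supp\rho$ outside the arc, using that $W$ is strictly convex on the corresponding translated arc since (H4) applies away from $0$). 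An analogous argument gives $J\subsetneq I^c$.

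Finally, for $m>0$, suppose for contradiction $m=0$; by the previous step $K=[-b,b]$ with $b<M$, so $\rho((M-\delta,M])=0$ for $0<\delta<M-b$, and hence
\begin{equation*}
\int_{[-M,M-\delta]}(\rho-1)\rd x=\cD[\rho]-\rho((M-\delta,M])+\delta=\cD[\rho]+\delta>\cD[\rho],
\end{equation*}
contradicting the definition of $\cD[\rho]$ as a supremum. Therefore $m>0$, and $m\le 1/2$ is immediate from $2m\le\int_\bT\rho\rd x=1$. I expect the main obstacle to be the strict-containment step: coordinating the $m>0$ and $m=0$ subcases, and in particular handling the degenerate possibility $\supp\rho=\bT$, which requires upgrading the pointwise convexity argument used on a gap to a Jensen-type global statement on sub-arcs of $\bT$.
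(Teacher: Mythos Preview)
Your overall plan closely parallels the paper's proof: the no-gap argument via strict convexity of $V[\rho]$ on intervals disjoint from $\supp\rho$, and the final step deducing $m>0$ from $K\subsetneq I$ together with $I$ witnessing $\cD[\rho]$, both match the paper. The point of departure --- and the genuine gap --- is your handling of the degenerate case $\supp\rho=\bT$ (equivalently $m=0$, $K=I$, $J=\overline{I^c}$).

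Your proposed Jensen-type argument does not work here. Strict convexity of $V[\rho]$ on a sub-arc is available precisely because $\rho$ vanishes on that arc, so that $(V[\rho])''(x)=\int W''(x-y)\,\rd\rho(y)\ge C_1$ via \textbf{(H4)}. When $\rho$ is supported everywhere, $V[\rho]$ need not be convex anywhere: the uniform measure $\rho=1$ gives $V[\rho]\equiv0$, and a perturbation such as $\rho=1+\epsilon\cos(2\pi x)$ gives $V[\rho](x)=\epsilon\,\hat W(1)\cos(2\pi x)$, which is concave near $x=0$. Choosing a single $y_0$ outside the arc only shows that the contribution $W(\cdot-y_0)$ is strictly convex on the arc; the contributions from $y$ inside the arc (where $x-y$ may cross $0$) can and do destroy this, so no Jensen inequality of the kind you describe is available.

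The paper closes this case by a different mechanism. Once $V[\rho]\le\ess\inf V[\rho]$ on all of $\supp\rho$ and (by convexity) on each complementary gap, one gets $W*\rho\equiv0$. Then Lemma~\ref{lem:W-hat-positive} (which uses only \textbf{(H1)}, \textbf{(H2)}, \textbf{(H4)}) gives $\hat W(k)>0$ for $k\ne 0$, so $\hat\rho(k)=0$ for all $k\ne0$, forcing $\rho=1$ and $\cD[\rho]=0<d$, a contradiction. You should replace your Jensen hint with this Fourier argument; note that it handles $K=I$ and $J=\overline{I^c}$ simultaneously (the paper only needs to assume $\{\pm M\}\subset J\cup K$), which also streamlines your case split.
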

Before we give the proof of Theorem \ref{thm:minimizer-shape}, we first give a lemma on $W$.
\begin{lemma}\label{lem:W-hat-positive}
	Assume $W$ satisfies {\bf (H1)}, {\bf (H2)}, {\bf (H4)}. Then $\hat{W}(k)>0$ for any $k\in\mathbb{Z},\,k\ne 0$.
\end{lemma}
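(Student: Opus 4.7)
The plan is to express $\hat{W}(k)$ as an explicit non-negative integral against $W''$ via a Taylor expansion of $W$ from the antipodal point $x=1/2$, and then read off the sign from {\bf (H4)}.

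First, a preliminary fact: $W$ is $C^2$ at $x=1/2=-1/2\in\mathbb{T}\setminus\{0\}$ by {\bf (H1)}, so $W'$ is continuous there, and combined with the oddness of $W'$ (from the evenness in {\bf (H2)}) we conclude $W'(1/2)=-W'(-1/2)=-W'(1/2)$, i.e.\ $W'(1/2)=0$. Integrating $W''$ twice from $1/2$ back to $x$ thus yields, for every $x\in(0,1/2]$, the pointwise Taylor formula with integral remainder
\[
W(x)=W(1/2)+\int_x^{1/2}W''(u)(u-x)\,\rd u=W(1/2)+\int_0^{1/2}W''(u)(u-x)_+\,\rd u,
\]
which is valid since $W\in C^2([x,1/2])$ for each $x>0$.

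Now fix $k\in\mathbb{Z}\setminus\{0\}$. By {\bf (H2)}, $\hat W(k)=2\int_0^{1/2}W(x)\cos(2\pi k x)\,\rd u$, and substituting the Taylor representation kills the constant piece because $\int_0^{1/2}\cos(2\pi k x)\,\rd x=0$. The resulting double integral has non-negative ``absolute'' part with finite total mass
\[
\int_0^{1/2}\int_0^{1/2}W''(u)(u-x)_+\,\rd u\,\rd x=\int_0^{1/2}W''(u)\frac{u^2}{2}\,\rd u=\int_0^{1/2}\bigl(W(x)-W(1/2)\bigr)\,\rd x<\infty,
\]
using $W\in L^1(\mathbb T)$ and $W(1/2)<\infty$. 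Fubini therefore applies, and the inner cosine integral evaluates to $\int_0^u(u-x)\cos(2\pi k x)\,\rd x=\frac{1-\cos(2\pi k u)}{(2\pi k)^2}$, giving
\[
\hat W(k)=\frac{1}{2\pi^2 k^2}\int_0^{1/2}W''(u)\bigl(1-\cos(2\pi k u)\bigr)\,\rd u.
\]
Both factors are non-negative and {\bf (H4)} gives $W''\ge C_1$, so
\[
\hat W(k)\ \ge\ \frac{C_1}{2\pi^2 k^2}\int_0^{1/2}\bigl(1-\cos(2\pi k u)\bigr)\,\rd u\ =\ \frac{C_1}{(2\pi k)^2}\ >\ 0.
\]

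The only genuinely delicate step is justifying Fubini across the possible singularity of $W$ at the origin; this amounts to the weighted integrability $\int_0^{1/2}W''(u)u^2\,\rd u<\infty$, which drops out of the Taylor identity together with $W\in L^1(\mathbb T)$, so no hypothesis beyond those already assumed is needed. Everything else in the argument is elementary one-variable calculus.
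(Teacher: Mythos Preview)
Your proof is correct. Both your argument and the paper's exploit the same underlying fact—that the convexity of $W$ lets one write $W(x)-W(1/2)$ as a nonnegative superposition of tent functions $\max\{\beta-|x|,0\}$, whose Fourier coefficients are nonnegative—but the executions differ. The paper parametrizes this superposition via a layer-cake decomposition of $W'$: it introduces the inverse function $u$ of $W'|_{[-1/2,0)}$ and writes $W(x)-W(-1/2)=\int_0^A\max\{-|x|-u(h),0\}\,\rd h$, then appeals to the positivity of the Fourier coefficients of each tent. You instead use the second-order Taylor remainder directly, which amounts to the same superposition but parametrized by the apex position $u$ with weight $W''(u)\,\rd u$; computing the inner cosine integral explicitly then gives the closed formula $\hat W(k)=\frac{1}{2\pi^2k^2}\int_0^{1/2}W''(u)(1-\cos 2\pi ku)\,\rd u$. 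Your route is arguably cleaner—no inverse function needed—and has the bonus of producing the quantitative bound $\hat W(k)\ge C_1/(2\pi k)^2$, which the paper's argument does not make explicit. (Minor typo: in your displayed Fourier-coefficient line the measure should be $\rd x$, not $\rd u$.)
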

\begin{proof}
By {\bf (H2)} $W$ is even, therefore $W'(x)$ is odd for $x \neq 0$ with $W'(1/2)=W'(-1/2)=0$. By {\bf (H4)} $W''(x)>0$ for any $x\ne 0$, therefore $W'$ is a strictly increasing function which maps $[-1/2,0)$ onto $[0,A)$ with $A=\lim_{x\rightarrow0-}W'(x) \in (0,\infty]$, and $\inf_{\bT} W = W(-1/2)$. Let $u$ be the inverse function of $W'|_{[-1/2,0)}$, which is strictly increasing on $(0,A)$.
	
For $x\in [-1/2,0)\cup (0,1/2]$, we have $W(x) = W(-|x|)$ and
	\begin{equation}
			W(x) - W(-1/2) = \int_{-1/2}^{-|x|} W'(y)\rd{y}  = \int_0^A \max\{-|x|-u(h),0\}\rd{h}.
   \end{equation}
Therefore for $k\neq 0$
\begin{equation}
	\hat{W}(k) = \int_{\bT} (W(x) - W(-1/2))e^{-2\pi i kx} \rd{x} =\int_{0}^{A} \rd{h} \int_{\bT} \max\{-|x|-u(h),0\} e^{-2\pi ikx} \rd {x}. 
\end{equation}
For each fixed $h$, we get $\beta = -u(h)\in (0,1/2)$. It suffices to notice that $\max\{\beta-|x|, 0\}$ has positive Fourier coefficients at $k\neq 0$ for almost every $\beta$.
\end{proof}
Now we are ready to prove Theorem \ref{thm:minimizer-shape}.
\begin{proof}[Proof of Theorem \ref{thm:minimizer-shape}]
	Firstly, since $I$ is maximizing $\int_I (\rho-1)$, its endpoints $\pm M \in \supp \rho$. By the reasoning above Theorem \ref{thm:minimizer-shape}, there is no open interval $(x_1, x_2)$ in $(\supp \rho)^c$ with $x_i \in \supp \rho \in \backslash \{ \pm M\}$, there is at most one closed interval $J \subset \bar{I^c}$ (closure of the open set $I^c$) and at most one closed interval $K\subset I$ (empty when $M=0$), i.e., $\supp \rho = \{ \pm M \} \cup J \cup K$. 
	
	We will show that $J\neq \bar{I^c}$ and $K\neq I$. If $\{  \pm M \} \subset J\cup K$ then $V[\rho](\pm M) = \ess \inf V[\rho]$, by $1$ in Proposition \ref{prop:semi} and by Corollary \ref{coro:sediment-minimizer}. Moreover, combining $3$ in Proposition \ref{prop:semi}, we see that $V[\rho]$ is now continuous with $V[\rho](x) \le \ess \inf V[\rho]$ for $x\in \supp \rho = J\cup K$. Since $\bT\backslash (J\cup K)$ is a disjoint union of two open intervals or a single open interval, by convexity of $V[\rho]$ for all $x\in \bT$ we have $V[\rho](x)\le \ess \inf V[\rho]$. This implies that $V[\rho]=W*\rho= 0$. Then $\widehat{W*\rho} = \hat{W}\cdot  \hat{\rho} = 0$, therefore $\hat{\rho}(k)=0$ for every $k\neq 0$ by Lemma \ref{lem:W-hat-positive} and $\rho$ must be the uniform distribution. This contradicts with $d>0$. 
	
	Finally, since $J\subsetneq \bar{I^c}$ and $K\subsetneq I$ and $I$ witnesses $\cD[\rho]$, it follows that $\rho$ must have Dirac masses at $\pm M$. 
\end{proof}

\subsection{Energy Minimization}\label{ssec:energy-min}
Our main goal in this section is to prove the uniqueness in Theorem \ref{thm:main-characterization}(iv). We will do so by using the idea of energy minimization in potential theory. 

To set up the question, we will consider an external potential $U\in L^1(\mathbb{T})$ of the form
\begin{equation}\label{eqn:U}
	U =  W*(\rho_{+} - \rho_{-} + \rho_{d})
\end{equation}
where $\rho_{+}$  and $\rho_{-}$ are nonnegative continuous functions, $\rho_{d}$ is the sum of finitely many positive Dirac masses and $W$ satisfies {\bf (H1)}-{\bf (H4)}. As a consequence, $U$ is bounded from below and at every point $x \in \bT \backslash \supp \rho_d$ we have $U(x)<\infty$ and continuous. Define the total energy functional
\begin{equation}\label{eqn:E}
	\cE_U[\rho] = \frac{1}{2}\int ( W*\rho)(x)\rho(x) \rd{x} + \int U(x)\rho(x)\rd{x}
\end{equation}
for $\rho \in \cM_{m}$ with $m\ge 0$. The dependence of $\cE_U$ on $U$ will be omitted when it is clear from the context. Here the two terms in \eqref{eqn:E} physically represent the pairwise interaction energy and potential energy respectively. For any $\rho\in\cM_{m}$, both terms in $\cE_U[\rho]$ take value in $(-\infty,\infty]$ since $U$ and $W$ (implied by {\bf (H1)} and {\bf (H3)}) are bounded from below, therefore finiteness of $\cE_U[\rho]$ implies finiteness of both terms. 

\begin{proposition}\label{prop:energy-min}
	Assume $W$ satisfies {\bf (H1)}-{\bf (H5)}, $U$ has the form \eqref{eqn:U} and $m\ge 0$. Then there exists a unique minimizer $\rho$ of $\cE$ in $\cM_{m}$. It is the only element in $\cM_{m}$ satisfying
	\begin{equation}\label{eqn:Emin-sediment}
		V_U[\rho](x) \le  \ess\inf V_U[\rho ],\,x\in \supp\rho.
	\end{equation}
	Furthermore, $\rho$ is also the unique maximizer of $\ess\inf V_U[\rho]$ for $\rho\in \cM_{m}$.
\end{proposition}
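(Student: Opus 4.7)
The plan is to combine the direct method of the calculus of variations with a strict-convexity argument based on the positivity of $\hat W(k)$ for $k \ne 0$ provided by Lemma \ref{lem:W-hat-positive}. For existence, I would take a minimizing sequence $\{\rho_n\} \subset \cM_m$ for $\cE_U$, pass to a weakly convergent subsequence $\rho_n \wc \rho^*$ by weak-$*$ compactness on the compact torus, and verify that $\cE_U$ is weakly lower semicontinuous. The quadratic part $\int (W*\rho)\rho\,\rd{x} = \sum_k \hat W(k)|\hat \rho(k)|^2$ is lsc by Fatou applied termwise (each term nonnegative by Lemma \ref{lem:W-hat-positive}), while the linear part $\int U\,\rd\rho$ is lsc because $U$ is itself lsc and bounded below, being the sum of the continuous convolutions $W*\rho_\pm$ with the lsc singular piece $W*\rho_d$. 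For uniqueness, the standard convexity computation
\begin{equation*}
	\tfrac12\bigl(\cE_U[\rho_1]+\cE_U[\rho_2]\bigr) - \cE_U\!\left[\tfrac12(\rho_1+\rho_2)\right] = \tfrac18 \int W*(\rho_1-\rho_2)\,\rd(\rho_1-\rho_2) = \tfrac18 \sum_{k\ne 0}\hat W(k)\bigl|\widehat{\rho_1-\rho_2}(k)\bigr|^2
\end{equation*}
is strictly positive unless $\rho_1 = \rho_2$, contradicting minimality of the midpoint.

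To establish the sediment characterization, I would take the linear variation $\rho_t = (1-t)\rho^* + t\nu$ for arbitrary $\nu \in \cM_m$ and compute the first-order optimality condition at $t = 0^+$, namely $\int V_U[\rho^*]\,\rd(\nu-\rho^*) \ge 0$. Testing with $\nu = m\delta_y$ at points $y$ where $V_U[\rho^*]$ is finite yields $V_U[\rho^*](y) \ge c^* := \tfrac{1}{m}\int V_U[\rho^*]\,\rd\rho^*$, and lower semicontinuity of $V_U[\rho^*]$ (so that $\inf = \ess\inf$) propagates this bound everywhere, identifying $c^* = \ess\inf V_U[\rho^*]$. Reinserting into $\int V_U[\rho^*]\,\rd\rho^* = c^*m$ forces $V_U[\rho^*] = c^*$ for $\rho^*$-almost every $x$, and lower semicontinuity at any point $x_0 \in \supp\rho^*$ (every neighborhood of which carries positive $\rho^*$-mass) upgrades this to the pointwise sediment condition $V_U[\rho^*](x_0) \le c^*$. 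Uniqueness within the sediment class then follows from the expansion
\begin{equation*}
	\cE_U[\rho_2]-\cE_U[\rho_1] = \tfrac12\int W*(\rho_2-\rho_1)\,\rd(\rho_2-\rho_1) + \int V_U[\rho_1]\,\rd(\rho_2-\rho_1),
\end{equation*}
which, combined with $V_U[\rho_1] \ge c_1$ everywhere and $\int V_U[\rho_1]\,\rd\rho_1 = c_1 m$, yields $\cE_U[\rho_2] \ge \cE_U[\rho_1]$; swapping roles gives equality of energies, so both are minimizers and hence equal by the first paragraph.

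For the last assertion, given $\nu \in \cM_m$ with $\tilde c := \ess\inf V_U[\nu]$, the plan is to deduce $\tilde c \le c^*$ by combining three ingredients: the lsc bound $V_U[\nu] \ge \tilde c$ integrated against $\rho^*$, the symmetry $\int (W*\nu)\,\rd\rho^* = \int (W*\rho^*)\,\rd\nu$, and the sediment identity $V_U[\rho^*] = c^*$ on $\supp\rho^*$ (with $V_U[\rho^*] \ge c^*$ off), together with the positive-definiteness inequality $\int W*(\nu-\rho^*)\,\rd(\nu-\rho^*) \ge 0$. The equality case $\tilde c = c^*$ should then pin down each intermediate inequality to an equality, which by the strict positivity of $\hat W(k)$ for $k \ne 0$ forces $\widehat{\nu - \rho^*}(k) = 0$ for all $k \ne 0$ and hence $\nu = \rho^*$. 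The main obstacle I anticipate is the singular nature of $U$ at $\supp\rho_d$, where $V_U$ takes the value $+\infty$: the test-measure argument in the sediment step and the integrations in the maximizer step must carefully avoid these points, and hypothesis (H5), controlling local averages of $W$ by its pointwise values, will be the key structural input that makes the finite-energy and lsc manipulations rigorous.
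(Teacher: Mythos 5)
Your broad strategy (direct method for existence, strict convexity via $\hat W(k)>0$ for uniqueness, first-order condition for the sediment characterization, quadratic-form comparison for the maximizer) is reasonable, and the existence and strict-convexity steps essentially match the paper's. However, the sediment-characterization step contains a genuine gap that you half-noticed but misattributed. The Euler--Lagrange inequality $\int V_U[\rho^*]\,\rd(\nu-\rho^*)\ge 0$ comes from expanding $\cE_U[(1-t)\rho^*+t\nu]-\cE_U[\rho^*]=tL+\tfrac{t^2}{2}Q$ with $L=\int V_U[\rho^*]\,\rd(\nu-\rho^*)$ and $Q=\int W*(\nu-\rho^*)\,\rd(\nu-\rho^*)$; minimality of $\rho^*$ forces $L\ge 0$ only if $Q<\infty$. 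For $\nu=m\delta_y$ the self-energy $\int (W*\nu)\,\rd\nu=m^2W(0)=\infty$, so $Q=\infty$ and $\cE_U[\rho_t]=\infty$ for every $t>0$: the minimality inequality becomes vacuous and yields nothing about $L$. The obstacle is therefore not, as you suggest, the singularity of $U$ at $\supp\rho_d$, but the infinite self-energy of the Dirac test measure itself. The paper sidesteps this entirely: rather than sending mass to a point, it redistributes mass from a neighborhood of $x_0$ onto a \emph{bounded} density $\varphi_2=\chi_S\cdot|S|^{-1}\int\varphi_1$ supported on a positive-Lebesgue-measure low-potential set $S$ (found using lower semicontinuity of $V_U[\rho^*]$); because $\varphi_2$ is bounded, $Q$ is finite and the linear term carries the argument.

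A second, related problem recurs in your uniqueness-within-the-sediment-class and maximizer steps: you integrate the almost-everywhere (Lebesgue) bound $V_U[\rho_1]\ge c_1$ or $V_U[\nu]\ge\tilde c$ against a measure ($\rho_2$ or $\rho^*$) that you have not shown to be absolutely continuous, so the bound need not transfer. Lower semicontinuity does not rescue this: lsc gives $\liminf_{x\to x_0}f(x)\ge f(x_0)$, which is the wrong-direction inequality, and in general an lsc function can have $\inf f<\ess\inf f$ (e.g.\ $f\equiv 0$ except $f(0)=-1$), so your parenthetical ``$\inf=\ess\inf$'' is not automatic. The paper handles exactly this point by mollifying: it compares $\rho_1$ not against $\rho_\infty$ but against the continuous function $\rho_0=\rho_\infty*\psi_a$, for which $\int V_U[\rho_1]\rho_0\,\rd{x}\ge m\,\ess\inf V_U[\rho_1]$ is legitimate, and then sends $a\to 0$ using Proposition~\ref{prop:energy-Fourier} -- which is precisely where \textbf{(H5)} enters, rather than in the test-measure step where you anticipated it. For the final maximizer claim the paper does something different again, invoking the microscopic-diffusion Lemma~\ref{lem:micro-diffusion} directly; your quadratic-form comparison is a plausible alternative, but only after the same mollification machinery is brought to bear on the ill-justified integrations.
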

\begin{proof}
	{\bf Existence of energy minimizer:}\\
	Let $\{\rho_n\}$ be a minimizing sequence of $\cE$, and take a weakly convergent subsequence, still denoted as $\{\rho_n\}$, which converges weakly to some $\rho_\infty\in \cM_{m_0}$. Since both $W$ and $U$ are lower semicontinuous, we apply \cite[Theorem 1.3.4 (i)(iv)]{WeakConv} and get
		\begin{equation}
			\iint_{\bT^2}  W(x-y)\rho_\infty(y)\rho_\infty(x)\rd{x}\rd{y} \le \liminf_{n\rightarrow\infty}\iint_{\bT^2}  W(x-y)\rho_n(y)\rho_n(x)\rd{x}\rd{y}, 
		\end{equation}
	and
		\begin{equation}
			\int U(x)\rho_\infty(x)\rd{x} \le \liminf_{n\rightarrow\infty}\int U(x)\rho_n(x)\rd{x}.
		\end{equation}
		Therefore
		\begin{equation}
			\cE[\rho_\infty] \le \liminf_{n\rightarrow\infty}\cE[\rho_n] = \inf_{\rho\in \cM_{m_0}}\cE[\rho].
		\end{equation}	
	
\noindent	{\bf Characterizing property}: \\
	If $\rho_{\infty}$ does not satisfy \eqref{eqn:Emin-sediment}, we will give a transport plan to construct $\rho$ with smaller $\cE$. Under this assumption, by lower continuity of $V_{\infty}:=V_U[\rho_{\infty}]$ in Proposition \ref{prop:semi}, there exists $x_0\in \supp \rho$, $a>0$ and $\epsilon>0$ such that $V_\infty(x)\ge \ess\inf V_\infty+2\epsilon$ for $x\in [x_0-a,x_0+a]$. Since $x_0\in \supp\rho_\infty$, we have $\int_{[x_0-a,x_0+a]}\rho_\infty\rd{x}>0$. By the definition of $\ess\inf V_\infty$, there exists a set $S$ with positive measure such that $V_\infty(x) \le \ess\inf V_\infty+\epsilon$ for $x\in S$. Now consider
	\begin{equation}\label{eqn:energy-min-transport-plan}
\varphi_1=\rho_\infty\cdot \chi_{[x_0-a,x_0+a]} ,\quad \varphi_2=  \chi_S\cdot \frac{1}{|S|} \int_{\bT}\varphi_1\rd{x}, \quad\rho_\beta = \rho_\infty + \beta (-\varphi_1+\varphi_2)
	\end{equation}
	for $0<\beta<1$. Notice that $\varphi_1,\varphi_2$ are nonnegative and $\gamma:=\int_{\mathbb{T}}\varphi_1\rd{x}=\int_{\mathbb{T}}\varphi_2\rd{x}>0$, and $\rho_\infty-\beta\varphi_1\ge 0$ by the construction of $\varphi_1$. Therefore $\rho_\beta$ is in $\cM_{m}$. Then
	\begin{equation}\begin{split}
			\cE[\rho_\beta] = &\cE[\rho_\infty] + \beta\int V_\infty\cdot  (-\varphi_1+\varphi_2)\rd{x} + \frac{\beta^2}{2}\int \Big(W*(-\varphi_1+\varphi_2)\Big)\cdot (-\varphi_1+\varphi_2)\rd{x} \\
			\le & \cE[\rho_\infty] - \beta\cdot (\ess\inf V_\infty+2\epsilon)\int \varphi_1\rd{x} + \beta\cdot (\ess\inf V_\infty+\epsilon)\int \varphi_2\rd{x} + C\beta^2 \\
			= & \cE[\rho_\infty] - \gamma\epsilon\beta + C\beta^2 \\
	\end{split}\end{equation}
	where $C=\int \big( W*(-\varphi_1+\varphi_2))\cdot (-\varphi_1+\varphi_2)\rd{x} <\infty$ from the construction of $\varphi_i$ and {\bf (H1)}. Therefore we obtain a contradiction with the minimality of $\rho_\infty$ by taking $\beta$ small enough.
	
\noindent	{\bf Uniqueness of minimizer}:\\
Assume $\rho_0$ and $\rho_1$ are two distinct minimizers of $\cE$ in $\cM_{m}$. Define $\rho_t = (1-t)\rho_0+t\rho_1\in \cM_{m}$ for $0\le t \le 1$. Then
	\begin{equation}\label{eqn:Erhot}\begin{split}
			\cE[\rho_t] = & \frac{(1-t)^2}{2}\int ( W*\rho_0)\cdot \rho_0 \rd{x} + \frac{t^2}{2}\int ( W*\rho_1)\cdot \rho_1 \rd{x} + t(1-t)\int ( W*\rho_0)\cdot \rho_1 \rd{x} \\
			& + (1-t)\int U\cdot \rho_0\rd{x} + t\int U\cdot \rho_1\rd{x},
	\end{split}\end{equation}
	is a quadratic function in $t$, with
	\begin{equation}\label{eqn:d2Erhot}
		\frac{\rd^2}{\rd{t}^2}\cE[\rho_t]  = \int ( W*(\rho_1-\rho_0))(x)(\rho_1-\rho_0)(x) \rd{x} = \sum_{k\in \mathbb{Z}}\hat{ W}(k)|\hat{\rho}_1(k)-\hat{\rho}_0(k)|^2
	\end{equation}
	by Proposition \ref{prop:energy-Fourier}. Since $\rho_0\ne \rho_1$ and $\hat{\rho}_0(0)=\hat{\rho}_1(0)=m$, there exists some $k\neq $ such that $\hat{\rho}_1(k)\ne\hat{\rho}_0(k)$. Notice $\hat{W}(k)>0$ for $k\ne 0$ by Lemma \ref{lem:W-hat-positive}. Therefore $\frac{\rd^2}{\rd{t}^2}\cE[\rho_t]>0$ and $\cE[\rho_{1/2}]<\cE[\rho_0] = \cE[\rho_1] $. Contradiction.
	
\noindent	{\bf Uniqueness of $\rho$ satisfying \eqref{eqn:Emin-sediment}}:\\
Assume $\rho_1\ne \rho_\infty$ satisfies  \eqref{eqn:Emin-sediment} with $\rho_\infty$ being the unique minimizer. Let $a>0$ be small, and $\rho_0=\rho_\infty*\psi_a$, where $\psi_a$ is as defined in \eqref{psialpha}. Then we define $\rho_t = (1-t)\rho_0+t\rho_1\in \cM_{m}$ for $0\le t \le 1$, and $\cE[\rho_t]$ is given by \eqref{eqn:Erhot} and satisfies \eqref{eqn:d2Erhot}. We also have
	\begin{equation}
		\begin{aligned}
			\frac{\rd}{\rd{t}}\cE[\rho_t] = & -(1-t)\int ( W*\rho_0)\cdot \rho_0 \rd{x} + t\int ( W*\rho_1)\cdot \rho_1 \rd{x} + (1-2t)\int ( W*\rho_1)\cdot \rho_0 \rd{x} \\
			& -\int U\cdot \rho_0\rd{x}+\int U\cdot \rho_1\rd{x}
			\end{aligned}
\end{equation}
	Evaluating at $t=1$, we obtain
	\begin{equation}\label{dEt1}
			\frac{\rd}{\rd{t}}\Big|_{t=1} \cE[\rho_t] = \int V_U[\rho_1]\cdot (\rho_1-\rho_0)\rd{x} \le  \ess\inf V_U[\rho_1]m - \ess\inf V_U[\rho_1]m = 0,
\end{equation}
	using \eqref{eqn:Emin-sediment} for $\rho_1$ and the fact that $\rho_0$ is a continuous function. 
	
	Then we integrate the inequality $\frac{\rd^2}{\rd{t}^2}\cE[\rho_t]\ge 0$ from $t$ to $1$ and get
	\begin{equation}
		\frac{\rd}{\rd{t}}\cE[\rho_t] = \frac{\rd}{\rd{t}}\Big|_{t=1}\cE[\rho_t] - \int_t^1 \frac{\rd^2}{\rd{s}^2}\cE[\rho_s]\rd{s} \le 0,\quad \forall 0\le t \le 1,
	\end{equation}
therefore $\cE[\rho_0]\ge \cE[\rho_1] $. Since $\cE[\rho_\infty]-\cE[\rho_1]< 0$, by Proposition \ref{prop:energy-Fourier} we can take $a>0$ small enough such that $\cE[\rho_0]< \cE[\rho_1]$. Contradiction.	

\noindent{\bf Maximizers of $\ess\inf V_U[\rho]$ satisfying \eqref{eqn:Emin-sediment}}: \\
We first note that the maximizer of $\ess\inf V_U[\rho]$ exists. If $U=0$, the existence of maximizer of $\ess \inf V[\rho] = -\cH[\rho]$ is equivalent to the existence of minimizer of $\cH$, which follows from lower semi-continuous of $\cH$ in Lemma \ref{lem:D-H-property} after scaling with $m$. For general $U$ in the form of \eqref{eqn:U}, the same proof in Lemma \ref{lem:D-H-property} applies when $W*\rho$ is replaced with $W*(\rho + \rho_+ - \rho_-+\rho_d)$. 

Let $\rho_\infty$ be a maximizer of $\ess\inf V_U[\rho]$. Suppose the statement is not true, then by lower semicontinuity of $V_U[\rho_{\infty}]$, there exists $x_0\in \supp \rho_{\infty}$, $a>0$ and $\epsilon>0$ such that
\begin{equation}
	V_U[\rho_\infty](x) > \ess\inf V_U[\rho_\infty]+ \epsilon,\quad x\in [x_0-a, x_0+a]
\end{equation}
Then applying Lemma \ref{lem:micro-diffusion} to $[x_0-a,x_0+a]$ with the external potential $U$, we obtain $\rho_\lambda$ with $\ess\inf V_U[\rho_\lambda] > \ess\inf V_U[\rho_\infty]$. Contradiction. It follows that the unique minimizer of $\cE_U[\rho]$ is simultaneously the unique maximizer of $\ess \inf V_U[\rho]$ and the unique element satisfying \eqref{eqn:Emin-sediment}. 
\end{proof}
Now we are ready to prove Theorem \ref{thm:main-characterization}(iv) following Proposition \ref{prop:energy-min}. Recall Theorem \ref{thm:main-characterization}(ii) and Theorem \ref{thm:minimizer-shape}, we can always write $\rho =  m(\delta_M+\delta_{-M})+\rho_1$. By letting $U = W* m(\delta_M+\delta_{-M})$ and $\rho_1$ be the unique minimizer of $\cE_U$, we obtain that $V_U[\rho_1] = V[\rho]$ satisfying the \eqref{eqn:sediment}. Since $\rho_1$ is the energy minimizer of $\cE_U$, it is clear that $\rho_1$ does not contain any Dirac mass. Therefore the Dirac mass configuration of $\rho$ is completely determined by $m$ and $M$ in $U$. 
\begin{corollary}\label{cor_rhoform}
Assume $W$ satisfies {\bf (H1)}-{\bf (H5)}. Let $\alpha\ge 0$, $0<d\le 1$, $0\le M< 1/2$. Following Theorem \ref{thm:minimizer-shape}, let $\rho= m(\delta_M+\delta_{-M})+\rho_1$ be an even minimizer of $\cG_{\alpha}$ in $\cM_{\cD\ge d}$ with $I = [-M, M]$ witnessing $\cD[\rho]$. Then $\rho_1$ is the unique minimizer of $\cE_U$ in $\cM_{1-2m}$ where $U = m(\delta_M+\delta_{-M})$. Furthermore $\rho$ is the unique probability measure satisfying \eqref{eqn:sediment} in the class of probability measures with the same Dirac mass configuration.
\end{corollary}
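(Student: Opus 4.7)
The plan is to invoke Proposition \ref{prop:energy-min} with external potential $U := W * (m\delta_M + m\delta_{-M})$ and total mass $1 - 2m$. This $U$ is admissible in the sense of that proposition (take $\rho_+ = \rho_- = 0$ and $\rho_d = m\delta_M + m\delta_{-M}$), so under {\bf (H1)}--{\bf (H5)} there is a unique minimizer $\rho_\ast \in \cM_{1-2m}$ of $\cE_U$, which is also the unique element of $\cM_{1-2m}$ satisfying the sediment condition $V_U[\rho_\ast](x) \le \ess\inf V_U[\rho_\ast]$ for $x \in \supp\rho_\ast$. The whole corollary reduces to showing that $\rho_1$ is this $\rho_\ast$.

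The key identity is
\begin{equation*}
V_U[\rho_1] = W * (m\delta_M + m\delta_{-M}) + W * \rho_1 = W * \rho = V[\rho],
\end{equation*}
so $\ess\inf V_U[\rho_1] = \ess\inf V[\rho]$. Corollary \ref{coro:sediment-minimizer} yields $V[\rho](x) \le \ess\inf V[\rho]$ for every $x \in \supp\rho \setminus \{\pm M\}$. Since $\rho_1$ is the non-atomic residual from Theorem \ref{thm:minimizer-shape}, it suffices to verify $\supp\rho_1 \subseteq \supp\rho \setminus \{\pm M\}$; once this inclusion is in hand, the displayed estimate translates via the identity into $V_U[\rho_1](x) \le \ess\inf V_U[\rho_1]$ for every $x \in \supp\rho_1$, and the uniqueness assertion of Proposition \ref{prop:energy-min} forces $\rho_1 = \rho_\ast$.

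For the uniqueness of $\rho$ itself, suppose $\tilde\rho = m(\delta_M + \delta_{-M}) + \tilde\rho_1$ has the same Dirac mass configuration as $\rho$ and satisfies the sediment condition \eqref{eqn:thm_chap1_1}. Repeating the above substitution gives $V_U[\tilde\rho_1] = V[\tilde\rho]$ with $\tilde\rho_1 \in \cM_{1-2m}$, and the sediment condition on $\tilde\rho$ transfers to one for $\tilde\rho_1$ with external potential $U$. Proposition \ref{prop:energy-min} then gives $\tilde\rho_1 = \rho_\ast = \rho_1$, and hence $\tilde\rho = \rho$.

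The delicate step is ruling out $\pm M \in \supp\rho_1$ (and analogously for $\tilde\rho_1$). The Dirac components of $\rho$ make $V[\rho](x) \to +\infty$ as $x \to \pm M$. If $\pm M$ were in $\supp\rho_1$, then because $\rho_1$ is non-atomic, every neighborhood of $\pm M$ would contain points of $\supp\rho_1 \setminus \{\pm M\}$; picking such a sequence $x_n \to \pm M$ and applying Corollary \ref{coro:sediment-minimizer} at each $x_n$ would force $\ess\inf V[\rho] = +\infty$, contradicting $V[\rho] \in L^1$. Thus $\pm M \notin \supp\rho_1$, and beyond this boundary check the rest of the argument is a mechanical transfer between $(V[\rho], \rho)$ and $(V_U[\rho_1], \rho_1)$ using Proposition \ref{prop:energy-min}.
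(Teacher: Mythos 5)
Your proof is correct and follows essentially the same path as the paper: set $U = W*m(\delta_M + \delta_{-M})$, use the identity $V_U[\rho_1]=V[\rho]$ to transfer the sediment estimate from Corollary \ref{coro:sediment-minimizer}, and conclude via the uniqueness clause of Proposition \ref{prop:energy-min}. Your explicit check that $\pm M\notin\supp\rho_1$ (using lower semicontinuity of $V[\rho]$ together with $V[\rho](\pm M)=\infty$) makes rigorous a boundary step that the paper's terse presentation leaves implicit but which is indeed needed before the sediment condition can be quoted on all of $\supp\rho_1$.
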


For the purpose of our discussion in Section \ref{sec:construction-stationary}, we also give a refined version of Proposition \ref{prop:energy-min} in the case that $U$ is the sum of two Dirac masses of the same size at $-M$ and $M$, where we restrict $\rho$ to a smaller class of measures with the prescribed total mass on the two intervals $(-M, M)$ and $(M, 1-M)$.

\begin{corollary}\label{cor_Emin}
	Let $0<m\le 1/2$, $0<M<1/2$, $\rho_d = m(\delta_M +\delta_{-M})$ and $U(x) = W* \rho_d$, and $m_1,m_2\ge 0$ with $m_1+m_2=1-2m$. Then there exists a unique minimizer $\rho_{\infty,m_1,m_2}$ of $\cE_U$ in 
	\begin{equation}\label{Mm1m2}
		\cM_{m_1,m_2}=\Big\{\rho\in\cM_{1-2m}:\int_{(-M,M)}\rho\rd{x} = m_1,\,\int_{(M,1-M)}\rho\rd{x} = m_2\Big\},
	\end{equation}
which is the unique $\rho$ satisfying
\begin{equation}\label{cor_Emin_1}\begin{split}
		& V_U[\rho](x) \le  \underset{y\in (-M,M)}{\ess\inf} V_U[\rho](y),\quad x\in \supp\rho\cap (-M,M) \\
		& V_U[\rho](x) \le  \underset{y\in (M,1-M)}{\ess\inf} V_U[\rho](y),\quad x\in \supp\rho\cap (M,1-M) \\
\end{split}\end{equation}
There exists a unique $(m_1,m_2)$ with either of the following holds:
\begin{itemize}
	\item $m_1=0$, $ \underset{y\in (-M,M)}{\ess\inf}V_U[\rho_{\infty,m_1,m_2}](y)\ge\underset{y\in (M,1-M)}{\ess\inf}V_U[\rho_{\infty,m_1,m_2}](y)$.
	\item $m_2=0$, $ \underset{y\in (-M,M)}{\ess\inf}V_U[\rho_{\infty,m_1,m_2}](y)\le\underset{y\in (M,1-M)}{\ess\inf}V_U[\rho_{\infty,m_1,m_2}](y)$.
	\item $m_1>0,\,m_2>0$, $\underset{y\in (-M,M)}{\ess\inf}V_U[\rho_{\infty,m_1,m_2}](y)=\underset{y\in (M,1-M)}{\ess\inf}V_U[\rho_{\infty,m_1,m_2}](y)$
\end{itemize}
and in this case $\rho_{\infty,m_1,m_2}=\rho_\infty$ is the minimizer of $\cE_U$ in $\cM_{1-2m}$.
\end{corollary}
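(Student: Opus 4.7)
The plan is to adapt the proof of Proposition \ref{prop:energy-min} to the restricted class $\cM_{m_1,m_2}$ for the first two assertions, and then to read off the trichotomy by comparing $\rho_{\infty,m_1,m_2}$ with the global minimizer of $\cE_U$ on $\cM_{1-2m}$.

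For existence and uniqueness of the minimizer $\rho_{\infty,m_1,m_2}$, I would run the direct method as in Proposition \ref{prop:energy-min}: pick a minimizing sequence $\{\rho_n\}\subset \cM_{m_1,m_2}$, extract a weakly convergent subsequence $\rho_n \wc \rho_\infty$, and use lower semicontinuity of both terms of $\cE_U$ via \cite[Theorem 1.3.4]{WeakConv}. The one new issue is that $\cM_{m_1,m_2}$ is not a priori weakly closed; however, since $U(x)=mW(x-M)+mW(x+M)$ has a $-\log$ singularity at $\pm M$, the upper bound on $\cE_U[\rho_n]$ together with the lower bound $\int (W*\rho_n)\rho_n\rd{x}\ge (\inf W)(1-2m)^2$ yields an upper bound on $\int U\rd\rho_n$, and hence by lower semicontinuity $\int U\rd\rho_\infty<\infty$, forcing $\rho_\infty(\{\pm M\})=0$. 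Then $\partial I_1=\partial I_2=\{\pm M\}$ are $\rho_\infty$-null, and weak convergence gives $\rho_\infty(I_1)=m_1$, $\rho_\infty(I_2)=m_2$. Uniqueness of the minimizer follows verbatim from the convexity computation \eqref{eqn:Erhot}--\eqref{eqn:d2Erhot}, using $\hat{W}(k)>0$ from Lemma \ref{lem:W-hat-positive} and the convexity of $\cM_{m_1,m_2}$.

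For the characterization \eqref{cor_Emin_1}, I would localize the transport argument of Proposition \ref{prop:energy-min}: if \eqref{cor_Emin_1} failed on, say, $I_1$, pick $x_0\in \supp\rho_{\infty,m_1,m_2}\cap I_1$ and a set $S\subset I_1$ on which $V_U[\rho_{\infty,m_1,m_2}]$ is $\epsilon$-below its $I_1$-essential infimum, and apply \eqref{eqn:energy-min-transport-plan} with $\varphi_1,\varphi_2$ both supported in $I_1$; the resulting perturbation stays in $\cM_{m_1,m_2}$ and strictly lowers $\cE_U$, contradicting minimality. Uniqueness of solutions to \eqref{cor_Emin_1} in $\cM_{m_1,m_2}$ follows from the smoothing-plus-convexity argument in the last part of Proposition \ref{prop:energy-min}, with the mollification of $\rho_\infty$ carried out separately inside each of $I_1$ and $I_2$ so that both mass constraints are preserved.

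Finally, for the trichotomy I would avoid analyzing $(m_1,m_2)\mapsto \rho_{\infty,m_1,m_2}$ directly. Let $\rho_\infty\in \cM_{1-2m}$ be the global minimizer of $\cE_U$ from Proposition \ref{prop:energy-min}; the same singularity argument as above gives $\rho_\infty(\{\pm M\})=0$, so $\rho_\infty\in \cM_{m_1^*,m_2^*}$ for $m_i^*:=\rho_\infty(I_i)$, and the sediment condition \eqref{eqn:Emin-sediment} restricted to $I_1,I_2$ is exactly \eqref{cor_Emin_1}, whence $\rho_\infty=\rho_{\infty,m_1^*,m_2^*}$. Since $\ess\inf_{\bT}V_U[\rho_\infty]=\min\{\underset{I_1}{\ess\inf}\,V_U[\rho_\infty],\underset{I_2}{\ess\inf}\,V_U[\rho_\infty]\}$ and any interval carrying mass of $\rho_\infty$ must achieve this infimum by \eqref{eqn:Emin-sediment}, exactly one of the three bullet conditions holds at $(m_1^*,m_2^*)$ depending on whether $m_1^*=0$, $m_2^*=0$, or both are positive. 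Conversely, each bullet makes \eqref{cor_Emin_1} upgrade to the global sediment condition \eqref{eqn:Emin-sediment} for $\rho_{\infty,m_1,m_2}$, so the uniqueness clause of Proposition \ref{prop:energy-min} forces $\rho_{\infty,m_1,m_2}=\rho_\infty$ and hence $(m_1,m_2)=(m_1^*,m_2^*)$. The main technical obstacle will be the mass-escape issue in the first step; once it is ruled out by the logarithmic blow-up of $U$ at $\pm M$, the remainder is essentially a rearrangement of Proposition \ref{prop:energy-min}.
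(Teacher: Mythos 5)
Your proposal is correct and follows essentially the same route as the paper's (sketched) proof: the direct method with lower semicontinuity for existence, strict convexity via $\hat W(k)>0$ for uniqueness, a localized transport perturbation to establish the characterizing conditions \eqref{cor_Emin_1}, and comparison against the global minimizer $\rho_\infty$ from Proposition \ref{prop:energy-min} to read off the trichotomy. You go beyond the paper's sketch in two places that it tacitly omits — ruling out mass escape to $\{\pm M\}$ in the weak limit via the logarithmic blow-up of $U$ so that $\cM_{m_1,m_2}$ actually captures the limiting measure, and carrying out the mollification separately inside $(-M,M)$ and $(M,1-M)$ so the interpolation stays in $\cM_{m_1,m_2}$ — both of which are correct and needed for a complete argument.
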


Since the proof is very similar to Proposition \ref{prop:energy-min}, we will only give a sketch of the proof.
\begin{proof}
	{\bf Existence of energy minimizer}: Let $\rho_n \wc \rho \in \cM_{m_1,m_2}$ minimizing $\cE_U$. Then $\rho$ is a minimizer of $\cE$ by lower semicontinuity of $\cE$ with respect to weak convergence. 
	
	{\bf Characterizing property}: If $\rho_{\infty, m_1, m_2}$ does not satisfy the first line in \eqref{cor_Emin_1}, a similar transport plan with \eqref{eqn:energy-min-transport-plan} where $S$ is now a subset of $(-M, M)$ will construct $\rho_{\beta}$ with smaller $\cE_U$. Similarly for the second line of \eqref{cor_Emin_1}.
	
	{\bf Uniqueness of minimizer in $\cM_{m_1, m_2}$}. One can prove by contradiction using a linear interpolation $\rho_t=(1-t)\rho_0+t\rho_1\in \cM_{m_1,m_2}$ between two minimizers of $\cE$ in $\cM_{m_1,m_2}$.
	
	{\bf Relation between $\rho_{\infty;m_1,m_2}$ and $\rho_\infty$}: Let $1-2m\ge 0$, and $\rho_\infty$ be the minimizer of $\cE$ in $\cM_{1-2m}$, whose existence and uniqueness are guaranteed by Proposition \ref{prop:energy-min}. Since $\cE_U[\rho_{\infty}]<\infty$, $\rho_{\infty}$ does not contain Dirac masses, therefore the uniqueness and existence of the pair $(m_1, m_2)$ follows. The conditions on $\rho_{\infty, m_1, m_2}$ follow from \eqref{eqn:Emin-sediment} directly.	
\end{proof}

\section{Construction of Distributions}\label{sec:construction-stationary}
Our main goal for this section is to construct a class of measures that are candidates for the unique minimizer of $\cG$. In Section \ref{sec:minimizer-characterization}, we prove that the unique minimizer of $\cG_{\alpha}$ in $\cM_{\cD\ge d}$ must be a certain Dirac mass configuration in the format $\rho_d = m (\delta_M+\delta_{-M})$ together with the unique sediment distribution with respect to $U = W*\rho_d$ in $\cM_{1-2m}$ under the potential $W$. In this section, we will first give the constructions of a class of (signed) measures which we loosely refer to as \emph{stationary distribution}. These measures satisfy the condition that $V_U[\rho]'(x)=0$ for $x\in \supp \rho$. It clearly forms a larger class of measures since a differentiable $V_U[\rho]$ for a sediment distribution $\rho$ satisfies $V_U[\rho] = \ess \inf V_U[\rho]$ in $\supp \rho$, which is a constant. \footnote{This concept of the stationary distribution is closely related to the \emph{stationary state} (or \emph{steady state}) and \emph{local energy minimizer} in potential theory, see e.g. \cite{ShuCar}.} We demonstrate the inclusion of all mentioned classes as following:
\begin{center}
$\{ $stationary measures w.r.t $U \} \supset \{ $sediment measures w.r.t $ U  \} =$
$ \{ $minimizers of $\cE_U \} \supset$ \\ $ \{$ minimizer of $\cG_{\alpha}$ in $\cM_{\cD\ge d}$ subtracts its Dirac masses $ \}$.
\end{center}

Although our final goal and the above inclusions are all about measures over $\bT$, we take a detour to discuss signed measures over $\R$ where the minimal value of another functional $\tilde{\cG}$, defined in \eqref{eqn:def-tilde-G} as an analogue of $\cG$, is relatively easier to determine. We will discuss the minimal value of $\cG$ in Section \ref{sec:R-to-T} and its analogue $\tilde{\cG}$ over $\R$ in Section \ref{sec:min-value-R}. For the current section, we will focus on the construction of interesting measures. From now on, we will take $W(x) = -\log|2\sin (\pi x)|$ for $x\in \bT$ and $\tilde{W}(x) = -\log |x|$ for $x\in \R$. The construction heavily deploys the facts that we are using logarithmic potential, which is closely related to the kernel of Hilbert transform, on both $\R$ and $\bT$. We will first give the construction over $\R$ since it is relatively cleaner, and save the construction over $\bT$ to after. 
\subsection{Stationary Distributions over $\R$}\label{ssec:stationary-R}
In this section, we firstly construct a family of signed measures over $\R$. To simplify the notation, given a sequence of real numbers
\begin{equation}
	L_1 < M_1 < R_1 \le L_2 < M_2 < R_2 \le \cdots \le L_n < M_n < R_n,
\end{equation}
we denote $S\subset \R$ to be the union of intervals 
\begin{equation}
	S:=(-\infty,L_1]\cup \bigcup_{j=1}^{n-1}[R_j,L_{j+1}]\cup [R_n,\infty),
\end{equation}
and denote the product 
\begin{equation}
	T_{L}(x):=\prod_{j=1}^n |x-L_j|, \quad T_{L,k}(x):=\prod_{j\neq k} |x-L_j|.
\end{equation}
Similarly for $T_M$ and $T_R$.

\begin{lemma}\label{lem_complex}
Given a sequence of real numbers $L_1 < M_1 < R_1\le \cdots \le L_n < M_n < R_n$. We define $\mu(x) = -1+\mu_c(x) + \mu_d(x)$ where
	\begin{equation}
		\mu_c(x) = \frac{\sqrt{T_L(x)T_R(x)}}{T_M(x)}\chi_S, \quad \mu_d(x) = \sum_{j=1}^n a_j\delta_{M_j}
	\end{equation}
	with
	\begin{equation}\label{ak}
		a_k =\pi\prod_{j=1}^n\frac{\sqrt{T_L(M_k)T_R(M_k)}}{T_{M,k}(M_k)}>0.
	\end{equation}
Then $\mu(x)$ satisfies
	\begin{equation}\label{eqn:diff}
		\pv (\tilde{W}'*\mu)(x) = \left\{\begin{split}
			& -\pi\cdot\sgn(x-M_k) \frac{\sqrt{T_L(x)T_R(x)}}{T_M(x)},\quad x\in [L_k,M_k)\cup(M_k,R_k] \\
			& 0,\quad x\in S
		\end{split}\right.
	\end{equation}
If further more
\begin{equation}\label{lem_complex2_1}
	\sum_{j=1}^n (L_j+R_j-2M_j) = 0,
\end{equation}
then $|\mu(x)| \le \frac{C}{x^2}$ as $|x|\to \infty$ and $\int_{\R} |\mu(x) |\rd{x} < \infty$ and $\int_{\mathbb{R}}\mu(x)\rd{x}=0$.
\end{lemma}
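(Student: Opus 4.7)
The plan is to encode $\mu$ as the imaginary part of the boundary value of a complex auxiliary function. Set
\begin{equation*}
g(z) = \frac{\sqrt{p(z)}}{q(z)}, \quad p(z) = \prod_{j=1}^n(z-L_j)(z-R_j), \quad q(z) = \prod_{j=1}^n(z-M_j),
\end{equation*}
defined on $\C\setminus\bigcup_k[L_k, R_k]$ with the branch chosen so that $\sqrt{p(z)}/z^n\to 1$ as $z\to\infty$. Since $\tilde{W}'(x)=-1/x$ is (up to a constant) the kernel of the Hilbert transform, (\ref{eqn:diff}) should follow from the Hardy-space identity $v=Hu$ applied to a suitably regularized $g-1$.

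First I would determine the boundary values of $g$ on $\R$ by sign-tracking each factor $\sqrt{(z-L_j)(z-R_j)}$ across its cut. The outcome is that on the exterior $S$, $g(x)=\sqrt{T_L(x)T_R(x)}/T_M(x)=\mu_c(x)$ (real), while on each cut $(L_k,R_k)\setminus\{M_k\}$ the upper boundary value is $g(x+i0)=\sgn(x-M_k)\cdot i\cdot \sqrt{T_L(x)T_R(x)}/T_M(x)$, purely imaginary and changing sign at $M_k$. The simple zero of $q$ at $M_k$, which lies inside the cut, makes $g$ blow up like $(ia_k/\pi)/(z-M_k)$ from the UHP side with $a_k$ as in (\ref{ak}), and like $-(ia_k/\pi)/(z-M_k)$ from the LHP side; these ``one-sided residues'' are what ultimately manufacture the Dirac masses $\mu_d$.

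Next, set
\begin{equation*}
F(z) := g(z) - 1 - \sum_{k=1}^n \frac{ia_k/\pi}{z-M_k}.
\end{equation*}
The rational correction cancels the UHP-side singular behavior of $g$ at each $M_k$, so $F$ is holomorphic on the open UHP. The expansion $g(z) = 1 + \frac{1}{2z}\sum_j(2M_j - L_j - R_j) + O(1/z^2)$ combined with (\ref{lem_complex2_1}) gives $F(z) = O(1/z)$ at infinity, and the boundary value $F(x+i0)\in L^2(\R)$ has no distributional singularities at the $M_k$ (the naive $\delta_{M_k}$ and $\pv(1/(x-M_k))$ pieces arising from interpreting each $1/(x+i0-M_k)$ distributionally cancel against the matching singular behavior of $g(x+i0)$). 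Thus $F\in H^2(\text{UHP})$, and its boundary values satisfy $v_F = Hu_F$ with $H\phi(x) = \pi^{-1}\pv\int \phi(y)/(x-y)\,\rd{y}$. Reading off $u_F = \mu_c\chi_S - 1$ and $v_F = \Im g(x+i0) - \pi^{-1}\sum_k a_k/(x-M_k)$ from Step 1 and using $H(1)=0$, this identity gives
\begin{equation*}
H(\mu_c\chi_S)(x) = \Im g(x+i0) - \frac{1}{\pi}\sum_{k=1}^n \frac{a_k}{x-M_k}.
\end{equation*}

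Plugging this into the direct expansion
\begin{equation*}
\pv(\tilde{W}'*\mu)(x) = \pv\int_\R \frac{\mu(y)}{y-x}\,\rd{y} = -\pi H(\mu_c\chi_S)(x) + \sum_k \frac{a_k}{M_k - x}
\end{equation*}
(using $\pv\int_\R \rd{y}/(y-x) = 0$ for the constant $-1$ piece of $\mu$), the $\sum_k a_k/(x-M_k)$ terms cancel, leaving $\pv(\tilde{W}'*\mu)(x) = -\pi\Im g(x+i0)$; the boundary values from Step 1 then reproduce (\ref{eqn:diff}) on both $S$ and on the cuts. For the integrability statement, (\ref{lem_complex2_1}) upgrades the decay to $g(z)-1 = O(1/z^2)$, so $|\mu(x)|\le C/x^2$ at infinity and $\int|\mu|<\infty$; multiplying the identity $\pv(\tilde{W}'*\mu)(x)=0$ (valid exactly for $|x|$ large in $S$) by $x$ and letting $x\to\infty$ then yields $-\int\mu\,\rd{y}=0$. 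The main technical obstacle is the sign-tracking across the $n$ cuts in Step 1, together with the verification that the subtraction in Step 2 truly produces an $H^2$ function --- i.e., that the would-be distributional artefacts at each $M_k$ from the one-sided residues of $g$ cancel cleanly against those of the rational correction $ia_k/[\pi(z-M_k)]$.
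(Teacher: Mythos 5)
Your proposal is correct and is essentially the paper's own proof, modulo renaming: your auxiliary $g=\sqrt{p}/q$ is the paper's $g_c$, and your regularized function $F = g - 1 - \sum_k \tfrac{ia_k/\pi}{z-M_k}$ is exactly the paper's $g = -1 + g_c + g_d$. Both arguments build a function holomorphic on the upper half plane whose boundary real part is $\mu_c\chi_S - 1$ and whose imaginary part, corrected by the one-sided pole contributions at each $M_k$, is $-\tfrac{1}{\pi}(\tilde W'*\mu)$; the identity $v_F = H u_F$ (with kernel $\tfrac{1}{\pi x} = -\tilde W'/\pi$) is the same step the paper writes as $H[\Re g] = \Im g$. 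The only genuine divergence is in the mean-zero part: the paper closes a semicircular contour in the UHP with indentations at the branch points $L_j, R_j$, picking up $\sum a_j$ from the residues of $g_d$ on the large arc and identifying the real part of the boundary integral with $\int_\R\mu - \sum a_j$, whereas you instead multiply the already-established identity $\pv(\tilde W'*\mu)(x)=0$ (for $|x|$ large in $S$) by $x$, split $\tfrac{x}{y-x}=-1+\tfrac{y}{y-x}$, and let $x\to\infty$. Both are valid and give the same conclusion; the contour argument needs no tail estimates, while your route needs (but gets, from the $O(1/x^2)$ decay and the smoothness of $\mu$ near $y=x$ for large $x$) that $\pv\int \tfrac{y\mu(y)}{y-x}\,dy\to 0$. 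One small inaccuracy to fix: where you write that (\ref{lem_complex2_1}) gives $F(z)=O(1/z)$ you mean $O(1/z^2)$ — the $O(1/z)$ bound and hence $F\in H^2$ hold already without (\ref{lem_complex2_1}), which is needed only for the integrability upgrade. This is worth stating precisely since the lemma deliberately separates the two conclusions.
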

\begin{proof}	
We define a function $g(z) = -1+ g_c(z)+g_d(z)$ where
	\begin{equation}\label{g}
		g_c(z) =\frac{\prod_{j=1}^n\phi(z-L_j)\phi(z-R_j)}{\prod_{j=1}^n (z-M_j)}, \quad  g_d(z) =- \sum_{j=1}^n \frac{a_j}{\pi}\frac{i}{z-M_j}
	\end{equation}
where $a_k$ is given by \eqref{ak} and $\phi(z)= \sqrt{z}$ is the analytic function on $\mathbb{C}\backslash i\mathbb{R}_{\le0}$. It is clear that $g(z)$ is holomorphic on the upper half plane $\C_{y>0}$. Moreover, $g(z)$ is continuous in $\C_{y\ge 0}$ since the pole of $g_c(z)$ at $M_k$ is exactly cancelled by that of $g_d(z)$ due to the value of $a_k$.
	
	Now we analyze $g$ on the real line. If $x\in S$ then $g_c(x) = \mu_c(x)$ is real and positive. If $x\in [L_k,M_k)\cup(M_k,R_k]$, then 
	\begin{equation}
	g_c(x) = i\cdot \sgn(x-M_k)\frac{\prod_{j=1}^n\sqrt{|(x-L_j)(x-R_j)|}}{\prod_{j=1}^n |x-M_j|} = i \cdot \sgn(x-M_k) \frac{\sqrt{T_L(x)T_R(x)}}{T_M(x)}
	\end{equation}
	is purely imaginary. Therefore for $x\in \R \backslash \{ M_k\mid 1\le k\le n \}$ we have
	\begin{equation}
		\begin{aligned}
	\Re(g)(x) &=  -1 + \mu_c(x),\\
	\Im(g)(x) &= \sum_{k=1}^n\chi_{[L_k,R_k]}\sgn(x-M_k)\frac{\sqrt{T_L(x)T_R(x)}}{T_M(x)}-\sum_{j=1}^n \frac{a_j}{\pi}\frac{1}{x-M_j}.
		\end{aligned}
	\end{equation}
The function $\Im(g)(x)$ can be extended to $\R$ continuously since $g$ is continuous on $\C_{y\ge 0}$. The function $\Re(g)(x)$ is continuous at any $x\in\mathbb{R}$ and $\Re(g)\in L^2(\R)$, since as $|x|\rightarrow\infty$
	\begin{equation}\label{Re_g}
		|\Re(g)(x)| = \left|-1 + \frac{1-\frac{1}{2}\sum_{j=1}^n (L_j+R_j)\frac{1}{|x|} + O(\frac{1}{|x|^2})}{1-\sum_{j=1}^n M_j \frac{1}{|x|} + O(\frac{1}{|x|^2})}\right| \le \frac{C}{|x|}.
	\end{equation}
    Therefore Hilbert transform $H[\Re(g)]$ exists and it is a standard property that
	\begin{equation}\label{eqn:Hilbert-transform}
		H[\Re(g)] = \pv \frac{1}{\pi y} *(-1 + \mu_c(y)) =\pv \frac{-\tilde{W}'}{\pi} *(-1 + \mu_c(y)) =  \Im(g),
     \end{equation}
 where the second equality follows since $\tilde{W}'(x) = -\frac{1}{x}$. We finished proving \eqref{eqn:diff} by adding the term $-\tilde{W}'/\pi *\mu_d$ to both sides of \eqref{eqn:Hilbert-transform}. 
 
 Using the assumption \eqref{lem_complex2_1}, we can refine \eqref{Re_g} to $|\Re(g)(x)| \le  \frac{C}{|x|^2}$, and therefore $\mu = \Re(g)+\mu_d$ satisfies $\int_{\R} |\mu(x)| \rd{x}<\infty$. 
 
 Finally, we prove $\int_{\R} \mu = 0$. We take a contour integral of $g$ along
	\begin{equation}
		\begin{aligned}
		\gamma_{r, \epsilon} := &\{re^{i\theta}:0\le \theta\le \pi\} \cup  \Big(\cup_{P\in \{L_j, R_j \}_{j=1}^n} \{\epsilon e^{i\theta} + P:0\le \theta\le \pi\}\Big) \\
		& \cup \Big([-r, r]\backslash \cup_{P\in \{L_j, R_j \}_{j=1}^n}  [P-\epsilon, P+\epsilon] \Big) \\
		\end{aligned}
	\end{equation}
	for a large $r$ and small $\epsilon$, with counterclockwise direction. Since $g$ is analytic in a neighborhood of the domain enclosed by $\gamma = \gamma_{r,\epsilon}$, we have $\int_{\gamma} g(z) \rd{z} = 0$. Therefore its real part is
	\begin{equation}\label{eqn:contour}
\int_{\gamma \cap \R} (-1+ \mu_c(x))\rd{x} 
			+ \Re \Big(\int_{\{re^{i\theta}:0\le \theta\le \pi\}} g(z)\rd{z} \Big) 
			- \sum_{P \in \{ L_j, R_j \}_{j=1}^n} \Re \Big(\int_{\{\epsilon e^{i\theta}+P:0\le \theta\le \pi\}}
			g(z)\rd{z} \Big) =0.
\end{equation}

	The first term in \eqref{eqn:contour} converges to $\int_{\mathbb{R}}\mu(x)\rd{x}-\sum_{j=1}^n a_j$ as $r\rightarrow\infty$. For the second term, the contribution from $-1+ g_c(z)$ vanishes as $r\to \infty$, since on upper half plane $|-1+ g_c(re^{i\theta})| \le C/r^2$ under \eqref{lem_complex2_1}. The contribution from $g_d(z)$ for each $M_k$-term is
	\begin{equation}
\lim_{r\to\infty} \int_0^\pi \frac{a_k}{\pi}\frac{i}{re^{i\theta}-M_k}ire^{i\theta}\rd{\theta}=\lim_{r\to \infty}-\frac{a_k}{\pi}\int_0^\pi \frac{1}{1-\frac{M_k}{r}e^{i\theta}}\rd{\theta}=-a_k.
	\end{equation}
   The third term vanishes as $\epsilon\to 0$ since $g(z)$ is continuous in a neighborhood of $L_k$ and $R_k$. Therefore by letting $r\to \infty $ and $\epsilon\to 0$, we obtain $\int_{\mathbb{R}}\mu(x)\rd{x}=0$.
\end{proof}
\begin{remark}\label{rmk:stationary-R}
If \eqref{lem_complex2_1} holds then $|\mu(x)|\le \frac{C}{x^2}$ for large $|x|$, thus $\tilde{W}*\mu$ is well-defined on $\R$ except at $M_k$. We can imagine that $\mu_d$ generates an external potential $U = \tilde{W}*\mu_d$ and $\mu_c-1$ is a signed measure on $\R$. Since $(\tilde{W}*\mu)' =\pv \tilde{W}'*\mu = 0$ in $S=\supp \mu_c$, the total potential $V_U[\mu_c-1] = \tilde{W}*\mu$ is a constant on each connected component of $\supp \mu_c$. This is the reason why we consider $\mu$ as stationary distributions. 
\end{remark}

Then we apply the above lemma to obtain the following type I, II, III measures on $\mathbb{R}$, denoted as $\mu_\ti,\mu_\tii,\mu_\tiii$, which are stationary in the sense of Remark \ref{rmk:stationary-R}. Firstly by taking the sequence $-1/\pi <0<1/\pi$, we obtain
	\begin{equation}\label{typei}
		\textbf{Type \ti: }
		\mu_{\ti}(x) = -1+\frac{\sqrt{x^2-\pi^{-2}}}{|x|}\chi_{|x|\ge \pi^{-1}} + \delta(x), \quad (\tilde{W}*\mu_{\ti})'(x) = -\frac{\pi\sqrt{-(x^2-\pi^{-2})}}{x}\chi_{|x|\le \pi^{-1}}.
	\end{equation}
Next for $R>1$, by taking $-R< -1< 0=0<1<R$ we obtain
	\begin{equation}\label{typeii}
		\textbf{Type \tii: }
		\mu_{\tii,R}(x) = -1+\frac{\sqrt{(x^2-R^2)x^2}}{|x^2-1|}\chi_{|x|\ge R} + m(\delta_1+\delta_{-1}),
	\end{equation}
with
\begin{equation}\label{typeii-2}
m = \frac{\pi\sqrt{R^2-1}}{2}, \quad (\tilde{W}*\mu_{\tii})'(x) = -\frac{\pi x\sqrt{-(x^2-R^2)}}{x^2-1}\chi_{|x|\le R}.
\end{equation}
Finally, by taking $-R<-1< -L<0<L<1<R$ we obtain
\begin{equation}\label{typeiii}
		\textbf{Type \tiii: }
		\mu_{\tiii,L,R}(x) = -1+\frac{\sqrt{(x^2-R^2)(x^2-L^2)}}{|x^2-1|}\chi_{|x|\in [0,L]\cup [R,\infty)} + m(\delta_1+\delta_{-1}),
	\end{equation}
with
\begin{equation}\label{typeiii-2}
m = \frac{\pi\sqrt{(R^2-1)(1-L^2)}}{2},\quad (\tilde{W}*\mu_{\tiii})'(x) =- \frac{\pi\sgn(x)\sqrt{-(x^2-R^2)(x^2-L^2)}}{x^2-1}\chi_{L\le |x|\le R}.
\end{equation}

See Figure \ref{fig:over-R} for examples of constructions above. 

\subsection{Stationary Distributions over $\bT$}\label{ssec:stationary-T}
We now construct a family of stationary measures over $\bT$. It will contain the unique minimizer of $\cE_U$ with $U$ being generated by the Dirac mass configuration $\rho_d = m (\delta_M+\delta_{-M})$ in Proposition \ref{prop:energy-min} for all $m$ and $M$, therefore includes the unique minimizer of $\cG_{\alpha}$ in Theorem \ref{thm:main-characterization} (after subtracting $\rho_d$). 

The construction is similar to that of Lemma \ref{lem_complex}.  For this construction, we will always represent points in $\bT$ as $[-1/2, 1/2)$. We will use the conformal mapping $\omega(z)=i\frac{1-z}{1+z}$ that maps the unit disc to the upper half plane. Notice that $\omega(e^{2\pi i x})=\tan\pi x$ for any $-1/2<x<1/2$. We also similarly simplify the notation before the construction. Given a sequence of real numbers $-\frac{1}{2} < \kl_1 < \km_1 < \kr_1 \le \kl_2 < \km_2 < \kr_2 \le \cdots \le \kl_n < \km_n < \kr_n < \frac{1}{2}$, we denote
\begin{equation}
	 S:=[-1/2,\kl_1]\cup \bigcup_{j=1}^{n-1}[\kr_j,\kl_{j+1}]\cup [\kr_n,1/2)
\end{equation}
and
\begin{equation}
T_{\kl}(x):=\prod_{j=1}^n |\tan\pi x-\tan\pi \kl_j|, \quad T_{\kl, k}(x):= \prod_{j\neq k} |\tan\pi x-\tan\pi \kl_j|.
\end{equation}
Similarly for $T_{\km}(x)$ and $T_{\kr}(x)$. 
\begin{lemma}\label{lem_complexT}
Given a sequence of points on $\bT$ denoted by
	\begin{equation}
		-\frac{1}{2} < \kl_1 < \km_1 < \kr_1 \le \kl_2 < \km_2 < \kr_2 \le \cdots \le \kl_n < \km_n < \kr_n < \frac{1}{2}.
	\end{equation}
We define $\rho = \rho_c+\rho_d$ on $\mathbb{T}$ be given by 
	\begin{equation}
		\rho_c(x) =  \frac{\sqrt{T_{\kl}(x)T_{\kr}(x)}}{T_{\km}(x)}\chi_{S},\quad  \rho_d(x) = \sum_{j=1}^n \ka_j\delta(x-\km_j),
	\end{equation}
with
	\begin{equation}\label{Ak}
		\ka_k =\frac{\sqrt{T_{\kl}(\km_k)T_{\kr}(\km_k)}}{T_{\km,k}(\km_k)}\cdot \cos^2\pi \km_k>0.
	\end{equation}
Then there exists a constant $C_1$ such that 
	\begin{equation}
		(W*\rho)'(x)- \pi C_1= \left\{\begin{split}
			& -\pi\sgn(x-\km_k)\frac{\sqrt{T_{\kl}(x)T_{\kr}(x)}}{T_{\km}(x)},\quad x\in [\kl_k,\km_k)\cup(\km_k,\kr_k] \\
			& 0,\quad x\in S
		\end{split}\right.
	\end{equation}
\end{lemma}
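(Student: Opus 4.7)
The plan is to mimic the proof of Lemma \ref{lem_complex}, replacing the Hilbert transform on $\mathbb{R}$ (with kernel $1/(\pi y)$) by the Hilbert transform on $\mathbb{T}$ (with kernel $\cot(\pi y)$). The key analog is that on $\mathbb{T}$ one has $W'(x)=-\pi\cot(\pi x)$, and $\cot\pi x$ is precisely the kernel of the conjugate-function operator on the circle: if an analytic function $G=u+iv$ on the unit disk extends continuously to the closed disk, then on the boundary
\[
v(x) = v(0) + \mathrm{p.v.}\!\int_\mathbb{T} u(y)\cot(\pi(x-y))\rd{y} = v(0) - \tfrac{1}{\pi}(W*u)'(x).
\]

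First I construct an analytic function $G$ on the open unit disk by transplanting the function $g$ from Lemma \ref{lem_complex} through the conformal map $\omega(z)=i(1-z)/(1+z)$, which sends the open disk to the open upper half plane and sends $e^{2\pi ix}$ to $\tan\pi x$. Concretely, I set $G(z)=g(\omega(z))$ where $g$ is given by \eqref{g} with the $\mathbb{R}$-points $L_j=\tan\pi \kl_j$, $M_j=\tan\pi \km_j$, $R_j=\tan\pi \kr_j$. Since $\omega$ maps the open disk into the open upper half plane, which avoids the branch cut $i\mathbb{R}_{\le0}$ of $\phi=\sqrt{\cdot}$, each square-root factor in $g$ pulls back to a well-defined analytic function on the disk, so $G$ is analytic there. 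The poles of $g_c$ at $M_k$ are cancelled by $g_d$ with the coefficients from \eqref{ak}, giving a $G$ that extends continuously to the closed disk except at the finitely many points $e^{2\pi i\km_k}$, where I will instead use simple-pole residues to generate the Dirac contributions. Boundedness of $g$ at $\infty$ ensures continuity of $G$ at $z=-1$.

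Second I identify the boundary values. The same sign analysis as in Lemma \ref{lem_complex} shows that $g$ is real and nonnegative on the image of $S$ under $\tan\pi(\cdot)$ and purely imaginary on the images of $[\kl_k,\km_k)\cup(\km_k,\kr_k]$, and pulling back these facts transfer to $G$ on the corresponding arcs of $\mathbb{T}$. Using the identity
\[
\tan\pi x - \tan\pi y = \frac{\sin\pi(x-y)}{\cos\pi x\cos\pi y},
\]
the $(\cos\pi x)^n$ factor arising from the numerator (under the square root, from $2n$ half-powers) cancels the $(\cos\pi x)^n$ from the denominator. Hence the expression $\sqrt{T_L(\tan\pi x)T_R(\tan\pi x)}/T_M(\tan\pi x)$ becomes, up to a positive constant depending only on $\cos\pi \kl_j,\cos\pi \km_j,\cos\pi \kr_j$, the $\mathbb{T}$-expression $\sqrt{T_\kl(x)T_\kr(x)}/T_\km(x)$. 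Absorbing this constant into the normalization of $G$ (equivalently, rescaling so that $\Re(G)$ on $S$ equals $\rho_c$), I get the desired $\Re(G)=\rho_c$ on $S$ and $\Im(G)=\pm\sqrt{T_\kl T_\kr}/T_\km$ on the gaps.

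Third I combine these with the conjugate-function identity. The identity gives $(W*\rho_c)'(x)=-\pi(\Im(G)(x)-\Im(G)(0))$ at every boundary point other than $e^{2\pi i\km_k}$. To incorporate $\rho_d$, I add $(W*\rho_d)'(x)=-\pi\sum_k \ka_k\cot(\pi(x-\km_k))$, and this matches exactly the contribution of the residues of $G$ at $e^{2\pi i\km_k}$; here the factor $\cos^2\pi\km_k$ in \eqref{Ak} arises from the chain-rule factor $1/|\omega'(e^{2\pi i\km_k})|=\cos^2\pi\km_k$ when one transplants the $\mathbb{R}$-residue $a_k$ from \eqref{ak} to the disk. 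Setting $C_1=-\Im(G)(0)/1$ (or equivalently reading off from $G(0)$) yields the stated conclusion.

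The principal obstacle is the careful bookkeeping in two places: the branch-and-sign analysis for $G$ on each arc, which must be matched with the $\mathbb{R}$-computation via $\omega$; and the residue computation at $e^{2\pi i\km_k}$, where the chain rule through $\omega$ produces precisely the $\cos^2\pi\km_k$ factor in \eqref{Ak}, together with the absorption of the overall $\cos$-factor constants from the $\tan\pi x-\tan\pi y$ identity into the normalization that makes $\Re(G)=\rho_c$ on $S$. Once these are handled, the statement follows by combining the conjugate-function identity with the explicit boundary values of $G$.
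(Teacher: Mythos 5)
Your overall strategy—transplant $g$ from Lemma \ref{lem_complex} to the disk through $\omega(z)=i(1-z)/(1+z)$ and invoke the cotangent conjugate-function kernel $\cot\pi x = -\frac{1}{\pi}W'(x)$—is exactly the paper's approach, and your $G=g\circ\omega$ does in fact differ from the paper's $\kg$ only by a complex additive constant, so the argument can be made to go through. But two of your intermediate claims are wrong and reveal confusions that would need fixing.

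First, there is no ``positive constant depending on $\cos\pi\kl_j$, etc.'' to absorb into a normalization of $G$. The paper defines $T_\kl(x):=\prod_j|\tan\pi x-\tan\pi\kl_j|$, not $\prod_j|\sin\pi(x-\kl_j)|$, so $T_\kl(x)=T_L(\tan\pi x)$ identically and $\sqrt{T_L(\tan\pi x)T_R(\tan\pi x)}/T_M(\tan\pi x)$ already equals $\sqrt{T_\kl(x)T_\kr(x)}/T_\km(x)$ on the nose. Rescaling $G$ by such a constant would in fact break the cancellation of the pole of $\kg_c$ at $e^{2\pi i\km_k}$ by $\kg_d$ (whose coefficient $\ka_k$ is fixed by \eqref{Ak}).

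Second, and more seriously, $G=g\circ\omega$ \emph{does} extend continuously to $e^{2\pi i\km_k}$, because $g$ from Lemma \ref{lem_complex} is constructed to be continuous on $\C_{y\ge 0}$, with the pole of $g_c$ at $M_k$ already cancelled by $g_d$. There are no poles of $G$ on the boundary, so the phrase ``use simple-pole residues to generate the Dirac contributions'' is not meaningful. The Dirac masses are part of the given measure $\rho=\rho_c+\rho_d$, and $(W*\rho_d)'(x)=-\pi\sum_j\ka_j\cot\pi(x-\km_j)$ is obtained by a one-line direct computation from $W'=-\pi\cot\pi(\cdot)$; the analytic function is only needed to compute $(W*\rho_c)'$. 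What must then be checked is that the boundary singularities of $\Im(G)$ at $\km_j$ cancel against $\pv\cot\pi y*\rho_d$. This is where the paper diverges from your $G$: rather than taking $g_d\circ\omega$ (whose boundary value is $-\sum_j\frac{a_j}{\pi}\,\frac{i\cos\pi x\cos\pi\km_j}{\sin\pi(x-\km_j)}$), the paper uses $\kg_d(z)=-\sum_j\ka_j\,\frac{i}{\omega(ze^{-2\pi i\km_j})}$, whose boundary value is exactly $-i\sum_j\ka_j\cot\pi(x-\km_j)$, so the singular-part matching is immediate. Your $G$ still works because
\begin{equation}
\frac{a_j}{\pi}\cdot\frac{\cos\pi x\cos\pi\km_j}{\sin\pi(x-\km_j)} - \ka_j\cot\pi(x-\km_j) = -\frac{a_j}{\pi}\cos\pi\km_j\sin\pi\km_j
\end{equation}
is a constant precisely because $\ka_j=\frac{a_j}{\pi}\cos^2\pi\km_j$; but this identity is what needs to be verified, and it is not a residue/chain-rule computation as you describe (indeed, $1/|\omega'(e^{2\pi i\km_k})|=2\cos^2\pi\km_k$, not $\cos^2\pi\km_k$). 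So the proposal has the right skeleton but its account of how the Dirac masses and the factor $\cos^2\pi\km_k$ enter is incorrect and would not compile into a correct proof as written.
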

\begin{proof}
	Let $L_j=w(e^{2\pi i \kl_j})=\tan\pi \kl_j,\,M_j = \tan\pi \km_j,\,R_j=\tan\pi \kr_j$. Then similar to the function $g$ in \eqref{g}, we define $\kg = \kg_c+\kg_d- C_{\kg}$ for $|z|\le 1$ with
	\begin{equation}
		\kg_c(z) = \frac{\prod_{j=1}^n\phi(\omega(z)-L_j)\phi(\omega(z)-R_j)}{\prod_{j=1}^n (\omega(z)-M_j)}, \quad \kg_d(z) = - \sum_{j=1}^n \ka_j \frac{i}{\omega(ze^{-2\pi i \km_j})},
	\end{equation}
	where $\phi$ is as appeared in \eqref{g}, $\ka_j$ is given in \eqref{Ak}, and $C_{\kg}=C_0+iC_1$ is a complex constant such that $\kg(0)=0$. Similarly with Lemma \ref{lem_complex}, here $\kg$ is analytic in the unit disc and continuous on $|z|=1$. At $z=-1$ we have $\lim_{z\rightarrow -1,\,|z|<1}\kg(z)=1- \sum_{j=1}^n \ka_j \frac{i}{\omega(-e^{-2\pi i \km_j})}$. 

	Then, similar to the proof of Lemma \ref{lem_complex}, we have
	\begin{equation}
	\Re(\kg)(e^{2\pi i x}) = \rho_c(x) - C_0,
	\end{equation}
	\begin{equation}
	\Im(\kg)(e^{2\pi i x}) = \sum_{k=1}^n\chi_{[\kl_k,\kr_k]}\sgn(x-\km_k)\frac{\sqrt{T_{\kl}(x)T_{\kr}(x)}}{T_{\km}(x)} -\sum_{j=1}^n \ka_j\cot\pi(x-\km_j) - C_1
	\end{equation}
	which are functions in $L^2(\mathbb{T})$. The kernel for the Hilbert transform on $\mathbb{T}$ is exactly
	\begin{equation}
		\cot\pi x  = -\frac{1}{\pi}W'(x).
	\end{equation}
    By construction $\kg(0) = 0$, we get
	\begin{equation}
		\Big(\pv  \cot\pi y* (\rho_c- C_0)\Big)(x) = \Im (\kg),
    \end{equation}
therefore
	\begin{equation}
		(W*\rho)'= - \pi \Big(\pv  \cot\pi y* \rho \Big)(x)
			= - \pi \Big(  \sum_{k=1}^n\chi_{[\kl_k,\kr_k]}\sgn(x-\km_k)\frac{\sqrt{T_{\kl}(x)T_{\kr}(x)}}{T_{\km}(x)} - C_1 \Big).
\end{equation}
\end{proof}

The above lemma is useful for our application to the energy minimizers only when $C_1=0$. Using $\omega(0)=i$ and $\kg_d(0) = -\sum_i \ka_i$, this condition is 
\begin{equation}\label{cond_C1}
\Im \Big( \frac{\prod_{j=1}^n\phi(i-L_j)\phi(i-R_j)}{\prod_{j=1}^n (i-M_j)} \Big) = 0
\end{equation}
Then we may apply the mean value principle to the analytic function $\kg$ to get
\begin{equation}
	\int_{\mathbb{T}}\rho_c \rd{x} =  C_0 =  \Re\Big(\frac{\prod_{j=1}^n\phi(i-L_j)\phi(i-R_j)}{\prod_{j=1}^n (i-M_j)}\Big) - \sum_i \ka_i,
\end{equation}
i.e.
\begin{equation}\label{cond_C0}
	\int_{\bT} \rho \rd{x} = \Re\Big(\frac{\prod_{j=1}^n\phi(i-L_j)\phi(i-R_j)}{\prod_{j=1}^n (i-M_j)}\Big).
\end{equation}

Then we will construct some energy minimizers. Using the sequence $-\frac{1}{\pi}\sin^{-1}2m <0 < \frac{1}{\pi}\sin^{-1}2m$ with $n=1$ in Lemma \ref{lem_complexT}, we obtain the following:
\begin{proposition}\label{prop_rhoi}
For $0<m\le 1/2$, we have the probability measures $\rho_{\ti,m}(x) = \rho_{\ti,c} + \rho_{\ti,d}$ on $\bT$ where
\begin{equation}\label{typeirho}
		\rho_{\ti, c}(x) = \sqrt{1-\frac{4m^2}{\sin^2\pi x}}\chi_{|x|\ge \frac{1}{\pi}\sin^{-1}2m}, \quad \rho_{\ti, d}(x) = 2m\delta(x). 
\end{equation}
The measure $\rho_{\ti, c}$ is the unique minimizer of $\cE_U$ in $\cM_{1-2m}$ for $U = W*\rho_{\ti, d}$. 
\end{proposition}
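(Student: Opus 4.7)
The plan is to apply Lemma \ref{lem_complexT} in the simplest case $n=1$ with $\kl_1 = -\tfrac{1}{\pi}\sin^{-1}2m$, $\km_1 = 0$, $\kr_1 = \tfrac{1}{\pi}\sin^{-1}2m$, and then rescale the resulting stationary distribution to obtain a probability measure. Proposition \ref{prop:energy-min} will then identify its continuous part as the claimed unique energy minimizer.

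First I unpack the output of the lemma. Using the trigonometric identity $\tan^2\pi x-\tan^2(\sin^{-1}2m) = \frac{\sin^2\pi x - 4m^2}{\cos^2\pi x\,(1-4m^2)}$, the continuous density simplifies to $\tilde\rho_c(x) = \frac{1}{\sqrt{1-4m^2}}\sqrt{1-\tfrac{4m^2}{\sin^2\pi x}}\,\chi_S$ with $S = \{|x|\ge \tfrac{1}{\pi}\sin^{-1}2m\}$, and the Dirac mass prescribed by \eqref{Ak} works out to $\ka_1 = \tfrac{2m}{\sqrt{1-4m^2}}$. Next I verify the hypothesis \eqref{cond_C1} to ensure $C_1 = 0$: with $L_1 = -R_1 = -\tfrac{2m}{\sqrt{1-4m^2}}$ and the branch $\phi(z)=\sqrt{z}$ cut along $i\mathbb{R}_{\le 0}$, a direct argument computation gives $\phi(i-L_1)\phi(i-R_1) = i\sqrt{R_1^2+1}$, hence
$$\frac{\phi(i-L_1)\phi(i-R_1)}{i-M_1} \;=\; \frac{i\sqrt{R_1^2+1}}{i} \;=\; \frac{1}{\sqrt{1-4m^2}} \;\in\; \mathbb{R}.$$
Therefore $C_1 = 0$, so $(W*\tilde\rho)' \equiv 0$ on $S$, and \eqref{cond_C0} simultaneously yields $\int_\bT \tilde\rho\,\rd{x} = \frac{1}{\sqrt{1-4m^2}}$.

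Setting $\rho_{\ti,m} := \sqrt{1-4m^2}\,\tilde\rho$ then produces a probability measure matching the formulas in the statement exactly; since $(W*\rho)'$ is linear in $\rho$, the identity $(W*\rho_{\ti,m})' = 0$ continues to hold on $S$. Because $S$ is a single closed arc in $\bT$, $W*\rho_{\ti,m}$ is constant on $\supp\rho_{\ti,c} = S$. Using the explicit formula for $(W*\tilde\rho)'$ on $(\kl_1,0)\cup(0,\kr_1)$ supplied by Lemma \ref{lem_complexT}, that derivative is strictly positive on $(\kl_1,0)$ and strictly negative on $(0,\kr_1)$, so $W*\rho_{\ti,m}$ rises monotonically from its constant boundary value on $S$ up to $+\infty$ at the Dirac mass. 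Thus this constant equals $\ess\inf_\bT(W*\rho_{\ti,m})$.

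To conclude, set $U := W*\rho_{\ti,d} = 2mW$; then $V_U[\rho_{\ti,c}] = W*\rho_{\ti,m}$, and the preceding step gives $V_U[\rho_{\ti,c}](x) = \ess\inf V_U[\rho_{\ti,c}]$ for every $x\in \supp\rho_{\ti,c}$. Since $U$ has the admissible form \eqref{eqn:U} (a convolution of $W$ with a single positive Dirac mass), Proposition \ref{prop:energy-min} applied in $\cM_{1-2m}$ identifies $\rho_{\ti,c}$ as the unique minimizer of $\cE_U$. The main technical obstacle is the complex computation verifying $C_1 = 0$, which is precisely what pins down $\kr_1 = \tfrac{1}{\pi}\sin^{-1}2m$ as the correct choice; after that, the remaining steps (total-mass bookkeeping via \eqref{cond_C0}, the rescaling, the monotonicity observation, and the invocation of Proposition \ref{prop:energy-min}) are routine. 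The degenerate endpoint $m = 1/2$ can be treated separately, where the support of $\rho_{\ti,c}$ collapses and $\rho_{\ti,m}=\delta_0$ is the only element of $\cM_1$ having the required Dirac mass.
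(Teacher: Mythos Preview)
Your proposal is correct and follows essentially the same approach as the paper: apply Lemma~\ref{lem_complexT} with $n=1$ and the symmetric sequence $-\tfrac{1}{\pi}\sin^{-1}2m<0<\tfrac{1}{\pi}\sin^{-1}2m$, verify \eqref{cond_C1} via the complex computation, use \eqref{cond_C0} for the total mass, rescale, read off the sign of $(W*\rho)'$ to check the sediment condition, and invoke Proposition~\ref{prop:energy-min}. Your treatment is in fact slightly more explicit (the argument for $\phi(i-L_1)\phi(i-R_1)=i\sqrt{R_1^2+1}$ and the separate handling of the degenerate endpoint $m=1/2$), but the route is the same.
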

See Figure \ref{fig:over-T} (a) for an example of this construction. 
\begin{proof}
We verify \eqref{cond_C1} and invoke \eqref{cond_C0} by computing
\begin{equation}
	\frac{\phi(i-L_1)\phi(i-R_1)}{i-M_1} =\frac{ \phi(i+\tan(\sin^{-1}2m))\phi(i-\tan(\sin^{-1}2m)) }{i}= \frac{1}{\sqrt{1-4m^2}} = \int_{\bT} \rho \rd{x}, 
\end{equation}
where the corresponding measure $\rho$ in Lemma \ref{lem_complexT} is 
\begin{equation}
	\rho(x) = \frac{\sqrt{|(\tan\pi x - \tan(\sin^{-1}2m))(\tan\pi x + \tan(\sin^{-1}2m))|}}{|\tan\pi x|} + \frac{2m}{\sqrt{1-4m^2}}\delta(x).
\end{equation}
In order to construct a probability measure, we divide $\rho$ by its total mass and obtain $\rho_{\ti}$, which is the sum of $\rho_{\ti, c}$ and $\rho_{\ti, d}$ given in \eqref{typeirho}. 

By Lemma \ref{lem_complexT} we obtain the formula for $(W*\rho_{\ti})'$. It is positive in $[-\frac{1}{\pi}\sin^{-1}2m, 0)$ and negative in $(0, \frac{1}{\pi}\sin^{-1}2m]$ and zero otherwise, indicating that $W*\rho_{\ti}$ is smallest in $\supp \rho_{\ti,c}$. Therefore $\rho_{\ti, c} \in \cM_{1-2m}$ satisfies the characterizing condition \eqref{eqn:Emin-sediment} for $U = \rho_{\ti, d}*W$ with $M = 0$, and by Proposition \ref{prop:energy-min}, it is also the unique minimizer of $\cE_U$ and maximizer of $\ess\inf V_U$ in $\cM_{1-2m}$. 
\end{proof}
Similarly we can obtain the unique minimizer when $M\neq 0$ by applying Lemma \ref{lem_complexT}. Using the sequence $-R< -M< -L\le  L< M < R$ with $n=2$, we obtain the following:
\begin{proposition}\label{prop_rhoii}
For $0\le L < M < R < 1/2$, we have probability measures $	\rho_{\tii,M,R,L} = \rho_{\tii, c} + \rho_{\tii, d}$ on $\bT$ where
	\begin{equation}\begin{split}\label{typeiirho}
			\rho_{\tii,c}(x)  = & \frac{\sqrt{\sin\pi(x-R)\sin\pi(x+R)\sin\pi(x-L)\sin\pi(x+L)}}{\sin\pi(x-M)\sin\pi(x+M)}\chi_{|x|\in [0,L]\cup[R,1/2]} \\
			\rho_{\tii,d}(x) = &  m(\delta(x+M)+\delta(x-M)) 			
	\end{split}\end{equation}
where
	\begin{equation}\label{typeiirhoA}
		m = \frac{\sqrt{-\sin\pi(M-R)\sin\pi(M+R)\sin\pi(M-L)\sin\pi(M+L)}}{\sin2\pi M}.
	\end{equation}
The measure $\rho_{\tii, c}$ is the unique minimizer of $\cE_U$ where $U = W*\rho_{\tii, d}$ in $\cM_{m_1, m_2}$ with
	\begin{equation}\label{eqm1}
		m_1=\int_{(-M,M)}\rho_{\tii, M, R, L}\rd{x},\quad m_2=1-2m-m_1.
	\end{equation}
\end{proposition}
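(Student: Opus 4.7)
The plan is to specialize Lemma \ref{lem_complexT} to the configuration with $n=2$ and parameters
\[
\kl_1=-R,\ \km_1=-M,\ \kr_1=-L,\ \kl_2=L,\ \km_2=M,\ \kr_2=R,
\]
producing a stationary measure $\rho=\rho_c+\rho_d$ on $\bT$ whose support and Dirac-mass locations match those in \eqref{typeiirho}, and then to normalize to a probability measure and verify the optimality hypotheses of Corollary \ref{cor_Emin}.

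The first step is to check that the constant $C_1$ of Lemma \ref{lem_complexT} vanishes. This is immediate from the symmetry of the configuration under $x\mapsto -x$: the resulting measure $\rho$ is even, so $(W*\rho)'$ is odd, but the lemma gives $(W*\rho)'\equiv \pi C_1$ on the symmetric set $S$, forcing $C_1=0$ and making condition \eqref{cond_C1} hold.

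The second step is to compute the total mass $C_0$ via \eqref{cond_C0}. Using the identity $\phi(i+a)\phi(i-a)=i\sqrt{1+a^2}$, which follows by adding the arguments of the two square roots on the principal branch of $\phi$ in the upper half-plane, the product in \eqref{cond_C0} collapses to $C_0=\cos^2\pi M/(\cos\pi L\cos\pi R)$. Dividing $\rho$ by $C_0$ produces the probability measure $\rho_{\tii,M,R,L}$; the closed forms \eqref{typeiirho} and \eqref{typeiirhoA} then follow by rewriting each factor $\tan\pi x - \tan\pi y$ as $\sin\pi(x-y)/(\cos\pi x \cos\pi y)$ and simplifying with $\sin\pi(x-y)\sin\pi(x+y)=\sin^2\pi x - \sin^2\pi y$. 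The only subtle point is the minus sign inside the square root in \eqref{typeiirhoA}, which arises because $\sin\pi(M-R)<0$ under the hypothesis $L<M<R<1/2$.

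The final step is to identify $\rho_{\tii,c}$ as the unique minimizer in $\cM_{m_1,m_2}$ via Corollary \ref{cor_Emin}. Since $C_1=0$, Lemma \ref{lem_complexT} gives $(W*\rho_{\tii,M,R,L})'\equiv 0$ on $S$, so $V_U[\rho_{\tii,c}]=W*\rho_{\tii,M,R,L}$ is constant on each of the two connected arcs $[-L,L]$ and $[R,1/2)\cup[-1/2,-R]$ of $S$ in $\bT$. The explicit sign of $(W*\rho)'$ provided by Lemma \ref{lem_complexT} makes $V_U[\rho_{\tii,c}]$ strictly increase on $(L,M)$ and strictly decrease on $(M,R)$, with reflected behavior on $(-R,-L)$. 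Hence the constant value on $[-L,L]$ equals $\ess\inf_{(-M,M)} V_U[\rho_{\tii,c}]$ and the constant value on the other arc equals $\ess\inf_{(M,1-M)} V_U[\rho_{\tii,c}]$, which is exactly the characterizing condition \eqref{cor_Emin_1}. Uniqueness then follows from Corollary \ref{cor_Emin}. The main bookkeeping challenge will be the branch-tracking for $\phi$ and sign conventions in Step 2, but everything runs parallel to the proof of Proposition \ref{prop_rhoi} with no new ideas required.
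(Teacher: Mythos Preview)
Your proposal is correct and follows essentially the same approach as the paper's proof: specialize Lemma \ref{lem_complexT} to the symmetric $n=2$ configuration, compute the total mass via \eqref{cond_C0}, normalize, and invoke Corollary \ref{cor_Emin}. The only cosmetic difference is that you verify $C_1=0$ by the evenness argument, whereas the paper simply computes the expression in \eqref{cond_C0} directly and observes it is real; both routes are fine and your identity $\phi(i+a)\phi(i-a)=i\sqrt{1+a^2}$ is exactly what underlies the paper's evaluation $\phi(i-L_j)\phi(i-R_j)\phi(i-L_{j'})\phi(i-R_{j'}) = \frac{i}{\cos\pi R}\cdot\frac{i}{\cos\pi L}$.
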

See Figure \ref{fig:over-T} (b), (c) and (d) for examples of this construction. 
\begin{proof}
Similarly with Proposition \ref{prop_rhoii}, we verify \eqref{cond_C1} and invoke \eqref{cond_C0} by computing
	\begin{equation}
		\frac{\prod_{j=1}^n\phi(i-L_j)\phi(i-R_j)}{\prod_{j=1}^n (i-M_j)} = \frac{\frac{i}{\cos\pi R}\cdot \frac{i}{\cos\pi L}}{-\frac{1}{\cos^2\pi M}} = \frac{\cos^2\pi M}{\cos\pi R\cos\pi L} = \int_{\bT} \rho \rd{x},
	\end{equation}
where the corresponding $\rho = \rho_c+\rho_d$ in Lemma \ref{lem_complexT} is
	\begin{equation}\begin{split}
			\rho_c(x) = & \frac{\sqrt{|(\tan^2\pi x-\tan^2\pi R)(\tan^2\pi x-\tan^2\pi L)|}}{|\tan^2\pi x-\tan^2\pi M|}\chi_{|x|\in [0,L]\cup[R,1/2]} \\
			\rho_d(x)= & \ka (\delta(x+M)+\delta(x-M)),
	\end{split}\end{equation} with
	\begin{equation}
		\ka = \frac{\sqrt{|(\tan^2\pi M-\tan^2\pi R)(\tan^2\pi M-\tan^2\pi L)|}}{|2\tan\pi M|}\cdot \cos^2\pi M.
	\end{equation}
We divide $\rho$ by its total mass to obtain $\rho_{\tii,M,R,L}(x)$, and still denote the corresponding terms by $\rho_{\tii, c}$ and $\rho_{\tii, d}$. 

By Lemma \ref{lem_complexT}, for $\rho_{\tii} = \rho_{\tii,M, R, L}$, we obtain the formula for $(W*\rho_{\tii}(x))'$. It indicates that $W*\rho_{\tii}$ is a constant in both connected components $[-L,L]$ and $\bT \backslash [-M, M]$ of $\supp \rho_{\tii, c}$. Therefore the measure $\rho_{\tii,c} \in \cM_{1-2m}$ satisfies the characterizing condition in \eqref{cor_Emin_1} for $m_1$ and $m_2$ given in \eqref{eqm1} and $U = W*\rho_{\tii, d}$, and by Corollary \ref{cor_Emin}, it is also the unique minimizer of $\cE_U$ in $\cM_{m_1,m_2}$.
\end{proof}

Combining Corollary \ref{cor_Emin} and Proposition \ref{prop_rhoi} and \ref{prop_rhoii}, given $U = W*m(\delta_M +\delta_{-M})$ and $m_1 \le 1-2m$, the unique minimizer of $\cE_U$ in $\cM_{m_1, 1-2m-m_1}$ must be in the form of either $\rho_{\ti, c}(x)$ (when $M = 0$) as in \eqref{typeirho} or $\rho_{\tii, c}(x)$ (when $M \neq 0$) as in \eqref{typeiirho}. Let's denote this unique minimizer in $\cM_{m_1, 1-2m-m_1}$ to be $\rho_{M, m, m_1}$. We thus obtain a three-parameter family of probability measures over $\bT$, 
\begin{equation}\label{eqn:3-parameter}
	\eta_{M, m, m_1}:=\rho_{M, m, m_1}+ m(\delta_M + \delta_{-M}),
\end{equation}
which includes the minimizer of $\cG_{\alpha}$ in $\cM_{\cD\ge d}$ by Theorem \ref{thm:main-characterization}. 

\subsection{Sediment Distributions over $\bT$}\label{ssec:sediment-T}
 In Section \ref{ssec:stationary-T} we have constructed a three-parameter family $\eta_{M, m, m_1} \in \cM$ in \eqref{eqn:3-parameter} that serves as candidates for the minimizer of $\cG_{\alpha}$ in $\cM_{\cD\ge d}$. These measures are all composed of Dirac masses in the form of $ m(\delta_M + \delta_{-M})$ together with a stationary measure in $\cM_{1-2m}$ with respect to $U = W*m(\delta_M + \delta_{-M})$. 
 
 We are going to show in this section that we can further narrow down candidates from stationary distributions to sediment distributions. While doing so, we will eliminate the parameter $m_1$ and reduce the number of parameters from three to two.

\begin{figure}
	\includegraphics[width=0.43\textwidth]{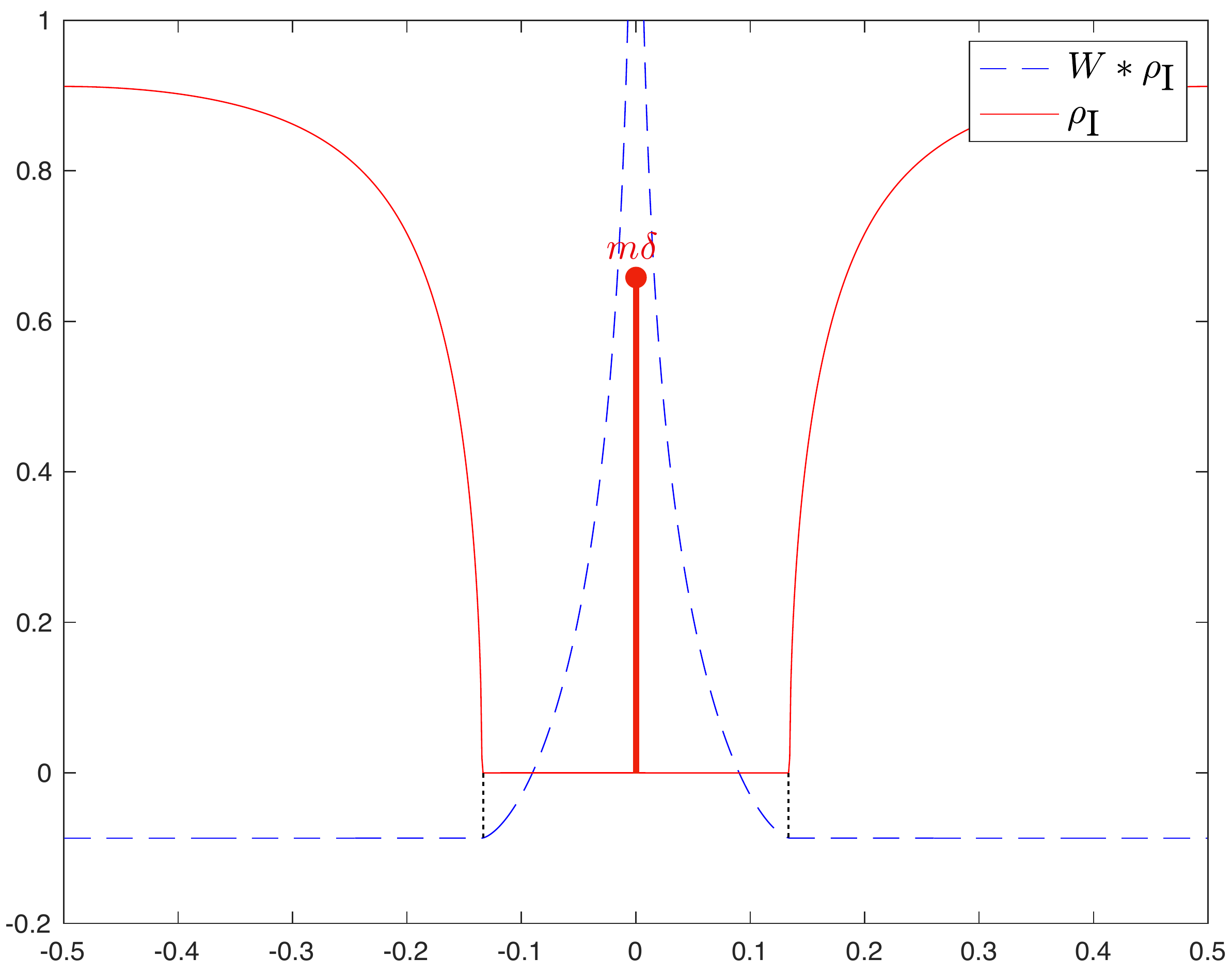}
	\includegraphics[width = 0.43\textwidth]{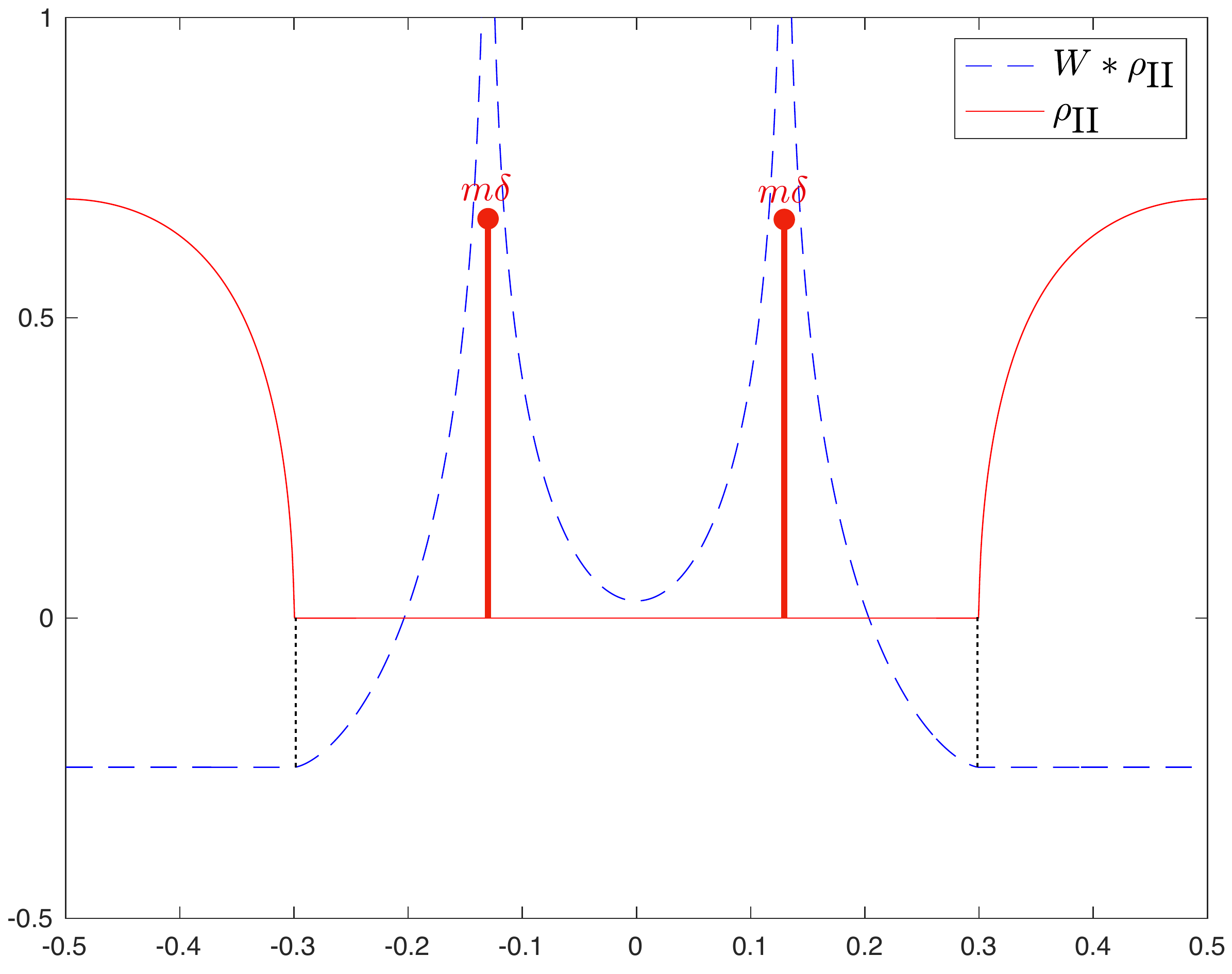}\\
	\includegraphics[width=0.43\textwidth]{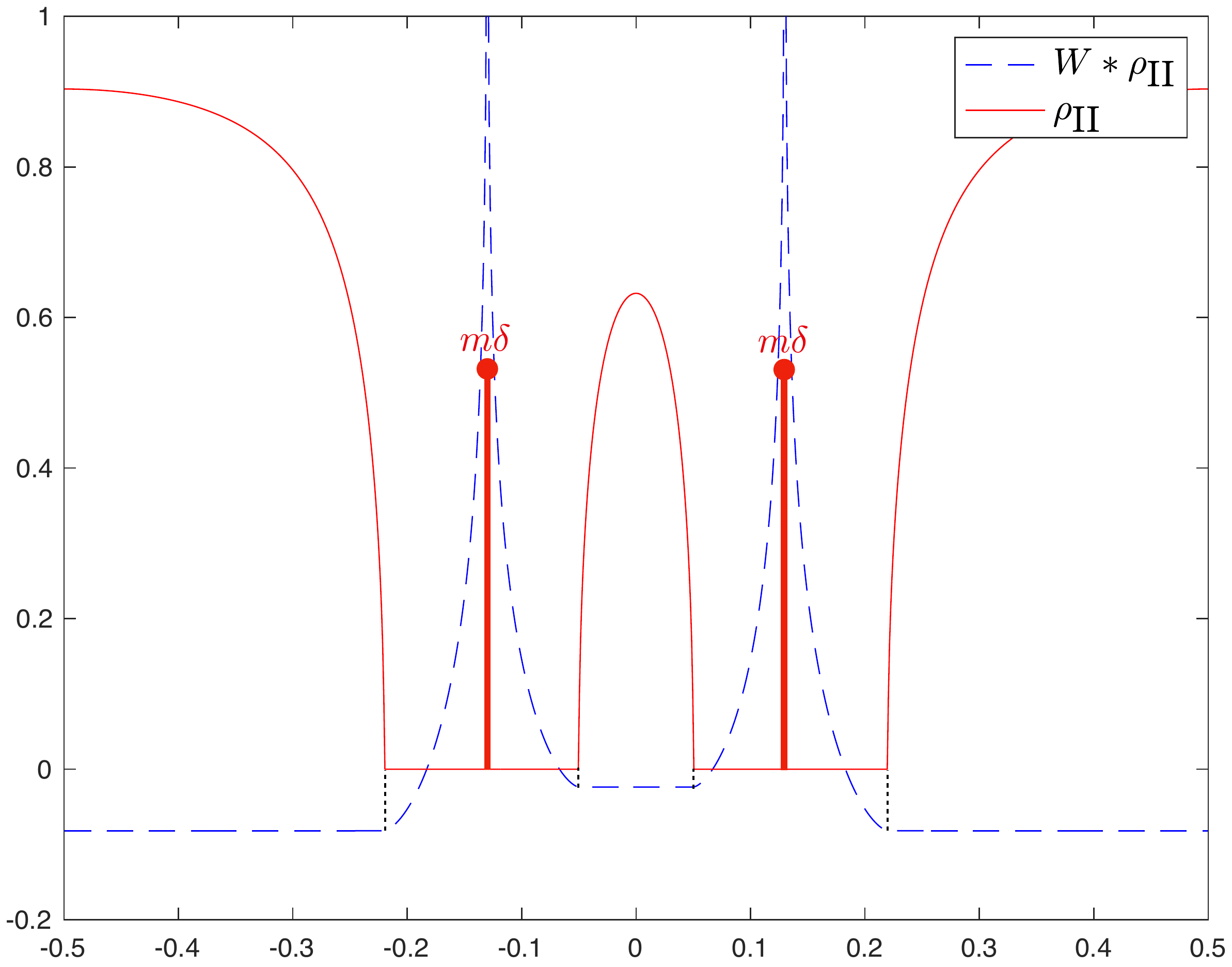}
	\includegraphics[width=0.43\textwidth]{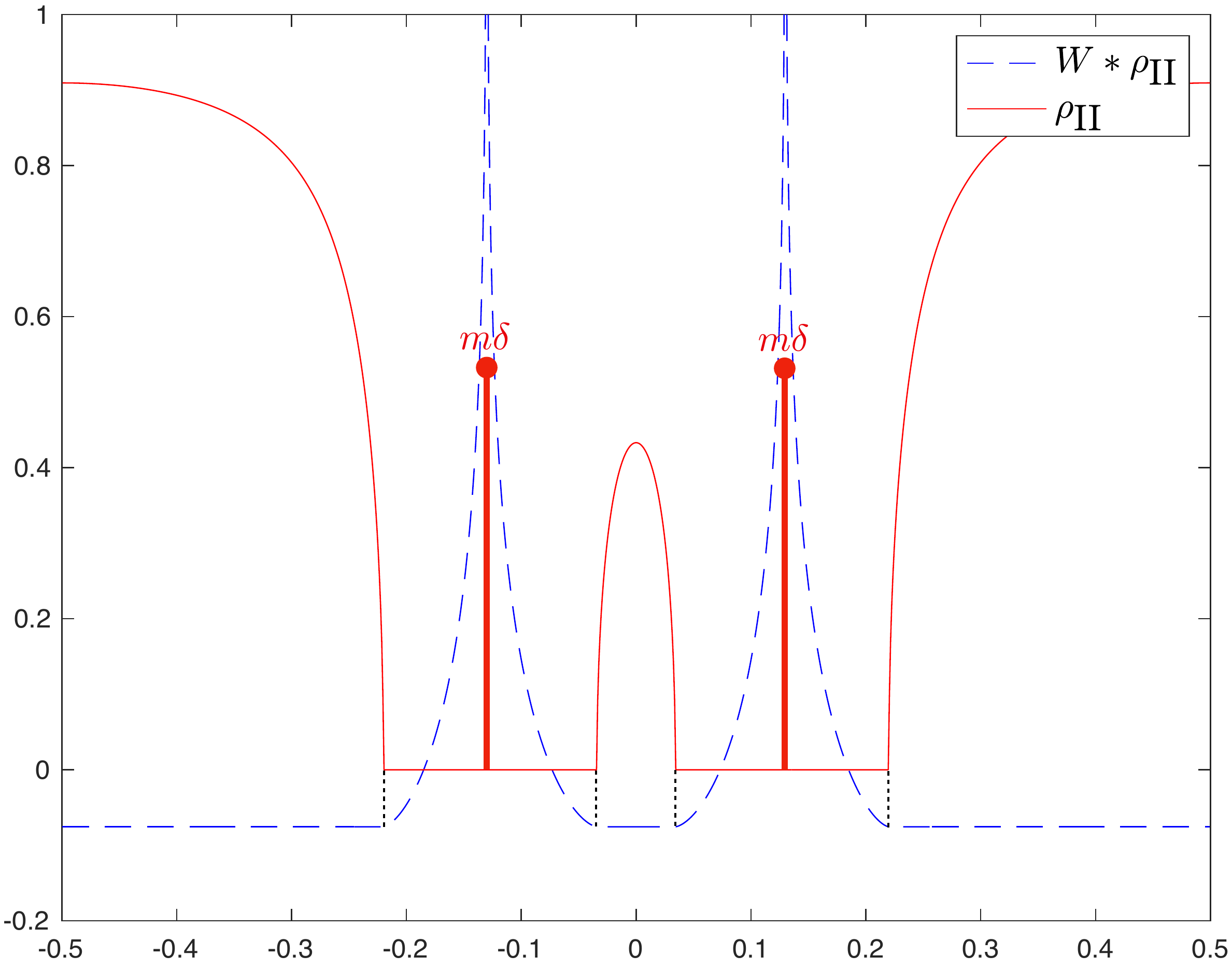}
	\caption{ Stationary Distribution over $\bT$: (a) Top left: Type $\ti$ over $\bT$ ($m= 0.41$); (b) Top right: Type $\tii$ over $\bT$ ($L=0$, $M=0.13$, $R =3$);
		(c) Bottom left: non-sediment Type $\tii$ over $\bT$ ($L=0.05$, $M=0.13$, $R=0.22$); (d) Bottom right: sediment Type $\tii$ over $\bT$ ($L\approx0.034$, $M=0.13$, $R=0.22$).}
	\label{fig:over-T}
\end{figure}

\begin{proposition}\label{prop:2-parameter}
	Fix $0<M<1/2$ and $0<m\le 1/2$. There exists a unique $m_1$ such that $\rho_{M, m, m_1}$ is a sediment distribution with respect to $U = W*m(\delta_M + \delta_{-M})$. Moreover, for fixed $M$ and $m$, $(W*\rho_{M,m, m_1})(0)-(W*\rho_{M, m, m_1})(1/2)$ is an increasing function of $m_1$. 
\end{proposition}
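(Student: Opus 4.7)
The first assertion combines Corollary \ref{cor_Emin} with Proposition \ref{prop:energy-min}: by definition $\rho_{M,m,m_1}$ is the unique minimizer of $\cE_U$ in $\cM_{m_1,1-2m-m_1}$, Corollary \ref{cor_Emin} gives a unique pair $(m_1,m_2)$ with $m_1+m_2=1-2m$ for which this minimizer coincides with the global minimizer of $\cE_U$ in $\cM_{1-2m}$, and Proposition \ref{prop:energy-min} characterizes that global minimizer as the unique sediment distribution with respect to $U$.

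For the monotonicity, the plan is to identify $(W*\rho_{M,m,m_1})(0)-(W*\rho_{M,m,m_1})(1/2)$ with a difference of Lagrange multipliers, up to an $m_1$-independent constant, and then conclude via convexity of the minimum-energy function. By Lemma \ref{lem_complexT} (with $C_1=0$ as verified in Proposition \ref{prop_rhoii}), $V_U[\rho_{M,m,m_1}]$ has vanishing derivative on $\supp \rho_{M,m,m_1}$ and is therefore constant on each of the two connected components $[-L,L]$ and $\bT\setminus(-R,R)$, with respective values $c_1(m_1)$ and $c_2(m_1)$. For $m_1\in(0,1-2m)$ the points $0$ and $1/2$ lie in these two components, so
\[
(W*\rho_{M,m,m_1})(0) - (W*\rho_{M,m,m_1})(1/2) \;=\; c_1(m_1)-c_2(m_1) \;+\; U(1/2)-U(0),
\]
where $U(0)=2mW(M)$ and $U(1/2)=2mW(1/2-M)$ are independent of $m_1$. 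It thus suffices to prove $c_1-c_2$ is increasing in $m_1$.

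A standard Euler-Lagrange perturbation argument identifies $c_i=\partial_{m_i}E_U(m_1,m_2)$, where $E_U(m_1,m_2):=\min_{\rho\in\cM_{m_1,m_2}}\cE_U[\rho]$. Convexity of $E_U$ follows as in Proposition \ref{prop:energy-min}: given minimizers $\rho,\rho'$ at two mass pairs, the convex combination $(1-t)\rho+t\rho'$ lies in the linearly interpolated mass class, and the identity
\[
\tfrac{d^2}{dt^2}\cE_U[(1-t)\rho+t\rho'] \;=\; \sum_{k\ne 0}\hat{W}(k)\,|\widehat{\rho'-\rho}(k)|^2 \;>\; 0
\]
whenever $\rho\ne\rho'$, together with Lemma \ref{lem:W-hat-positive}, gives strict convexity. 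Restricting $E_U$ to the line $m_1+m_2=1-2m$ yields a strictly convex function $g(m_1):=E_U(m_1,1-2m-m_1)$ whose derivative $g'(m_1)=c_1(m_1)-c_2(m_1)$ is therefore strictly increasing on $(0,1-2m)$. The endpoint values $m_1\in\{0,1-2m\}$ are handled by continuity of $W*\rho_{M,m,m_1}$ in $m_1$ at the evaluation points $0$ and $1/2$, which remain separated from the singularity of $W$ and from $\{\pm M\}$ for all admissible $m_1$. The main technical hurdle will be the identification $c_i=\partial_{m_i}E_U$: the two simultaneous constraints $\int_{(-M,M)}\rho=m_1$ and $\int_{(M,1-M)}\rho=m_2$ restrict admissible perturbations to each component separately, and at the boundary values $m_1\in\{0,1-2m\}$ one of the two components degenerates, requiring a continuity argument rather than a direct application of the Lagrange framework.
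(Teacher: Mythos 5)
Your uniqueness argument matches the paper's. For the monotonicity statement, you take a genuinely different route. The paper works directly with the explicit parametrization \eqref{typeiirho}/\eqref{typeiirhoA}: fixing $m,M$, it shows $L$ and $R$ both increase with $m_1$, deduces from this a pointwise and then a cumulative-distribution comparison between $\rho_{M,m,m_{1,1}}$ and $\rho_{M,m,m_{1,2}}$, and closes with the monotone comparison principle Lemma \ref{lem_dec}. You instead recognize $(W*\rho)(0)-(W*\rho)(1/2)$ as a difference of Lagrange multipliers $c_1-c_2$ plus an $m_1$-independent constant, and derive monotonicity from strict convexity of the value function $E_U(m_1,m_2)$ restricted to the constraint line. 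This is a clean and correct conceptual alternative. Two refinements are worth making. First, since convexity alone does not give differentiability of $E_U$, the envelope identification is best stated as a subdifferential inclusion: writing $\rho$ for the minimizer in $\cM_{m_1,m_2}$, convexity of $\cE_U$ gives $\cE_U[\rho']\ge\cE_U[\rho]+\int V_U[\rho](\rho'-\rho)\,\rd{x}$ for any $\rho'\in\cM_{m_1',m_2'}$, and \eqref{cor_Emin_1} gives $V_U[\rho]\ge c_i$ on the $i$-th interval with equality on $\supp\rho$, whence $(c_1,c_2)\in\partial E_U(m_1,m_2)$; strict monotonicity of subgradients of the strictly convex one-variable restriction $g$ then yields the conclusion without needing $g'$ to exist pointwise. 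Second, your argument still leans on the explicit construction at two points, namely the identification of the two support components as symmetric intervals containing $0$ and $1/2$, and the continuity of $W*\rho_{M,m,m_1}$ in $m_1$ near the endpoints $m_1\in\{0,1-2m\}$, so it does not fully decouple from Section \ref{ssec:stationary-T}.
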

See Figure \ref{fig:over-T} (c) and (d) for a comparison between non-sediment distributions and sediment distributions. Before we prove Proposition \ref{prop:2-parameter}, we first give a lemma which is a comparison principle for moments against the decreasing function $\tilde{W}$.
\begin{lemma}\label{lem_dec}
	Let $U$ be a $C^1$ function on $(0,X)$ with $X\in (0,\infty]$ and $U'<0$, and $\mu_1,\mu_2$ be signed measures on $(0,X)$ with $\int_{(0,X)}\mu_1\rd{x}=\int_{(0,X)}\mu_2\rd{x}$. If
	\begin{equation}\label{lem_dec_0}
		\int_{(0,x)}\mu_1(y)\rd{y} \le \int_{(0,x)}\mu_2(y)\rd{y} ,\quad \forall x\in (0,X),
	\end{equation}
	and
\begin{equation}\label{lem_dec_1}
	\int_{(0,X)}(|\mu_1|+|\mu_2|)U\rd{x}<\infty,
\end{equation}
\begin{equation}\label{lem_dec_2}
   \lim_{x\rightarrow0+}U(x)\int_{(0,x)}(\mu_1(y)-\mu_2(y))\rd{y}=\lim_{x\rightarrow X^{-}}U(x)\int_{(0,x)}(\mu_1(y)-\mu_2(y))\rd{y}=0,
\end{equation}
	then
	\begin{equation}
		\int_{(0,X)}\mu_1 U\rd{x} \le \int_{(0,X)}\mu_2 U\rd{x}.
	\end{equation}
And strict inequality holds as long as $\mu_1\ne\mu_2$.
\end{lemma}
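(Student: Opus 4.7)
My plan is to prove Lemma \ref{lem_dec} by an integration-by-parts argument, rewriting the difference of the $U$-integrals as an integral of the ``distribution function'' $F(x) := \int_{(0,x)}(\mu_2-\mu_1)(y)\rd{y}$ against the negative density $-U'$, and then reading off the inequality from the signs.

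Let me first set up notation. Define $F(x) := \int_{(0,x)}(\mu_2-\mu_1)(y)\rd{y}$. Hypothesis \eqref{lem_dec_0} says exactly $F \ge 0$ on $(0,X)$, and the equal-mass assumption gives $\lim_{x\to X^-}F(x)=0$; trivially $\lim_{x\to 0+}F(x)=0$. Since $|\mu_1|+|\mu_2|$ is finite on any compact subinterval of $(0,X)$ (implied by \eqref{lem_dec_1}), $F$ is a function of bounded variation on every such subinterval and has one-sided limits everywhere. The goal reduces to proving $\int_{(0,X)}U\rd(\mu_2-\mu_1)\ge 0$.

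Next, for any $0<\epsilon<X-\epsilon<X$ (or $0<\epsilon<N<X$ if $X=\infty$), the function $U$ is $C^1$ on the closed interval, so Lebesgue--Stieltjes integration by parts yields
\begin{equation}
\int_{(\epsilon,X-\epsilon]}U\rd(\mu_2-\mu_1) \;=\; U(X-\epsilon)F(X-\epsilon)-U(\epsilon)F(\epsilon)-\int_\epsilon^{X-\epsilon}F(x)U'(x)\rd{x}.
\end{equation}
I will then send $\epsilon\to 0+$ (and, if $X=\infty$, the upper cutoff $\to X$ separately). The boundary terms $U(X-\epsilon)F(X-\epsilon)$ and $U(\epsilon)F(\epsilon)$ vanish in the limit by hypothesis \eqref{lem_dec_2}; the left-hand side converges to $\int_{(0,X)}U\rd(\mu_2-\mu_1)$ by dominated convergence using \eqref{lem_dec_1}; and $\int_\epsilon^{X-\epsilon}FU'\rd{x}$ converges to $\int_0^X FU'\rd{x}$ by monotone convergence, since $F\ge 0$ and $-U'>0$ keeps the integrand of a fixed sign. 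The outcome is the clean identity
\begin{equation}
\int_{(0,X)}U\rd(\mu_2-\mu_1) \;=\; -\int_0^X F(x)U'(x)\rd{x}.
\end{equation}
Because $F\ge 0$ and $U'<0$, the right-hand side is $\ge 0$, which gives the desired inequality.

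For the strict inequality, I would argue that if $\mu_1\ne\mu_2$ then $F\not\equiv 0$: if $F$ vanished everywhere it vanishes on the left and right limits at each point, and this forces $\mu_2-\mu_1=0$ as a measure on $(0,X)$. Hence there exists $x_0\in(0,X)$ with $F(x_0)>0$; since $F$ has one-sided limits at $x_0$, $F$ must be bounded below by some positive constant on a one-sided neighborhood of $x_0$, which has positive Lebesgue measure. Combined with $U'<0$ strictly, this upgrades the sign inequality on $\int_0^X FU'\rd{x}$ to strict. The main obstacle is essentially bookkeeping: verifying that the integration-by-parts formula applies with the correct treatment of atoms of $\mu_2-\mu_1$ (which can only create jumps in $F$ that do not affect the Lebesgue integral $\int FU'\rd{x}$), and that the two limit passages --- dominated convergence for the left-hand side and the vanishing of boundary terms via \eqref{lem_dec_2} --- are both justified by the given hypotheses.
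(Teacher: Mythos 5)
Your proof is correct and follows essentially the same route as the paper's: define the cumulative difference $F = m_2 - m_1$, integrate by parts on a compactly contained subinterval, kill the boundary terms via \eqref{lem_dec_2}, and read off the sign from $F \ge 0$ and $U' < 0$. The only differences are cosmetic: you spell out the dominated/monotone convergence justifications for the two limit passages and give a more detailed verification of the strict-inequality case (arguing via one-sided continuity of the BV function $F$ that $F > 0$ on a set of positive Lebesgue measure), where the paper simply asserts strictness; both are fine.
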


\begin{proof}
	Denote $m_i(x)=\int_{(0,x)}\mu_i(y)\rd{y}$. Then integration by parts gives
	\begin{equation}
		\int_{(\epsilon,R)}\mu_i U\rd{x} = U(R)\cdot m_i(R)-U(\epsilon)\cdot m_i(\epsilon) - \int_{(\epsilon,R)}m_i U'\rd{x},
	\end{equation}
	for $0<\epsilon<R<X$. Then
	\begin{equation}
		\begin{aligned}
		& \int_{(\epsilon,R)}\mu_2 U\rd{x}-\int_{(\epsilon,R)}\mu_1 U\rd{x} \\
		= & U(R)\cdot (m_2(R)-m_1(R))- U(\epsilon)\cdot (m_2(\epsilon)-m_1(\epsilon)) -\int_{(\epsilon,R)}(m_2-m_1) U'\rd{x}.
		\end{aligned}
	\end{equation}
	Taking $\epsilon\rightarrow0+$ and $R\rightarrow X-$, using the assumption \eqref{lem_dec_1} and $U'(x)<0$ and $m_1(x)<m_2(x)$ for $x\in (0, X)$, we get
	\begin{equation}
		\int_{(0,X)}\mu_2 U\rd{x}-\int_{(0,X)}\mu_1 U\rd{x}  = -\int_{(0,X)}(m_2-m_1) U'\rd{x} \ge 0,
	\end{equation}
	where the last inequality is strict if $\mu_1\ne\mu_2$.
\end{proof}

\begin{proof}[Proof of Proposition \ref{prop:2-parameter}]
	The uniqueness of $m_1$ follows directly from the uniqueness of minimizer of $\cE_U$ in $\cM_{1-2m}$ in Corollary \ref{cor_Emin}. 
	
	To show the monotonicity of $(W*\rho_{M, m, m_1})(0)-(W*\rho_{M, m, m_1})(1/2)$, we first show the monotonicity of $L$ and $R$ as a function of $m_1$ when $M$ and $m$ is fixed. The measure $\rho_{M, m, m_1}$ has the form of $\rho_{\tii, c}(x)$ as given in \eqref{typeiirho} where $R$ and $L$ are implicitly determined by \eqref{typeiirhoA} and \eqref{eqm1}. For $i = 1,2$, let's say $R_i$ and $L_i$ are determined by taking $m_1 = m_{1,i}$, and $\rho_i = \rho_{M, m, m_{1,i}}$ is the corresponding $\rho_{\tii, c}$ in \eqref{typeiirho}. Using $\sin\pi(M-R)\sin\pi(M+R)=(\cos2\pi R - \cos2\pi M)/2$, we can see from \eqref{typeiirhoA} that either $L_1<L_2<M<R_1< R_2$ or $L_2<L_1< M<R_2<R_1$. Now using \eqref{typeiirho} and \eqref{eqm1}, we see that if $m_{1,1}< m_{1,2}$ then $L_1<L_2<M<R_1< R_2$. 
	
	This implies that if $m_{1,1}< m_{1,2}$, then $\rho_1(x)\le \rho_2(x)$ for $0< x< M$ and $\rho_1(x) \ge \rho_2(x)$ for $M<x<1/2$. Therefore
		\begin{equation}
			\int_{|x|< X}\rho_1(x)\rd{x} \le \int_{|x|< X}\rho_2(x)\rd{x} 
		\end{equation}
		for any $0\le X \le 1/2$. Then, since $\rho_1,\rho_2,W$ are even and $W(x)$ is decreasing on $(0,\frac{1}{2})$, we have
	\begin{equation}
	(W*\rho_1)(0)= \int_{\mathbb{T}}W(x)\rho_1(x)\rd{x} \le \int_{\mathbb{T}}W(x)\rho_2(x)\rd{x}=(W*\rho_2)(0),
	\end{equation}
	by Lemma \ref{lem_dec} (with $X=1/2$). Similarly we have $(W*\rho_1)(1/2)\ge (W*\rho_2)(1/2)$ by applying Lemma \ref{lem_dec} to $\rho_i(1/2-x)$. Then the conclusion follows.
\end{proof}

\subsection{Admissible Distributions over $\R$}\label{ssec:admissible-R}
In this section, we will give a statement for distributions over $\R$ that serves as the analogue of Proposition \ref{prop:2-parameter} in Section \ref{ssec:sediment-T}. We will not only prove similar qualitative results like existence and uniqueness in Proposition \ref{prop:2-parameter}, but also provide quantitative analysis for this family of distributions over $\R$. 

Recall the three types of distributions defined in \eqref{typei}, \eqref{typeii} and \eqref{typeiii}. The distributions $\mu_{\tii}$ and $\mu_{\tiii}$ form a family parametrized by two parameters $R$ and $L$ where $\mu_{\tii}$ corresponds to cases where $L = 0$. We will first make the observation in Proposition \ref{prop:Wmu-infinity} that $(\tilde{W}*\mu)(\infty) = 0$ for all three types of $\mu$. Then by imposing the condition 
\begin{equation}\label{eqn:admissible}
(\tilde{W}*\mu)(x) = \ess \inf (\tilde{W}*\mu), \quad \text{ for } x\in \supp \mu_c, 
\end{equation}
we further cut down by one parameter and obtain a family of stationary measures $\mu$ over $\R$ just parametrized by $R$ in Proposition \ref{prop_mu}. We call this family of signed measures over $\R$ together with $\mu_{\ti}$ the \emph{admissible} distributions. Figure \ref{fig:over-T} (c) and (d) illustrate the difference between admissible distributions and non-admissible distributions. Although technically speaking they are not measures and do not satisfy the sediment condition, they serve a similar role on $\R$ with that of sediment measures (together with the Dirac masses) over $\bT$. Such connection will be made clearer in Section \ref{sec:R-to-T}.

\begin{proposition}\label{prop:Wmu-infinity}
For $\mu$ be given in \eqref{typei}, \eqref{typeii} or \eqref{typeiii}, we have
\begin{equation}\label{limln}
	\lim_{x\to \infty}(\tilde{W}*\mu)(x)=0.
\end{equation}
\end{proposition}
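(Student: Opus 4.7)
The plan rests on two facts established in Lemma \ref{lem_complex}: first, that $\mu$ is integrable on $\R$ with the quadratic decay $|\mu(y)|\le C/y^2$ as $|y|\to\infty$; second, that $\int_{\R}\mu(y)\,dy=0$. A preliminary observation is that the balance condition \eqref{lem_complex2_1} is verified by each of the three constructions. For type $\ti$ one has $L_1=-\pi^{-1},M_1=0,R_1=\pi^{-1}$, whose sum is zero; for type $\tii$, viewed with $n=2$ as $L_1=-R,\,M_1=-1,\,R_1=L_2=0,\,M_2=1,\,R_2=R$, the sum is $(-R+0+2)+(0+R-2)=0$; type $\tiii$ is analogous. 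Using $\int_{\R}\mu=0$ to absorb the $\log|x|$ piece, we rewrite
\begin{equation}\label{eqn:prop_Wmu_split}
(\tilde W*\mu)(x) \;=\; -\int_{\R}\log|x-y|\,\mu(dy) \;=\; -\int_{\R}\log\Bigl|1-\frac{y}{x}\Bigr|\,\mu(dy),
\end{equation}
so it suffices to show the right-hand side vanishes as $|x|\to\infty$.

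I would then split the integration in \eqref{eqn:prop_Wmu_split} into three regions. In the near region $|y|\le|x|/2$, the elementary estimate $|\log|1-y/x||\le 2|y/x|$ combined with boundedness of $\mu$ on a fixed compact set (containing $\supp\mu_d$) and with $|\mu(y)|\le C/y^2$ for large $|y|$ gives a contribution of order $\log|x|/|x|$. In the far region $|y|\ge 2|x|$, the bound $|\log|1-y/x||\le \log 2 + \log(|y|/|x|)$ integrated against the decay $|\mu(y)|\le C/y^2$ yields a contribution also of order $\log|x|/|x|$.

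The technical heart of the argument is the intermediate region $|x|/2\le|y|\le 2|x|$, where the logarithm has an integrable singularity at $y=x$ but $\mu$ in that range contains no Dirac component (for large $|x|$ the Dirac support lies in the near region). Here the quadratic tail provides the uniform bound $|\mu(y)|\le C/|x|^2$, and the rescaling $u=y/x$ turns the contribution into
\begin{equation}
\frac{C}{|x|}\int_{1/2\le|u|\le 2}\bigl|\log|1-u|\bigr|\,du \;=\; O(|x|^{-1}).
\end{equation}
Summing the three regions yields $(\tilde W*\mu)(x)=O(\log|x|/|x|)$, which gives \eqref{limln}. The main obstacle is the intermediate region: once one sees that the $1/y^2$ decay of $\mu$ buys exactly the factor $|x|^{-2}$ needed to neutralize the unit-scale log singularity after rescaling, the rest is bookkeeping.
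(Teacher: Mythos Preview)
Your proof is correct and follows essentially the same strategy as the paper: use the mean-zero property from Lemma~\ref{lem_complex} to rewrite $(\tilde W*\mu)(x)=-\int_{\R}\log|1-y/x|\,\mu(dy)$, then split into regions and exploit the quadratic decay $|\mu(y)|\lesssim |y|^{-2}$. The only difference is cosmetic: the paper cuts at $|y|=\sqrt{x}/2$ and $|y|=x/2$ (so the third region $|y|>x/2$ contains the singularity at $y=x$ and is handled by the same rescaling $y=xy_1$), whereas you cut at $|y|=|x|/2$ and $|y|=2|x|$; both decompositions yield the same $O(\log|x|/|x|)$ bound. Your explicit check of the balance condition \eqref{lem_complex2_1} for each type is a useful addition that the paper leaves implicit in the symmetry of the constructions.
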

\begin{proof}
Let $\mu$ be given by \eqref{typei}, \eqref{typeii} or \eqref{typeiii}.  We know from Lemma \ref{lem_complex} that $\int_\mathbb{R}\mu\rd{x}=0$ and $|\mu(x)|\lesssim \frac{1}{|x|^2}$. To see \eqref{limln}, we use the mean-zero property of $\mu$ and write
	\begin{equation}\begin{split}
			\int_{\mathbb{R}}\tilde{W}(x-y)\mu(y)\rd{y} = & \int_{\mathbb{R}}(\tilde{W}(x-y)-\tilde{W}(x))\mu(y)\rd{y} = \int_{\mathbb{R}}-\ln|1-\frac{y}{x}|\cdot \mu(y)\rd{y} \\
	\end{split}\end{equation}
	for large $|x|$. We can assume $x>0$ since $\tilde{W}*\mu$ is even. 
	
	For $|y|\le \sqrt{x}/2$, we have $\big|\ln|1-\frac{y}{x}|\big| \lesssim \frac{1}{\sqrt{x}}$, thus
	\begin{equation}
		\left|\int_{|y|\le \sqrt{x}/2}-\ln|1-\frac{y}{x}|\cdot \mu(y)\rd{y}\right|\lesssim \frac{1}{\sqrt{x}}\int_{\R}|\mu(y)|\rd{y}.
	\end{equation}
Next for $\sqrt{x}/2<|y|\le x/2$, similarly we have $\big|\ln|1-\frac{y}{x}|\big| \lesssim \frac{|y|}{x}$ and $|\mu(y)| \lesssim 1/y^2$, thus
	\begin{equation}
		\left|\int_{\sqrt{x}/2<|y|\le x/2}-\ln|1-\frac{y}{x}|\cdot\mu(y)\rd{y}\right| \lesssim \frac{1}{x}\int_{\sqrt{x}/2<|y|\le x/2}\frac{1}{|y|}\rd{y} \lesssim \frac{\ln x}{x}.
	\end{equation}
Finally for $|y|>x/2$ using a change of variable $y=x y_1$ we get
	\begin{equation}
		\left|\int_{|y|>x/2}-\ln|1-\frac{y}{x}|\cdot \mu(y)\rd{y}\right| \lesssim \int_{|y|>x/2}\big|\ln|1-\frac{y}{x}|\big|\frac{1}{y^2}\rd{y} \lesssim \frac{1}{x}\int_{|y_1|>1/2}\big|\ln|1-y_1|\big|\frac{1}{y_1^2}\rd{y_1} \lesssim \frac{1}{x}.
	\end{equation}
Therefore \eqref{limln} is proved. 
\end{proof}
\begin{remark}\label{rmk:Wmu-infinity}
It follows from Proposition \ref{prop:Wmu-infinity} and the stationary property of $\mu$ (in  \eqref{eqn:diff}) that $(\tilde{W}*\mu)(x)=0$ when $|x|\ge 1/\pi$ for Type $\ti$ and $(\tilde{W}*\mu)(x)=0$ when $|x|\ge R$ for Type $\tii$ and $\tiii$.
\end{remark}

\begin{figure}
	\includegraphics[width=0.43\textwidth]{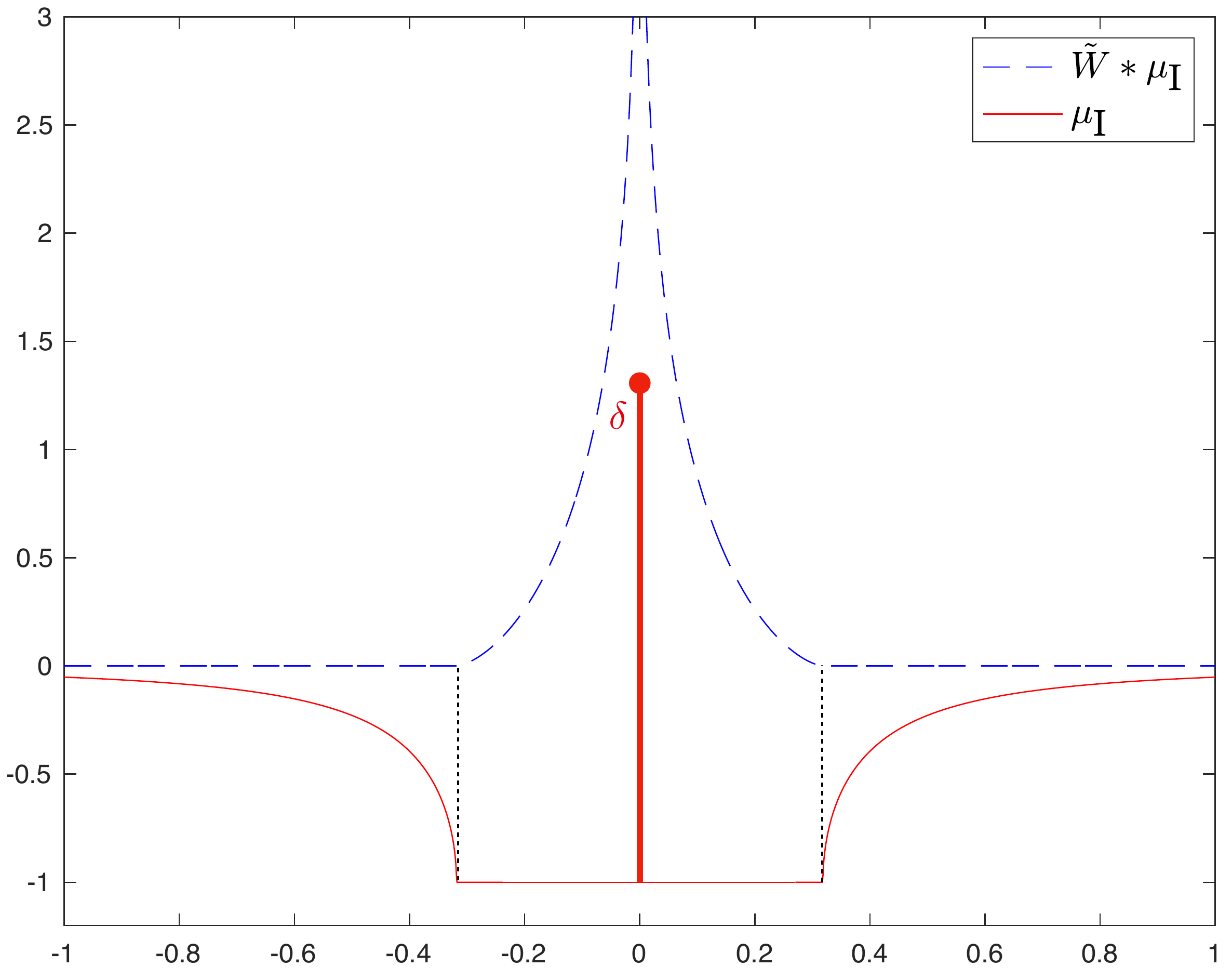}
	\includegraphics[width = 0.43\textwidth]{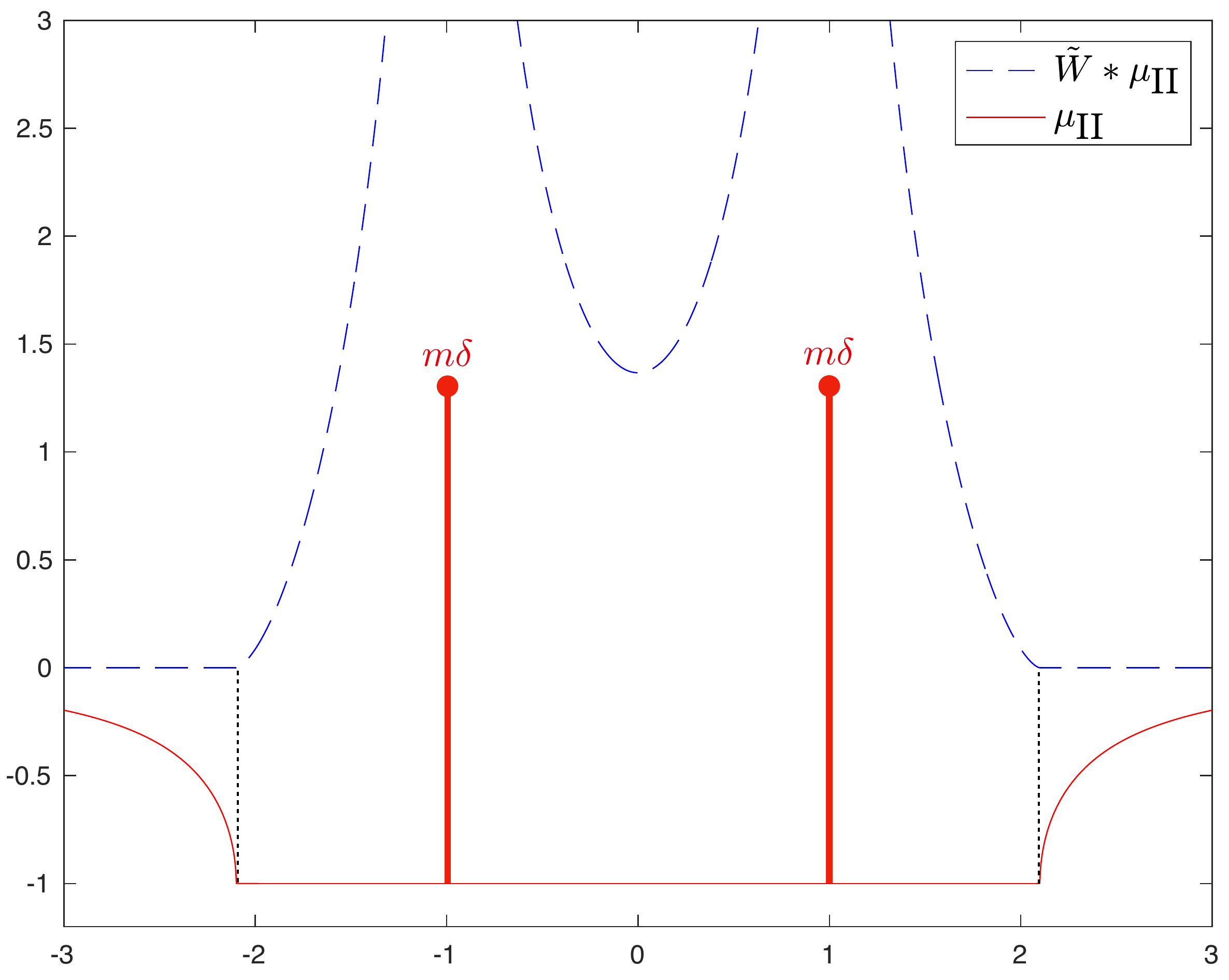}\\
	\includegraphics[width=0.43\textwidth]{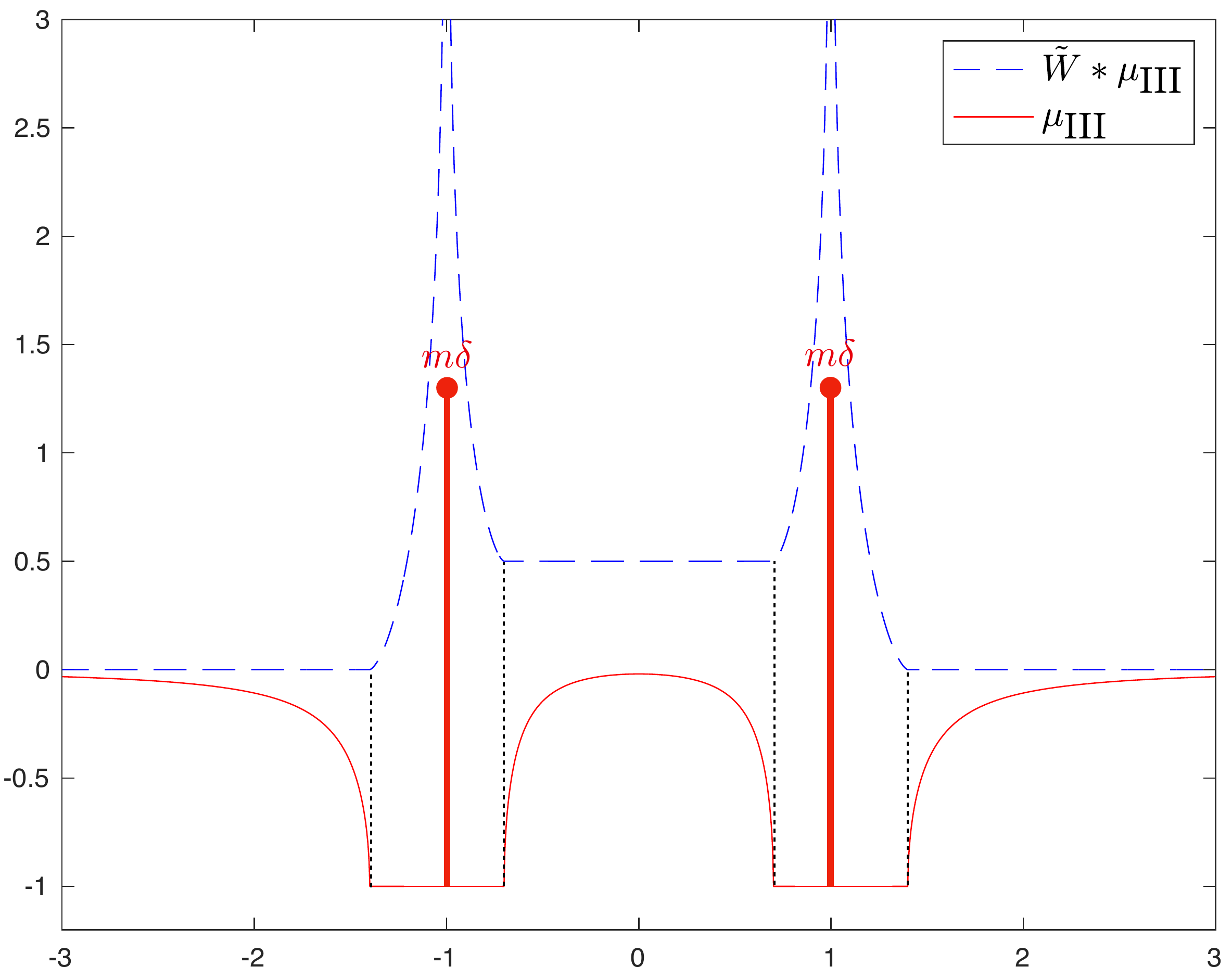}
	\includegraphics[width=0.43\textwidth]{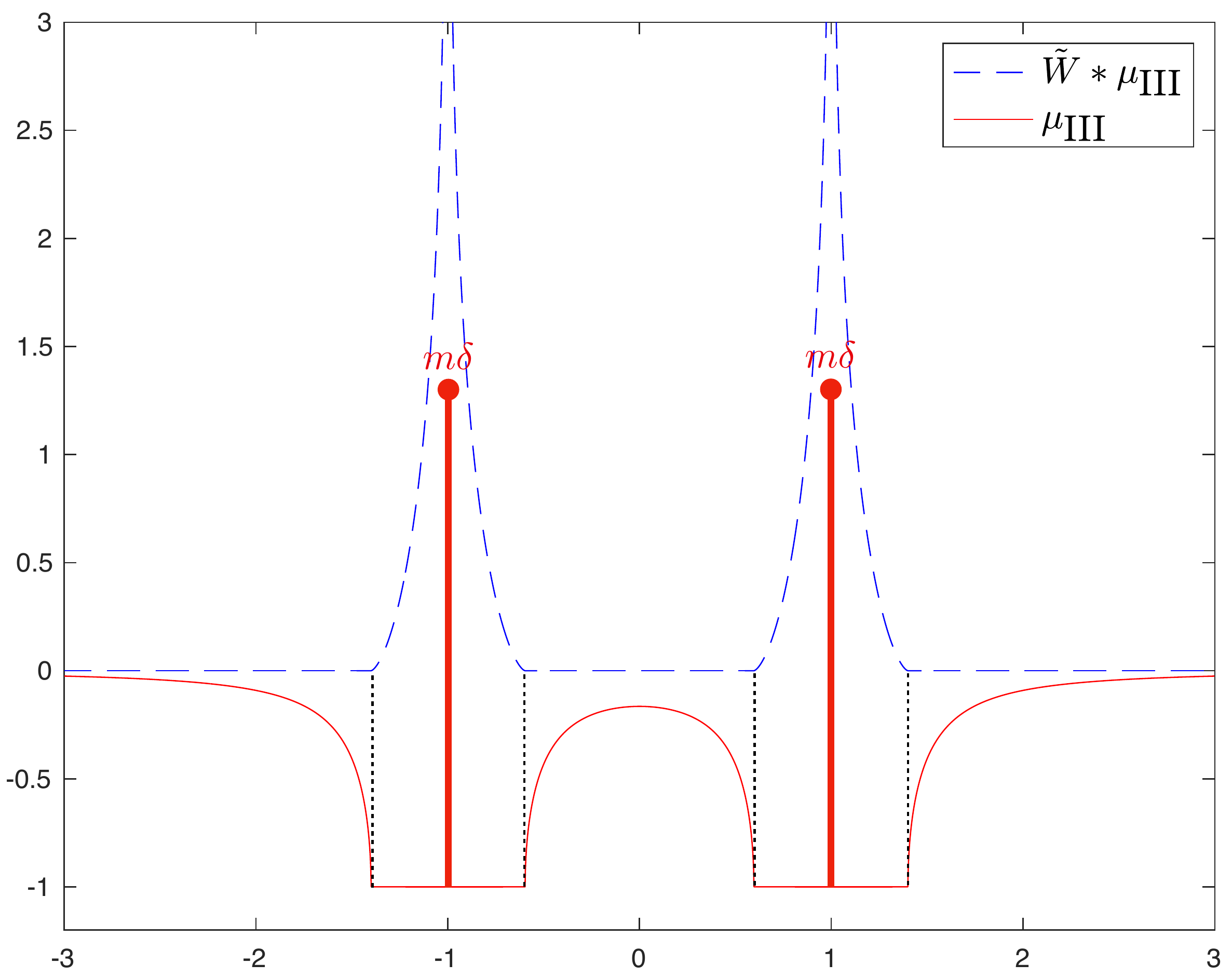}
	\caption{Stationary Distribution over $\R$: (a) Top left: Type $\ti$ over $\R$; (b) Top right: Type $\tii$ over $\R$ ($L=0$, $R =2.1$);
		(c) Bottom left: non-admissible Type $\tiii$ over $\R$ ($L=0.7$, $R=1.4$); (d) Bottom right: admissible Type $\tiii$ over $\R$ ($L\approx 0.60$, $R = 1.4$).}
	\label{fig:over-R}
\end{figure}

In the following proposition, by imposing \eqref{eqn:admissible}, we will further get rid of the parameter $L$ and obtain a one-parameter family of distributions parametrized by $R$. We first explain how the condition \eqref{eqn:admissible} is equivalent to a condition on $(\tilde{W}*\mu)(0)$. For Type $\tii$, we recall in Lemma \ref{lem_complex} that $\supp \mu_c = (-\infty, -R] \cup [R, \infty)$ and $(\tilde{W}*\mu)' = \pv \tilde{W}'*\mu$ is positive in $[0, 1)$ and negative in $(1, R]$ and $\tilde{W}*\mu$ approaches to $\infty$ as $x\to 1$. Therefore \eqref{eqn:admissible} is equivalent to $(\tilde{W}*\mu)(0)\ge 0$ since $(\tilde{W}*\mu )(\infty) = 0$ by Proposition \ref{prop:Wmu-infinity}. For Type $\tiii$, we recall that $\supp \mu_c = (-\infty, -R] \cup [-L,L] \cup[R, \infty)$ and $(\tilde{W}*\mu)'$ is positive in $[L, 1)$ and negative in $(1, R]$, therefore \eqref{eqn:admissible} is equivalent to $(\tilde{W}*\mu)(0)=0$. 

We now give the formula for $(\tilde{W}*\mu)(0)$ for $\mu$ constructed in \eqref{typeii} and \eqref{typeiii} by integrating against $(\tilde{W}*\mu)'$. Although $\tilde{W}*\mu$ is not continuous at $x=1$, we still have
\begin{equation}\label{eqn:pv}
	\begin{split}
\pv\int_L^R (\tilde{W}*\mu)'(x)\rd{x} & = \lim_{\epsilon \to 0^+} \Big((\tilde{W}*\mu)(1-\epsilon) -  (\tilde{W}*\mu)(L) + (\tilde{W}*\mu)(R) -  (\tilde{W}*\mu)(1+\epsilon) \Big)\\
& =( \tilde{W}*\mu)(R) - ( \tilde{W}*\mu)(L) = -( \tilde{W}*\mu)(L),
\end{split}\end{equation}
where we plug in $\mu = (-1+\mu_c) + \mu_d$ by Lemma \ref{lem_complex} for the second equality. The third equality follows from Remark \ref{rmk:Wmu-infinity}. Then by the explicit expression of $(\tilde{W}*\mu)'$ in \eqref{typeii-2} and \eqref{typeiii-2} we obtain
\begin{equation}\label{eqn:Phi}
	(\tilde{W}*\mu)(0) = -\pv\int_L^R (\tilde{W}*\mu)'(x)\rd{x} = \pv\int_L^R\frac{\pi\sqrt{-(x^2-R^2)(x^2-L^2)}}{x^2-1}\rd{x}: = \Phi(L, R).
\end{equation}
We denote the last integral by $\Phi(L, R)$. 

In order to study $(\tilde{W}*\mu)(0)$ for Type $\tii$ and $\tiii$ distributions over $\R$, we first prove some useful properties of this function $\Phi(L, R)$. 

\begin{lemma}\label{lem:Phi}
	For $0\le L <1<R$, let $\Phi(L, R)$ be given in \eqref{eqn:Phi}. 
	\begin{enumerate}
		\item[\textnormal{(i)}]The function $\Phi(L, R)$ is strictly increasing in both $L$ and $R$. 
		\item[\textnormal{(ii)}] There exists a unique real number $R= R_c \approx 1.8102$ such that $\Phi(0, R) = 0$. 
		\item[\textnormal{(iii)}] If $\Phi(L, R) = 0$, then $L+R<2$ and $L^2+R^2>2$.
	\end{enumerate}
\end{lemma}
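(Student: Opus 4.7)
The plan is to prove the three parts in turn, making use of both the integral formula \eqref{eqn:Phi} and the measure-theoretic identification $\Phi(L,R) = (\tilde{W}*\mu_{\tiii,L,R})(0) = -\int_{\R}\ln|y|\,\mu_{\tiii,L,R}(y)\,\rd{y}$ already established (well-defined because $\tilde W(\pm 1)=0$ kills the Dirac pieces and $-1+\mu_c = O(1/y^2)$ at infinity). For (i) I would differentiate in the measure form, which avoids the principal-value pole at $x=1$. A direct computation from \eqref{typeiii} gives $\partial_R\mu_c>0$ on $\{|y|<L\}$ and $\partial_R\mu_c<0$ on $\{|y|>R\}$, while the moving-boundary term at $|y|=R$ vanishes because $\mu_c(R^\pm)=0$. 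Multiplying by $-\ln|y|$, which is positive on $\{|y|<1\}$ (and in particular on $\{|y|<L\}$) and negative on $\{|y|>1\}$ (in particular on $\{|y|>R\}$), makes both contributions to $\partial_R\Phi$ positive. The argument for $\partial_L\Phi>0$ is completely parallel, with the signs of $\partial_L\mu_c$ flipping in just the right way.

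For (ii), the substitutions $u=x^2$ and then $v=\sqrt{R^2-u}$ reduce $\Phi(0,R)=\pi\pv\int_0^R x\sqrt{R^2-x^2}/(x^2-1)\,\rd{x}$ to $\pi\pv\int_0^R v^2/(R^2-1-v^2)\,\rd{v}$; partial fractions with $a=\sqrt{R^2-1}$ and the identity $(R+a)/(R-a)=(R+a)^2$ then give the closed form
\[
\Phi(0,R) \;=\; -\pi R + \pi\sqrt{R^2-1}\,\cosh^{-1}(R),
\]
whose derivative simplifies to $\pi R\cosh^{-1}(R)/\sqrt{R^2-1}>0$ for $R>1$. Combined with $\Phi(0,1^+)=-\pi<0$ and $\Phi(0,R)\to+\infty$, the intermediate value theorem produces a unique root $R_c$; numerical root finding gives $R_c\approx 1.8102$.

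For (iii), by the monotonicity from (i) it suffices to prove $\Phi>0$ on the line $\{L+R=2\}$ and $\Phi<0$ on the curve $\{L^2+R^2=2\}$. For the first, I parametrize $L=1-a,\,R=1+a$ with $a\in(0,1]$ and substitute $x=1+t$: using $x^2-1=t(t+2)$ and $(R^2-x^2)(x^2-L^2)=(a^2-t^2)((2+t)^2-a^2)$, the integral becomes
\[
\Phi(1-a,1+a)/\pi \;=\; \pv\int_{-a}^{a}\frac{g(t)}{t}\,\rd{t}, \qquad g(t):=\sqrt{a^2-t^2}\,\sqrt{1-\tfrac{a^2}{(2+t)^2}}.
\]
Because $g$ is continuous at $t=0$, the symmetric-interval identity $\pv\int_{-a}^{a}g(t)/t\,\rd{t}=\int_0^a (g(t)-g(-t))/t\,\rd{t}$ applies, and the strict monotonicity of $t\mapsto 1-a^2/(2+t)^2$ forces $g(t)>g(-t)$ for $t>0$, yielding $\Phi>0$. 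For the second, the substitution $x^2=1+cw$ with $c:=(R^2-L^2)/2\in(0,1]$ transforms $\Phi$ into
\[
\Phi \;=\; \frac{\pi c}{2}\,\pv\int_{-1}^{1}\frac{\sqrt{1-w^2}}{w\sqrt{1+cw}}\,\rd{w},
\]
and the pairing $f(w)+f(-w)=(\sqrt{1-w^2}/w)\bigl(1/\sqrt{1+cw}-1/\sqrt{1-cw}\bigr)$ is strictly negative on $(0,1)$, so the principal value is negative and $\Phi<0$. The main technical difficulty will be carefully justifying the symmetric-interval pairing of principal-value integrals in (iii) and the vanishing of moving-boundary contributions in (i); both reduce to routine bookkeeping once the appropriate splittings around the pole $x=1$ are set up.
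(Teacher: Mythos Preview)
Your argument is correct and, in parts (i) and (iii), takes a somewhat different route from the paper. For (i) the paper does not differentiate under the integral; instead it compares $\mu_{L,R_1}$ and $\mu_{L,R_2}$ pointwise, observes the resulting ordering of cumulative distributions, and then invokes Lemma~\ref{lem_dec} (integration against a decreasing kernel) to conclude. Your direct differentiation of the representation $\Phi(L,R)=\int_{\R}(-\ln|y|)\,\mu(y)\,\rd y$ is cleaner and self-contained: the vanishing of the moving-boundary terms follows from $\mu_c(R^\pm)=\mu_c(L^\pm)=0$, and the sign analysis is immediate because the support of $\partial_R\mu_c$ lives entirely away from the pole at $|y|=1$. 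The only point to make explicit is that the $1/\sqrt{|y|-R}$ singularity of $\partial_R\mu_c$ at the edge is integrable and that $(-\ln|y|)\,\partial_R\mu_c=O(|y|^{-2}\ln|y|)$ at infinity, so dominated convergence justifies the interchange; you have flagged this as routine, and it is. For (ii) your derivation of the closed form and its derivative agrees with the paper. For (iii) the overall strategy is identical---reduce via monotonicity to the sign of $\Phi$ on the curves $L+R=2$ and $L^2+R^2=2$---but your centered substitutions $x=1+t$ and $x^2=1+cw$ lead to symmetric principal-value integrals that are dispatched by the odd/even pairing $g(t)-g(-t)$, whereas the paper uses the reflections $x\mapsto 2-x$ and $u\mapsto 2-u$ after a preliminary change of variable. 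The two computations are essentially equivalent rearrangements of the same cancellation. One small logical point worth recording: to cover $R>2$ in the $L+R<2$ conclusion, note that your $a=1$ case gives $\Phi(0,2)>0$, and then monotonicity in $R$ handles all larger $R$; the paper addresses this by first noting $\Phi(L,R)=0\Rightarrow R\le R_c$.
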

See Figure \ref{fig:Phi} (a) for a graph of $\Phi(L, R)  = 0$. 
\begin{proof}
	\textbf{Proof of (i)}:
We denote $\mu_{L,R}$ for $\mu$ of Type $\tii$ (where $L=0$) and Type $\tiii$. Recall the expression of $\mu_{L, R}$ and $m(L, R)$ in \eqref{typeiii-2}.
	If $R_2>R_1$, then $m(L,R_2)>m(L,R_1)$, and 
	\begin{equation}
		\mu_{L,R_2}(x) \le  \mu_{L,R_1}(x) \text{ for } |x|>1, \quad \mu_{L,R_2}(x) \ge  \mu_{L,R_1}(x) \text{ for } |x|\le1.
	\end{equation}
	This verifies the condition \eqref{lem_dec_0} with $\mu_i=\mu_{L,R_i}$ using the mean zero property of $\mu_{\tiii}(x)$. One can also check the condition \eqref{lem_dec_1} with $U=\tilde{W}$ since $\mu_i$ are $L^\infty$ functions near 0, have the mean-zero property, and decay at least  as $1/|x|^2$ at infinity. Therefore, noticing that $\mu_i$ are even, we can apply Lemma \ref{lem_dec} (with $X=\infty$) to get $\Phi(L,R_2)> \Phi(L,R_1)$ by the decreasing property of $\tilde{W}(x)$ in $|x|$. Similarly one can prove the monotonicity in $L$.
	
	\textbf{Proof of (ii)}:
	Since $\Phi(0, R)$ is strictly increasing in $R$, it suffices to show that 
	\begin{equation}
		\lim_{R\to 1^+}\Phi(0, R)<0, \quad \lim_{R\to \infty}\Phi(0, R)>0.
	\end{equation}
	Notice that $\Phi(0, R)$ can be explicitly evaluated as
	\begin{equation}
		\pv\int_0^R\frac{x\sqrt{-(x^2-R^2)}}{x^2-1}\rd{x} = \sqrt{R^2-1}\ln(R+\sqrt{R^2-1})-R, \quad \text{ for } R>1,
	\end{equation}
     which indicates the sign of limit as $R\to 1^{+}$ and $R\to \infty$. We denote the unique root in $(1, \infty)$ of $ \sqrt{R^2-1}\ln(R+\sqrt{R^2-1})-R = 0$ by $R_c$. It is approximately $1.8102$.
	
	\textbf{Proof of (iii)}:
	Note that it follows $\Phi(0, R_c) = 0$ and the monotonicity of $\Phi$ that if $\Phi(L, R) =0$ for some $L\ge 0$, then $R\le R_c$. Now since $\Phi(L, R)$ is increasing in both $L$ and $R$, it suffices to show for any $R_c\ge R>1$ that
	\begin{equation}
		\Phi(2-R, R) > 0,\quad \Phi(\sqrt{2-R^2}, R) < 0.
	\end{equation}
	
	To see $\Phi(\sqrt{2-R^2},R) < 0$, we use a change of variable $u=x^2$ and then write $\frac{2}{\pi}\Phi(\sqrt{2-R^2},R)$ as
	\begin{equation}\begin{split}
		& \pv\int_{2-R^2}^{R^2} \frac{\sqrt{(R^2-u)(u-(2-R^2))}}{(u-1)\sqrt{u}}\rd{u} \\
			= & \lim_{\epsilon\rightarrow0+} \int_{2-R^2}^{1-\epsilon} \frac{\sqrt{(R^2-u)(u-(2-R^2))}}{(u-1)\sqrt{u}}\rd{u} + \int_{1+\epsilon}^{R^2} \frac{\sqrt{(R^2-u)(u-(2-R^2))}}{(u-1)\sqrt{u}}\rd{u} \\
			= & \lim_{\epsilon\rightarrow0+} -\int_{1+\epsilon}^{R^2} \frac{\sqrt{(R^2-u)(u-(2-R^2))}}{(u-1)\sqrt{2-u}}\rd{u} + \int_{1+\epsilon}^{R^2} \frac{\sqrt{(R^2-u)(u-(2-R^2))}}{(u-1)\sqrt{u}}\rd{u} \\
			= & \lim_{\epsilon\rightarrow0+}\int_{1+\epsilon}^{R^2} \frac{\sqrt{(R^2-u)(u-(2-R^2))}}{(u-1)}(\frac{1}{\sqrt{u}}-\frac{1}{\sqrt{2-u}})\rd{u} < 0. \\
	\end{split}\end{equation}
	
	Similarly to see $\Phi(2-R,R) > 0$, we write $\frac{1}{\pi}\Phi(2-R,R)$ as
	\begin{equation}\begin{split}
		 & \pv\int_{2-R}^R\frac{\sqrt{(R^2-x^2)(x^2-(2-R)^2)}}{x^2-1}\rd{x} \\
			= & \lim_{\epsilon\rightarrow0+} \int_{2-R}^{1-\epsilon}\frac{\sqrt{(R^2-x^2)(x^2-(2-R)^2)}}{x^2-1}\rd{x} + \int_{1+\epsilon}^R\frac{\sqrt{(R^2-x^2)(x^2-(2-R)^2)}}{x^2-1}\rd{x}\\
 = & \lim_{\epsilon\rightarrow0+} \!\!\int_{1+\epsilon}^R \!\!\! \frac{\sqrt{(R^2-(2-x)^2)((2-x)^2-(2-R)^2)}}{(2-x)^2-1}\rd{x} +\!\!\! \int_{1+\epsilon}^R \!\!\!\frac{\sqrt{(R^2-x^2)(x^2-(2-R)^2)}}{x^2-1}\rd{x}	\\
		\!	= & \lim_{\epsilon\rightarrow0+} \int_{1+\epsilon}^R\frac{\sqrt{(R^2-(2-x)^2)((2-x)^2-(2-R)^2)}}{(2-x)^2-1}+\frac{\sqrt{(R^2-x^2)(x^2-(2-R)^2)}}{x^2-1}\rd{x}.\\
	\end{split}\end{equation}
The last integrand is clearly positive since
	\begin{equation}\begin{split}
		&-(R^2-(2-x)^2)((2-x)^2-(2-R)^2)(x^2-1)^2 + (R^2-x^2)(x^2-(2-R)^2)(1-(2-x)^2)^2 \\
		=& -8(R-1)^2(x-1)^3(R(2-R)-x(2-x)) > 0. 
	\end{split}\end{equation}
\end{proof}

Now we are ready to state the following. 
\begin{proposition}\label{prop_mu}
	Let $\mu$ be given in \eqref{typeii} or \eqref{typeiii}. 
	\begin{enumerate} 
		\item[\textnormal{(i)}] For Type $\tii$, $(\tilde{W}*\mu_{\tii})(0)\ge 0$ iff $R\ge R_c$, where $R_c \approx 1.8102$ is the unique number in $(1, \infty)$ satisfying
		\begin{equation}
			\sqrt{R^2-1}\ln\Big(R+\sqrt{R^2-1}\Big) -R= 0.
		\end{equation} In this case $\mu_{\tii}(x)<0$ for $|x|\ne 1$. 
		\item[\textnormal{(ii)}]For Type $\tiii$, $(\tilde{W}*\mu_{\tiii})(0) =0$ iff $1<R<R_c$ and $L=L(R)$, where $L(R)\in (0,1)$ is the unique number satisfying
		\begin{equation}\label{LRcond}
			\Phi(L,R):=\pv\int_L^R \frac{\pi\sqrt{-(x^2-R^2)(x^2-L^2)}}{x^2-1}\rd{x} = 0.
		\end{equation}
	In this case $\mu_{\tiii}(x)<0$ for $|x|\ne 1$. This function $L(R)$ is decreasing in $R$.
	\end{enumerate}
\end{proposition}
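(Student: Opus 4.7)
The plan is to reduce both parts of the proposition to the monotonicity and zero-set analysis of the function $\Phi(L,R)$ already carried out in Lemma \ref{lem:Phi}. The starting observation is that, for the stationary distributions of Type $\tii$ and $\tiii$, the potential $\tilde{W}*\mu$ is constant on each connected component of $\supp\mu_c$ (Remark \ref{rmk:stationary-R}) and equals $0$ on $[R,\infty)$ (Remark \ref{rmk:Wmu-infinity}). For Type $\tiii$, since $[0,L]\subset \supp\mu_c$, this gives $(\tilde{W}*\mu)(0)=(\tilde{W}*\mu)(L)$; combined with the principal-value identity already derived at \eqref{eqn:pv} and the explicit form of $(\tilde{W}*\mu)'$, this yields $(\tilde{W}*\mu_{\tiii})(0)=\Phi(L,R)$. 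The Type $\tii$ case is the same calculation with $L=0$, so $(\tilde{W}*\mu_{\tii})(0)=\Phi(0,R)$.

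For part (i), Lemma \ref{lem:Phi}(i)--(ii) then immediately gives the desired equivalence, since $R\mapsto\Phi(0,R)$ is strictly increasing with unique zero $R_c$. The explicit closed form $\Phi(0,R)=\pi\bigl(\sqrt{R^2-1}\,\ln(R+\sqrt{R^2-1})-R\bigr)$ is precisely the calculation already performed inside the proof of Lemma \ref{lem:Phi}(ii); setting this to zero recovers the defining equation for $R_c$ stated in the proposition.

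For part (ii), fix $R\in(1,R_c)$ and study $L\mapsto\Phi(L,R)$ on $[0,1)$. This map is strictly increasing by Lemma \ref{lem:Phi}(i), with $\Phi(0,R)<0$ by Lemma \ref{lem:Phi}(ii). As $L\to 1^-$, the extra factor $\sqrt{x^2-L^2}$ in the numerator regularizes the pole of the integrand at $x=1$, and the one-sided limit becomes the genuine positive integral $\pi\int_1^R\sqrt{(R^2-x^2)/(x^2-1)}\,\rd{x}>0$. The intermediate value theorem together with strict monotonicity then produces a unique $L(R)\in(0,1)$ with $\Phi(L(R),R)=0$. Strict decrease of $L(R)$ follows from the two monotonicities of $\Phi$: if $R_1<R_2$ but $L(R_1)\le L(R_2)$, then $\Phi(L(R_2),R_2)>\Phi(L(R_1),R_1)=0$, a contradiction. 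Conversely, for $R\ge R_c$ we have $\Phi(L,R)\ge\Phi(0,R)\ge 0$ for every $L\ge 0$, so no admissible $L\in(0,1)$ exists.

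Finally, for the sign claims on $\mu$: on the middle band $|x|\in(L,R)$ with $|x|\ne 1$ one has $\mu=-1$, so only the continuous tail $|x|\ge R$ (and, for Type $\tiii$, the inner interval $|x|\le L$) requires work. In all of these regions, $\mu(x)<0$ reduces after squaring to the polynomial inequality $(x^2-R^2)(x^2-L^2)<(x^2-1)^2$, equivalently
\begin{equation*}
 (R^2+L^2-2)\,x^2 \;>\; R^2L^2-1,
\end{equation*}
with $L=0$ in the Type $\tii$ case. For Type $\tii$ this is trivial once $R^2>2$, which is automatic from $R\ge R_c>\sqrt{2}$. For Type $\tiii$, Lemma \ref{lem:Phi}(iii) supplies exactly the two ingredients required: $L^2+R^2>2$ makes the left-hand side nonnegative, while combining $L+R<2$ with $L^2+R^2>2$ gives $2LR=(L+R)^2-(L^2+R^2)<2$, so $RL<1$ and the right-hand side is negative. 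The estimate then holds uniformly in $x$. The only place genuine subtlety enters is the principal-value identification of $(\tilde{W}*\mu)(0)$ with $\Phi(L,R)$ across the singularity at $x=1$, which is the main obstacle; that step is essentially \eqref{eqn:pv}, and once it is in hand the remainder of the proof is routine bookkeeping around Lemma \ref{lem:Phi}.
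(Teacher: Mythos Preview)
Your proposal is correct and follows essentially the same route as the paper's proof: both identify $(\tilde{W}*\mu)(0)$ with $\Phi(L,R)$ via \eqref{eqn:pv}--\eqref{eqn:Phi}, invoke Lemma~\ref{lem:Phi}(i)--(ii) for the equivalences, compute $\lim_{L\to 1^-}\Phi(L,R)>0$ to run the intermediate value argument, and derive the sign claim $\mu<0$ from the algebraic inequality $(x^2-R^2)(x^2-L^2)<(x^2-1)^2$ using $R^2+L^2>2$ and $RL<1$ from Lemma~\ref{lem:Phi}(iii). Your rearrangement $(R^2+L^2-2)x^2>R^2L^2-1$ is a slightly cleaner packaging of the same algebra the paper uses, and you make the ``only if'' direction of (ii) (no admissible $L$ for $R\ge R_c$) explicit where the paper leaves it implicit, but there is no substantive difference in approach.
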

\begin{proof}[Proof of Proposition \ref{prop_mu}]
\textbf{Proof of (i)}: 
	Lemma \ref{lem:Phi}(i) and (ii) implies that $\Phi(0, R) = (\tilde{W}*\mu_{\tii})(0)\ge 0$ if and only if $R\ge R_c$. To see that $\mu_{\tii}(x)<0$ for $|x|\ne 1$, we notice that for $x\ge R$,
	\begin{equation}
		\mu_{\tii}(x) = -1 + \sqrt{\frac{1-\frac{R^2}{x^2}}{(1-\frac{1}{x^2})^2}} <  -1 + \sqrt{\frac{1-\frac{R^2}{x^2}}{1-\frac{2}{x^2}}} < 0, \text{ for } R\ge R_c. 
	\end{equation}
	
	\textbf{ Proof of (ii)}: 
By Lemma \ref{lem:Phi}, the function $\Phi(L, R)$ is increasing in both $L$ and $R$. Therefore for each $1<R<R_c$ we have $\Phi(0,R)< \Phi(0, R_c) = 0$. On the other hand, by \eqref{eqn:Phi},
	\begin{equation}
		\lim_{L\rightarrow1-}\Phi(L,R) = \pi\int_1^R\sqrt{\frac{R^2-x^2}{x^2-1}}\rd{x} > 0.
	\end{equation}
	Therefore, there exists a unique $L=L(R)$ such that $\Phi(L,R)=0$. The decreasing property of $L(R)$ then follows from the monotonicity of $\Phi(L, R)$ in $R$ and $L$. To see $\mu_{\tiii}(x)<0$ for $|x|\neq 1$ when $1<R<R_c$ and $L=L(R)$, it suffices to note
	\begin{equation}\begin{split}
			(x^2-R^2)(x^2-L^2) = & x^4 - (R^2+L^2)x^2 + R^2L^2\\
			= &x^4 - (R^2+L^2)x^2 + \frac{1}{4}\big((R+L)^2-(R^2+L^2)\big)^2 \\ 
			< & x^4 - 2x^2 + \frac{1}{4}(4-2)^2 = (x^2-1)^2.
	\end{split}\end{equation}
	The last inequality follows from Lemma \ref{lem:Phi}(iii).
\end{proof}

It follows from Proposition \ref{prop_mu} that for each $R>1$, we get a unique stationary distribution satisfying \eqref{eqn:admissible}. We now collect these distributions and the Type $\ti$ distribution and their scaling functions into a family. 
\begin{definition}[Admissible Distributions]\label{def:admissible}
Let $\mu$ be a signed measure on $\R$. Then we say $\mu$ is \emph{admissible} if it is one of the following:
\begin{itemize}
	\item $\mu_{\ti}(\frac{\cdot}{\lambda})$ for some $\lambda>0$,
	\item $\mu_{\tii,R}(\frac{\cdot}{\lambda})$ for $R\ge R_c$ and some $\lambda>0$,
	\item $\mu_{\tiii,L(R),R}(\frac{\cdot}{\lambda})$ for $1<R< R_c$ and some $\lambda>0$.
\end{itemize}
We call $\lambda$ the \emph{scaling factor} of $\mu$. 
\end{definition}
Aside from the Type $\ti$ distribution, this family is parametrized by two parameters $\lambda$ and $R$. By the notation in Lemma \ref{lem_complex}, we have $\mu = -1+\mu_c+\mu_d$. By abuse of notation, we will also denote the corresponding term for a general admissible $\mu$ with a scaling factor $\lambda$ by $\mu_c$ and $\mu_d$. 

\section{An Optimization Problem over $\R$}\label{sec:min-value-R}
In this section, our goal is to determine the minimal value of an analogous goal functional $\tilde{\cG}$ for admissible distributions over $\R$. Recall that we have constructed and defined admissible distributions in Section \ref{ssec:admissible-R} and Definition \ref{def:admissible}. Now we define for admissible $\mu$ that
\begin{equation}\label{tildeH}
	\tilde{\cH}[\mu] = \int_{\mathbb{R}} \tilde{W}*\mu\rd{x},\quad \tilde{\cD}[\mu_{\ti}(\frac{\cdot}{\lambda})] = \lambda,\quad \tilde{\cD}[\mu_{\tii \text{ or }\tiii}(\frac{\cdot}{\lambda})] = \int_{[-\lambda,\lambda]}\mu_{\tii \text{ or }\tiii}(\frac{x}{\lambda})\rd{x},
\end{equation}
and
\begin{equation}\label{eqn:def-tilde-G}
	\tilde{\cG}[\mu]=\frac{\tilde{H}[\mu]}{\tilde{D}^2[\mu]}.
\end{equation}
The relation between $\tilde{\cH}$, $\tilde{\cD}$ and the functionals $\cH$, $\cD$ on $\mathbb{T}$ will be revealed in Section \ref{sec:R-to-T} by Theorem \ref{prop:rho-circ-H-D} and Theorem \ref{thm_Dcomp}. Then we state the main result of this chapter as an optimization problem.
\begin{theorem}\label{thm:min-value-R}
	For admissible $\mu$, we have $\tilde{\cG}[\mu] \ge \frac{1}{2}$. The equality holds if and only if $\mu$ is of Type $\ti$.
\end{theorem}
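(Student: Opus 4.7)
The plan is to exploit scale invariance to reduce to $\lambda=1$, and then handle the three normalized families separately. Since admissible $\mu$ has total mass zero (Lemma \ref{lem_complex}), one verifies $\tilde{W}*\mu_\lambda(x)=\lambda(\tilde{W}*\mu)(x/\lambda)$ where $\mu_\lambda(x)=\mu(x/\lambda)$, giving $\tilde{\cH}[\mu_\lambda]=\lambda^2\tilde{\cH}[\mu]$ and $\tilde{\cD}[\mu_\lambda]=\lambda\tilde{\cD}[\mu]$, so $\tilde{\cG}$ is scale-invariant. Type I is handled directly: from \eqref{typei} and $(\tilde{W}*\mu_\ti)(\pm 1/\pi)=0$ (Remark \ref{rmk:Wmu-infinity}), one obtains $(\tilde{W}*\mu_\ti)(x)=\int_x^{1/\pi}\pi\sqrt{\pi^{-2}-t^2}/t\,dt$ on $(0,1/\pi)$. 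Fubini then gives $\tilde{\cH}[\mu_\ti]=2\int_0^{1/\pi}\pi\sqrt{\pi^{-2}-t^2}\,dt=\tfrac{1}{2}$, and hence $\tilde{\cG}[\mu_\ti]=\tfrac{1}{2}$ (since $\tilde{\cD}[\mu_\ti]=1$ by definition).

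For Types II and III, I would compute $\tilde{\cH}$ by integration by parts. The total potential $F=\tilde{W}*\mu$ vanishes at $\pm R$ (and at $\pm L$ in Type III by admissibility), while the logarithmic singularities at $\pm 1$ from the Dirac masses are locally symmetric in $\epsilon$ and so produce cancelling boundary terms in the principal-value limit, yielding $\int F\,dx=-\pv\int xF'(x)\,dx$. Substituting the explicit $F'$ from Lemma \ref{lem_complex}, using $u=x^2$ followed by a trigonometric substitution, the computation reduces to the standard identity $\pv\int_0^\pi\sin^2\theta/(a-\cos\theta)\,d\theta=\pi a$ valid for $|a|<1$ (itself proved by polynomial division and by observing that $\pv\int_0^\pi d\theta/(a-\cos\theta)=0$ via the half-angle substitution). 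This yields the clean expressions
\begin{equation*}
\tilde{\cH}[\mu_{\tii,R}]=\frac{\pi^2(R^2-2)}{2},\qquad \tilde{\cH}[\mu_{\tiii,L,R}]=\frac{\pi^2(R^2+L^2-2)}{2}.
\end{equation*}

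For Type II, combining with $\tilde{\cD}=\pi\sqrt{R^2-1}-2$ from \eqref{typeii-2}, the inequality $\tilde{\cG}>1/2$ reduces algebraically to $\sqrt{R^2-1}>(\pi^2+4)/(4\pi)$, which holds strictly for all $R\geq R_c$ from the defining relation for $R_c$ in Proposition \ref{prop_mu}(i); equality $\tilde{\cG}=1/2$ is only approached as $R\to\infty$. The main obstacle is Type III, where $\tilde{\cD}$ reduces (via $y=1-x^2$ and the factorization $(R^2-x^2)(L^2-x^2)=(y-(1-L^2))(y+(R^2-1))$) to an incomplete elliptic integral without elementary closed form. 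My plan there is to show that $R\mapsto\tilde{\cG}[\mu_{\tiii,L(R),R}]$ is continuous on $(1,R_c)$, matches $\tilde{\cG}[\mu_{\tii,R_c}]>1/2$ at the right endpoint (since $L(R_c^-)=0$ by Proposition \ref{prop_mu}(ii)), and is monotone in $R$. Monotonicity follows by implicit differentiation of the constraint $\Phi(L(R),R)=0$, giving $L'(R)=-\Phi_R/\Phi_L<0$ by Lemma \ref{lem:Phi}(i), combined with $d\tilde{\cH}/dR=\pi^2(R+L\,L'(R))$ and a careful computation of $d\tilde{\cD}/dR$ that uses the above factorization together with differentiation under the integral sign to isolate the $R$- and $L$-dependence. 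The equality case $\tilde{\cG}=1/2$ is thus attained only in the Type I family.
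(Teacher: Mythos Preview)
Your treatment of scale invariance, Type~\ti, and Type~\tii\ matches the paper's (Lemma~\ref{lem:scaling}, Propositions~\ref{prop:type-1-value} and~\ref{prop:type-2-value}). Your closed form $\tilde{\cH}[\mu_{\tiii,L,R}]=\tfrac{\pi^2}{2}(R^2+L^2-2)$ is correct and is in fact cleaner than what the paper records; the paper stops at the integral expression \eqref{tildeH2} and never writes this closed form, though it would have simplified some of the later estimates. One small slip: for Type~\tii\ you say $\tilde{\cG}\to\tfrac12$ as $R\to\infty$, but with $s=\sqrt{R^2-1}$ one has $\tilde{\cG}=\tfrac{\pi^2(s^2-1)}{(\pi s-2)^2}\to 1$; this does not affect the strict inequality you need.

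The genuine gap is in Type~\tiii. You propose to prove that $R\mapsto\tilde{\cG}[\mu_{\tiii,L(R),R}]$ is monotone on $(1,R_c)$ and then read off the bound from the right endpoint. Numerically this appears to be true (the values decrease from roughly $0.97$ near $R=1.1$ to $0.84$ at $R_c$), but your sketch does not establish it. You would need to show $\tilde{\cD}\,d\tilde{\cH}/dR < 2\tilde{\cH}\,d\tilde{\cD}/dR$, where $d\tilde{\cH}/dR=\pi^2(R+L\,L'(R))$ involves $L'(R)=-\Phi_R/\Phi_L$, itself a ratio of principal-value integrals with no closed form, and $d\tilde{\cD}/dR$ is a derivative of an incomplete elliptic integral again depending on $L'(R)$. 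Saying ``a careful computation of $d\tilde{\cD}/dR$'' is not enough: both $\tilde{\cH}$ and $\tilde{\cD}$ are increasing (this much the paper does prove, in Lemma~\ref{lem_HDmon}), so monotonicity of their ratio is a genuinely delicate comparison of rates, and nothing in your outline supplies the needed quantitative control.

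The paper avoids this difficulty by a hybrid argument: it proves only the separate monotonicity of $\tilde{\cH}$ and $\tilde{\cD}$ in $R$, which is much easier, and then uses that to reduce the check $\tilde{\cG}>\tfrac12$ on $[1.1,R_c]$ to a finite numerical grid (Table~\ref{table}, verifying $\tilde{\cH}_k/\tilde{\cD}_{k+1}^2>\tfrac12$). Near the degenerate endpoint $R\to 1^+$, where both $\tilde{\cH}$ and $\tilde{\cD}$ vanish, it switches to explicit upper and lower bounds (Lemma~\ref{lem:estimate-H-D}) obtained from the constraints $L+R<2$, $L^2+R^2>2$ of Lemma~\ref{lem:Phi}(iii). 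If you can actually carry out your monotonicity argument it would be more elegant and eliminate the numerical verification, but as written the Type~\tiii\ case is not proved.
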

See Figure \ref{fig:Phi} (b) for a graph of $\tilde{\cG}[\mu]$ for $\mu$ of Type $\tii$ and $\tiii$ indexed by $R$. 
\begin{figure}
	\includegraphics[width=7cm, height = 5cm]{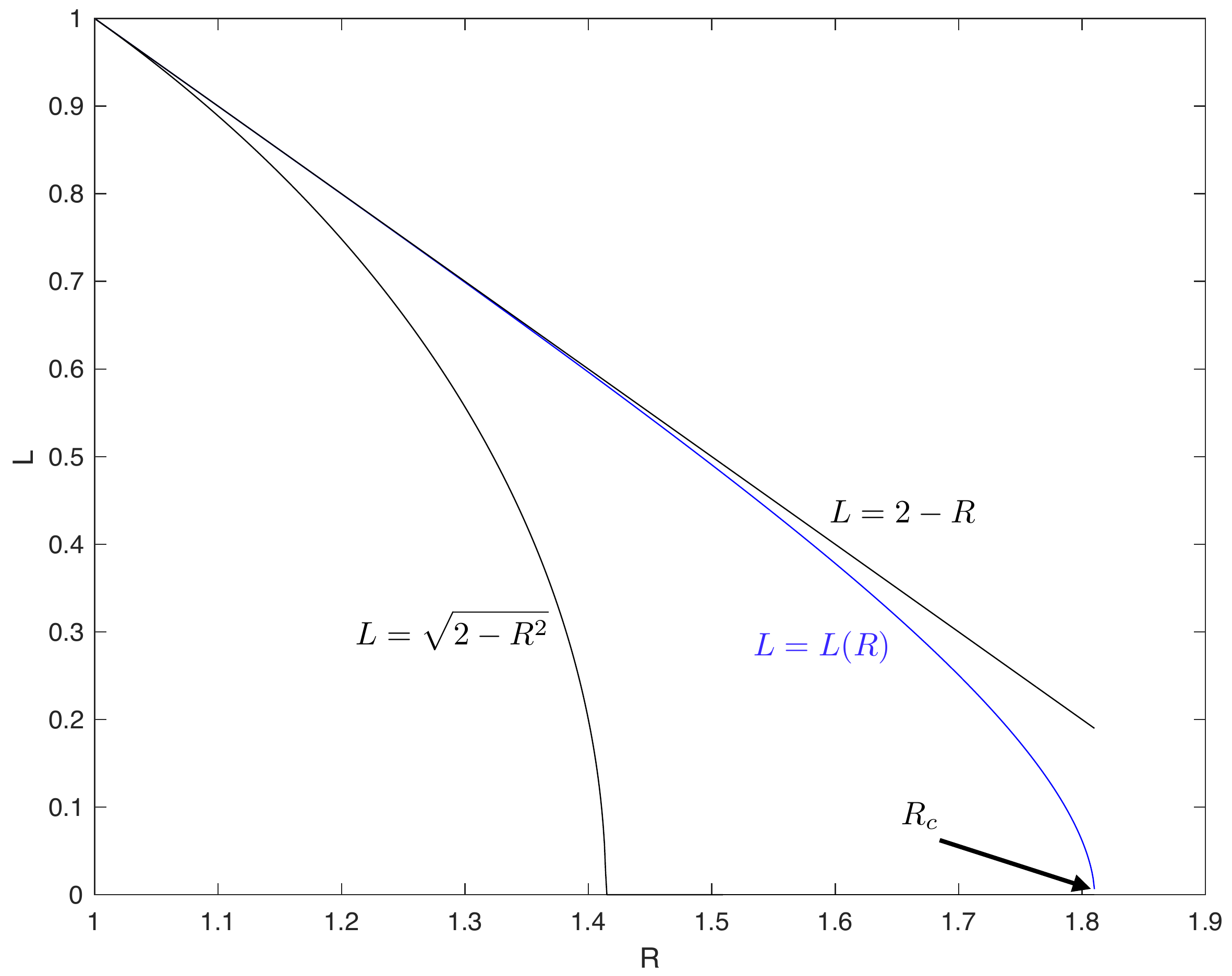}
	\includegraphics[width= 7cm, height = 5cm]{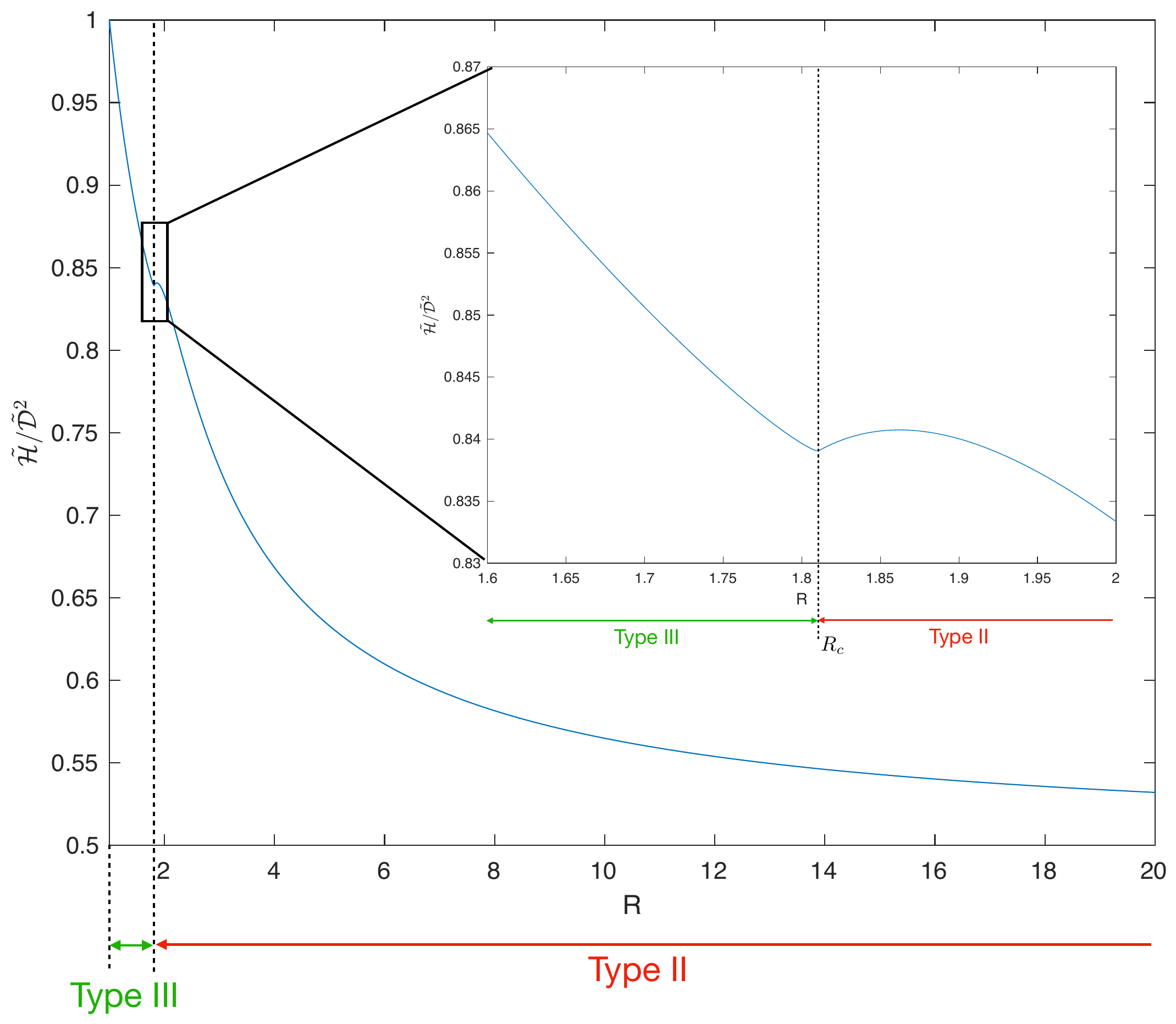}\\
	\caption{ (a) Left: Graph for $\Phi(L, R) = 0$; (b) Right: Graph for $\tilde{\cG}[\mu_{L(R), R}]$. }
	\label{fig:Phi}
\end{figure}

\subsection{Dimension Reduction}\label{ssec:dimension-reduction}
Recall that admissible distribution is either of Type $\ti$, parametrized by $\lambda>0$, or of Type $\tii$ and $\tiii$, parametrized by two parameters $\lambda>0$ and $R>1$. In this section, we will show that the goal functional $\tilde{\cG}$ remains invariant under scaling, i.e., $\tilde{\cG}(\mu)$ does not depend on the $\lambda$ parameter in all cases. 
\begin{lemma}\label{lem:scaling}
	If $\mu$ is admissible, then $\tilde{\cG}[\mu] = \tilde{\cG}[\mu(\frac{\cdot}{\lambda})]$ for any $\lambda>0$.
\end{lemma}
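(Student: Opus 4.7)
The plan is to verify that both $\tilde{\cH}$ and $\tilde{\cD}$ scale as $\lambda^2$ and $\lambda$ respectively under $\mu \mapsto \nu := \mu(\cdot/\lambda)$, so the ratio $\tilde{\cG}=\tilde{\cH}/\tilde{\cD}^2$ is invariant. By transitivity of dilations, I may as well take the given $\mu$ to have scaling factor $1$ to start with.

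The key pointwise identity I would establish first is
\begin{equation*}
(\tilde{W}*\nu)(x) \;=\; \lambda\,(\tilde{W}*\mu)(x/\lambda).
\end{equation*}
Changing variables $y = \lambda z$ in the convolution and using $\tilde{W}(\lambda u) = -\log\lambda + \tilde{W}(u)$, one gets an extra term $-\lambda\log\lambda \cdot \int_\R\mu\,dz$ on top of the desired $\lambda\,(\tilde{W}*\mu)(x/\lambda)$. The Dirac parts are handled via the distributional rule $\delta(x/\lambda-a) = \lambda\,\delta_{\lambda a}$, which is the interpretation consistent with the paper's formula $\tilde{\cD}[\mu_{\ti}(\cdot/\lambda)]=\lambda$ and with the convolution computation absorbing all $\log\lambda$ terms cleanly. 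Crucially, the stray $-\lambda\log\lambda\int\mu$ then vanishes because $\int_\R\mu\,dx=0$: this was established in Lemma \ref{lem_complex} under the moment condition \eqref{lem_complex2_1}, and one checks directly that this condition is satisfied for each of the three configurations $\{-1/\pi,0,1/\pi\}$, $\{-R,-1,0,1,R\}$, and $\{-R,-1,-L,L,1,R\}$ defining Types $\ti$, $\tii$, $\tiii$.

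Once this identity is in hand, integrating in $x$ and then substituting $u=x/\lambda$ yields
\begin{equation*}
\tilde{\cH}[\nu] \;=\; \lambda\!\int_\R (\tilde{W}*\mu)(x/\lambda)\,dx \;=\; \lambda^2\,\tilde{\cH}[\mu],
\end{equation*}
with both integrals convergent because Remark \ref{rmk:Wmu-infinity} shows $\tilde{W}*\mu$ has compact support. For $\tilde{\cD}$, Type $\ti$ gives $\tilde{\cD}[\nu]=\lambda=\lambda\cdot\tilde{\cD}[\mu_{\ti}]$ directly from the definition. For Types $\tii$ and $\tiii$, substituting $y=x/\lambda$ in $\int_{[-\lambda,\lambda]}\mu(x/\lambda)\,dx$ scales the $-1$ and $\mu_c$ contributions by an overall factor $\lambda$ (from $dx=\lambda\,dy$), while the Dirac masses of $\mu$ at $\pm 1$ are carried to the endpoints $\pm\lambda$ of the new interval and, by the same distributional rule, contribute $\lambda$ times their original weight; this gives $\tilde{\cD}[\nu]=\lambda\,\tilde{\cD}[\mu]$.

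Combining the two scalings produces $\tilde{\cG}[\nu]=\lambda^2\tilde{\cH}[\mu]/(\lambda\,\tilde{\cD}[\mu])^2=\tilde{\cG}[\mu]$, which is the desired identity. The only subtle step is handling the Dirac masses consistently in the two computations; once the distributional scaling rule is fixed, everything reduces to a routine change of variables together with the mean-zero property $\int\mu\,dx=0$ from Lemma \ref{lem_complex}, so I do not expect any serious obstacle beyond this bookkeeping.
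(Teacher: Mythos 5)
Your proposal is correct and follows essentially the same route as the paper: a change of variables together with the mean-zero property of $\mu$ (from Lemma \ref{lem_complex}) to absorb the stray $\log\lambda$ term, giving $\tilde{\cH}[\mu(\cdot/\lambda)]=\lambda^2\tilde{\cH}[\mu]$ and $\tilde{\cD}[\mu(\cdot/\lambda)]=\lambda\tilde{\cD}[\mu]$. The only cosmetic difference is that you first record the pointwise identity $(\tilde{W}*\nu)(x)=\lambda(\tilde{W}*\mu)(x/\lambda)$ before integrating, and you spell out the distributional scaling of the Dirac masses at the endpoints, whereas the paper does the change of variables directly inside the double integral and treats the Dirac contribution implicitly — but these are the same computation.
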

\begin{proof}
	By change of variables,
	\begin{equation}\begin{split}
			\tilde{\cH}[\mu(\frac{\cdot}{\lambda})] = & \int_{\mathbb{R}}(\tilde{W}*\mu(\frac{\cdot}{\lambda}))(x)\rd{x} = -\int_{\mathbb{R}}\int_{\mathbb{R}}\ln|y|\mu(\frac{x-y}{\lambda})\rd{y}\rd{x}\\
			= & -\lambda^2\int_{\mathbb{R}}\int_{\mathbb{R}}\ln|\lambda y|\mu(x-y)\rd{y}\rd{x}
			=  -\lambda^2\int_{\mathbb{R}}\int_{\mathbb{R}}\ln| y|\mu(x-y)\rd{y}\rd{x}=\lambda^2\tilde{\cH}[\mu]
	\end{split}\end{equation}
	where we use the mean-zero property of $\mu$, shown in Lemma \ref{lem_complex}, in the fourth inequality. 
	
	Then we discuss $\tilde{\cD}$. Firstly $\tilde{\cD}[\mu_{\ti}(\frac{\cdot}{\lambda})]=\lambda=\lambda\tilde{D}[\mu_{\ti}]$ by definition. If $\mu = \mu_{\tii, R}$ or $\mu_{\tiii, L(R), R}$, then
	\begin{equation}
		\tilde{\cD}[\mu(\frac{\cdot}{\lambda})]= \int_{[-\lambda,\lambda]}\mu(\frac{x}{\lambda})\rd{x}= \lambda\int_{[-1,1]}\mu(x)\rd{x}=\lambda \tilde{\cD}[\mu].
	\end{equation}

     Therefore $\tilde{\cG}[\mu(\frac{\cdot}{\lambda})] = \lambda^2\tilde{\cH}[\mu]/(\lambda\tilde{D}[\mu])^2 = \tilde{\cG}[\mu] $. 
\end{proof}

It follows from Lemma \ref{lem:scaling} that it suffices to consider $\mu_{\ti}$, $\mu_{\tii}$ and $\mu_{\tiii}$ to determine minimal value of $\tilde{\cG}$. We first compute this value for $\mu_{\ti}$ directly.  
\begin{proposition}\label{prop:type-1-value}
	For admissible $\mu$ of Type $\ti$, we have $\tilde{\cG}[\mu] = \frac{1}{2}$. 
\end{proposition}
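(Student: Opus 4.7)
The plan is to reduce to the unscaled case and then carry out a direct one-line computation. By the scaling invariance established in Lemma \ref{lem:scaling}, it suffices to show $\tilde{\cG}[\mu_{\ti}] = \tfrac{1}{2}$, i.e., to treat the parameter $\lambda = 1$. From the definition in \eqref{tildeH}, $\tilde{\cD}[\mu_{\ti}] = 1$, so the entire task reduces to showing $\tilde{\cH}[\mu_{\ti}] = \tfrac{1}{2}$.

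The key observation is that by Remark \ref{rmk:Wmu-infinity}, the potential $V := \tilde{W}*\mu_{\ti}$ vanishes identically on $\{|x| \ge 1/\pi\}$, so
\begin{equation*}
\tilde{\cH}[\mu_{\ti}] = \int_{\mathbb{R}} V(x)\,\rd{x} = \int_{-1/\pi}^{1/\pi} V(x)\,\rd{x}.
\end{equation*}
Since $V(\pm 1/\pi) = 0$, integration by parts gives $\tilde{\cH}[\mu_{\ti}] = -\int_{-1/\pi}^{1/\pi} x V'(x)\,\rd{x}$. Now I would plug in the explicit formula for $V'$ furnished by \eqref{typei}, namely $V'(x) = -\pi\sqrt{\pi^{-2}-x^2}/x$ on $(-1/\pi,1/\pi)\setminus\{0\}$, so that $xV'(x) = -\pi\sqrt{\pi^{-2}-x^2}$. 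The remaining integral is then the area under a semicircle,
\begin{equation*}
\tilde{\cH}[\mu_{\ti}] \;=\; \pi\int_{-1/\pi}^{1/\pi}\sqrt{\pi^{-2}-x^2}\,\rd{x} \;=\; \pi \cdot \frac{\pi}{2}\cdot \pi^{-2} \;=\; \frac{1}{2}.
\end{equation*}
Combining this with $\tilde{\cD}[\mu_{\ti}]^2 = 1$ yields $\tilde{\cG}[\mu_{\ti}] = 1/2$, and then scaling invariance extends the equality to every $\mu_{\ti}(\cdot/\lambda)$.

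There is no real obstacle here: the Type $\ti$ distribution is explicit enough that all three ingredients (the endpoint vanishing of $V$, the closed-form expression for $V'$, and the semicircle integral) are available directly. The only mild care needed is to justify the integration by parts across $x=0$, which is harmless because $xV'(x)$ extends continuously to $0$ even though $V'$ itself is singular there.
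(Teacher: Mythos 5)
Your proof is correct and follows essentially the same route as the paper: reduce to $\lambda=1$ via Lemma \ref{lem:scaling}, use Remark \ref{rmk:Wmu-infinity} to localize the integral to $[-1/\pi,1/\pi]$, convert to $-\int x\,V'(x)\,\rd{x}$, and evaluate the semicircle integral. The paper's \eqref{tildeH1} does the conversion by writing $V(x)=-\int_x^{1/\pi}V'(y)\,\rd{y}$ and applying Fubini, which is the same manipulation as your integration by parts; your extra remark about continuity of $xV'(x)$ at $0$ is a sound justification of the step.
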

\begin{proof}
It suffices to consider $\mu = \mu_{\ti}$ by Lemma \ref{lem:scaling}. Clearly $\tilde{\cD}[\mu]=1$. By Remark \ref{rmk:Wmu-infinity} and \eqref{typei}
	\begin{equation}\label{tildeH1}\begin{split}
			\tilde{\cH}[\mu] = & 2\int_0^{\frac{1}{\pi}}(\tilde{W}*\mu)(x)\rd{x} = -2\int_0^{\frac{1}{\pi}}\int_x^{\frac{1}{\pi}}(\tilde{W}*\mu)'(y)\rd{y}\rd{x}\\
			= &-2\int_0^{\frac{1}{\pi}}y(\tilde{W}*\mu)'(y)\rd{y} 
			=  2\pi\int_0^{\frac{1}{\pi}}\sqrt{-(y^2-\pi^{-2})}\rd{y} = \frac{1}{2}.
	\end{split}\end{equation}
\end{proof}

\subsection{Large $R$}\label{ssec:large-R}
In this section, we consider the value $\tilde{\cG}[\mu]$ for $\mu = \mu_{\tii, R}$ with $R\ge R_c$ and $\mu_{\tiii, L(R), R}$ with $R$ away from $1$. We first compute explicitly $\tilde{\cG}[\mu_{\tii,R}]$ in Proposition \ref{prop:type-2-value}. Then we estimate $\tilde{\cG}[\mu_{\tiii, L(R), R}]$ when $R$ is away from $1$ ( $R_c \ge R\ge 1.1$) using the monotonicity of $\tilde{\cH}$ and $\tilde{\cD}$ proved in Lemma \ref{lem_HDmon}.

\begin{proposition}\label{prop:type-2-value}
	For admissible $\mu$ of Type $\tii$, we have $\tilde{\cG}[\mu] > \frac{1}{2}$. 
\end{proposition}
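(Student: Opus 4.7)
By the scaling invariance of $\tilde{\cG}$ established in Lemma \ref{lem:scaling}, it suffices to prove $\tilde{\cG}[\mu_{\tii,R}] > \tfrac{1}{2}$ for the unscaled measure $\mu = \mu_{\tii,R}$ at an arbitrary $R \geq R_c$. The discrepancy is immediate from the definition \eqref{tildeH}: on $[-1,1]$ the continuous part $\mu_c$ vanishes, so integrating $-1$ over $[-1,1]$ and adding the Dirac contribution $2m = \pi\sqrt{R^2-1}$ from \eqref{typeii-2} gives
\begin{equation*}
\tilde{\cD}[\mu_{\tii,R}] = \pi\sqrt{R^2-1} - 2,
\end{equation*}
which is strictly positive for $R \geq R_c$.

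For the energy, I would first invoke Remark \ref{rmk:Wmu-infinity} together with the evenness of $\tilde{W}*\mu$ to reduce to $\tilde{\cH}[\mu_{\tii,R}] = 2\int_0^R (\tilde{W}*\mu)(x)\,dx$. Applying the integration-by-parts argument from the proof of Proposition \ref{prop:type-1-value}, one must handle the logarithmic singularity of $\tilde{W}*\mu$ at $x=1$ arising from the Dirac mass $m\delta_1$. The singularity is symmetric — locally $\tilde{W}*\mu(x) = -m\ln|x-1| + (\text{bounded, continuous})$ — so the boundary contributions $(1\pm\epsilon)(\tilde{W}*\mu)(1\pm\epsilon)$ cancel as $\epsilon \to 0$, and using the formula for $(\tilde{W}*\mu)'$ from \eqref{typeii-2} together with $(\tilde{W}*\mu)(R)=0$ yields
\begin{equation*}
\tilde{\cH}[\mu_{\tii,R}] = -2\,\mathrm{pv}\int_0^R x(\tilde{W}*\mu)'(x)\,dx = 2\pi\,\mathrm{pv}\int_0^R \frac{x^2\sqrt{R^2-x^2}}{x^2-1}\,dx.
\end{equation*}
Splitting $x^2/(x^2-1) = 1 + 1/(x^2-1)$ extracts the elementary piece $\int_0^R\sqrt{R^2-x^2}\,dx = \pi R^2/4$ and leaves a principal-value integral which can be evaluated in closed form via the substitutions $x = R\sin\theta$ followed by the Weierstrass substitution $t = \tan(\theta/2)$; the result is expressible through the elementary function $\sqrt{R^2-1}\ln(R+\sqrt{R^2-1})$ that appeared in Lemma \ref{lem:Phi}(ii).

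With closed forms in hand, the task reduces to showing $\Psi(R) := \tilde{\cH}[\mu_{\tii,R}] - \tfrac{1}{2}\tilde{\cD}[\mu_{\tii,R}]^2 > 0$ for all $R \geq R_c$. Expanding $\tilde{\cD}^2/2 = \pi^2(R^2-1)/2 - 2\pi\sqrt{R^2-1} + 2$ produces cancellation of the leading $\pi^2 R^2/2$ terms, leaving
\begin{equation*}
\Psi(R) = \frac{\pi^2}{2} - 2 + 2\pi\sqrt{R^2-1} + 2\pi\,\mathrm{pv}\int_0^R \frac{\sqrt{R^2-x^2}}{x^2-1}\,dx.
\end{equation*}
I would first verify $\Psi(R_c) > 0$ by substituting the closed form and applying the defining relation $\sqrt{R_c^2-1}\ln(R_c+\sqrt{R_c^2-1}) = R_c$ from Lemma \ref{lem:Phi}(ii), which simplifies the transcendental contribution and reduces positivity to an elementary inequality. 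For $R > R_c$ I would establish $\Psi(R) \geq \Psi(R_c)$, either by computing $\Psi'(R)$ directly (differentiating $\tilde{\cH}$ under the integral sign using Leibniz' rule) and checking its sign, or by parametrizing differently and using convexity.

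The principal obstacle is the last step: although every quantity in $\Psi(R)$ is elementary after the integration, the combination of $\sqrt{R^2-1}\ln(R+\sqrt{R^2-1})$ terms with polynomial-in-$R$ and $\sqrt{R^2-1}$ contributions makes the positivity assertion nontrivial. The defining relation for $R_c$ should be what eliminates the transcendental term at the boundary and makes the inequality there transparent; away from $R_c$, the derivative analysis should exploit the fact that $\tilde{\cG}[\mu_{\tii,R}] \to 1/2$ as $R\to\infty$ (so $\Psi/\tilde{\cD}^2 \to 0$ from above), confining the sign of $\Psi'$ and preventing $\Psi$ from crossing zero.
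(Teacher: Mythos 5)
Your setup through the expression
\begin{equation*}
\tilde{\cH}[\mu_{\tii,R}] = 2\pi\,\mathrm{pv}\int_0^R \frac{x^2\sqrt{R^2-x^2}}{x^2-1}\,dx,\qquad
\tilde{\cD}[\mu_{\tii,R}] = \pi\sqrt{R^2-1}-2,
\end{equation*}
matches the paper, as does the split $x^2/(x^2-1)=1+1/(x^2-1)$. But the closed form you anticipate is wrong. The leftover principal-value integral is \emph{not} expressible through $\sqrt{R^2-1}\ln(R+\sqrt{R^2-1})$ — you are confusing it with the $\Phi(0,R)$ integral from Lemma~\ref{lem:Phi}(ii), which has a single power of $x$ in the numerator, namely $\mathrm{pv}\int_0^R \frac{x\sqrt{R^2-x^2}}{x^2-1}\,dx = \sqrt{R^2-1}\ln(R+\sqrt{R^2-1})-R$. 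The integral actually appearing here,
\begin{equation*}
\mathrm{pv}\int_0^R \frac{\sqrt{R^2-x^2}}{x^2-1}\,dx,
\end{equation*}
is a \emph{constant}: substituting $x=R\sin\theta$ and then $u=\tan\theta$ reduces it to $(R^2-1)\,\mathrm{pv}\int_0^{\pi/2}\frac{d\theta}{R^2\sin^2\theta-1}-\frac{\pi}{2}$, and the remaining principal value equals $\frac{1}{\sqrt{R^2-1}}\,\mathrm{pv}\int_0^\infty\frac{dv}{v^2-1}=0$, so the whole thing is $-\pi/2$.

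Consequently $\tilde{\cH}[\mu_{\tii,R}] = \frac{\pi^2}{2}(R^2-2)$, a polynomial, with no transcendental term to eliminate. Your quantity $\Psi(R)$ then collapses to $\Psi(R) = 2\pi\sqrt{R^2-1} - \frac{\pi^2}{2}-2$, which is monotonically increasing in $R$ and positive precisely when $R > \sqrt{(\tfrac{1}{\pi}+\tfrac{\pi}{4})^2+1}\approx 1.49$. Since $R_c\approx 1.81$, the conclusion is immediate and requires neither the defining relation for $R_c$ nor a separate derivative analysis — there is no "principal obstacle" in the final step. The hard part of this proposition is really the closed-form evaluation of the integral, which is the step you got wrong; once that is in hand the paper's one-line comparison finishes the proof.
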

\begin{proof}
Let $\mu=\mu_{\tii,R}$ for $R\ge R_c$. We have $\tilde{\cD}[\mu]=\int_{[-1,1]}\mu\rd{x}=2m-2 = \pi\sqrt{R^2-1}-2$ where $m = m(R)$ is given in \eqref{typeii-2}. 

Similar with \eqref{tildeH1}, we compute
\begin{equation}\begin{split}
		\tilde{\cH}[\mu] = &  -2 \pv\int_0^R y(\tilde{W}*\mu)'(y)\rd{y} = 2\pi\pv\int_0^R\frac{ y^2\sqrt{R^2-y^2}}{y^2-1}\rd{y}. \\
\end{split}\end{equation}
This integral can be calculated explicitly by reducing to rational integrals. We get
\begin{equation}\begin{split}
		\tilde{\cH}[\mu] = & \frac{\pi^2}{2}(R^2-2), \quad \tilde{\cG}[\mu]=\frac{\tilde{\cH}[\mu]}{\tilde{\cD}[\mu]^2} = \frac{\frac{\pi^2}{2}(R^2-2)}{(\pi\sqrt{R^2-1}-2)^2}.
\end{split}\end{equation}
Then $\tilde{\cG}[\mu]>\frac{1}{2}$ is equivalent to $R>\sqrt{(\frac{1}{\pi}+\frac{\pi}{4})^2+1}\approx1.49$, which holds since $R\ge R_c$.
\end{proof}

Before we study $\tilde{\cG}[\mu_{\tiii}]$, we first give a lemma on the monotonicity of $\tilde{\cH}$ and $\tilde{\cD}$. 
\begin{lemma}\label{lem_HDmon}
For $1<R<R_c$, both functions $\tilde{\cH}[\mu_{\tiii,L(R),R}]$ and $\tilde{\cD}[\mu_{\tiii,L(R),R}]$ are strictly increasing in $R$.
\end{lemma}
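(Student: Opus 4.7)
The plan is to prove both monotonicities by explicitly differentiating $\tilde{\cH}[\mu_{L(R),R}]$ and $\tilde{\cD}[\mu_{L(R),R}]$ with respect to $R$ along the admissibility curve, and then showing the derivatives are strictly positive, in the same spirit as the proof of Lemma \ref{lem:Phi}(i).

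The first step is to obtain workable integral representations. Directly from \eqref{typeiii},
\begin{equation*}
\tilde{\cD}[\mu_{L,R}] = -2 + \pi\sqrt{(R^2-1)(1-L^2)} + 2\int_0^{L} \frac{\sqrt{(R^2-x^2)(L^2-x^2)}}{1-x^2}\,dx.
\end{equation*}
For $\tilde{\cH}$, admissibility together with Remark \ref{rmk:Wmu-infinity} forces $(\tilde{W}*\mu_R)(L) = (\tilde{W}*\mu_R)(R) = 0$, so integration by parts on $[L,R]$ using \eqref{typeiii-2} and the principal value (to absorb the logarithmic blow-up at $y = \pm 1$, whose boundary contribution is $O(\epsilon\log\epsilon)\to 0$) gives
\begin{equation*}
\tilde{\cH}[\mu_{L,R}] = 2\pi\,\pv\int_{L}^{R} \frac{y\sqrt{(R^2-y^2)(y^2-L^2)}}{y^2-1}\,dy.
\end{equation*}

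Next I differentiate along the curve $L = L(R)$ implicitly defined by $\Phi(L(R),R)=0$. Because the integrands vanish at both $x=L$ and $x=R$ through the square-root factors, no boundary contributions arise from Leibniz's rule, and the derivative reduces to $(\partial_R + L'(R)\partial_L)$ applied to the integrand over a fixed domain. Implicit differentiation of $\Phi(L(R),R) = 0$ gives $L'(R) = -\partial_R\Phi/\partial_L\Phi$, which is strictly negative because both partials are strictly positive (as already used in the proof of Lemma \ref{lem:Phi}(i)). The chain rule therefore produces
\begin{equation*}
\frac{d}{dR}F(L(R),R) = \frac{(\partial_R F)(\partial_L\Phi) - (\partial_R\Phi)(\partial_L F)}{\partial_L\Phi}, \qquad F = \tilde{\cD},\,\tilde{\cH},
\end{equation*}
and it suffices to show the Jacobian-type numerator is strictly positive for each of $F = \tilde{\cD}$ and $F = \tilde{\cH}$.

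The main technical step is this positivity. After the substitution $u = x^2$, each Jacobian unfolds into a product of principal-value integrals over $[L^2,R^2]$ with integrands involving $\sqrt{(R^2-u)(u-L^2)}$ and its $R,L$-derivatives, all weighted by $1/(u-1)$. I would then symmetrize in the two integration variables $(u,v)\mapsto(v,u)$, pair up terms, and invoke the admissibility relation $\Phi(L,R) = 0$ to cancel the divergent contributions near $u = 1, v = 1$, leaving a manifestly positive integrand. This strategy is morally the same as the comparison arguments used in Lemma \ref{lem_dec} and Proposition \ref{prop:2-parameter}, but adapted to the principal-value setting. The main obstacle is exactly this cancellation: the sign-changing, divergent factor $1/(u-1)$ rules out any direct Lemma \ref{lem_dec}-type pointwise comparison, and extracting a definite sign requires carefully exploiting the constraint $\Phi(L,R) = 0$ as the precise mechanism through which the singular contributions annihilate, leaving a quartic-in-radicals integrand whose positivity can be checked algebraically.
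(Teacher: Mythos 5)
Your plan—implicit differentiation along $\Phi(L(R),R)=0$ followed by positivity of a Jacobian-type bilinear form—is a reasonable outline, but the decisive step is asserted rather than proved, and it is not actually true that the symmetrized integrand is ``manifestly positive.'' After symmetrizing the Jacobian in $(y,v)$ one obtains, for $F=\tilde{\cH}$, an integrand proportional to
\[
\frac{(R^2-L^2)\,(y-v)^2\,(y+v)}{(y^2-1)(v^2-1)\sqrt{(R^2-y^2)(y^2-L^2)(R^2-v^2)(v^2-L^2)}},
\]
which still carries the sign-changing factor $1/\bigl((y^2-1)(v^2-1)\bigr)$ over $[L,R]^2$. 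The regions where exactly one of $y,v$ lies below $1$ contribute with the opposite sign, so the pv double integral has no obvious sign; the constraint $\Phi(L,R)=0$ does not cancel these contributions in any transparent way, and you give no mechanism by which it would. The same unresolved sign problem recurs for $F=\tilde{\cD}$, where $\partial_L\tilde{\cD}$ itself is a difference of two positive divergent-looking pieces. Since establishing the sign of those Jacobians is the entire content of the lemma, this is a genuine gap, not a deferred detail.

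The paper avoids differentiation altogether. For $\tilde{\cH}$, it integrates by parts starting from $\int_L^R(R-x)(\tilde{W}*\mu)'\,dx$ and then uses $\Phi(L(R),R)=0$ to replace the weight $x-R$ by $x-1$, which cancels the singular factor and yields the fully regular expression $\tilde{\cH}[\mu]=2\pi\int_L^R\frac{\sqrt{(R^2-x^2)(x^2-L^2)}}{x+1}\,dx$; monotonicity in $R$ (increasing) and in $L$ (decreasing, hence increasing along the curve $L(R)$) is then immediate by inspection. For $\tilde{\cD}$, the paper argues by contradiction: supposing $\int_{(1,\infty)}\mu_{R_1}\le\int_{(1,\infty)}\mu_{R_2}$ for some $R_1<R_2$, the quadratic-in-$x^2$ structure of the densities forces a single crossing of $\mu_{R_1}-\mu_{R_2}$, which gives a uniform comparison of the cumulative functions; applying Lemma~\ref{lem_dec} with $U=\tilde{W}$ then yields $(\tilde{W}*\mu_{R_1})(0)>(\tilde{W}*\mu_{R_2})(0)$, contradicting $(\tilde{W}*\mu)(0)=0$ for admissible Type~\tiii. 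Note that this use of Lemma~\ref{lem_dec} compares the measures $\mu_{R_i}$ against the globally decreasing weight $\tilde{W}$ over $(0,\infty)$, not against the $1/(u-1)$ integrand of $\Phi$; your concern that Lemma~\ref{lem_dec} ``rules out any direct comparison'' because of the sign-changing factor is therefore misdirected, since the paper never attempts that comparison. In short: your integral formulas and the implicit-differentiation setup are fine, but the positivity you need is not established, and the route that actually closes it in the paper is structurally different.
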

\begin{proof}
	Let $\mu=\mu_{\tiii,L(R),R}$. We first consider $\tilde{\cH}$. Similar with \eqref{tildeH1}, we compute
	\begin{equation}\label{tildeH2}\begin{split}
			\tilde{\cH}[\mu] = & 2\int_L^R (\tilde{W}*\mu)(y)\rd{y} = 2\pv\int_L^R (R-x)(\tilde{W}*\mu)'(x)\rd{x}  \\
			= & 2\pi\pv\int_L^R (x-R)\frac{\pi\sqrt{-(x^2-R^2)(x^2-L^2)}}{x^2-1}\rd{x}\\
			= & 2\pi\pv\int_L^R (x-1)\frac{\pi\sqrt{-(x^2-R^2)(x^2-L^2)}}{x^2-1}\rd{x}
			= 2\pi\int_L^R\frac{ \sqrt{(R^2-x^2)(x^2-L^2)}}{x+1}\rd{x} \\
	\end{split}\end{equation}
	where the third equality uses \eqref{typeii-2}, and the second last equality follows from \eqref{LRcond} for $L=L(R)$. Then $\tilde{\cH}[\mu_{\tiii, L(R), R}]$ is strictly increasing in $R$ since $L(R)$ is strictly decreasing in $R$.
	
	By definition of $\tilde{\cD}$ and the mean-zero property of $\mu$, $\tilde{\cD}$ being increasing in $R$ is equivalent to $\int_{(1,\infty)}\mu_{\tiii,L(R),R}\rd{x}$ being decreasing in $R$. For $i=1,2$ and $1< R_i< R_c$, we denote $L_i: = L(R_i)$ and $\mu_i:=\mu_{\tiii,L_i,R_i}$. Suppose the contrary, then there exists $1<R_1<R_2<R_c$ such that
	\begin{equation}\label{LR12}
		\int_{(1,\infty)}\mu_1(x) \rd{x}\le \int_{(1,\infty)}\mu_{2}(x)\rd{x}. 
	\end{equation}
	We now compare the numerator of $\mu_i$ in \eqref{typeiii}, which is
	\begin{equation}\label{xLR}
		(x^2-R_1^2)(x^2-L_1^2)-(x^2-R_2^2)(x^2-L_2^2) = (R_2^2+L_2^2-R_1^2-L_1^2)x^2+(R_1^2L_1^2-R_2^2L_2^2).
	\end{equation}
     By the assumption \eqref{LR12} and the fact that $\mu_1(R_2)> \mu_2(R_2)=0$, we see that $\mu_1-\mu_2$ must be decreasing in $x$ and negative when $x$ is large enough. Therefore there exists some $x_0\in (R_2,\infty)$ such that
	\begin{equation}\label{eqX1}
		\mu_1(x)\ge \mu_2(x) \text{ for } 0\le x< 1 \text{ and } 1<x \le x_0, \quad\quad \mu_1(x) \le \mu_2(x) \text{ for } x > x_0.
	\end{equation}	
	We claim that 
	\begin{equation}\label{eqX}
		\int_{(0,X)} \mu_{1}(x)\rd{x} > \int_{(0,X)}\mu_{2}(x)\rd{x}, \quad \forall X>0.
	\end{equation}
 In fact, this can be seen by separating into the following cases. If $0<X\le 1$, we use the first inequality in \eqref{eqX1}; if $1<X\le x_0$, we firstly notice that \eqref{LR12} implies that $\int_{(0,1]} \mu_{1}(x)\rd{x} > \int_{(0,1]}\mu_{2}(x)\rd{x}$ by the mean-zero property of $\mu$, then we use the first inequality in \eqref{eqX1} for $(1, X)$; if $X>x_0$, then the second inequality in \eqref{eqX1} implies that $\int_{[X,\infty)} \mu_{1}(x)\rd{x} < \int_{[X,\infty)}\mu_{2}(x)\rd{x}$, then we apply the mean-zero property of $\mu$. 
	
	Now we apply Lemma \ref{lem_dec} with $X=\infty$ and get
	\begin{equation}
		(\tilde{W}* \mu_1) (0)= \int_{\mathbb{R}} \tilde{W}(x)\mu_{1}(x)\rd{x} > \int_{\mathbb{R}} \tilde{W}(x)\mu_{2}(x)\rd{x} = (\tilde{W}* \mu_2)(0), 
	\end{equation}
	which contradicts the property $(\tilde{W}*\mu)(0) = 0$ for $\mu$ of Type $\tiii$, proved in Proposition \ref{prop_mu}.
\end{proof}

Now we consider $\mu_{\tiii, L(R), R}$ with $R_c \ge R\ge 1.1$. We take a discretization $R_0<R_1<\cdots<R_n=R_c$ of $R$ with $n = 19$ and denote $\mu_i: = \mu_{\tiii, L(R_i), R_i}$ and $\tilde{\cH}_i:= \tilde{\cH}[\mu_i]$ and $\tilde{\cD}_i:= \tilde{\cD}[\mu_i]$. We then numerically verify in Table \ref{table} that 
\begin{equation}\label{num}
	\frac{\tilde{\cH}_k}{\tilde{\cD}_{k+1}^2}>\frac{1}{2},
\end{equation}
for $0\le k\le n-1$. \footnote{Approximation of integrals are computed in Matlab with error no more than $10^{-4}$. As a result, the error of $\frac{\tilde{\cH}_k}{\tilde{\cD}_{k+1}^2}$ is no more than $10^{-3}$ for any $k$. } \\
	\begin{table}[!htbp]
		\begin{center}
			\label{table}
			\begin{tabular}{|c|c|c|c|c|c|c|c|c|c|}
				\hline
				& & & & & & & & &  \\[-0.3cm]
				$k$ & $R_k$ & $\tilde{\cH}_k$ & $\tilde{\cD}_k$ & $\tilde{\cH}_k/\tilde{\cD}_{k+1}^2$ & $k$ & $R_k$ & $\tilde{\cH}_k$ & $\tilde{\cD}_k$ & $\tilde{\cH}_k/\tilde{\cD}_{k+1}^2$  \\[0.1cm]
				\hline
				0 & 1.1000 & 0.0986 & 0.3188 & 0.5765 & 10 & 1.4297 & 1.7954 & 1.4174 & 0.7500 \\
				\hline
				1 & 1.1292 & 0.1645 & 0.4135 & 0.6290 & 11 & 1.4677 & 2.1224 & 1.5472 & 0.7512 \\
				\hline
				2 & 1.1592 & 0.2495 & 0.5114 & 0.6650 &	12 & 1.5067 & 2.4858 & 1.6809 & 0.7515 \\
				\hline
				3 & 1.1900 & 0.3550 & 0.6125 & 0.6906 &	13 & 1.5467 & 2.8879 & 1.8187 & 0.7512 \\
				\hline
				4 & 1.2216 & 0.4824 & 0.7170 & 0.7090 &	14 & 1.5878 & 3.3312 & 1.9607 & 0.7504 \\
				\hline
				5 & 1.2541 & 0.6331 & 0.8248 & 0.7225 &	15 & 1.6300 & 3.8188 & 2.1070 & 0.7492 \\
				\hline
				6 & 1.2874 & 0.8088 & 0.9361 & 0.7323  &	16 & 1.6733 & 4.3538 & 2.2577 & 0.7477 \\
				\hline
				7 & 1.3216 & 1.0111 & 1.0509 & 0.7394  &	17 & 1.7177 & 4.9404 & 2.4131 & 0.7459 \\
				\hline
				8 & 1.3567 & 1.2417 & 1.1694 & 0.7445 &	18 & 1.7633 & 5.5844 & 2.5736 & 0.7437 \\
				\hline
				9 & 1.3927 & 1.5025 & 1.2915 & 0.7479 &	19 & 1.8102 & 6.3003 & 2.7403 & \\
				\hline
			\end{tabular}
		\end{center}
		\caption{Numerical verification of $\frac{\tilde{\cH}}{\tilde{\cD}^2}$.}
	\end{table}

It then follows from Lemma\ref{lem_HDmon} that for any $R\in (R_k,R_{k+1}]$ and $\mu = \mu_{\tiii, L(R), R}$
\begin{equation}
	\frac{\tilde{\cH}[\mu]}{\tilde{\cD}[\mu]^2} \ge \frac{\tilde{\cH}_k}{\tilde{\cD}_{k+1}^2}>\frac{1}{2}.
\end{equation}
We thus prove the following. 
\begin{proposition}\label{prop:type-3-value-large}
	For $1.1 \le R\le R_c$ and $\mu = \mu_{\tiii, L(R), R}$, we have $\tilde{\cG}[\mu]>1/2$. 
\end{proposition}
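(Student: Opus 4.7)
The plan is to reduce the continuous inequality $\tilde{\cG}[\mu_{\tiii,L(R),R}]>1/2$ on $[1.1, R_c]$ to a finite numerical verification, leveraging the monotonicity established in Lemma \ref{lem_HDmon}. Since both $\tilde{\cH}[\mu_{\tiii,L(R),R}]$ and $\tilde{\cD}[\mu_{\tiii,L(R),R}]$ are strictly increasing in $R$, any partition $1.1 = R_0 < R_1 < \cdots < R_n = R_c$ yields the sandwich
\[
\frac{\tilde{\cH}[\mu_{\tiii,L(R),R}]}{\tilde{\cD}[\mu_{\tiii,L(R),R}]^2} \ \ge\ \frac{\tilde{\cH}_k}{\tilde{\cD}_{k+1}^2}, \qquad R \in (R_k, R_{k+1}],
\]
so it suffices to verify the finite family of inequalities $\tilde{\cH}_k/\tilde{\cD}_{k+1}^2 > 1/2$ for $k=0,1,\ldots,n-1$, which is exactly the data displayed in the table.

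To carry this out, first I would fix a mesh of $R$-values covering $[1.1, R_c]$ (the authors pick $n=19$, i.e., twenty points, finer near the left endpoint where the margin is smallest). For each $R_k$ the implicitly defined $L(R_k) \in (0,1)$ is computed by numerically solving $\Phi(L, R_k) = 0$ from \eqref{LRcond}; existence and uniqueness come from Proposition \ref{prop_mu}(ii), and strict monotonicity of $\Phi$ in $L$ from Lemma \ref{lem:Phi} means bisection or Newton converges rapidly. Then $\tilde{\cH}_k$ is evaluated via the non-singular representation \eqref{tildeH2}, namely
\[
\tilde{\cH}_k \ =\ 2\pi \int_{L(R_k)}^{R_k} \frac{\sqrt{(R_k^2 - x^2)(x^2 - L(R_k)^2)}}{x+1}\, dx,
\]
while $\tilde{\cD}_k$ comes from its definition in \eqref{tildeH} together with the explicit form of $\mu_{\tiii,L(R_k),R_k}$ in \eqref{typeiii}, \eqref{typeiii-2}. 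Both integrals are smooth, bounded, and amenable to standard quadrature; importantly, the principal-value formulation of $\tilde{\cH}$ has been traded for an ordinary Riemann integral by the elimination of the pole at $x=1$ achieved in \eqref{tildeH2}, which was itself made possible by the defining condition $\Phi(L(R),R)=0$.

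The main obstacle is not conceptual but quantitative: the mesh must be fine enough that the smallest ratio $\tilde{\cH}_k/\tilde{\cD}_{k+1}^2$, which by the tabulated data is attained near $R = 1.1$ with value about $0.5765$, survives the accumulated numerical error from the root-finding for $L(R_k)$ and from the two integrations. Because the integrands are smooth and uniformly bounded in each subinterval, a quadrature rule with prescribed error at most $10^{-4}$ per integral propagates to an error of order $10^{-3}$ in each ratio, comfortably below the minimal margin of roughly $0.076$. Once the twenty entries in the table are computed and verified at this precision, the proposition follows immediately from the sandwich bound; if the margin near $R = 1.1$ ever became too tight under a different quadrature, one would simply refine the mesh near that endpoint, which can be done uniformly since $\tilde{\cG}[\mu_{\tiii,L(R),R}]$ is continuous on the compact interval $[1.1, R_c]$.
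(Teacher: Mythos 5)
Your proposal is correct and mirrors the paper's own proof exactly: both reduce the problem on $[1.1,R_c]$ to finitely many discrete checks by invoking the monotonicity in $R$ of $\tilde{\cH}[\mu_{\tiii,L(R),R}]$ and $\tilde{\cD}[\mu_{\tiii,L(R),R}]$ from Lemma \ref{lem_HDmon}, then numerically verify $\tilde{\cH}_k/\tilde{\cD}_{k+1}^2 > 1/2$ on the same $n=19$ mesh reported in the table. The extra paragraph you add about solving $\Phi(L,R_k)=0$ for $L(R_k)$, using the pole-free representation \eqref{tildeH2} for $\tilde{\cH}$, and tracking quadrature error is a faithful fleshing-out of what the paper relegates to a footnote rather than a deviation from its method.
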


\subsection{Small $R$}
In this section, we give an estimate of $\tilde{\cG}[\mu]$ for $\mu = \mu_{\tiii, L(R), R}$ when $R$ is close to $1$. In Section \ref{ssec:large-R} we have shown Theorem \ref{thm:min-value-R} for $R\ge 1.1$, therefore we focus on $1<R\le 1.1$. We start with giving a lower bound for $\tilde{\cH}$ and an upper bound for $\tilde{\cD}$ in terms of $R$ and $L = L(R)$. They serve as good approximations of $\tilde{\cH}$ and $\tilde{\cD}$ when $R$ is close to $1$.

\begin{lemma}\label{lem:estimate-H-D}
For any $ 1< R< R_c$ and $\mu = \mu_{\tiii, L(R), R}$, we have
\begin{equation} 
	\tilde{\cH}[\mu] \ge   \frac{\pi^2}{2(R+1)R}\cdot (\frac{R^2-L^2}{2})^2, \quad \tilde{\cD}[\mu]\le \pi(1-L)\Big(1+\frac{2}{\pi}(1-L)\Big).
\end{equation}
\end{lemma}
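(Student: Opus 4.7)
The lower bound on $\tilde{\cH}[\mu]$ is a direct computation from the formula~\eqref{tildeH2}. Since $x+1\le R+1$ on $[L,R]$,
$$\tilde{\cH}[\mu]\ge \frac{2\pi}{R+1}\int_L^R\sqrt{(R^2-x^2)(x^2-L^2)}\,dx.$$
I would evaluate the remaining integral via the substitution $x^2=L^2+(R^2-L^2)\sin^2\theta$ with $\theta\in[0,\pi/2]$, under which $\sqrt{(R^2-x^2)(x^2-L^2)}=(R^2-L^2)\sin\theta\cos\theta$ and $dx = (R^2-L^2)\sin\theta\cos\theta/x\,d\theta$. Using $x\le R$ to bound the Jacobian from below and the identity $\int_0^{\pi/2}\sin^2\theta\cos^2\theta\,d\theta=\pi/16$,
$$\int_L^R\sqrt{(R^2-x^2)(x^2-L^2)}\,dx\ge \frac{(R^2-L^2)^2}{R}\cdot\frac{\pi}{16},$$
from which the target lower bound $\pi^2/(2R(R+1))\cdot((R^2-L^2)/2)^2$ follows after rearrangement.

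For $\tilde{\cD}[\mu]$ I would work from the decomposition
$$\tilde{\cD}[\mu] = -2+2m+\int_{-L}^L\mu_c\,dx = 2m-2(1-L)-\int_{-L}^L(1-\mu_c)\,dx,$$
and bound the three pieces separately. The Dirac-mass term $2m=\pi\sqrt{(R^2-1)(1-L^2)}$ is controlled via Lemma~\ref{lem:Phi}(iii): the inequality $R-1<1-L$ gives $R^2-1\le (R+1)(1-L)$, yielding $2m\le\pi(1-L)\sqrt{(R+1)(1+L)}$; the factor $\sqrt{(R+1)(1+L)}$ is then bounded using $R+L<2$ (again Lemma~\ref{lem:Phi}(iii)). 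For the gap $\int_{-L}^L(1-\mu_c)\,dx\ge 0$ (using $\mu_c\le 1$ on $[-L,L]$, a consequence of Proposition~\ref{prop_mu}), I plan to extract a quantitative lower bound by using $1-\mu_c\ge (1-\mu_c^2)/2$ together with the identity
$$(1-\mu_c^2)(1-x^2)^2 = (1-R^2L^2)+(R^2+L^2-2)x^2,$$
and then integrating in closed form via partial fractions on $[-L,L]$, producing terms of orders $(1-L)$ and $(1-L)^2$ after simplification using Lemma~\ref{lem:Phi}(iii).

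The main obstacle is obtaining the sharp constant $\pi$ in the leading term $\pi(1-L)$ of the target. A naive combination—bounding $2m\le 2\pi(1-L)$ via AM-GM and discarding the gap $\int(1-\mu_c)\,dx\ge 0$—produces a constant close to $2\pi-2\approx 4.28$ instead of $\pi\approx 3.14$. The asymptotic regime $R\to 1^+$, $L\to 1^-$ shows this discrepancy is genuine: the boundary layers of $\mu_c$ near $x=\pm L$ contribute exactly $(\pi-2)(1-L)$ to the gap, and this cancellation against $2m\approx 2\pi(1-L)-2(1-L)$ must be captured quantitatively rather than asymptotically. Closing the factor-of-two gap requires using the constraint $\Phi(L,R)=0$ beyond its weak consequence $R-1<1-L$, matching the $1-\mu_c\sim\sqrt{2\ell s}/(\ell+s)$ behavior (with $\ell=1-L$, $s=L-|x|$) against $2m$ so that the subleading $O((1-L)^2)$ term emerges with the stated coefficient $2$.
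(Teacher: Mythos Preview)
Your treatment of the $\tilde{\cH}$ bound is correct and essentially equivalent to the paper's: the paper substitutes $y=x^2$ first and then bounds $(\sqrt{y}+1)\sqrt{y}\le (R+1)R$, arriving at the same half-circle integral $\int_{L^2}^{R^2}\sqrt{(R^2-y)(y-L^2)}\,dy=\tfrac{\pi}{8}(R^2-L^2)^2$. Your trigonometric substitution is just a reparametrization of the same calculation.

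For $\tilde{\cD}$, however, your proposal is incomplete, and you correctly diagnose why: your decomposition
\[
\tilde{\cD}[\mu]=2m-2(1-L)-\int_{-L}^L(1-\mu_c)\,dx
\]
forces you to extract a lower bound of order $(\pi-2)(1-L)$ from the gap integral to cancel the overshoot in $2m$, and your proposed route via $1-\mu_c\ge (1-\mu_c^2)/2$ and partial fractions does not do this---you say so yourself. The constraint $\Phi(L,R)=0$ enters your argument only through its crude consequences $R-1<1-L$ and $LR<1$, which is not enough to recover the leading $\pi$.

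The paper sidesteps this cancellation problem entirely by a comparison trick you did not find. Starting from the same expression
\[
\tilde{\cD}[\mu]=2m-2+2\int_0^L\frac{\sqrt{(R^2-x^2)(L^2-x^2)}}{1-x^2}\,dx,
\]
it proves the pointwise inequality
\[
\frac{\sqrt{(R^2-x^2)(L^2-x^2)}}{1-x^2}<\frac{\sqrt{(L-x)(2-L-x)}}{1-x}
\]
(checked by squaring; it reduces to $L+R<2$ and $LR<1$). The right-hand side is, after the change of variable $y=\frac{1-x}{\pi(1-L)}$, exactly $\pi(1-L)$ times the continuous part of the Type~$\ti$ distribution $\mu_{\ti}$ on $[1/\pi,\,1/(\pi(1-L))]$. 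The mean-zero property of $\mu_{\ti}$ then evaluates this integral as $1-\tfrac{\pi}{2}(1-L)$ plus an explicit tail that is bounded by $(1-L)^2$. The leading $\pi(1-L)$ thus emerges structurally from the Type~$\ti$ connection rather than from a delicate cancellation, and the inequality $R-1<1-L$ is only used once more at the end to control $2m$.
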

\begin{proof}
The lower bound for $\tilde{\cH}$ follows from estimating the integral in \eqref{tildeH2}. By a change of variable $x=\sqrt{y}$, we have
	\begin{equation}\label{tildeH3}\begin{split}
		\tilde{\cH}[\mu] =& \pi\int_{L^2}^{R^2}\frac{\sqrt{(R^2-y)(y-L^2)}}{(\sqrt{y}+1)\sqrt{y}}\rd{y} 
		\ge \frac{\pi}{(R+1)R}\int_{L^2}^{R^2}\sqrt{(R^2-y)(y-L^2)}\rd{y} \\=&  \frac{\pi^2}{2(R+1)R}\cdot (\frac{R^2-L^2}{2})^2.
\end{split}	\end{equation}

For the upper bound of $\tilde{\cD}$, we estimate
	\begin{equation}\label{tildeD1}
		\tilde{\cD}[\mu] = \int_{[-1, 1]} \mu(x) \rd{x} = 2m-2+2\int_0^L\frac{\sqrt{(R^2-x^2)(L^2-x^2)}}{1-x^2}\rd{x},
	\end{equation}
where $m = m(L, R) = \pi \sqrt{(R^2-1)(1-L^2)}/2$ in \eqref{typeiii-2}. Noticing that
	\begin{equation}
		\Big(\frac{\sqrt{(R^2-x^2)(L^2-x^2)}}{1-x^2}\cdot\frac{1-x}{\sqrt{(L-x)(2-L-x)}} \Big)^2= \frac{(R+x)(L+x)}{(1+x)^2}\cdot\frac{R-x}{2-L-x} < 1,
	\end{equation}
	since $L+R<2$ and $LR<1$ by Lemma \ref{lem:Phi}, we get 
	\begin{equation}\label{tildeD2}
		\tilde{\cD}[\mu] < 2m-2+2\int_0^L\frac{\sqrt{(L-x)(2-L-x)}}{1-x}\rd{x}.
	\end{equation}
	
	The last integral in \eqref{tildeD2} can be related to the total mass of Type $\ti$. In fact, by a change of variable $y=\frac{1-x}{\pi(1-L)}$,
	\begin{equation}\label{tildeD3}
		\int_0^L\frac{\sqrt{(L-x)(2-L-x)}}{1-x}\rd{x} = \pi(1-L)\int_{1/\pi}^{1/(\pi(1-L))}\frac{\sqrt{y^2-\pi^{-2}}}{y}\rd{y}
	\end{equation}
	By the even and mean-zero property of $\mu_\ti$, we have
	\begin{equation}
		\int_{1/\pi}^\infty\Big(\frac{\sqrt{y^2-\pi^{-2}}}{y}-1\Big)\rd{y} = -\frac{1}{2}+\frac{1}{\pi}.
	\end{equation}
	Therefore the integral in \eqref{tildeD3} equals
	\begin{equation}\label{tildeD4}
		\begin{split}
			 & \pi(1-L)\left(\frac{1}{\pi(1-L)}-\frac{1}{\pi} -\frac{1}{2}+\frac{1}{\pi} - \int_{1/(\pi(1-L))}^\infty \Big(\frac{\sqrt{y^2-\pi^{-2}}}{y}-1\Big)\rd{y}\right) \\
			= & 1-\frac{\pi(1-L)}{2} + \frac{1}{\pi}(1-L)\int_{1/(\pi(1-L))}^\infty \frac{1}{y(y+\sqrt{y^2-\pi^{-2}})}\rd{y} \\
			< & 1-\frac{\pi(1-L)}{2} + \frac{1}{\pi}(1-L)\int_{1/(\pi(1-L))}^\infty \frac{1}{y^2}\rd{y} 
			\quad < 1-\frac{\pi(1-L)}{2} + (1-L)^2. \\
	\end{split}\end{equation}
Combining \eqref{tildeD2} and \eqref{tildeD4}, we get
\begin{equation}\begin{split}
		\tilde{\cD}[\mu] < & 2m-2+2(1-\frac{\pi(1-L)}{2} + (1-L)^2) \\
		= & \pi(1-L)\Big(\sqrt{\frac{(R+1)(R-1)(1+L)}{1-L}}-1+\frac{2}{\pi}(1-L)\Big) \\
		<  &\pi(1-L)\Big(1+\frac{2}{\pi}(1-L)\Big),\\
\end{split}\end{equation}
where the last inequality holds since $R-1<1-L$ and thus $(R+1)(1+L)<(R+1)(3-R)<4$. 
\end{proof}

\begin{proposition}
		For $1 < R\le 1.1$ and $\mu = \mu_{\tiii, L(R), R}$, we have $\tilde{\cG}[\mu]>1/2$. 
\end{proposition}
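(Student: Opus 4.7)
The plan is to combine the explicit estimates from Lemma \ref{lem:estimate-H-D} with the structural constraint $L^2+R^2>2$ from Lemma \ref{lem:Phi}(iii), thereby reducing the claim to a simple numerical inequality valid throughout $R\in(1,1.1]$.

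First I would substitute the bounds of Lemma \ref{lem:estimate-H-D} directly into the definition of $\tilde{\cG}$, obtaining
\begin{equation*}
\tilde{\cG}[\mu] \;\ge\; \frac{(R^2-L^2)^2}{8R(R+1)(1-L)^2\bigl(1+\tfrac{2(1-L)}{\pi}\bigr)^2}.
\end{equation*}
The next, and conceptually central, step is to exploit $L^2+R^2>2$, which rearranges as $R^2-L^2>2(1-L^2)=2(1-L)(1+L)$. Squaring and plugging in cancels the awkward factor $(1-L)^2$ and produces the cleaner bound
\begin{equation*}
\tilde{\cG}[\mu] \;>\; \frac{(1+L)^2}{2R(R+1)\bigl(1+\tfrac{2(1-L)}{\pi}\bigr)^2}.
\end{equation*}
So it suffices to prove the one-variable inequality $(1+L)^2>R(R+1)\bigl(1+\tfrac{2(1-L)}{\pi}\bigr)^2$ on the admissible range.

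To finish, I would use the same constraint $L^2+R^2>2$ in the form $L>\sqrt{2-R^2}$, which for $R\le 1.1$ gives $1-L<1-\sqrt{0.79}<0.112$. Then $(1+L)^2>(1.888)^2>3.56$, while $R(R+1)\bigl(1+\tfrac{2(1-L)}{\pi}\bigr)^2\le (1.1)(2.1)(1.072)^2<2.66$. Since $3.56>2.66$, the desired inequality holds and $\tilde{\cG}[\mu]>1/2$.

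No genuine obstacle arises here; the only point requiring care is the uniformity of the bounds across the interval. Both $R(R+1)$ and the upper bound $1-\sqrt{2-R^2}$ on $1-L$ are increasing in $R$, so the endpoint $R=1.1$ is simultaneously the worst case for both, and the remaining margin there ($3.56$ versus $2.66$) is comfortably large, leaving no room for concern about sharper dependence of $L(R)$ on $R$ coming from the implicit equation $\Phi(L,R)=0$.
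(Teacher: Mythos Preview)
Your proposal is correct and follows essentially the same approach as the paper: both start from the bounds of Lemma~\ref{lem:estimate-H-D}, use $R^2+L^2>2$ from Lemma~\ref{lem:Phi}(iii) to cancel the factor $(1-L)^2$, and arrive at the inequality $\tilde{\cG}[\mu]>\frac{(1+L)^2}{2R(R+1)(1+\frac{2}{\pi}(1-L))^2}$. The only difference is in the final numerical step: the paper further bounds $R(R+1)<(2-L)(3-L)$ via $R+L<2$ to obtain a function of $L$ alone, then checks monotonicity and evaluates at $L=0.85$, whereas you go directly to the worst case $R=1.1$, $L>\sqrt{0.79}$ and compare $3.56$ against $2.66$ --- a slightly more direct route to the same conclusion.
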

\begin{proof}
Using Lemma \ref{lem:estimate-H-D} and $R^2+L^2>2$ and  $R+L<2$ from Lemma \ref{lem:Phi}, we obtain that
\begin{equation}\begin{split}
		\frac{\tilde{\cH}[\mu]}{\tilde{\cD}[\mu]^2} > &  \frac{1}{2(R+1)R}\cdot \frac{(R^2-L^2)^2}{4(1-L)^2}\cdot \frac{1}{(1+\frac{2}{\pi}(1-L))^2} \\
		> &  \frac{1}{2(R+1)R}\cdot \frac{(2-2L^2)^2}{4(1-L)^2}\cdot \frac{1}{(1+\frac{2}{\pi}(1-L))^2} \\
		= &  \frac{(1+L)^2}{2(R+1)R}\cdot \frac{1}{(1+\frac{2}{\pi}(1-L))^2} 
		>  \frac{(1+L)^2}{2(3-L)(2-L)}\cdot \frac{1}{(1+\frac{2}{\pi}(1-L))^2}. \\
\end{split}\end{equation}
We notice that the last expression is equal to 1 when $L=1$, therefore it is larger than $1/2$ when $L$ is close to 1, equivalently when $R$ is close to 1. 
In fact, this can be quantified by observing that the last expression is increasing in $L$. When $L=0.85$, it is approximately $0.58$, therefore it must be greater than $1/2$ for any $0.85\le L < 1$. Since $R\ge \sqrt{2-L^2}$, we have verified $\tilde{\cG}[\mu]>1/2$ for any $1<R\le \sqrt{2-0.85^2}$, and in particular, it is true for $1<R\le 1.1$.
\end{proof}

\section{From $\R$ to $\bT$}\label{sec:R-to-T}
In this section, we will compare the values of $\tilde{\cG}[\mu]$ with $\cG[\rho]$, where $\mu$ is admissible over $\R$ and $\rho$ (without its Dirac masses) is sediment over $\bT$. We have constructed these two family of distributions in Section \ref{ssec:admissible-R} and \ref{ssec:sediment-T} respectively. In Section \ref{sec:min-value-R}, we have determined $\tilde{\cG}[\mu]\ge 1/2$ for all admissible $\mu$. In order to establish a comparison, we first introduce a way to associate a signed measure $\rho_{\circ}\in \cM$ with each admissible distribution $\mu$ over $\R$, via periodization, in Section \ref{ssec:periodization}. We then relate the quantities $\tilde{\cH}[\mu]$ and $\tilde{\cD}[\mu]$ with $\cH[\rho_{\circ}]$ and $\cD[\rho_{\circ}]$ in Theorem \ref{prop:rho-circ-H-D} and Section \ref{ssec:compare1}. In particular, we show that $\cG[\rho_{\circ}]\ge \tilde{\cG}[\mu]$. Finally we prove a comparison principle using the convexity of logarithmic potential, showing that for each minimizer $\rho \in \cM$ of $\cG$ in $\cM_{\cD\ge d}$ there exists a unique admissible $\mu$ whose associated $\rho_{\circ}$ satisfying $\cG[\rho]\ge \cG[\rho_{\circ}]>1/2$. 

\subsection{Periodization}\label{ssec:periodization}
In this section, we bridge the admissible distributions over $\R$ to distributions over $\bT$ via a periodization. For each signed measure $\mu$ with $\int_{\R} |\mu(x)| \rd{x} < \infty$ over $\R$, we define its \emph{periodization}
\begin{equation}\label{rhocirc0}
	\mu_{\bT}(x): = \sum_{j\in\mathbb{Z}} \mu(x-j),  \quad  x\in \bT. 
\end{equation}
This definition does not depend on the choice of representatives for $x$ in $\bT$. Now for each admissible $\mu$, we associate a signed measure over $\bT$
\begin{equation}\label{rhocirc}
	\rho_\circ[\mu](x): = 1+\mu_{\bT}(x) ,\quad x\in\mathbb{T}.
\end{equation}
If $\mu$ is in \eqref{typei}, \eqref{typeii} and \eqref{typeiii} up to a scaling factor $\lambda$ , then $\mu= -1+ \mu_c+ \mu_d$ using the notation in Lemma \ref{lem_complex}, we can break down $\rho_{\circ}[\mu] = \rho_{\circ, c}[\mu] + \rho_{\circ, d}[\mu]$ where
\begin{equation}
	\rho_{\circ, c}[\mu](x):= 1+ (-1+\mu_c)_{\bT}(x),\quad\quad  \rho_{\circ, d}[\mu](x):= (\mu_d)_{\bT}(x).
\end{equation}
For simplicity, we will drop the associated $\mu$ in these notations when there is no confusion. 

We now characterize properties of $\rho_{\circ, c}$ and $\rho_{\circ, d}$ for $\mu$. 

\begin{proposition}\label{prop:rho-circ-c-d}
	Given $\mu$ an admissible distribution over $\R$ and $\rho_{\circ} = \rho_{\circ}[\mu]$. Let $m = m(R, L(R))$ be given by \eqref{typeiii-2} for $\mu$ of Type $\tii$ and $\tiii$ and $\lambda$ the scaling factor of $\mu$. Assume $\lambda\le 1$ for $\mu$ of type $\ti$ and $\lambda \le \frac{1}{2}$, $\lambda m<\frac{1}{2}$ for $\mu$ of type $\tii$ or $\tiii$. \\
Then $\rho_\circ $ is even with $\int_{\mathbb{T}}\rho_\circ\rd{x}=1$, and
\begin{itemize}
	\item[\textnormal{(i)}] $\rho_{\circ, d}$ is $\lambda \delta $ for $\mu$ of Type $\ti$ and is $\lambda m(\delta_{\lambda} + \delta_{-\lambda})$ for $\mu$ of Type $\tii$ or $\tiii$.
	\item[\textnormal{(ii)}]	$\rho_{\circ, c}$ is H\"older continuous with H\"older exponent $1/2$. 
	\item[\textnormal{(iii)}]$\rho_{\circ,c}$ is $C^{2}$ and $\rho_{\circ, c}''< 0$ on $\bT \backslash B$, where $B = \{ \lambda \pi^{-1} \}$ for $\mu$ of Type $\ti$ and $B = \{ \pm \lambda L, \pm \lambda R \}$ for $\mu$ of Type $\tii$ and $\tiii$. 
\end{itemize}
If $\mu$ is of Type $\tii$ or $\tiii$ then 
\begin{itemize}
   \item[\textnormal{(iv)}] $\{x\in(-\lambda,\lambda):\rho_\circ(x)\ge 0\}$ is either $\emptyset$ or $[-L_\circ, L_\circ]$ for some $L_{\circ}\in [0, \lambda L )$. 
\item[\textnormal{(v)}] $\{x\in(\lambda,1-\lambda):\rho_\circ(x)\ge 0\}$ is either $\emptyset$, or $[R_\circ,1-R_\circ]$ for some $R_{\circ}\in (\lambda R, 1/2]$. 
\end{itemize}
\end{proposition}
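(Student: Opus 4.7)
The plan is to verify each sub-claim by unpacking the explicit structure $\mu = -1 + \mu_c + \mu_d$ from Lemma \ref{lem_complex} and combining it with the periodization identity $\rho_{\circ}(x) = 1 + \sum_{j\in\Z}\mu((x-j)/\lambda)$. Evenness of $\rho_\circ$ follows from the symmetric choice of $\{L_j,M_j,R_j\}$ which makes $\mu$ even on $\R$, and the total mass equals $1 + \int_\R\mu = 1$ by the mean-zero property of Lemma \ref{lem_complex}. For (i), scaling sends the unit Dirac $\delta_a$ to $\lambda\delta_{\lambda a}$, and the assumptions $\lambda\le 1$ (Type $\ti$) or $\lambda\le 1/2$ with $\lambda m<1/2$ (Types $\tii$, $\tiii$) prevent the shifted Diracs from coinciding on $\bT$ and keep each weight below $1/2$, yielding the stated $\rho_{\circ,d}$.

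For (ii) and (iii), I work with $\rho_{\circ,c}(x) = 1 + \sum_{j\in\Z}[-1+\mu_c((x-j)/\lambda)]$. The pointwise decay $|{-1+\mu_c(y)}| = O(1/y^2)$ from Lemma \ref{lem_complex} (which relies on \eqref{lem_complex2_1}) makes this series and its formal derivatives absolutely and uniformly convergent on compact subsets of $\bT\setminus B$; each summand is smooth away from the shifted boundary $\partial S + \lambda\Z$ and H\"older-$1/2$ near it, giving (ii) and the $C^2$ part of (iii). For the strict inequality $\rho_{\circ,c}''<0$, the key pointwise fact is $\mu_c''<0$ on the open interior of $\supp\mu_c$, which I would verify by direct differentiation of $(T_LT_R)^{1/2}/T_M$ in each Type (after the change of variable $u=1/x$ the expression $\sqrt{1-R^2u^2}/(1-u^2)$ or its Type-$\tiii$ analogue makes the sign clean). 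Then every term $\mu_c''((x-j)/\lambda)$ is nonpositive, all but finitely many are strictly negative (large-$|j|$ shifts land in the interior of $\supp\mu_c$), and the sum is strictly negative.

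Claims (iv) and (v) reduce to combining strict concavity with strategic pointwise evaluation. On the arc $(\lambda L,\lambda)$ the $j=0$ term equals $-1$ because $x/\lambda\in(L,1)$ lies in the gap of $\supp\mu_c$, and every other summand is strictly negative since $\mu_c<1$ at all finite points (Proposition \ref{prop_mu}); hence $\rho_{\circ,c}<0$ on $(\lambda L,\lambda]$. On $(-\lambda L,\lambda L)$, $\rho_{\circ,c}$ is even, continuous, and strictly concave by (iii), so $\{\rho_{\circ,c}\ge 0\}$ is either empty or a symmetric closed interval $[-L_\circ,L_\circ]$, and the strict negativity at $\pm\lambda L$ forces $L_\circ<\lambda L$. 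Claim (v) is the parallel argument about the midpoint $1/2$: evenness plus periodicity gives $\rho_{\circ,c}(x)=\rho_{\circ,c}(1-x)$; on $(\lambda,\lambda R)\cup(1-\lambda R,1-\lambda)$ both the $j=0$ and the $j=\pm 1$ contributions hit the $-1$ plateau forcing strict negativity; and strict concavity on the complementary sub-arc $(\lambda R,1-\lambda R)$ yields the interval form $[R_\circ,1-R_\circ]$ with $R_\circ>\lambda R$.

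The main obstacle is the strict concavity $\mu_c''<0$ on the interior of $\supp\mu_c$ in step (iii); once this is in hand, the sign analysis in (iv) and (v) is a clean one-dimensional concavity-plus-boundary-data argument, so extracting this inequality uniformly across the three Types (rather than as three separate messy computations) is the real work of the proof.
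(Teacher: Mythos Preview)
Your approach is correct and essentially coincides with the paper's: both use the pointwise inequality $\mu(x-j)<0$ for $j\ne 0$ (Proposition~\ref{prop_mu}) to confine $\{\rho_{\circ,c}\ge 0\}$ to $[-\lambda L,\lambda L]\cup[\lambda R,1-\lambda R]$, then invoke the strict concavity from (iii) to pin down the interval form. One small correction: in (v) your claim that the $j=\pm 1$ summands also ``hit the $-1$ plateau'' on $(\lambda,\lambda R)$ is not true in general (it would require $\lambda(R+1)>1$), but this is harmless---the weaker fact $-1+\mu_c((x-j)/\lambda)<0$ from Proposition~\ref{prop_mu} already gives $\rho_{\circ,c}<0$ there. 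The paper also treats the case $\lambda R>1/2$ separately, invoking $L+R<2$ from Lemma~\ref{lem:Phi} to ensure $\pm\lambda R\notin[-\lambda L,\lambda L]$ on $\bT$; your argument covers this case implicitly once the overclaim is dropped, since then $(\lambda,\lambda R)\cup(1-\lambda R,1-\lambda)$ already exhausts $(\lambda,1-\lambda)$.
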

\begin{proof}
 Since $\mu$ is even and mean zero over $\R$, it is clear that $\rho_{\circ}$ is even and has $\int_{\bT} \rho_{\circ}[\mu] (x) \rd{x} = 1$. It follows from the expression of $\mu_d$ and that $\rho_{\circ, d}$ has the stated form. Combining the expression of $\mu_c$ and the fact that $|\mu_c'(x)|\lesssim |x|^{-3}$ for large $|x|$, one can see that $\rho_{\circ, c}$ is H\"older continuous with exponent $1/2$. For $x\notin B$ (when considered as a subset of $\R$), it is clear that $-1+\mu_c(x)$ is $C^{2}$ and $|\mu''_{c}(x)|\lesssim |x|^{-4}$ for large $|x|$ and $\mu_c''(x)\le 0$, therefore we $\rho_{\circ, c}$ is $C^{2}$ and 
\begin{equation}\label{ddmuneg}
	\rho_{\circ, c}''=\sum_{j\in\mathbb{Z}}\mu_c''(x-j) < 0, \text{ for } x \in \bT\backslash B. 
\end{equation}
	
For $\mu$ of Type $\tii$ and $\tiii$, in $(-\lambda, \lambda)$ and $(\lambda, 1-\lambda)$ by the previous discussion, we have $\rho_{\circ} = \rho_{\circ, c}$. By Proposition \ref{prop_mu}, $\sum_{j\in\mathbb{Z},j\ne 0}\mu(x-j)<0$, therefore 
\begin{equation}
	\supp(\rho_{\circ, c})_+ \subseteq \supp(1+\mu(x)\cdot \chi_{[-1/2,1/2)})_+.
\end{equation}
We separate the discussion depending on the size of $\lambda R$. If $\lambda R\le 1/2$, then  
\begin{equation}
	 \supp(1+\mu(x)\chi_{[-1/2,1/2)})_+= [-\lambda L,\lambda L]\cup [\lambda R,1-\lambda R],
\end{equation}
where $[-\lambda L, \lambda L]$ is replaced with $\emptyset$ if $L = 0$. Then it follows from $\rho_{\circ, c}''<0$ that $\supp(\rho_{\circ, c})_+$ consists at most two possibly empty symmetric intervals, therefore must be $[-L_\circ, L_\circ]$ with $L_{\circ}\in [0, \lambda L)$ and $[R_\circ,1-R_\circ]$ with $R_{\circ}\in (\lambda R, 1/2] $. If $\lambda R>1/2$, then
\begin{equation}
	\supp(1+\mu(x)\chi_{[-1/2,1/2)})_+= [-\lambda L,\lambda L],
\end{equation}
and it is replaced with $\emptyset$ if $L = 0$. Notice that $R+L<2$ by Lemma \ref{lem:Phi}, therefore $|\lambda R- (-\lambda L)| <1$ when $\lambda <1/2$, thus $\lambda R$ as a point in $\bT$ does not lie in $[-\lambda L, \lambda L]$. Therefore we again have $\rho_{\circ, c}''<0$ on $[-\lambda L, \lambda L]$, and the statement follows. 
\end{proof}

Next we establish the key connection between the functionals $\tilde{\cH}$ and $\tilde{\cD}$ over $\R$ and the functionals $\cH$ and $\cD$ over $\bT$. Note that $\rho_{\circ}$ is not necessarily a measure but only a signed measure over $\bT$. Therefore we extend the functionals $\cH$ by defining
\begin{equation}
	\cH[\rho] = -\ess \inf (W*\rho),
\end{equation}
for arbitrary signed measures $\rho$ over $\bT$. 
\begin{theorem}[The First Comparison]\label{prop:rho-circ-H-D}
	Given $\mu$ an admissible distribution over $\R$ and $\rho_{\circ} = \rho_{\circ}[\mu]$. Let $m = m(R, L(R))$ be given by \eqref{typeiii-2} for $\mu$ of Type $\tii$ and $\tiii$ and $\lambda$ the scaling factor of $\mu$. Assume $\lambda\le 1$ for $\mu$ of type $\ti$ and $\lambda \le \frac{1}{2}$, $\lambda m<\frac{1}{2}$ for $\mu$ of type $\tii$ or $\tiii$. Then we have
\begin{equation}\label{prop_rhocirc_2}
	\tilde{\cH}[\mu] =- \inf_{x\in \bT} (W*\rho_{\circ}) =  \cH[\rho_{\circ}].
\end{equation}
Moreover, $(W*\rho_{\circ}) (x) = \cH[\rho_{\circ}]$ for $ x\in \supp (\rho_{\circ, c})_+$. On the other hand,
\begin{equation}
	\tilde{\cD}[\mu] > \int_{[-\lambda , \lambda]} ((\rho_{\circ})_+ -1 ) \rd{x}. 
\end{equation}
\end{theorem}
\begin{proof}
Firstly we observe
	\begin{equation}
		\int_{[-\lambda,\lambda]}((\rho_\circ)_+-1)\rd{x} <   \int_{[-\lambda,\lambda]} (\max\{1+\mu(x),0\}-1)\rd{x}=\int_{[-\lambda,\lambda]} \mu\rd{x}= \tilde{\cD}[\mu].
	\end{equation}
	
We then focus on the functional $\cH$. By definition of $\rho_{\circ}$, we obtain 
	\begin{equation}
		\hat{\rho}_\circ(k) = \left\{\begin{split}
			& 1,\quad k=0 \\
			& \cF[\mu](k),\quad k\in\mathbb{Z},\,k\ne 0
		\end{split}\right.
	\end{equation}
using the mean-zero property $\hat{\mu}(0)=0$. Here $\hat{\rho}_{\circ}$ is the Fourier coefficients over $\bT$ and $\cF[\mu]$ is the Fourier transform over $\R$. Notice that
\begin{equation}
	\cF[\tilde{W}](\xi) = \frac{1}{|\xi|},\,\xi\ne 0, \quad \hat{W}(k) = \frac{1}{|k|},\,k\ne 0,
\end{equation}
and $\hat{W}(0) = 0$, therefore
\begin{equation}
	\cF[\tilde{W}*\mu](\xi) = \frac{\cF[\mu](\xi)}{|\xi|} \text{ for }\xi\ne 0,  \quad \cF[\tilde{W}*\mu](0)=\int_{\mathbb{R}}(\tilde{W}*\mu)\rd{x}=\tilde{\cH}[\mu].
\end{equation}
We denote $\Lambda(x):=\sum_{j\in\mathbb{Z}}\delta(x-j)$ for $x\in \R$. By Poisson summation formula, we have $\cF[\Lambda](\xi) = \Lambda(\xi)$. Therefore 
\begin{equation}
\cF[\tilde{W}*\mu*\Lambda](\xi) =  \cF[\tilde{W}*\mu]\cdot \Lambda(\xi) = \sum_{0\ne j \in \mathbb{Z}} \frac{\cF[\mu](j)}{|j|} \delta(\xi-j) + \tilde{\cH}[\mu] \delta(\xi).
\end{equation}
Now we compare with $W*\rho_\circ$ where
\begin{equation}
	\widehat{W*\rho_{\circ}}(k) =\frac{\hat{\mu}(k)}{|k|}  \text{ for }k\ne 0,  \quad 	\widehat{W*\rho_{\circ}}(0)=0,
\end{equation}
we obtain by taking Fourier inversion
\begin{equation}\label{prop_rhocirc_1}
	(W*\rho_\circ )(x) + \tilde{\cH}[\mu]= ( \tilde{W}*\mu*\Lambda) (x) = (\tilde{W}*\mu)_{\bT}(x).
\end{equation}

Recall in Proposition \ref{prop_mu} that $\mu - \mu_d <0$ and $\tilde{W}*\mu \ge 0$. Therefore in order to prove \eqref{prop_rhocirc_2}, it suffices to show that $\supp (\tilde{W}*\mu)_{\bT} \neq \bT$. By Remark \ref{rmk:Wmu-infinity} $\tilde{W}*\mu$ obtains its minimal value $0$ in $\supp \mu_c$, and therefore $\supp (\tilde{W}*\mu) \subset \{x : \mu=-1\}$. It follows from the assumption on $\lambda$ that in all cases $\int_{\R} \mu_d <1$. By the mean zero property of $\mu$, we see that $|\supp(\tilde{W}*\mu)|< \int_{\R} \mu_d <1$. Therefore we show \eqref{prop_rhocirc_2}. 

If $\lambda R<1/2$, then $(\tilde{W}*\mu)_{\bT}$ is supported on $\lambda L<|x|<\lambda R$, therefore $W*\rho_{\circ}$ obtains its minimum exactly on $[-\lambda L, \lambda L]\cup [\lambda R, 1-\lambda R]$ which contains $\supp (\rho_{\circ, c})_+$. If $\lambda R\ge 1/2$, then similarly with Theorem \ref{prop:rho-circ-c-d}, we know that $\lambda R$ does not lie in $[-\lambda L, \lambda L]$. Therefore $(\tilde{W}*\mu)_{\bT}$ is zero on $[-\lambda L, \lambda L]$ which contains $\supp(\rho_{\circ, c})_+$. Therefore we show that $W*\rho_{\circ}$ obtains minimal value $\cH[\rho_{\circ}]$ on $\supp (\rho_{\circ, c})_+$. 
\end{proof}

\subsection{A Comparison Principle via Convexity}\label{ssec:compare1}
In this section, our main goal is to establish the following comparison principle between energy minimizers associated with different external potentials. The proof of this principle essentially takes advantage of the convexity of log potential. 
\begin{proposition}\label{lem_comp1}
Given $0<M<\frac{1}{2}$, $m>0$, and two external potentials over $\bT$
	\begin{equation}\label{Uflat0}
		U_\flat =W * m(\delta_M + \delta_{-M}), \quad U_\sharp =U_\flat+U_*,
	\end{equation}
where $U_*\in C^1(\bT)$ is even. Let $m_1,m_2\ge 0$, and $\rho_\flat$ and $\rho_\sharp$ be the energy minimizer for $U_\flat$ and $U_\sharp$ in $\cM_{m_1,m_2}$ (c.f. \eqref{Mm1m2}) respectively. If 
	\begin{equation}\label{lem_comp1_1}
		U_*'(x)<0,\quad \forall x\in (0,\frac{1}{2})\cap \supp\rho_\sharp,
	\end{equation}
and $\rho_\flat$ and $\rho_\sharp$ are H\"older continuous functions, then 
	\begin{equation}
		( W*\rho_\flat)(0)>( W*\rho_\sharp)(0),\quad ( W*\rho_\flat)(\frac{1}{2})<( W*\rho_\sharp)(\frac{1}{2}).
	\end{equation}
\end{proposition}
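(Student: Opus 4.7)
The plan is to reduce the two pointwise inequalities at $x=0$ and $x=\tfrac12$ to a single cumulative-distribution comparison on $(0,1/2)$, then to extract that comparison from the Euler--Lagrange characterization of the two minimizers and a perturbation argument. Set $\eta:=\rho_\flat-\rho_\sharp$; by hypothesis $\eta$ is even, Hölder continuous, and has mean zero on each of $I_1=(-M,M)$ and $I_2=(M,1-M)$. Using evenness of $\rho_\flat,\rho_\sharp,W$,
\begin{equation*}
(W*\rho_\flat)(0)-(W*\rho_\sharp)(0)=2\int_0^{1/2}W(y)\,\eta(y)\rd{y},\quad (W*\rho_\flat)(\tfrac12)-(W*\rho_\sharp)(\tfrac12)=2\int_0^{1/2}W(\tfrac12-y)\,\eta(y)\rd{y}.
\end{equation*}
Because $W$ is strictly decreasing on $(0,1/2)$ and $y\mapsto W(\tfrac12-y)$ is strictly increasing, Lemma~\ref{lem_dec} applied on $(0,1/2)$ reduces both of the desired inequalities to the single cumulative claim
\begin{equation}\label{keyclaim}
\Psi(X):=\int_0^X\eta(y)\rd{y}\ge 0,\quad X\in[0,\tfrac12],\qquad \Psi\not\equiv 0,
\end{equation}
whose endpoint values $\Psi(0)=\Psi(M)=\Psi(\tfrac12)=0$ are automatic from evenness and the two mass constraints.

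To establish \eqref{keyclaim}, I would invoke Corollary~\ref{cor_Emin}, which provides constants $c_{\flat,j},c_{\sharp,j}$ (indexed by $j=1,2$ for $I_1,I_2$) such that $V_{U_\flat}[\rho_\flat]=c_{\flat,j}$ on $\supp\rho_\flat\cap I_j$ with $\ge$ elsewhere in $I_j$, and analogously for $\rho_\sharp$ with $U_\sharp$. Subtracting, $f:=W*\eta-U_*$ is sandwiched: $f$ equals $c_{\flat,j}-c_{\sharp,j}$ on the common support $\supp\rho_\flat\cap\supp\rho_\sharp\cap I_j$, satisfies $f\le c_{\flat,j}-c_{\sharp,j}$ on $\supp\rho_\flat\cap I_j$, and $f\ge c_{\flat,j}-c_{\sharp,j}$ on $\supp\rho_\sharp\cap I_j$. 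I would then prove \eqref{keyclaim} by contradiction: if $\Psi<0$ on $(a,b)\subset[0,M]$ with $\Psi(a)=\Psi(b)=0$ (the case $(a,b)\subset[M,\tfrac12]$ is symmetric), then $\rho_\sharp$ carries a strict excess of mass over $\rho_\flat$ on some subinterval. Since $U_*'<0$ on $\supp\rho_\sharp\cap(0,\tfrac12)$ by~\eqref{lem_comp1_1}, transporting this excess of $\rho_\sharp$-mass outward, together with its mirror on $(-\tfrac12,0)$ to preserve evenness and the two mass constraints, produces an admissible competitor $\tilde\rho_\sharp\in\cM_{m_1,m_2}$ for which $\cE_{U_\sharp}[\tilde\rho_\sharp]-\cE_{U_\sharp}[\rho_\sharp]=-c\epsilon+O(\epsilon^2)$ with $c>0$: the linear gain comes from the strict drop in $\int U_*\rho_\sharp$, while the quadratic self-interaction cost is $O(\epsilon^2)$ via the Fourier identity $\langle W*\sigma,\sigma\rangle=\sum_k\hat W(k)|\hat\sigma(k)|^2$ and~{\bf (H4)}. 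For $\epsilon$ small this contradicts the minimality of $\rho_\sharp$, giving \eqref{keyclaim}. Strictness $\Psi\not\equiv 0$ is forced by~\eqref{lem_comp1_1}, since $\rho_\flat=\rho_\sharp$ would compel $U_*$ to be locally constant on $\supp\rho_\sharp$.

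\textbf{Main obstacle.} The delicate point is implementing the outward transport when $\supp\rho_\flat$ and $\supp\rho_\sharp$ do not coincide: the competitor must remain nonnegative, respect the prescribed masses on \emph{both} $I_1$ and $I_2$ simultaneously, and the sandwich inequalities for $f$ (rather than equalities) must be enough to guarantee that the energy really decreases across gaps in the supports. An alternative that may sidestep this is to interpolate $U_\tau=U_\flat+\tau U_*$ for $\tau\in[0,1]$, linearize the Euler--Lagrange system for $\partial_\tau\rho_\tau$, and establish monotonicity of the corresponding $\Psi_\tau$ in $\tau$; this trades the support-mismatch difficulty for the problem of tracking the free boundary of $\supp\rho_\tau$ as $\tau$ varies.
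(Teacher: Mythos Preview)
Your reduction to the cumulative claim \eqref{keyclaim} is correct and matches the paper exactly: both you and the paper apply Lemma~\ref{lem_dec} to pass from $\km_\flat\ge\km_\sharp$ on $(0,\tfrac12)$ to the two pointwise inequalities at $0$ and $\tfrac12$, and your argument for strictness ($\Psi\not\equiv0$) is fine.

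The genuine gap is in your proof of \eqref{keyclaim} itself. Your competitor argument cannot work: $\rho_\sharp$ is \emph{by hypothesis} the minimizer of $\cE_{U_\sharp}$ in $\cM_{m_1,m_2}$, so no admissible perturbation $\tilde\rho_\sharp$ can produce $\cE_{U_\sharp}[\tilde\rho_\sharp]-\cE_{U_\sharp}[\rho_\sharp]=-c\epsilon+O(\epsilon^2)$ with $c>0$. The ``linear gain'' you isolate from $\int U_*\,\rho$ is not a gain in the full energy: at the minimizer $\rho_\sharp$, the first variation of $\cE_{U_\sharp}$ vanishes for every mass-preserving perturbation inside $\supp\rho_\sharp$, which means your $U_*$-term is exactly cancelled to first order by the combined change in $\int U_\flat\,\rho$ and the interaction $\tfrac12\int(W*\rho)\rho$. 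You have used the hypothesis $\Psi<0$ only to locate where $\rho_\sharp$ exceeds $\rho_\flat$, but that comparison to $\rho_\flat$ carries no information about the $U_\sharp$-optimality of $\rho_\sharp$.

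What is missing is a mechanism that couples the two Euler--Lagrange conditions \emph{simultaneously}. The paper does this via a displacement lemma (Lemma~\ref{lem_displace}): assuming $\km_\flat(x_0)<\km_\sharp(x_0)$, one passes to the inverse functions $\kX_\flat,\kX_\sharp$, locates the mass level $m_s$ where the gap $\kX_\flat-\kX_\sharp$ is maximal, and sets $x_\flat=\kX_\flat(m_s)>x_\sharp=\kX_\sharp(m_s)$. A convexity computation in the rearranged variable (using $W''>0$ and the explicit form $W''(x)=\pi/\sin^2\pi x$) then yields $(W'*\rho_\flat)(x_\flat)\ge(W'*\rho_\sharp)(x_\sharp)$. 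Combined with the two stationarity conditions $U_\flat'+W'*\rho_\flat=0$ at $x_\flat$ and $U_\sharp'+W'*\rho_\sharp=0$ at $x_\sharp$, this forces $U_\sharp'(x_\sharp)\ge U_\flat'(x_\flat)$. But
\[
U_\flat'(x_\flat)-U_\sharp'(x_\sharp)=\int_{x_\sharp}^{x_\flat}U_\flat''\,\rd x\;-\;U_*'(x_\sharp)>0,
\]
since $U_\flat''>0$ (convexity of $W$) and $U_*'(x_\sharp)<0$ by~\eqref{lem_comp1_1}; this is the contradiction. Note that the argument uses \emph{both} the convexity of $W$ (for the displacement inequality) and the convexity of $U_\flat$ (for the integral term), neither of which your competitor sketch invokes. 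Your $\tau$-interpolation alternative is closer in spirit and could in principle work, but you would still need an analogue of Lemma~\ref{lem_displace} to control the sign of $\partial_\tau\Psi_\tau$.
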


We first give some preparations. 
\begin{lemma}\label{lem_holder}
	Let $u$ be H\"older continuous on $\mathbb{T}$ with exponent $\beta$. Then its Hilbert transform $H[u]$ is defined everywhere over $\bT$ and is H\"older continuous with exponent $\beta_1$ for any $0<\beta_1<\beta$.
\end{lemma}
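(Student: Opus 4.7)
The plan is to exploit the oddness and cancellation of the Hilbert kernel $\cot(\pi z)$, together with the H\"older modulus of $u$, via the standard subtract-then-split technique used for Privalov's theorem.

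\textbf{Step 1: Pointwise definition.} First I would rewrite the principal-value integral using the mean-zero property of $\cot(\pi \cdot)$ on $\mathbb{T}$:
\begin{equation}
H[u](x) = \operatorname{p.v.}\!\int_{\mathbb{T}} u(y)\cot(\pi(x-y))\,\rd y = \operatorname{p.v.}\!\int_{\mathbb{T}} (u(y)-u(x))\cot(\pi(x-y))\,\rd y.
\end{equation}
Since $|\cot(\pi z)| \le C/\|z\|_{\mathbb{T}}$ near $z=0$ and $|u(y)-u(x)|\le [u]_\beta\,\|y-x\|_{\mathbb{T}}^{\beta}$, the integrand is absolutely bounded by $C\,\|y-x\|_{\mathbb{T}}^{\beta-1}$, which is integrable over $\mathbb{T}$ for $\beta>0$. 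Hence $H[u](x)$ exists as an ordinary Lebesgue integral for every $x\in\mathbb{T}$.

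\textbf{Step 2: Hölder estimate at scale $\delta$.} Fix $x_1,x_2\in\mathbb{T}$ with $\delta := \|x_1-x_2\|_{\mathbb{T}}$ small. Set $A = \{y\in\mathbb{T} : \|y-x_1\|_{\mathbb{T}} < 2\delta\}$, so $A$ also contains a $\delta$-neighborhood of $x_2$. Write
\begin{equation}
H[u](x_1) - H[u](x_2) = I_1 + I_2 + I_3,
\end{equation}
where $I_1 = \int_A (u(y)-u(x_1))\cot(\pi(x_1-y))\,\rd y$, $I_2 = -\int_A (u(y)-u(x_2))\cot(\pi(x_2-y))\,\rd y$, and $I_3$ is the integral over $\mathbb{T}\setminus A$ of the difference.

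For $I_1$ and $I_2$, the same bound as in Step 1 gives $|I_j| \le C\int_0^{3\delta} r^{\beta-1}\rd r \le C\,\delta^{\beta}$.

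\textbf{Step 3: The far-field term.} On $\mathbb{T}\setminus A$ I would decompose
\begin{equation}
\begin{aligned}
(u(y)-u(x_1))&\cot(\pi(x_1-y))  - (u(y)-u(x_2))\cot(\pi(x_2-y)) \\
&= (u(y)-u(x_1))\bigl[\cot(\pi(x_1-y))-\cot(\pi(x_2-y))\bigr] \\
&\quad + (u(x_2)-u(x_1))\cot(\pi(x_2-y)).
\end{aligned}
\end{equation}
Using $|\cot'(\pi z)| \le C/\|z\|_{\mathbb{T}}^{2}$ together with the mean value theorem gives $|\cot(\pi(x_1-y))-\cot(\pi(x_2-y))| \le C\,\delta/\|y-x_1\|_{\mathbb{T}}^{2}$ on $\mathbb{T}\setminus A$ (since $\|y-x_2\|_{\mathbb{T}}\asymp \|y-x_1\|_{\mathbb{T}}$ there). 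The first bracket then contributes
\begin{equation}
C\,\delta\!\int_{2\delta<\|y-x_1\|_{\mathbb{T}}<1/2}\!\|y-x_1\|_{\mathbb{T}}^{\beta-2}\,\rd y \le C\,\delta^{\beta}
\end{equation}
(the integral is $O(\delta^{\beta-1})$ for $\beta<1$, and $O(|\log\delta|)$ for $\beta=1$, which is absorbed below). The second bracket contributes
\begin{equation}
|u(x_2)-u(x_1)|\cdot \Bigl|\int_{\mathbb{T}\setminus A}\cot(\pi(x_2-y))\,\rd y\Bigr| \le C\,\delta^{\beta}\cdot C\,|\log \delta|,
\end{equation}
where the logarithm comes from the non-absolute integrability of $\cot$ near its pole: the truncated absolute integral is $O(|\log \delta|)$, and the oddness of $\cot$ around $x_2$ would give better cancellation but I will not need it. Combining, $|H[u](x_1)-H[u](x_2)| \le C\,\delta^{\beta}|\log\delta|$, which for any $\beta_1<\beta$ is bounded by $C_{\beta_1}\,\delta^{\beta_1}$.

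\textbf{Main obstacle.} The genuine loss from $\beta$ to $\beta_1$ comes precisely from the logarithmic factor in Step 3, which is why the statement does not assert H\"older continuity with the same exponent $\beta$ (indeed this fails in general). The rest is routine kernel calculus; I expect the only care needed is in handling the endpoint $\beta=1$ and in verifying that the principal-value cancellation in $\int_{\mathbb{T}\setminus A}\cot(\pi(x_2-y))\,\rd y$ is controlled uniformly in $x_1,x_2$, which follows because the set $\mathbb{T}\setminus A$ differs from a symmetric neighborhood of $x_2$ by a region of measure $O(\delta)$ on which $\cot$ is bounded by $C/\delta$.
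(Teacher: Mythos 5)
Your proof is correct, and at the structural level it is the same near\hbox{-}field/far\hbox{-}field split the paper uses: both subtract $u(x)$ inside the principal value, cut out a small symmetric interval around the singularity, bound the near field by $\int_0^{c\delta} r^{\beta-1}\rd r \lesssim \delta^\beta$, and then estimate the far field. The difference lies entirely in how the far field is handled. The paper leaves the kernel alone and instead combines the two function increments, using
$\big|(u(x_2-y)-u(x_2))-(u(x_1-y)-u(x_1))\big|\le C\epsilon^\beta$,
and then pays $\int_{|y|\ge\epsilon}|W'(y)|\rd y\lesssim|\log\epsilon|$. You instead perform the further Privalov\hbox{-}style decomposition into a kernel\hbox{-}variation term $(u(y)-u(x_1))[\cot(\pi(x_1-y))-\cot(\pi(x_2-y))]$ and a function\hbox{-}variation term $(u(x_2)-u(x_1))\cot(\pi(x_2-y))$. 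Your route is slightly longer but strictly more informative: if in the second bracket you had actually exploited the oddness cancellation (which you note but decline to use), that term would be $O(\delta^\beta)$ rather than $O(\delta^\beta|\log\delta|)$, recovering the sharp exponent for $0<\beta<1$. The paper's cruder one\hbox{-}step estimate cannot be sharpened this way and inherently carries the logarithm, but it is enough for the lemma as stated and is shorter.

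One small inaccuracy in your closing remark: you say the loss of exponent ``fails in general,'' but in fact by Privalov's theorem the Hilbert transform \emph{does} preserve $C^{0,\beta}(\mathbb{T})$ for every $0<\beta<1$; the genuine failure only occurs at the Lipschitz endpoint $\beta=1$. The weakening to $\beta_1<\beta$ in the lemma is a convenience that lets the authors avoid the cancellation argument, not a reflection of an obstruction. Everything else checks out, including the estimate $|\cot(\pi(x_1-y))-\cot(\pi(x_2-y))|\le C\delta/\|y-x_1\|^2$ on $\mathbb{T}\setminus A$ (which uses $\|y-x_2\|\asymp\|y-x_1\|$ there) and the $O(|\log\delta|)$ bound for the truncated kernel integral.
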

\begin{proof}
	Recall that the kernel for Hilbert transform over $\bT$ is exactly $W'$, and it is an odd function. Therefore
	\begin{equation}
		H[u](x) = \pv\int_{\mathbb{T}}W'(y)u(x-y)\rd{y} = \int_{\mathbb{T}}W'(y)(u(x-y)-u(x))\rd{y}.
	\end{equation}
	Here the last integrand is integrable because $|W'(y)|\lesssim |y|^{-1}$ near $y=0$ and $|u(x-y)-u(x)|\le C |y|^\beta$. Therefore $H[u](x)$ is defined everywhere. 
	
	To show the H\"older continuity, we take $x_1<x_2$ and denote $\epsilon=x_2-x_1>0$. Then $|H[u](x_2)-H[u](x_1)|$ is bounded since
		\begin{equation}
		\int_{|y|<\epsilon} (|W'(y)(u(x_2-y)-u(x_2))|+|W'(y)(u(x_1-y)-u(x_1))|)\rd{y} \le C \epsilon^{\beta},
		\end{equation}
	\begin{equation}
		 \int_{|y|>\epsilon} W'(y)\big((u(x_2-y)-u(x_2))-(u(x_1-y)-u(x_1))\big)\rd{y} 
			\le C\epsilon^\beta\int_{|y|\ge \epsilon}|W'(y)|\rd{y} 
			\le C\epsilon^{\beta_1}
\end{equation}
	for any $\beta_1<\beta$. This finishes the proof.
\end{proof}

Let $m_0> 0$, and $\rho\in\cM_{m_0}$ be even. Define its cumulative function as
\begin{equation}
	\km[\rho](x) = \int_{[0, x]} \rho(y)\rd{y},\quad x\in [0,\frac{1}{2}].
\end{equation}
It is clear that $\km[\rho](0)=0$ and $\km[\rho](\frac{1}{2})=\frac{m_0}{2}$. When $\rho$ is H\"older continuous, we also define the inverse function $\kX[\rho](m)$ for $m\in [0, m_0/2]$ so that $\km(\kX[\rho](m)) = m$. It is clear that $\kX[\rho]$ is strictly increasing and piece-wise continuous. We will write $\km(x)$ and $\kX(m)$ in short when there is no confusion. 
\begin{lemma}\label{lem_X}
	Let $m_0>0$ and $\rho \in \cM_{m_0}$ be even and H\"older continuous. Let $
	\km(x)$ be the cumulative function for $\rho$ and $\kX(m)$ be its inverse function. Then we have the change-of-variable formula
	\begin{equation}\label{lem_X_1}
		(W*\rho)(x) = \int_0^{m_0/2}(W(x-\kX(m))+W(x+\kX(m)))\rd{m}
	\end{equation}
	and
	\begin{equation}\label{lem_X_2}
		(W'*\rho)(x) = \pv\int_0^{m_0/2}(W'(x-\kX(m))+W'(x+\kX(m)))\rd{m}
	\end{equation}
	for $x\in [0,1/2]$.
\end{lemma}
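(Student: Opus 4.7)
Both identities are pushforward change-of-variable formulas in disguise: the continuous non-decreasing map $\km : [0,1/2] \to [0,m_0/2]$ pushes the measure $\rho(y)\rd y$ forward to Lebesgue measure on $[0,m_0/2]$, so for any measurable $F$ one has $\int_0^{1/2} F(y)\rho(y)\rd y = \int_0^{m_0/2} F(\kX(m))\rd m$, with the right-hand side interpreted via the piecewise-continuous inverse $\kX$. The two steps are thus to fold the convolution onto $[0,1/2]$ using symmetries, then to perform this substitution.

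For the first step, writing $\bT = [-1/2,1/2)$ and splitting the convolution into the halves $[-1/2,0]$ and $[0,1/2]$, the substitution $y \mapsto -y$ on the first half combined with the evenness of $W$ and $\rho$ (and the oddness of $W'$) produces
\begin{equation*}
(W*\rho)(x) = \int_0^{1/2}\bigl(W(x-y)+W(x+y)\bigr)\rho(y)\,\rd y,
\end{equation*}
\begin{equation*}
(W'*\rho)(x) = \pv\int_0^{1/2}\bigl(W'(x-y)+W'(x+y)\bigr)\rho(y)\,\rd y
\end{equation*}
for $x \in [0,1/2]$. The principal value is needed only to handle the odd singularity of $W'(x-y)$ at $y=x$, and its convergence for $x\in(0,1/2)$ is guaranteed by the H\"older continuity of $\rho$ exactly as in Lemma \ref{lem_holder} (the boundary cases $x \in \{0,1/2\}$ give $0 = 0$ by oddness of $W'$). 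Applying the pushforward substitution $m = \km(y)$ to the first identity is immediate since the integrand is continuous and bounded in $y$, yielding (\ref{lem_X_1}).

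The main obstacle is transferring the principal value through the substitution in the second identity. Let $m_x = \km(x)$. The $y$-side $\pv$ excises the symmetric interval $(x-\delta, x+\delta)$ while the $m$-side $\pv$ excises $(m_x-\epsilon, m_x+\epsilon)$, and I need to match these up to vanishing error. If $\rho(x)>0$ then H\"older continuity of $\rho$ gives $\km(x\pm\delta) = m_x \pm \rho(x)\delta + O(\delta^{1+\beta})$, so choosing $\epsilon = \rho(x)\delta$ the corresponding endpoints in $y$ differ from $x\pm\delta$ by $O(\delta^{1+\beta})$; the resulting thin boundary-layer contribution is bounded by a constant times $\delta^{1+\beta} \cdot \delta^{-1} = \delta^{\beta} \to 0$. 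If instead $\rho \equiv 0$ on an open interval containing $x$, then $\km$ is constant there and $\kX$ has a jump at $m_x$ that skips over this interval; consequently $\kX(m)$ stays bounded away from $x$ on both sides of $m_x$, the integrand in $m$ has no singularity near $m_x$, and the $\pv$ reduces to an ordinary integral (with the same happening on the $y$-side, since $\rho = 0$ locally kills the singular integrand). In both cases the two principal value limits exist and are equal, establishing (\ref{lem_X_2}).
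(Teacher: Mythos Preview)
Your proof is correct and follows essentially the same approach as the paper: fold the convolution onto $[0,1/2]$ by evenness, apply the pushforward substitution $m=\km(y)$, and for the principal value match the excised intervals using the H\"older estimate $\km(x\pm\delta)=m_x\pm\rho(x)\delta+O(\delta^{1+\beta})$. One minor omission: your case split ($\rho(x)>0$ versus $\rho\equiv 0$ on a neighborhood of $x$) leaves out the case $\rho(x)=0$ with $x\in\supp\rho$, but as the paper notes, H\"older continuity then makes both sides absolutely convergent so the $\pv$ is superfluous.
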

\begin{proof}
Since $\rho$ is H\"older continuous, both $W'*\rho$ and $W*\rho$ are well-defined everywhere by Lemma \ref{lem_holder}, and $\kX(m)$ is guaranteed to be strictly increasing and piece-wise continuous. By symmetry of $\rho$ and a substitution $y = \kX(m)$, we get
\begin{equation}
	\begin{split}
		(W*\rho)(x) & = \int_{0}^{1/2} \big(W(x-y) + W(x+y) \big) \rho(y) \rd{dy} \\
		& =  \int_0^{m_0/2}\big(W(x-\kX(m))+W(x+\kX(m))\big)\rd{m}.
	\end{split}
\end{equation}

We can consider $W'*\rho$ similarly, and it suffices to prove 
\begin{equation}\label{lem_X_21}
	\pv \int_0^{1/2} W'(x-y)\rho(y)\rd{y} = \pv\int_0^{m_0/2}W'(x-\kX(m))\rd{m}.
\end{equation}
Without loss of generality, we can assume $x\in \supp\rho$ and $\rho(x)>0$, since otherwise $\pv$ can be removed. Denote $M_0 = \km (x)$. Then by previous analysis, we obtain that 
\begin{equation}\label{lem_X_22}
	\int_{[0,m_0/2]\backslash (M_0-\epsilon,M_0+\epsilon)} W'(x-\kX(m))\rd{m} = \int_{[0,\kX(M_0-\epsilon)]\cup[\kX(M_0+\epsilon),1/2]} W'(x-y)\rho(y)\rd{y},
\end{equation}
therefore it suffices to show that
\begin{equation}\label{lem_X_23}
 \int_{[x + (x-\kX(M_0-\epsilon)), \kX(M_0+\epsilon)]} W'(x-y)\rho(y)\rd{y} \to 0, \text{ as } \epsilon \to 0.
\end{equation}
By the H\"older continuity of $\rho$, we get $\rho(x\pm \epsilon) = \rho(x)+ O(\epsilon^{\alpha})$. Therefore when $\epsilon$ is small enough, $|\kX(M_0 \pm \epsilon)- \kX(M_0)|\lesssim \epsilon $. The above limit then easily follows. 
\end{proof}

Now we are ready to give the following comparison principle, which is the key to proving Proposition \ref{lem_comp1}. We will compare two measures $\rho_{\flat}$ and $\rho_{\sharp}$. For the notation, we will write $\km_{\flat}$ for $\km[\rho_{\flat}]$ and $\kX_{\flat}$ for $\kX[\rho_{\flat}]$. Similarly for $\rho_{\sharp}$.
\begin{lemma}\label{lem_displace}
	Let $\rho_\flat$ and $\rho_\sharp$ be in $\cM_{m_0}$ that are H\"older continuous and even. If
	\begin{equation}\label{lem_displace_1}
		\km_\flat(x_0) < \km_\sharp(x_0)
	\end{equation}
	for some $x_0\in (0,1/2)$, then there exists $0< x_\sharp < x_\flat \le 1/2$ and $x_\flat\in\supp\rho_\flat$ and $x_\sharp\in\supp\rho_\sharp$ such that
	\begin{equation}\label{lem_displace_2}
\quad \km_\flat(x_\flat)=\km_\sharp(x_\sharp),\quad ( W'*\rho_\flat)(x_\flat) \ge ( W'*\rho_\sharp)(x_\sharp).
	\end{equation}
\end{lemma}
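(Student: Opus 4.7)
The plan has two parts: constructing the pair $(x_\flat, x_\sharp)$ by a level-set selection, then verifying the force inequality through integration by parts combined with the convexity hypothesis (H4). Throughout, write $V_\bullet := W*\rho_\bullet$.

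\textbf{Construction of the pair.} Since $\km_\flat, \km_\sharp$ are continuous and nondecreasing from $0$ at $x=0$ to $m_0/2$ at $x=1/2$, the open set $A := \{x\in[0,1/2] : \km_\flat(x) < \km_\sharp(x)\}$ contains $x_0$. Any connected component $(a,b)$ of $A$ must meet $\supp\rho_\sharp$: otherwise $\km_\sharp$ would be constant on $(a,b)$ and, since $\km_\flat(a)=\km_\sharp(a)$ and $\km_\flat(b)=\km_\sharp(b)$ by continuity, $\km_\flat$ would coincide with $\km_\sharp$ throughout $[a,b]$ by monotonicity, contradicting strict inequality. So we may replace $x_0$ by a point of $A\cap\supp\rho_\sharp$. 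Pick any level $M \in (\km_\flat(x_0), \km_\sharp(x_0))$ and set
\begin{equation*}
x_\flat := \inf\{x\in[0,1/2] : \km_\flat(x) \ge M\}, \qquad x_\sharp := \sup\{x\in[0,1/2] : \km_\sharp(x) \le M\}.
\end{equation*}
Continuity yields $\km_\flat(x_\flat) = M = \km_\sharp(x_\sharp)$; the inf/sup choice guarantees $x_\flat \in \supp\rho_\flat$ and $x_\sharp \in \supp\rho_\sharp$; and the strict interval containment places $x_\sharp < x_0 < x_\flat$.

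\textbf{Force inequality.} Decompose
\begin{equation*}
(W'*\rho_\flat)(x_\flat) - (W'*\rho_\sharp)(x_\sharp) = \bigl(W'*(\rho_\flat-\rho_\sharp)\bigr)(x_\flat) + \int_{x_\sharp}^{x_\flat}(W''*\rho_\sharp)(x)\,\rd x.
\end{equation*}
The integral term is bounded below by $C_1 m_0(x_\flat - x_\sharp) > 0$ thanks to (H4). For the first term, use the evenness of $\rho_\flat-\rho_\sharp$ to rewrite it as $\int_0^{1/2} K(x_\flat,y)(\rho_\flat-\rho_\sharp)(y)\,\rd y$ with $K(x,y) := W'(x-y) + W'(x+y)$, then integrate by parts in $y$ against the primitive $\Sigma(y) := \km_\flat(y)-\km_\sharp(y)$. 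Because $\Sigma(0)=\Sigma(1/2)=0$, the boundary terms vanish, giving
\begin{equation*}
\bigl(W'*(\rho_\flat-\rho_\sharp)\bigr)(x_\flat) = \int_0^{1/2}\bigl[W''(x_\flat-y)-W''(x_\flat+y)\bigr]\,\Sigma(y)\,\rd y,
\end{equation*}
in which the kernel is strictly positive on $(0,1/2)$: $W''(z) = \pi^2/\sin^2(\pi z)$ is even, $1$-periodic, and decreasing on $[0,1/2]$, while the inequality $|x_\flat-y| < 1-(x_\flat+y)$ always holds.

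\textbf{Main obstacle.} The factor $\Sigma$ has indefinite sign, so the first term is not automatically nonnegative, and the manifestly positive second term need not dominate for an arbitrary choice of $M$. The resolution is to exploit the freedom in choosing $M$ within the open interval $(\km_\flat(x_0),\km_\sharp(x_0))$. Using Lemma \ref{lem_X} together with a change of variables, the quantity $g(M) := V_\flat'(\kX_\flat(M)) - V_\sharp'(\kX_\sharp(M))$ has zero mean over $[0,m_0/2]$ (by oddness of $V_\bullet'$ against the even $\rho_\bullet$), and the integral over $B := \{M : \kX_\flat(M) > \kX_\sharp(M)\}$ --- which has positive measure by our construction --- coincides with $\int_{B'}(V_\flat'\rho_\flat - V_\sharp'\rho_\sharp)\,\rd x$ on $B' := \{x : \km_\flat(x) < \km_\sharp(x)\}$. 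I then establish $\int_{B'}(V_\flat'\rho_\flat - V_\sharp'\rho_\sharp) \ge 0$ by a further integration by parts against $\Sigma$ on each connected component of $B'$ (whose boundary again kills the boundary terms), using the strict positivity of $V_\flat'' = W''*\rho_\flat$ guaranteed by (H4). This yields the existence of some $M \in B$ with $g(M) \ge 0$, and re-selecting $(x_\flat, x_\sharp)$ at the corresponding level completes the proof. Carrying out this concluding bookkeeping --- and accommodating plateaus where $\rho_\flat$ or $\rho_\sharp$ vanishes, which make $\kX_\bullet$ discontinuous --- is the main technical content of the proof.
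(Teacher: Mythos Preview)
Your averaging argument has a genuine gap in the step where you claim to prove $\int_{B'}(V_\flat'\rho_\flat - V_\sharp'\rho_\sharp)\,\rd x \ge 0$ by integration by parts against $\Sigma$. Writing $\rho_\flat = \rho_\sharp + \Sigma'$ and integrating the $V_\flat'\Sigma'$ piece by parts on a component $(a,b)$ of $B'$ does give the favourable term $-\int_a^b V_\flat''\Sigma\,\rd x \ge 0$, but it leaves the residual
\[
\int_a^b \bigl(V_\flat' - V_\sharp'\bigr)\rho_\sharp\,\rd x \;=\; \int_a^b \bigl(W'*(\rho_\flat-\rho_\sharp)\bigr)\rho_\sharp\,\rd x,
\]
which has no sign: it involves the global Hilbert transform of $\Sigma'$ against the local factor $\rho_\sharp$, and your hypotheses give no control here. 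The positivity of $V_\flat''$ alone cannot absorb this term. (Your earlier change of variables $\int_B g\,\rd M = \int_{B'}(V_\flat'\rho_\flat - V_\sharp'\rho_\sharp)\,\rd x$ is correct when both densities are strictly positive, but this identity is exactly what you started with, so there is no reduction.) There is also a secondary technical issue: your ``manifestly positive second term'' $\int_{x_\sharp}^{x_\flat}(W''*\rho_\sharp)\,\rd x$ is not literally well-defined since $W''$ is not locally integrable, though this is repairable via Lemma~\ref{lem_X}.

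The paper's proof avoids the averaging altogether by a single sharp choice of level: take $m_s$ to \emph{maximize} $\kX_\flat(m)-\kX_\sharp(m)$ over $[0,m_0/2]$, and set $x_\flat=\kX_\flat(m_s),\ x_\sharp=\kX_\sharp(m_s)$. The point is that this extremal choice yields the pointwise inequality $x_\flat-\kX_\flat(m)\ge x_\sharp-\kX_\sharp(m)$ for \emph{every} $m$, so after writing $(W'*\rho_\flat)(x_\flat)-(W'*\rho_\sharp)(x_\sharp)$ as an integral over $m$ via Lemma~\ref{lem_X}, each integrand can be handled directly by the convexity $W''>0$ and the explicit monotonicity of $W''(z)=\pi^2/\sin^2\pi z$ on $(0,1/2]$. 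Your observation that $W''(x_\flat-y)-W''(x_\flat+y)>0$ is exactly the ingredient used in the remaining case of that pointwise argument, but it becomes decisive only once the maximizing level has been selected.
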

\begin{proof}
For convenience, we will define $\kX_{\sharp}(m)$ to be the smallest $x \in [0,1/2]$ such that $\km_{\sharp}(x) = m$, and $\kX_{\flat}(m)$ to be the largest $x\in [0,1/2]$ such that $\km_{\flat}(x)= m$. Then $\kX_{\sharp}$ is lower-semicontinuous and $\kX_{\flat}$ is upper-semicontinuous. By the assumption \eqref{lem_displace_1}, we see that 
	\begin{equation}
		\sup_{m\in [0,m_0/2]} (\kX_\flat(m)-\kX_\sharp(m)) > 0.
	\end{equation}
	This supremum can be achieved, say at $m_s \in [0, m_0/2]$, since $\kX_\flat-\kX_\sharp$ is upper-semicontinuous, and it is also the maximum of the difference. We then denote $x_\sharp=\kX_\sharp(m_s)$ and $x_\flat=\kX_\flat(m_s)$. They clearly satisfy $x_{\sharp} \in\supp\rho_\sharp$ and $x_{\flat} \in\supp\rho_\flat$, and $x_\sharp<x_\flat$, and $\quad m_\flat(x_\flat)=m_s=m_\sharp(x_\sharp)$. So it suffices to prove
	$( W'*\rho_\flat)(x_\flat) - ( W'*\rho_\sharp)(x_\sharp) \ge 0$.
	
	By Lemma \ref{lem_X} we can write $( W'*\rho_\flat)(x_{\flat})-( W'*\rho_\sharp)(x_{\sharp})$ as
	\begin{equation}\label{W2term}
 \pv\int_0^{m_0/2}\big( W'(x_{\flat}-\kX_\flat(m))+ W'(x_{\flat}+\kX_\flat(m))\big)  -\big( W'(x_{\sharp}-\kX_\sharp(m))+ W'(x_{\sharp}+\kX_\sharp(m))\big)\rd{m},
\end{equation}
where the integrand is
\begin{equation}\label{W4term}
\int_{x_{\sharp}-\kX_\sharp(m)}^{ x_{\flat}-\kX_\flat(m)} W''(z)\rd{z} + \int_{x_{\sharp}+\kX_\sharp(m)}^{x_{\flat}+\kX_\flat(m)} W''(z)\rd{z},
\end{equation}
for any $m\neq m_s$. 

It follows from the definition of $m_s$ that $x_{\sharp}-\kX_\sharp(m) \le x_{\flat}-\kX_\flat(m)$. And one can check that in both integrals the domain does not contain $z = 0$. Therefore if $x_{\sharp}+\kX_\sharp(m)\le x_{\flat}+\kX_\flat(m)$ then the conclusion follows from the positivity of $W''$. Suppose not, we must have $\kX_{\flat}(m) - \kX_{\sharp}(m)< - (x_{\flat} - x_{\sharp})<0$, then we have
\begin{equation}
	\eqref{W4term}= \Big (\int_{x_{\sharp}-\kX_\sharp(m)}^{x_\flat-\kX_{\sharp}(m)}+\int_{x_\flat-\kX_\sharp(m)}^{x_\flat-\kX_\flat(m)} + \int_{x_\sharp+\kX_\sharp(m)}^{x_\flat+\kX_\sharp(m)}+\int_{x_\flat+\kX_\sharp(m)}^{x_\flat+\kX_\flat(m)} \Big) W''(z) \rd{z}.
\end{equation}
We can again show that $z=0$ is not in the domain of any of these integrals: trivially true when $m<m_s$ and use $\kX_{\flat}(m)< \kX_{\sharp}(m)$ for $m>m_s$. Now it is clear that the first and third integral are positive, and the second and fourth integral can combine as
	\begin{equation}\label{W4term1}
\int_{x_\flat-\kX_\sharp(m)}^{x_\flat-\kX_\flat(m)} W''(z)\rd{z} +\int_{x_\flat+\kX_\sharp(m)}^{x_\flat+\kX_\flat(m)}  W''(z)\rd{z}  = \int_{\kX_\flat(m)}^{\kX_\sharp(m)} \big( W''(x_\flat-z)- W''(x_\flat+z)\big)\rd{z}
\end{equation}
In the last integral, we have $z$ and $x_{\flat}$ both in $[0,1/2]$. Then it follows from the fact that $W''(x) = \pi/\sin^2 \pi x$ is even and decreasing in $(0,1/2]$ that the integrand is always positive. We thus finish proving $( W'*\rho_\flat)(x_\flat) \ge ( W'*\rho_\sharp)(x_\sharp)$. 
\end{proof}

Now we are ready to prove the main proposition in this section. 
\begin{proof}[Proof of Proposition \ref{lem_comp1}]
	Recall the notation that for external potential $U_{\flat}$, we obtain $\rho_{\flat} \in \cM_{m_0}$ as the energy minimizer as proved in Proposition \ref{prop:energy-min}, and the generated potential is $V_{\flat}: = U_{\flat} + W*\rho_{\flat}$. By Proposition \ref{prop:energy-min}, we have $V_{\flat}'(x) = 0$ on $\supp (\rho_{\flat})$. We denote the cumulative function $\km_{\flat}$ for $\rho_{\flat}$. It follows from the assumption that $\km_{\flat}(M) = m_1/2$. Similarly everything holds also for $U_{\sharp}$. 
	
	We claim that
	\begin{equation}\label{claimX}
		\km_\flat(x)\ge \km_\sharp(x),\quad 0\le x \le 1/2.
	\end{equation}
	Suppose not, then we apply Lemma \ref{lem_displace} to get $x_\flat$ and $x_\sharp$ satisfying \eqref{lem_displace_2}. Since $V_{\flat}' = W'*\rho_{\flat} + U_{\flat}'= 0$ on $\supp (\rho_{\flat})$ and similarly for $\rho_{\sharp}$, \eqref{lem_displace_2} then implies
	\begin{equation}
		U_{\sharp}'(x_{\sharp}) \ge U_{\flat}'(x_{\flat}). 
	\end{equation}
On the other hand,
	\begin{equation}\label{dUcomp}
		U_\flat'(x_\flat)-U_\sharp'(x_\sharp)=\big(U_\flat'(x_\flat)-U_\flat'(x_\sharp)\big)+ \big(U_\flat'(x_\sharp)-U_\sharp'(x_\sharp)\big) = \int_{x_{\sharp}}^{x_{\flat}} U_{\flat}''(x) \rd{x} - U_{*}'(x_{\sharp})>0. 
	\end{equation}
Here the first term is positive since $U_{\flat}''>0$ by the expression of $U_{\flat}$ and $M \notin [x_{\sharp}, x_{\flat}]$ since $\km_\flat(M)=\km_\sharp(M)=m_1/2$. The second term is positive since $-U_{*}'(x_{\sharp})>0$ by the assumption. Therefore we get a contradiction and prove the claim \eqref{claimX}. 

Finally, applying Lemma \ref{lem_dec} with $X=\frac{1}{2}$, we get $( W*\rho_\flat)(0)>( W*\rho_\sharp)(0)$, and the other conclusion $( W*\rho_\flat)(\frac{1}{2})<( W*\rho_\sharp)(\frac{1}{2})$ can be obtained similarly.
\end{proof}

\subsection{Comparison between Minimizers}\label{ssec:comparison2}
In this section, our main goal is to compare the value of $\cG$ for $\rho \in \cM$ over $\bT$ with the value of $\tilde{\cG}$ for admissible distributions over $\R$, so that we prove our main result $\cG\ge 1/2$ in Theorem \ref{thm:main}.

Recall that in Theorem \ref{thm:main-characterization} and Proposition \ref{prop:energy-min}, we have shown that the unique minimizer of $\cG$ for $\rho \in \cM_{\cD\ge d}$ must be the energy minimizer of $\cE$ (together with the Dirac masses $m(\delta_M +\delta_{-M})$) where the external potential is in the format of $U = W*m(\delta_M +\delta_{-M})$ form some $m \in (0, 1/2]$ and $M \in [0, 1/2]$. We will compare $\rho$ with $\rho_{\circ}$ constructed via periodization. By description of $\rho_{\circ, d}$ in Theorem \ref{prop:rho-circ-c-d}, we can find a unique $\rho_{\circ}$ with $\rho_{\circ, d} = m(\delta_M +\delta_{-M})$: if $M = 0$, then we have $\mu$ of Type $\ti$ and $\lambda = 2m$; if $M \neq 0$, then we have $\mu$ of Type $\tii$ or $\tiii$ with $\lambda = M$, and $R$ is uniquely determined by $m(R, L(R)) = m/\lambda$, since $m(R, L(R))$ is increasing in $R$ as $L(R)$ is decreasing in $R$ by Proposition \ref{prop_mu}. 
Our main theorem for this section is the following comparison theorem. It follows directly from the following theorem, together with Theorem \ref{thm:min-value-R}, that $\cG[\rho]\ge 1/2$ for $\rho \in \cM$. 

\begin{theorem}[The Second Comparison]\label{thm_Dcomp}
	Let $0<d< 1 $. Let $\rho =m (\delta_M + \delta_{-M})+ \rho_1$ be an even minimizer of $\cG$ in $\cM_{\cD \ge d}$ given in Theorem \ref{thm:main-characterization}. Let $\mu$ be the unique admissible distribution with the associated $\rho_{\circ, d} = m (\delta_M + \delta_{-M})$. Assume $M>0$, then we have $\lambda=M$ and
	\begin{equation}\label{thm_Dcomp_0}
		\cH[\rho] \ge  \tilde{\cH}[\mu],  \quad \quad	\cD[\rho] \le  \tilde{\cD}[\mu].
	\end{equation}
\end{theorem}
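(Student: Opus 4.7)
The plan is to pin down $\lambda = M$ by matching Dirac configurations and then to prove both inequalities via a single cumulative-function comparison. By Proposition \ref{prop:rho-circ-c-d}(i), the Dirac part of $\rho_\circ[\mu]$ equals $\lambda\, m(R, L(R))(\delta_\lambda + \delta_{-\lambda})$ when $\mu$ is Type II or III; equating this to $m(\delta_M + \delta_{-M})$ forces $\lambda = M$ and $m(R, L(R)) = m/M$, and the latter has a unique solution $R$ because $R \mapsto m(R, L(R))$ is strictly monotonic on the admissible range by Proposition \ref{prop_mu} and \eqref{typeiii-2}.

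For the $\cH$ inequality, I would first invoke Proposition \ref{prop:rho-circ-H-D} to rewrite $\tilde{\cH}[\mu] = \cH[\rho_\circ]$ and localize the essential infima. Theorem \ref{thm:minimizer-shape} and Corollary \ref{coro:sediment-minimizer} put the infimum of $W*\rho$ on $\supp\rho_1 = K \cup J$ with $K = [-K_0, K_0]$ and $J = [J_0, 1-J_0]$ (either possibly empty), while Proposition \ref{prop:rho-circ-c-d}(iv)-(v) combined with Proposition \ref{prop:rho-circ-H-D} puts that of $W*\rho_\circ$ on $[-L_\circ, L_\circ] \cup [R_\circ, 1-R_\circ]$. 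Since $\rho$ is not uniform, the two even extremum sets share at least one of $\{0, 1/2\}$. Writing $\rho_{\circ,c} = \sigma_1 - \tau$ with $\sigma_1 := (\rho_{\circ,c})_+$ and $\tau := (\rho_{\circ,c})_-$, the claim at such a common point $x$ becomes $(W*(\rho_1 + \tau))(x) \le (W*\sigma_1)(x)$. Crucially, $\rho_1 + \tau$ and $\sigma_1$ share the same total mass $(1-2m) + \int \tau$, since $\int \rho_{\circ,c} = 1 - 2m$ forces $\int \sigma_1 = (1-2m) + \int\tau$; so the two measures are directly comparable via a decreasing-kernel estimate.

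The target inequality will follow from a cumulative-function comparison $\km_{\sigma_1}(x) \ge \km_{\rho_1 + \tau}(x)$ on $[0, 1/2]$, via Lemma \ref{lem_dec} applied with $W$ (decreasing on $(0, 1/2)$) and by symmetry to $W(1/2 - \cdot)$ at the other endpoint. The key is to adapt the displacement argument of Lemma \ref{lem_displace} to this setting: $\rho_1$ is the $\cE_{U_\flat}$-minimizer in its mass class with $U_\flat = W*m(\delta_M + \delta_{-M})$ (by Corollary \ref{cor_Emin}), and the identity $W*\rho_\circ \equiv -\cH[\rho_\circ]$ on $\supp\sigma_1$ (from Proposition \ref{prop:rho-circ-H-D}) rearranges to show $\sigma_1$ is sediment for $U_\sharp := U_\flat - W*\tau$, hence the $\cE_{U_\sharp}$-minimizer in its own mass class. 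The perturbation $U_* := -W*\tau$ has a favorable sign since $\tau$ is a positive even measure supported strictly between the two components of $\supp\sigma_1$: the convexity of $W$ makes $W*\tau$ convex on each gap of $\supp\tau$, forcing $(W*\tau)'(x) > 0$ on $(0, L_\circ)$ and $(W*\tau)'(x) < 0$ on $(R_\circ, 1/2)$.

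The $\cD$ inequality will then be immediate: Proposition \ref{prop:rho-circ-H-D} gives $\tilde{\cD}[\mu] > 2m - 2M + 2\km_{\sigma_1}(M)$ while $\cD[\rho] = 2m - 2M + 2\km_{\rho_1}(M)$ by Theorem \ref{thm:main-characterization}(i), so the claim is the $x=M$ case of $\km_{\sigma_1} \ge \km_{\rho_1}$, which is implied by the chain $\km_{\sigma_1} \ge \km_{\rho_1 + \tau} \ge \km_{\rho_1}$. The hard part throughout is that $\rho_1$ and $\sigma_1$ a priori live in different mass classes (since $\int \sigma_1 > \int \rho_1$), so Proposition \ref{lem_comp1} does not apply verbatim. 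Adapting the displacement argument of Lemma \ref{lem_displace} to compare $\sigma_1$ against the hybrid $\rho_1 + \tau$ (which has matching total mass but is only a patchwork of two distinct optimality structures) is the main technical obstacle: the comparison of the drifts $U_\flat'$ and $U_\sharp'$ at the displacement points $x_\flat, x_\sharp$ must be carried out case by case depending on whether $x_\sharp$ falls in $\supp\rho_1$ or in $\supp\tau$, invoking the sign analysis of $U_*'$ established above. A final case analysis via the trichotomy in the last part of Corollary \ref{cor_Emin} handles the possibly empty components $K$, $J$, $[-L_\circ, L_\circ]$, $[R_\circ, 1-R_\circ]$.
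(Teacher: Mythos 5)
Your first step ($\lambda=M$, and pinning down $R$ via monotonicity of $m(R,L(R))$) is fine, and your identification of $\sigma_1$ as a sediment measure for $U_\sharp = U_\flat - W*\tau$ is correct. However, the unified cumulative--function strategy has genuine gaps. First, the sign analysis of $U_* = -W*\tau$ fails exactly where Proposition~\ref{lem_comp1} needs it: you correctly observe that $(W*\tau)'<0$ on $(R_\circ,1/2)$, but this means $U_*'=-(W*\tau)'>0$ there, and $(R_\circ,1/2)\subset\supp\sigma_1$, so the hypothesis \eqref{lem_comp1_1} is \emph{violated} on the outer component. This is not an incidental technicality --- it is precisely why the paper's proof introduces the cutoff $\phi_\epsilon$ in \eqref{Usharp}, producing a mean-zero perturbation $F$ in \eqref{eqF} with both positive and negative parts and a carefully excised region $[R_\circ, x_J+r-\epsilon]$ from $\supp\rho_\sharp$; Lemma~\ref{lem:U-prime} then needs a nontrivial auxiliary construction (the function $G$ in \eqref{eqG}) to verify the sign condition near $x_J$. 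A pure $-W*\tau$ with $\tau\ge 0$ cannot have the right sign on both components of $\supp\sigma_1$. Second, even granting $\km_{\sigma_1}\ge\km_{\rho_1+\tau}$, Lemma~\ref{lem_dec} applied to the increasing kernel $W(1/2-\cdot)$ yields $(W*\sigma_1)(1/2)\le (W*(\rho_1+\tau))(1/2)$ --- the \emph{opposite} of what the $\cH$ comparison needs when the shared extremum is at $1/2$. A single one-sided cumulative inequality cannot produce the same-direction conclusion at both $0$ and $1/2$. Third, the ``shared extremum'' premise is not guaranteed: e.g.\ $K\neq\emptyset$, $J=\emptyset$ together with $L_\circ=0$, $R_\circ<1/2$ gives disjoint extremum sets.

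The paper takes a structurally different route that dodges all of this. For $\cH$, no cumulative comparison is used at all: one observes that $W*\rho = V_U[\rho_1-\rho_{\circ,-}]$ with $U=W*(m(\delta_M+\delta_{-M})+\rho_{\circ,-})$, that $\rho_1-\rho_{\circ,-}$ is a nonnegative measure of the same total mass $m_0$ as $\rho_{\circ,J}+\rho_{\circ,K}$, and that the latter is the unique maximizer of $\ess\inf V_U$ in $\cM_{m_0}$ by Proposition~\ref{prop:energy-min}; the inequality $\ess\inf(W*\rho)\le\ess\inf(W*\rho_\circ)$ is then immediate. For $\cD$, the paper does \emph{not} compare $\sigma_1$ to the hybrid $\rho_1+\tau$ (which, as you note, is not an energy minimizer and cannot feed into Lemma~\ref{lem_displace}); instead it introduces an intermediary $\rho_\flat$, the $\cE_{U_\flat}$-minimizer in the \emph{same} class $\cM_{m_1,m_2}$ as the cutoff $\rho_\sharp$, applies Proposition~\ref{lem_comp1} to get \eqref{rhoflatcomp}, and then uses the $m_1$-monotonicity of $(W*\cdot)(0)-(W*\cdot)(1/2)$ from Proposition~\ref{prop:2-parameter} to bridge back from $\rho_\flat$ to $\rho_1$ across mass classes. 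This two-step bridge is the key structural idea your proposal is missing, and without it the ``adapt Lemma~\ref{lem_displace}'' step does not close. (The paper also disposes of the case $\int_\bT(\rho_{\circ,J}+\rho_{\circ,-})\rd x\le 0$ by a direct estimate before setting up $x_J$; your sketch tacitly assumes the nondegenerate case.)
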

\begin{proof}
It is clear that $\lambda = M$ as explained above. It suffices to prove the inequality in \eqref{thm_Dcomp_0}. Let $\rho_{\circ}$ be associated with $\mu$ as in \eqref{rhocirc}.  

We firstly study $\cH$. By Theorem \ref{prop:rho-circ-H-D}, we have $\cH[\rho_{\circ}] = \tilde{\cH}[\mu]$, therefore it suffices to compare $\cH[\rho_{\circ}]$ and $\cH[\rho]$. By Proposition \ref{prop:rho-circ-c-d}, we can write 
\begin{equation}
	\rho_\circ = m (\delta_M+\delta_{-M}) + \rho_{\circ,J} + \rho_{\circ ,K} + \rho_{\circ, -}
\end{equation}
where $\rho_{\circ ,J}$ and $\rho_{\circ ,K}$ are nonnegative (possibly identically zero) and supported on $J_\circ=[R_\circ,1-R_\circ]$ and $K_\circ=[-L_\circ,L_\circ]$ respectively, and $\rho_{\circ ,-}$ is nonpositive, supported on $(J_\circ \cup K_\circ )^c$. It follows from Proposition \ref{prop:energy-min} and Theorem \ref{prop:rho-circ-H-D} that $\rho_{\circ,J} + \rho_{\circ ,K}$ is the unique minimizer of $\cE_U$ in $\cM_{m_0}$ where
\begin{equation}
	m_0 = 1-2m - \int_{\bT}\rho_{\circ, -}(x) \rd{x}, \quad\quad U =  W * \big(m(\delta_M+\delta_{-M})  + \rho_{\circ ,-}\big),
\end{equation}
since $V_U[\rho_{\circ,J} + \rho_{\circ ,K}]= U + W*(\rho_{\circ,J} + \rho_{\circ ,K}) = W*\rho_{\circ}$ takes minimal value on $\supp(\rho_{\circ,J} + \rho_{\circ,K}) = \supp (\rho_{\circ, c})_+$. It then follows from Proposition \ref{prop:energy-min} that $\rho_{\circ,J} + \rho_{\circ ,K}$ is also the unique maximizer of $\ess \inf V_U$, therefore 
\begin{equation}
	-\cH[\rho] = \ess\inf (W*\rho) = \ess\inf V_U[\rho_1 - \rho_{\circ, -}] \le  \ess\inf ( W*\rho_\circ ) = -\cH[\rho_{\circ}]= - \tilde{\cH}[\mu].
\end{equation}

Next we study $\cD$. By Theorem \ref{prop:rho-circ-H-D}, it suffices to prove
\begin{equation}\label{Drhocomp}
	\cD[\rho] \le \int_{[-M,M]}((\rho_\circ)_+-1)\rd{x}.
\end{equation}
	If $\int_{\mathbb{T}} (\rho_{\circ ,J}+\rho_{\circ ,-})\rd{x} \le 0$, then 
\begin{equation}
	\begin{split}
&\int_{[-M,M]}((\rho_\circ )_+-1)\rd{x} \ge \int_{\mathbb{T}}(\rho_\circ -1)\rd{x} + (1-2M)\\
= &1-2M  = \int_{\bT} \rho  \rd{x} - \int_{[-M, M]}1 \rd{x} \ge  \int_{[-M, M]} (\rho -1) \ge \cD[\rho]. \\
	\end{split}
\end{equation}
Here the second inequality we use the above assumption $\int_{\mathbb{T}} (\rho_{\circ ,J}+\rho_{\circ ,-})\rd{x} \le 0$. Therefore we assume 	$\int_{\mathbb{T}} (\rho_{\circ ,J}+\rho_{\circ ,-})\rd{x}>0$ for the rest of the proof. This in particular implies that $J\neq \emptyset$ and $M R< 1/2$ by Proposition \ref{prop:rho-circ-c-d}. Let $x_J\in J_\circ$ be the unique number in $(0,1/2)$ such that
\begin{equation}\label{Jsharp1}
	\int_{R_{\circ}\le |x|\le x_J}\rho_{\circ ,J}\rd{x} + \int_{\mathbb{T}} \rho_{\circ ,-}\rd{x} = 0.
\end{equation}

We will formulate energy minimization problems for $\rho_\circ $, by specifying external potentials $U_{\sharp}$ and $U_{\flat}$ and their corresponding minimizer $\rho_{\sharp}$ and $\rho_{\flat}$. For small $\epsilon>0$ and $r\ge 0$, define a cutoff function
\begin{equation}
	\phi_{r,\epsilon}(x) = \max\left(0,1-\frac{1}{\epsilon}\dist\big(x,\{y\in [-1/2,1/2):|y|>x_J+r\}\big)\right),
\end{equation}
where $\dist$ denotes the distance function on $\bT$. Combined with \eqref{Jsharp1}, for each $\epsilon$, there exists a unique $r = r(\epsilon)\in (0,\epsilon)$ such that
\begin{equation}
	\int_{\mathbb{T}}\rho_{\circ ,J}(\chi_{J_\circ }-\phi_{r,\epsilon})\rd{x} + \int_{\mathbb{T}} \rho_{\circ ,-}\rd{x} = 0.
\end{equation}
We thus write $\phi_{\epsilon}$ in short for $\phi_{r(\epsilon),\epsilon}$. Then we define
\begin{equation}\label{Usharp}
	U_\sharp =  W * \Big(m (\delta_M + \delta_{-M})+ \rho_{\circ ,J} \cdot (\chi_{J_\circ }-\phi_\epsilon) + \rho_{\circ ,-}\Big),
\end{equation}
and
\begin{equation}
	\rho_\sharp =  \rho_{\circ ,J}\phi_\epsilon + \rho_{\circ ,K}.
\end{equation}
By definition, $\rho_{\sharp} \in \cM_{1-2m}$ and is the unique minimizer of $\cE_{U_{\sharp}}$ since $V_{U_{\sharp}}[\rho_{\sharp}]  = W*\rho_{\circ}$ obtains minimal value on  $\supp\rho_\sharp$. Meanwhile it is also the unique minimizer in $\cM_{m_1, m_2}$ where 
\begin{equation}
	m_1 = \int_{K_\circ }\rho_\circ \rd{x}, \quad\quad  m_2 = 1-2m-m_1.
\end{equation}
Next we define
\begin{equation}\label{Uflat}
	U_\flat =  W * \big(m(\delta_M+\delta_{-M})\big),
\end{equation}
and let $\rho_\flat$ be the unique minimizer in $\cM_{m_1,m_2}$ for $\cE_{U_{\flat}}$. It is clear that 
\begin{equation}
	U_*:=U_\sharp-U_\flat=W*(\rho_{\circ ,J}(\chi_{J_\circ }-\phi_\epsilon) + \rho_{\circ ,-}),
\end{equation}
 is $C^1$ since $\rho_{\circ, c}$ is H\"older continuous. We will verify \eqref{lem_comp1_1} in Lemma \ref{lem:U-prime}. Then by Lemma \ref{lem_comp1} to get 
\begin{equation}
	( W*\rho_\flat)(0) > ( W*\rho_\sharp)(0),\quad ( W*\rho_\sharp)(\frac{1}{2}) > ( W*\rho_\flat)(\frac{1}{2}).
\end{equation}
By the construction of $U_\sharp$ and $W(x)'<0$ for $x\in (0,1/2)$, we have 
\begin{equation}
	U_\sharp(0)<U_\flat(0), \quad \quad U_\sharp(\frac{1}{2})>U_\flat(\frac{1}{2}).
\end{equation}
Therefore
\begin{equation}\label{rhoflatcomp}
	( W*\rho_\flat)(0)+U_\flat(0) > ( W*\rho_\sharp)(0)+U_\sharp(0) \ge  ( W*\rho_\sharp)(\frac{1}{2}) +U_\sharp(\frac{1}{2}) > ( W*\rho_\flat)(\frac{1}{2})+U_\flat(\frac{1}{2}).
\end{equation}
Here the second inequality is a consequence of  $\rho_\sharp$ being an energy minimizer in $\cM_{1-2m}$, together with the fact that $1/2\in \supp\rho_\sharp$. There $\rho_{\flat} + m(\delta_M+\delta_{-M})$ must be of Type $\tii$ in Proposition \ref{prop_rhoii}. Notice that we can assume $0\in \supp \rho $ since otherwise $\supp \rho \cap (-M, M) = \empty$ and \eqref{Drhocomp} is trivial. It follows from Corollary \ref{cor_Emin} that $(W*\rho)(0)\le(W*\rho)(1/2)$. By Proposition \ref{prop:2-parameter}, $( W*\rho)(0)-( W*\rho)(1/2)$ is increasing in $m_1=\int_{(-M,M)}\rho\rd{x}$ , therefore
\begin{equation}
	\int_{(-M,M)}\rho_\flat\rd{x} \ge \int_{(-M,M)}\rho\rd{x}.
\end{equation}
	Combined with the fact that
\begin{equation}
	\int_{(-M,M)}(\rho_\circ )_+\rd{x} =m_1= \int_{(-M,M)}\rho_\flat \rd{x},
\end{equation}
and that $\cD[\rho] = \int_{[-M, M]} \rho \rd{x}$, we then obtain \eqref{Drhocomp}.
\end{proof}

We now verify \eqref{lem_comp1_1} for the constructed potential $U_{\sharp}$ and $U_{\flat}$ in Theorem \ref{thm_Dcomp}.  
\begin{lemma}\label{lem:U-prime}
Given $U_{\flat}$ and $U_{\sharp}$ defined in \eqref{Uflat} and \eqref{Usharp}. The potential $U_*:=U_\sharp-U_\flat$ satisfies \eqref{lem_comp1_1} if $\epsilon>0$ is small enough.
\end{lemma}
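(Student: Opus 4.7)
The plan is to establish an integration-by-parts identity for $U_*'$ that makes its sign transparent on the bulk of $(0,\tfrac12)$, and then handle a thin $O(\epsilon)$-strip by a continuity argument.

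By the choice of $r(\epsilon)$ in \eqref{Jsharp1}, the signed measure $\nu:=\rho_{\circ,J}(\chi_{J_\circ}-\phi_\epsilon)+\rho_{\circ,-}$ is even with $\int_\bT\nu\rd{x}=0$, so its primitive $G(y):=\int_{-1/2}^{y}\nu(z)\rd{z}$ is odd on $\bT$ with $G(\pm\tfrac12)=0$. The sign pattern of $\nu$ on $(0,\tfrac12)$ --- nonpositive on $(L_\circ,R_\circ)$, nonnegative on $[R_\circ,x_J+r]$, zero elsewhere --- forces $G\le 0$ on $[0,\tfrac12]$, vanishing outside $[L_\circ,x_J+r]$ and strictly negative on $(L_\circ,x_J+r)$ (assuming $\rho_{\circ,-}\not\equiv 0$; otherwise $\nu\equiv 0$ and the lemma is vacuous). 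Integration by parts on the torus, combined with the oddness of $G$, yields
\begin{equation*}
	U_*'(x)=\int_0^{1/2} G(y)\bigl[W''(x-y)-W''(x+y)\bigr]\rd{y},
\end{equation*}
interpreted as an ordinary Lebesgue integral when $x\in(0,\tfrac12)\setminus\supp\nu$. For $x,y\in(0,\tfrac12)$ with $x\ne y$ the bracket is strictly positive, since $W''(z)=\pi^2/\sin^2(\pi z)$ is strictly decreasing in the torus distance on $(0,\tfrac12]$ and a short case check (whether $x+y\le\tfrac12$ or not) gives $|x-y|_\bT<|x+y|_\bT$.

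Combining these facts, on the ``safe region'' $(0,L_\circ]\cup[x_J+r,\tfrac12)$ the integrand is nonpositive and strictly negative on $(L_\circ,x_J+r)\subset\supp G$, so $U_*'(x)<0$. This already covers all of $\supp\rho_\sharp\cap(0,\tfrac12)=(0,L_\circ]\cup[x_J+r-\epsilon,\tfrac12)$ except the thin interior strip $[x_J+r-\epsilon,x_J+r]\subset\supp\nu$. On this strip I invoke continuity: $\rho_{\circ,c}$ is H\"older continuous by Proposition~\ref{prop:rho-circ-c-d} and $\phi_\epsilon$ is Lipschitz, so $\nu$ is continuous and $U_*=W*\nu$ is $C^1$; the safe-region analysis gives $U_*'(x_J+r)<0$ (the identity is still a Lebesgue integral there because $G(y)$ vanishes quadratically at $y=x_J+r$, killing the would-be singularity of $W''$), so continuity forces $U_*'<0$ on an open neighborhood of $x_J+r$, and for $\epsilon$ small this neighborhood contains the strip.

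I expect the main technical obstacle to be ensuring that this continuity neighborhood is at least of size $\epsilon$: since the Lipschitz constant of $\phi_\epsilon$ grows like $1/\epsilon$, the modulus of continuity of $U_*'$ could in principle deteriorate as $\epsilon\to 0$. The clean remedy is to replace the soft continuity argument by a direct perturbation estimate comparing $\nu$ with its $\epsilon\to 0$ limit $\nu_0:=\rho_{\circ,J}\chi_{\{R_\circ\le|x|\le x_J\}}+\rho_{\circ,-}$: the difference $\nu-\nu_0$ is supported in an $O(\epsilon)$-band around $\pm x_J$, is mean-zero on each side of $0$, and has first moment of size $O(\epsilon^2)$, so a dipole expansion of the kernel $W'$ gives $(W'*(\nu-\nu_0))(x)=o(1)$ uniformly on a fixed-size neighborhood of $x_J$ as $\epsilon\to 0$. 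Since the bulk formula applied to $\nu_0$ yields a strict negative bound for $U_{*,0}'(x)$ on a one-sided neighborhood of $x_J$ (in fact $U_{*,0}'(x)\to-\infty$ as $x\to x_J$, because the jump of $\nu_0$ at $\pm x_J$ produces a logarithmic spike of the correct sign), adding the perturbation preserves strict negativity throughout the strip for all sufficiently small $\epsilon$.
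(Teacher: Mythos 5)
Your bulk-region argument is correct and is, at bottom, the same convexity mechanism as the paper's: integrating by parts against the odd primitive $G$ and then using that for $x,y\in(0,\tfrac12)$ one has $W''(x-y)>W''(x+y)$ is a clean repackaging of the estimate \eqref{W4_2}, and it does cover $x\in(0,L_\circ]\cup[x_J+r,\tfrac12)$ (including $x=x_J+r$, since $G$ vanishes quadratically there, as you observe). The genuine gap is the interior strip $[x_J+r-\epsilon,x_J+r]$, and neither of your two proposed remedies closes it. The soft continuity route fails for precisely the reason you flag: the Lipschitz constant of $\phi_\epsilon$ is $O(1/\epsilon)$, hence so is the H\"older seminorm of $\nu$ and therefore of $U_*'=W'*\nu$, so continuity of $U_*'$ at $x_J+r$ does not yield negativity on a neighborhood whose size survives as $\epsilon\to0$.

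The dipole argument (b) does not repair this. Since $\nu_0$ has a jump of size $\rho_{\circ,J}(x_J)$ at $x_J$ while $\nu$ is continuous there, $W'*\nu_0$ has a logarithmic divergence at $x_J$, and therefore so does $W'*(\nu-\nu_0)$, with the opposite sign; in particular $(W'*(\nu-\nu_0))(x)$ is \emph{not} $o(1)$ uniformly on any fixed neighborhood of $x_J$. The Taylor/dipole expansion of $W'(x-\cdot)$ about $x_J$ that underlies your $O(\epsilon^2)$ bound requires $|x-x_J|\gg\epsilon$, but every point of the strip lies within distance $\epsilon$ of $x_J$, so for those $x$ you are adding two quantities that both blow up as $\epsilon\to 0$ and the sign of the sum is undetermined. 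There is also a one-sided issue: the Lebesgue-integral form of your identity for $U_{*,0}'$ is only valid for $x\notin\supp\nu_0$, i.e. $x\ge x_J$, so it gives no sign information on the left half $[x_J+r-\epsilon,x_J)$ of the strip. What the paper does instead is a local reflection argument tailored to the strip: for $x_0\in[x_J+r-\epsilon,x_J+r]$ one shows $F(2x_0-x)\ge F(x)$ for $x\in[x_0,x_J+r]$ once $\epsilon$ is small (using the $C^1$ regularity of $\rho_\circ$ near $x_J$ from Proposition~\ref{prop:rho-circ-c-d} and the explicit ramp of $\phi_\epsilon$), then splits $F$ into a piece symmetric about $x_0$ near $x_0$ (whose contribution to $(W'*\,\cdot\,)(x_0)$ is $O(\epsilon)$) plus a mean-zero remainder supported away from $x_0$ carrying the same sign pattern as $F$ with a mass lower bound uniform in $\epsilon$, to which the bulk estimate applies. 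Some idea of this type is needed to complete the proof.
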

\begin{proof}
We write
	$U_*=W*F(x)$ where 
	\begin{equation}\label{eqF}
		F(x) := \rho_{\circ ,J}(\chi_{J_\circ }-\phi_\epsilon) + \rho_{\circ ,-} = \left\{\begin{array}{ll}
			\rho_{\circ,-}, & 0\le |x| \le R_\circ \\
			\rho_{\circ,J}, & R_\circ < |x| \le x_J+r-\epsilon \\
			\rho_{\circ,J}\cdot \frac{x_J+r-x}{\epsilon}, & x_J+r-\epsilon < |x| \le x_J+r \\
			0, & x_J+r < |x| \le \frac{1}{2}
		\end{array}\right.
	\end{equation}
	Here $r = r(\epsilon)$ is in $(0,\epsilon)$ and $F(x)$ has mean-zero $F(x)\le 0$ on $\{0\le |x| \le R_\circ\}$ and $F(x)\ge 0$ on the complement. Our goal is to show that $(W*F)'<0$ on $\{x_J+r-\epsilon \le x < 1/2 \}$ and $0< x\le L_{\circ}$. 
	
	First notice that for any $0<x<y<1/2$ and $z\in (0,x)\cup (y, 1/2)$ we have
	\begin{equation}\label{W4_2}
		\begin{split}
		&W'(z-y)+W'(z+y)-W'(z-x)-W'(z+x) \\
		 = &-\int_{z-y}^{z-x}W''(u)\rd{u} + \int_{z+x}^{z+y}W''(u)\rd{u} = \int_x^y (-W''(z-u)+W''(z+u))\rd{u}<0,
		\end{split}
	\end{equation}
similar to the proof of \eqref{W4term1}. Then due to the even property and the signs of $F$, it is clear that an integration of \eqref{W4_2} gives $(W*F)'<0$ on $\{x_J+r \le x < 1/2\}$ and $0< x\le K_{\circ}$.
	
We then focus on $x_J+r-\epsilon \le x_0 \le x_J+r$. Then we claim that
	\begin{equation}\label{claimF}
		F(2x_0-x) \ge F(x),\quad \forall x\in [x_0,x_J+r]
	\end{equation}
	for $\epsilon$ small enough. In fact, Proposition \ref{prop:rho-circ-c-d} shows that $\rho_\circ$ is smooth near $x_0$ with $\rho_\circ(x_0)>0$. In particular, 
	\begin{equation}
		\rho_\circ(2x_0-x)\ge \rho_\circ(x_0)  - (\rho_\circ'(x_0)+1)(x-x_0),\quad \rho_\circ(x) \le \rho_\circ(x_0)  + (\rho_\circ'(x_0)+1)(x-x_0)
	\end{equation}
	Therefore, considering the possibility of $2x_0-x$ lying in the second or third piece of \eqref{eqF}, we have
	\begin{equation}
		\begin{split}
			&F(2x_0-x)-F(x) \\
			\ge & \big(\rho_\circ(x_0)  - (\rho_\circ'(x_0)+1)(x-x_0)\big)\min \Big\{1,\frac{x_J+r-(2x_0-x)}{\epsilon}\Big\} \\
			& - \big(\rho_\circ(x_0)  + (\rho_\circ'(x_0)+1)(x-x_0)\big)\frac{x_J+r-x}{\epsilon} \\
     \ge & \frac{1}{\epsilon}\rho_\circ(x_0)\big(\min\{\epsilon,x_J+r-(2x_0-x)\}-(x_J+r-x)\big) - 2\big|\rho_\circ'(x_0)+1\big|(x-x_0).
	\end{split}\end{equation}
	Notice that $(x_J+r-(2x_0-x))-(x_J+r-x)=2(x-x_0)$ and $\epsilon-(x_J+r-x)\ge (x_J+r-x_0)-(x_J+r-x)=x-x_0$. Therefore the last quantity above is nonnegative if $\epsilon$ is small enough, which proves \eqref{claimF}.
	
	Then we define a function $G$ by
	\begin{equation}\label{eqG}
		G(x) :=  \left\{\begin{array}{ll}
			\rho_{\circ,-}, & 0\le |x| \le R_G \\
			F(2x_0-x), & 2x_0-(x_J+r) < |x| \le x_0 \\
			F(x), & x_0 < |x| \le x_J+r \\
			0, & \text{otherwise}.
		\end{array}\right.
	\end{equation}
	Here $R_G\in (0,R_\circ)$ is determined by $\int_{\mathbb{T}}G\rd{x}=0$, which is possible since the positive parts of $G$ are below $F$, by \eqref{claimF}. Then $F-G$ is supported on $\{|x|\le x_0\}$ with the same sign properties as $F$, and we may apply \eqref{W4_2} to show that $(W*(F-G))(x_0)>0$, and in fact, bounded  from below uniformly in $\epsilon$ and $x_0$. Since $G$ is symmetric around $x_0$ in a small neighborhood of $x_0$, this neighborhood makes no contribution to $(W*G)(x_0)$, and other parts only contribute $O(\epsilon)$ to $(W*G)(x_0)$ since they are away from $x_0$ with total mass $O(\epsilon)$. Therefore we get the positivity of $(W*F)(x_0)$ for all $x_J+r-\epsilon \le x_0 \le x_J+r$ if $\epsilon$ is small enough.	
\end{proof}

\section{From Continuum to Discrete}\label{sec:approximation}
In this chapter we finalize the proof of Theorem \ref{thm:probability-distribution} on the minimization problem of $\cG$ for measures. Then we show that it implies Theorem \ref{thm:main} via a construction of polynomials whose roots distribution approximates the minimizing distribution over $\bT$. Finally based on our proof of Theorem \ref{thm:main}, we prove Theorem \ref{thm:real-root} on giving a sharp bound of the number of signed real roots for an arbitrary complex polynomial. 

\subsection{Proof of Main Theorem}\label{ssec:final}
In this section, we collect all results we prove before and deduce the main theorem.
\begin{proof}[Proof of Theorem \ref{thm:main} and Theorem \ref{thm:probability-distribution}]
	Recall that we first extend the functional $\cD$ and $\cH$ from the set of discrete probability measures 
	\begin{equation}
		\cM_{\text{emp}}: = \Big\{\rho\in\cM: \rho=\sum_{j=1}^Nc_j\delta(x-x_j),\,N\in\mathbb{N},\, c_j\in \R_{\ge 0},\,x_j\in\mathbb{T}\Big\},
	\end{equation}
to all probability measures $\cM$ on $\bT$. We then show in Theorem \ref{thm:main-characterization} that the minimizer of $\cG$ in $\cM_{\cD \ge d}$ must be in the form of $\rho = m(\delta_M + \delta_{-M}) + \rho_1$ where $\rho_1$ is the sediment distribution in $\cM_{1-2m}$ w.r.t. $U = W*m(\delta_M + \delta_{-M})$. Depending on the size of $M$ and $m$, we have constructed the sediment distributions in Proposition \ref{prop_rhoi} ($M = 0$) and \ref{prop_rhoii} ($M\neq 0$) and Proposition \ref{prop:2-parameter}. We then show that $\cG[\rho] > 1/2$ for sediment $\rho$ with $M\neq 0$ by combining Theorem \ref{thm:min-value-R} and Theorem \ref{thm_Dcomp}. 

We now compute the value of $\cG[\rho]$ for sediment $\rho$ with $M = 0$. In Proposition \ref{prop_rhoi} we construct $\rho = \rho_{\ti, m}(x) \in \cM$ parametrized by $m$. By expression of $\rho$, we have $\cD[\rho] =2m$ is taken at the origin since $\rho(x)<1$ for $x\neq 0$. On the other hand, it follows from the construction that $W*\rho$ is stationary. By Lemma \ref{lem_complexT}, by letting $C_1 = 0$, we obtain
\begin{equation}\label{dWrhoi}
	(W*\rho_\ti)'(x) = - \pi\frac{\sqrt{-(\sin^2\pi x-(2m)^2)}}{\sin\pi x}\chi_{|x|\le \frac{1}{\pi}\sin^{-1}2m}.
\end{equation}
Due to the sign of $W*\rho_{\ti}(x)$, we see that $\cH[\rho] = -(W*\rho)(1/2)$. We can compute the value using mean-zero property of $W*\rho$
\begin{equation}\label{eqn:W-rho-1/2}\begin{split}
		(W*\rho_\ti)(\frac{1}{2})
		= & \frac{-2}{1-\frac{2\sin^{-1}2m}{\pi}}\int_0^{ \frac{\sin^{-1}2m}{\pi}}W*\rho_\ti\rd{x}\\
		= & \frac{-2}{1-\frac{2\sin^{-1}2m}{\pi} }\Big(\frac{\sin^{-1}2m}{\pi}\cdot(W*\rho_\ti)(\frac{1}{2}) -
		\int_0^{\frac{\sin^{-1}2m}{\pi}}(W*\rho_\ti)'(x)x\rd{x}\Big),
\end{split}\end{equation}
where the last equality follows from integration by parts and the fact that $(W*\rho_{\ti})(1/2) = (W*\rho_{\ti})(\frac{\sin^{-1}(2m)}{\pi})$. It then follows from \eqref{eqn:W-rho-1/2} and \eqref{dWrhoi} that
\begin{equation}\label{Wrhoi}\begin{split}
		-(W*\rho_\ti)(\frac{1}{2})= & -2\int_0^{\frac{1}{\pi}\sin^{-1}2m}(W*\rho_\ti)'(x)x\rd{x} = 2\pi \int_0^{\frac{1}{\pi}\sin^{-1}2m}x\sqrt{\frac{4m^2}{\sin^2\pi x}-1}\rd{x} \\
		= & \pi \int_0^{\frac{1}{\pi}\sin^{-1}2m}\frac{1}{\pi}\sin^{-1}(2my)\sqrt{\frac{1}{y^2}-1}\cdot\frac{2m}{\pi\sqrt{1-(2my)^2}}\rd{y} \\
		= & \frac{8m^2}{\pi} \int_0^1\sqrt{1-y^2}\cdot  \frac{\sin^{-1}(2my)}{2my\sqrt{1-(2my)^2}}\rd{y}> 2m^2.\\
\end{split}\end{equation}
Therefore we prove the inequality in Theorem \ref{thm:probability-distribution}. One can also evaluate the limit $\lim_{m\to 0+} (W*\rho_\ti)(\frac{1}{2})/m^2  = 2$, thus
\begin{equation}\label{rhoi_1}
	\lim_{m\rightarrow0+}\cG[\rho_{\ti,m}] = \frac{1}{2}.
\end{equation}
Therefore we also show that $\sqrt{2}$ sharp in Theorem \ref{thm:probability-distribution}, thus finish proving Theorem \ref{thm:probability-distribution}. Since $\cM_{\text{emp}}\subset \cM$, we also prove the inequality in Theorem \ref{thm:main}. 

It now suffices to prove that Theorem \eqref{eqn:main} is sharp. We will do so by constructing $\rho_{\epsilon}$ in
	\begin{equation}
	\cM_{\text{emp-rat}} = \Big\{\rho\in\cM: \rho=\sum_{j=1}^Nc_j\delta(x-x_j),\,N\in\mathbb{N}, c_j\in \Q_{\ge 0}, x_j\in\mathbb{T}\Big\},
\end{equation}
the set of empirical measures with rational coefficients, such that $\cG[\rho_{\epsilon}]< 1/2+\epsilon$. For each $\rho = \rho_{\ti, m} = 2m\delta +\rho_{\ti, c} \in \cM$, we claim that there exists $\{ \rho_n \} \subset \cM_{\text{emp}}$ such that $\rho_n\wc \rho$ and  $\limsup_{n\to \infty}\cH[\rho_n]\le \cH[\rho]$. For each $n$, we define
\begin{equation}
	\rho_n = 2m\delta + \sum_{j=0}^{n-1} \Big(m_{j,1}\delta(x-\frac{j}{n}) + m_{j,2}\delta(x-\frac{j+1}{n}) \Big),
\end{equation}
where $m_{j,1},m_{j,2}\ge 0$ are determined by the moment conditions
\begin{equation}
	\int_{j/n}^{(j+1)/n}\Big(m_{j,1}\delta(y-\frac{j}{n}) + m_{j,2}\delta(y-\frac{j+1}{n})-\rho_{\ti, c}(y)\Big) y^k \rd{y} = 0, \quad k = 0, 1.
\end{equation}
The weak convergence of $\{\rho_n\}$ to $\rho$ is clear. Similar to the proof of \eqref{eqn:micro-diffusion-away} in Lemma \ref{lem:micro-diffusion}, we may show that
\begin{equation}\label{jj1}
	\Big(W*\big(m_{j,1}\delta(\cdot-\frac{j}{n}) + m_{j,2}\delta(\cdot-\frac{j+1}{n})- \rho_{\ti, c}\chi_{[j/n,(j+1)/n]}\big)\Big)(x) \ge 0,\quad \forall x\notin [j/n,(j+1)/n].
\end{equation}
using the convexity of $W$. Therefore applying \eqref{jj1} to those $j$ with $x\notin [j/n,(j+1)/n)$, we have
\begin{equation}
	(W*(\rho_n -\rho))(x) \ge \Big(W*\big((m_{j_x,1}\delta(\cdot-\frac{j_x}{n}) + m_{j_x,2}\delta(\cdot-\frac{j_x+1}{n}) ) -  \rho_{\ti, c}\chi_{[j_x/n,(j_x+1)/n]}\big)\Big)(x),
\end{equation}
where $j_x$-th interval contains $x$. For large $n$, we have $(W*\delta)(x)>0$ for $|x|< 1/n$ since $W$ is positive near 0. We can also bound the other term using $\rho_{\ti, c}\le 1$ and $W= - \log |2 \sin \pi x|$,
\begin{equation}
	|\big(W*(\rho_{\ti, c}\chi_{[j_x/n,(j_x+1)/n]})\big)(x)| \le \sup_{a\in\mathbb{T}}\int_a^{a+1/n}|W|\rd{x} \lesssim \frac{\log n}{n}.
\end{equation}
Therefore we obtain
\begin{equation}
	(W*\rho_n)(x)-(W*\rho)(x) \ge -C \frac{\log n}{n},
\end{equation}
which implies
\begin{equation}
	\cH[\rho_n] \le \cH[\rho] + C \frac{\log n}{n}.
\end{equation}
and therefore $\limsup_{n\rightarrow\infty}\cH[\rho_n] \le \cH[\rho]$. Combining with Lemma \ref{lem:D-H-property}, we have that $\lim_{n\to \infty} \cD[\rho_n] = \cD[\rho]$ and $\lim_{n\to \infty} \cH[\rho_n] = \cH[\rho]$, and therefore $\lim_{n\to\infty} \cG[\rho_n] = \cG[\rho]$. 

Therefore for each $\epsilon>0$, we can find $\rho = \rho_{\ti, m}$ such that $\cG[\rho]< 1/2+\epsilon$. For this $\rho$, we can construct $\{ \rho_{n}\}$ as above so that $\cG[\rho_n] < 1/2+2\epsilon$ for $n$ large enough. For a $\rho_{\text{emp}} =\sum_{j=1}^Nc_j\delta(x-x_j),\,N\in\mathbb{N}\in \cM_{\text{emp}}$, we can view $\cG[\rho_{\text{emp}}]$ as a function of $\vec{c}=(c_1,\dots,c_N)$, and indeed a continuous function in terms of $\vec{c}$. Therefore, by replacing each $c_j$ by a nearby rational number while keeping $\sum_{j=1}^N c_j=1$, one can find a $\rho_{n}'\in\cM_{\text{emp-rat}}$ with $\cG[\rho_{n}'] < \frac{1}{2}+3\epsilon$. Since $\epsilon$ is arbitrary, we finish proving the sharpness of the constant $\sqrt{2}$ in Theorem \ref{thm:main}. 
\end{proof}
\begin{remark}
Notice that we actually show that $\cG[\rho]$ is strictly larger than $1/2$. Indeed, by Theorem \ref{thm:min-value-R} we see that $\cG[\rho]$ cannot equal to $1/2$ for $\rho$ with $M>0$, and we have just computed in the proof above that $\cG[\rho]>1/2$ for $\rho$ with $M=0$.   Therefore $1/2$ cannot be achieved for any $\rho \in \cM$, although we have constructed a family of $\rho_m$ where $\cG[\rho_m]$ can be arbitrarily close to $1/2$. It then also follows that $1/2$ cannot be achieved for any polynomial. 
\end{remark}

\subsection{Application towards Real Roots}\label{ssec:real-final}
In this section, we give the proof for Theorem \ref{thm:real-root}, which is a consequence of Theorem \ref{thm:main}. We can again extend the discrete question on polynomials to a continuous question about probability measures. For each $\rho \in \cM$, we define a functional 
\begin{equation}
	\cR[\rho]: = \int_{\{ 0 \}} \rho \rd{x} = \int_{\{0\}}(\rho-1) \rd{x}.
\end{equation}
It is easy to see that if $\rho_f = \frac{1}{n}\sum_j \theta_j$ is the empirical measure from a degree $n$ polynomial $f(z)$, then $\cR[\rho_f] = N_+(f)/n$. 

\begin{proof}[Proof of Theorem \ref{thm:real-root}]
It follows from the definition of $\cD$ that
\begin{equation}
	\cR[\rho] \le \cD[\rho] \le \sqrt{2}\cdot \sqrt{\cH[\rho]}.
\end{equation}
This implies that for $\rho = \rho_f$ that
\begin{equation}
	N_+(f) \le \sqrt{2} \cdot \sqrt{\cH[f]} \cdot n.
\end{equation}
Therefore it suffices to prove the inequality is sharp. 

Notice that in the proof of Theorem \ref{thm:main}, we have constructed $\rho_{\ti, m}$ such that $\cG[\rho_{\ti, m}]< 1/2+\epsilon$ for any $\epsilon>0$. By the expression of $\rho_{\ti, m}$ we see that $\cD[\rho_{\ti, m}] = \cR[\rho_{\ti,m}]$, therefore we also have for these $\rho_{\ti, m}$ that $\cH[\rho_{\ti, m}]/ \cR[\rho_{\ti, m}]^2 <1/2+\epsilon$. We then construct $\rho_n \wc \rho_{\ti, m}$ in the same way to approximate $\rho  = \rho_{\ti, m}$. Since $\lim_{n\to \infty} \cD[\rho_n] = 2m = \cR[\rho_n]$, we also have $\lim_{n\to \infty} \cH[\rho_n]/\cR[\rho_n]^2 = 1/2$. Therefore we can choose $\rho_n$ such that $ \cH[\rho_n]/\cR[\rho_n]^2 < 1/2+2\epsilon$. Finally the construction for $\rho_n'$ is the same since $\cR$ is also continuous in $\vec{c}$ when $\rho_{\text{emp}} = \sum_{j=1}^Nc_j\delta(x-x_j)$. Therefore we can find $\rho_n'$ with $\cH[\rho_n']/\cR[\rho_n']^2  < 1/2+3\epsilon$. Since $\epsilon$ is arbitrary,  we finish proving the sharpness of constant $\sqrt{2}$. 

The upper bound for $N_{\theta}(f)$ is exactly the same since $N_{\theta}(f(z)) = N_+(f(z\cdot e^{-2\pi \theta i }))$ and $\cH[f(z)] = \cH[f(z\cdot e^{-2\pi\theta i})]$. 
\end{proof}

\section{Formulation in Harmonic Functions}\label{sec:conjugate-function}
In his 1952 work \cite{Ganelius}, Ganelius formulates a question in harmonic functions and uses it to improve the constant in the original Erd\H{o}s-Tur\'an inequality proved by \cite{ET}. This approach of harmonic functions has been further developed by Mignotte in \cite{Mignotte}. In this section, our goal is to show that our sharp version of Erd\H{o}s-Tur\'an inequality in turn implies a sharp upper bound for harmonic functions in Ganelius' formulation.

\begin{theorem}[Ganelius, 1952]
	Let $f(z)= u(z)+i v(z)$ be an analytic function in $|z|<1$ with $f(0)=0$. Suppose $u(z)<H$ and $\frac{\partial v}{\partial \theta} (z)<K$ in $|z|<1$ where $H, K >0$, then there exists $C>0$ such that
	\begin{equation}\label{thm_gane_0}
	|v(z_1)-v(z_2)|\le C \sqrt{HK},  \quad \quad \text{ for } |z_1|,|z_2|< 1.
\end{equation}
	Moreover the constant $C$ can be taken to be $\sqrt{2\pi} \sqrt{\pi/k} \approx 4.64$ where $k = \sum_{m\ge 0} (-1)^{m-1} (2m+1)^{-2}$ is the Catalan constant. \footnote{Notice that $\theta$ is taken to be in $[0,1]$ in this current formulation whereas in \cite{Ganelius} and \cite{Mignotte} $\theta$ is taken to be in $[0, 2\pi]$. This results in a change of $\sqrt{2\pi}$ in the constant $C$.}
\end{theorem}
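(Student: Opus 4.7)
The plan is to recast the problem as an extremal problem for the Hilbert transform on the unit circle and solve it by symmetrization. First I would reduce to boundary values: by considering $f(rz)$ for $r<1$ and letting $r\to 1^{-}$ at the end, we may assume $f$ extends continuously to $\overline{\{|z|\le 1\}}$ with the same upper bounds $H,K$. Writing $U(\theta):=u(e^{2\pi i\theta})$ for the boundary value of $u$, analyticity of $f$ with $f(0)=0$ gives $v(e^{2\pi i\theta})=\mathsf{H}[U](\theta)$, where $\mathsf{H}$ denotes the Hilbert transform on $\mathbb{T}$. By the Cauchy--Riemann equations in polar coordinates, $\partial v/\partial\theta = r\,\partial u/\partial r$, so the two hypotheses translate into the pointwise boundary constraints $U\le H$ and $(\mathsf{H}U)'\le K$ on $\mathbb{T}$.

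Next I would set up the extremal problem. Taking $z_j=e^{2\pi i\theta_j}$, we write
\[
v(z_{2})-v(z_{1})=\mathsf{H}[U](\theta_{2})-\mathsf{H}[U](\theta_{1})=\int_{\mathbb{T}}K_{\theta_{1},\theta_{2}}(\phi)\,U(\phi)\,d\phi,
\]
for an explicit odd kernel $K_{\theta_{1},\theta_{2}}$ built from $\cot\pi(\cdot)$ (the kernel of $\mathsf{H}$ on $\mathbb{T}$). The task becomes: maximize this linear functional of $U$ over the convex set $\{U:\,U\le H,\;(\mathsf{H}U)'\le K\}$. Using translation invariance and reflection symmetrization about the midpoint of $\theta_{1}$ and $\theta_{2}$, one reduces to a one-parameter family of admissible candidates (the natural scaling between $H$ and $K$ eliminating one further degree of freedom).

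The main obstacle---and the heart of the proof---is solving this reduced extremal problem. The optimizer turns out to be an explicit even function whose Fourier coefficients directly produce the Catalan constant $k=\sum_{m\ge 0}(-1)^{m}(2m+1)^{-2}$; normalizing $H$ and $K$ so that both constraints are simultaneously tight on this optimizer yields
\[
|v(z_{1})-v(z_{2})|\le \sqrt{2\pi}\cdot\sqrt{\pi/k}\cdot\sqrt{HK}.
\]
The factor $\sqrt{2\pi}$ arises from the $2\pi$-periodicity convention used in Ganelius's original paper (versus the $1$-periodic convention for $\mathbb{T}$ adopted here), while $\sqrt{\pi/k}$ is the sharp constant of the underlying Fourier optimization. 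The sharper bound in Theorem \ref{thm:conjugate-functions}, with constant $\sqrt{2\pi}$, should follow along the same route by replacing the explicit Ganelius extremizer with the sharp Erd\H{o}s--Tur\'an inequality: approximate $\exp(cf(z))$ (for a suitable scale $c$) by polynomials whose discrepancy and height are controlled by $K$ and $H$ respectively, and invoke Theorem \ref{thm:main}.
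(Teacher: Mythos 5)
This statement is presented in the paper as a \emph{cited} result (Ganelius, 1952), not one the paper proves: the paper recalls it for context and then proves the sharpened version Theorem~\ref{thm_gane} with constant $\sqrt{2\pi}$. So there is no internal proof here to match against; your proposal has to stand on its own as a reconstruction of Ganelius's argument.

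As such, your setup is on the right track (reduce to boundary values, identify $v|_{\bT}$ as the conjugate function $\mathsf{H}U$, and recast as a linear extremal problem over $\{U\le H,\ (\mathsf{H}U)'\le K\}$), and this is broadly the flavor of Ganelius's harmonic-function approach. But as a proof it has a substantive gap: everything after ``the task becomes'' is asserted rather than shown. You never write down the kernel $K_{\theta_1,\theta_2}$, never exhibit the extremal $U$, never verify that both constraints are simultaneously active on it, and the appearance of the Catalan constant is announced (``turns out to be'') with no computation. The symmetrization step is also incomplete: you would need to check that the feasible set is stable under the reflection and that symmetrizing does not decrease the (odd) functional, and then that scaling really does reduce to a one-parameter family; none of this is spelled out. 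As written, the argument establishes $|v(z_1)-v(z_2)|\le C\sqrt{HK}$ for \emph{some} $C$ (which already needs a compactness/normal-family argument for existence of an extremizer), but does not reach the specific constant $\sqrt{2\pi}\sqrt{\pi/k}$.

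One further correction: your closing sentence, describing how the paper's sharp Theorem~\ref{thm:conjugate-functions} should follow ``by approximating $\exp(cf(z))$ by polynomials and invoking Theorem~\ref{thm:main},'' does not match the paper. The paper's proof of Theorem~\ref{thm_gane} bypasses polynomials entirely: it sets $\rho(\theta)=1-\tilde v'(\theta)$, observes that $\rho\in\cM$ under the normalization $K=1$, computes $W*\rho=-\pi\tilde u$ via the Hilbert transform identity, and applies the continuous-measure form Theorem~\ref{thm:probability-distribution} directly. The polynomial-approximation route is not needed because Theorem~\ref{thm:probability-distribution} is already proved for arbitrary probability measures, and it would in any case introduce extra error terms that the direct argument avoids.
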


We now prove Theorem \ref{thm:conjugate-functions}, which gives the improvement of Ganelius's theorem by replacing $C$ with the sharp constant $\sqrt{2\pi}$.
\begin{proof}[Proof of Theorem \ref{thm:conjugate-functions}]
In order to prove inequality \eqref{thm_gane_1}, we first note that it suffices to prove \eqref{thm_gane_1} for $|z|= 1$ in the case where $f(z)$ is analytic in $|z|<1+\epsilon$ and $u\le H$ and $\frac{\partial v}{\partial \theta} (z)\le K$. Indeed, if $|z_i|<1-\epsilon$ for $i = 1, 2$, we can consider $g(z) = f(z(1-\epsilon))$ instead. We still have the bound $u_g\le H$ and $\frac{\partial v_g}{\partial \theta} (z)\le K(1-\epsilon)\le K$. Now since $f(z)$ is analytic on $|z|< 1+\epsilon$, $u$ and $v$ are both harmonic functions on $|z|<1+\epsilon$. The \eqref{thm_gane_1} for $z_i$ and $f$ then follows from that for $z_i/(1-\epsilon)$ and $g$. By maximal value principle, $\sup_{|z_1|,|z_2|\le 1}|\tilde{v}(z_1)-\tilde{v}(z_2)|$ is achieved at $|z_1|,|z_2|= 1$. Meanwhile we can also assume $K=1$ without loss of generality by multiplying $f$ by $1/K$.
	
Denote the restriction of $u$ and $v$ on $|z|=1$ as $\tilde{u}(\theta)$ and $\tilde{v}(\theta)$. We now define
	\begin{equation}
		\rho(\theta) = 1-\tilde{v}'(\theta). 
	\end{equation}
Then $\rho \in \cM$	since $\int_{\bT} \rho(\theta) \rd{\theta} = 1$ and $\rho(\theta)>0$ by $\frac{\partial v}{\partial \theta} (z)\le 1$. Given $f(0)=0$, it is a standard property of Hilbert transform on $\bT$ that
	\begin{equation}
		\tilde{u} =\frac{1}{\pi} \pv W'*\tilde{v} = \frac{1}{\pi}W*\tilde{v}' = -\frac{1}{\pi} W*\rho.
	\end{equation}
	Therefore, $W*\rho \ge -\pi H$ and $\cH[\rho] \le \pi H$. By Theorem \ref{thm:probability-distribution}, we have $\cD[\rho] \le \sqrt{2}\cdot \sqrt{\cH[\rho]}$, thus
	\begin{equation}
		\sqrt{H} \ge \sqrt{\cH[\rho]/\pi} \ge \frac{1}{\sqrt{2\pi}}\cD[\rho] = \frac{1}{\sqrt{2\pi}}\sup_I \int_I(\rho-1)\rd{x} = \frac{1}{\sqrt{2\pi}}\sup_{a,b\in\mathbb{T}}(\tilde{v}(a)-\tilde{v}(b)),
	\end{equation}
which proves the \eqref{thm_gane_1}.
	
	Then we show that the constant in \eqref{thm_gane_1} is sharp. By the proof of Theorem \ref{thm:probability-distribution}, for any $\epsilon>0$, we may take $\rho_{\ti,m}\in\cM$ as given by \eqref{typeirho} for some $m>0$ such that $\frac{\cH[\rho_{\ti,m}]}{\cD[\rho_{\ti,m}]^2} < \frac{1}{2}+\epsilon$. Proposition \ref{prop_rhoi} shows that $\ess\inf(W*\rho_{\ti,m})=-\cH[\rho_{\ti,m}]$ is achieved on an interval $[\frac{1}{\pi}\sin^{-1}2m,1-\frac{1}{\pi}\sin^{-1}2m]$. Therefore, by taking convolution with a compactly supported mollifier $\phi$, we obtain a nonnegative smooth function $\rho=\phi*\rho_{\ti,m}\in\cM$ with $\cH[\rho]= \cH[\rho_{\ti,m}]$ since $W*\rho = \phi*(W*\rho_{\ti,m})$. Also, one can choose $\phi$ so that $\cD[\rho]$ is arbitrarily close to $\cD[\rho_{\ti,m}]$ by Lemma \ref{lem:D-H-property}, and this guarantees we can find continuous $\rho$ such that $\frac{\cH[\rho]}{\cD[\rho]^2} < \frac{1}{2}+\epsilon$. Now we define
	\begin{equation}
		\tilde{v}(\theta) = \int_0^\theta (1-\rho(t))\rd{t} -c_{\rho},\quad \tilde{u}(\theta) = \frac{1}{\pi} \pv (W'*\tilde{v})(\theta)= -\frac{1}{\pi}(W*\rho)(\theta),
	\end{equation}
	where $c_{\rho}$ is a constant which makes $\int_{\mathbb{T}}\tilde{v}(\theta)\rd{\theta}=0$. Here $\tilde{v},\tilde{u}$ are both smooth. Therefore, we may construct an analytic function $f=\tilde{u}+i\tilde{v}$ in $|z|<1$ by Poisson integral
	\begin{equation}
		f(re^{2\pi i\theta}) = \int_{\mathbb{T}} P_r(\theta-t)(\tilde{u}(t)+i\tilde{v}(t))\rd{t},\quad P_r(\theta)=  \frac{1-r^2}{1-2r\cos(2\pi\theta)+r^2}.
	\end{equation}
	and $f$ is continuous on $|z|\le 1$. Clearly $f(0)=0$ because $\tilde{u},\tilde{v}$ are mean-zero. By the harmonic property of $u$, we see that
	\begin{equation}
		\sup_{|z|<1} u(z) = \sup_{|z|=1}u(z) = -\frac{1}{\pi}\inf (W*\rho) =\frac{1}{\pi} \cH[\rho].
	\end{equation}
Now using $v_{r}(\theta) = (P_r * \tilde{v})(\theta)$, we have $\partial v /\partial \theta  = P_r * \tilde{v}'$. Again the harmonic property of $P_r * \tilde{v}'$ implies
	\begin{equation}
		\sup_{|z|<1}\partial v /\partial \theta(z) = \sup_{|z|=1}\partial v /\partial \theta(z) = \sup_{\theta\in\mathbb{T}}(1-\rho(\theta)) \le 1.
	\end{equation}
	Therefore $f$ satisfies the assumptions of this theorem with $H=\frac{1}{\pi}\cH[\rho]$ and $K=1$. On the other hand, since $v$ is harmonic on $|z|<1$ and continuous on $|z|\le 1$,
	\begin{equation}
		\sup_{|z_1|,|z_2|<1}|v(z_1)-v(z_2)| = \sup_{a,b\in\mathbb{T}}(\tilde{v}(a)-\tilde{v}(b)) = \sup_I \int_I(\rho-1)\rd{x} = \cD[\rho].
	\end{equation}
Therefore we find $f$ such that 
	\begin{equation}
		\sup_{|z_1|,|z_2|<1}|v(z_1)-v(z_2)| \ge  \sqrt{\pi H}/\sqrt{1/2+\epsilon}.
	\end{equation}
	Since $\epsilon$ is arbitrary, this shows the sharpness of the constant in \eqref{thm_gane_1}.	
\end{proof}

\begin{remark}
	By taking the difference with $v(0)$, the inequality \eqref{thm_gane_0} and \eqref{thm_gane_1} in both theorems imply that 
	\begin{equation}\label{thm_gane_2}
		|v(z)|\le C \sqrt{HK},  \quad \quad \text{ for } |z|< 1,
	\end{equation}
	with the same constant $C$, i.e. $C = \sqrt{2\pi} \sqrt{\pi/k}$ and $C=\sqrt{2\pi}$ respectively. However, the constant $\sqrt{2\pi}$ in \eqref{thm_gane_2} is not necessarily sharp.
\end{remark}

\section{Appendix: Continuity of Potential}
We list several results on the continuity of the generated potential $V = W*\rho$. Recall that a function $f$ is \emph{lower semicontinous} at $x = a\in \bT$ if 
\begin{equation}
	\liminf_{x\to a} f(x) \ge f(a).
\end{equation}

\begin{proposition}\label{prop:semi}
	Assume $W:\mathbb{T} \to (-\infty, \infty]$ satisfies {\bf (H1)}-{\bf (H4)}. For $\rho \in \cM$, denote $V = W*\rho$. 
	\begin{enumerate}
	\item[\textnormal{(i)}]
		The generated potential $V$ is lower semicontinuous and is $C^2$ on $\mathbb{T}\backslash \supp \rho$.
	\item[\textnormal{(ii)}]
		There exists a constant $C>0$ such that $V''(x)\ge C$ at every $x\notin \supp\rho$.
	\item[\textnormal{(iii)}]
		Let $(x_1,x_2)\subseteq (\supp\rho)^c$ be an interval with endpoints $x_1,x_2\in \supp\rho$. Then $V$ is right continuous at $x_1$ and left continuous at $x_2$.
	\end{enumerate}
\end{proposition}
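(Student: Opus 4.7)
The plan is to prove the three parts in order. Parts (i) and (ii) are soft: (i) combines the lower semicontinuity of $W$ (via Fatou) with differentiation under the integral on $(\supp\rho)^c$, and (ii) is an immediate consequence of the resulting formula together with \textbf{(H4)}. The main obstacle is (iii): at a boundary point of $\supp\rho$, $V$ need not be continuous from the side where mass accumulates, so an extra ingredient---the monotonicity of $W$ near its singularity---is needed to control the one-sided limit.

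For (i), \textbf{(H3)} together with continuity of $W$ on $\bT\setminus\{0\}$ make $W$ lower semicontinuous on $\bT$, and \textbf{(H2)}--\textbf{(H4)} force $W$ to attain its finite minimum $-M_0$ at $\pm 1/2$. Given $x_n\to x$, applying Fatou to the nonnegative family $y\mapsto W(x_n-y)+M_0$ weighted by $\rho$ yields $V(x)\le \liminf_n V(x_n)$. For the $C^2$ claim, given $x_0\notin\supp\rho$ pick an open neighborhood $U$ of $x_0$ whose closure is disjoint from $\supp\rho$; then $W(\,\cdot\,-y)$ and its first two derivatives in the first slot are uniformly bounded and continuous on $U\times \supp\rho$, and differentiation under the integral gives
\begin{equation*}
V'(x)=\int_{\bT}W'(x-y)\rho(y)\rd{y},\qquad V''(x)=\int_{\bT}W''(x-y)\rho(y)\rd{y},\qquad x\in U.
\end{equation*}
Part (ii) is then immediate: for $x\notin\supp\rho$, \textbf{(H4)} yields $V''(x)\ge C_1\int_\bT\rho\rd{y}=C_1$, since $\rho\in\cM$ is a probability measure.

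For (iii), fix $x_n\to x_1^+$ with $x_n\in(x_1,x_2)$. By (i) it suffices to show $\limsup_n V(x_n)\le V(x_1)$, and if $V(x_1)=+\infty$ this already follows from lower semicontinuity, so assume $V(x_1)<\infty$; in particular $\rho$ has no atom at $x_1$. The crucial observation is that \textbf{(H2)}--\textbf{(H4)} make $W$ strictly decreasing on $(0,1/2)$: $W'$ is odd and strictly increasing on $(0,1/2)$ by \textbf{(H4)}, and $W'(1/2)=0$ by even/periodic symmetry, so $W'<0$ on $(0,1/2)$. Pick $\epsilon\in(0,(x_2-x_1)/2)$ small, set $A=[x_1-\epsilon,x_1+\epsilon]$, and split the integral for $V(x_n)$ over $A$ and $A^c$. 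On $A^c$, $|x_n-y|$ remains bounded away from $0$ for all large $n$, so $W(x_n-y)\to W(x_1-y)$ uniformly in $y$ and dominated convergence gives convergence of the $A^c$-integrals. On $A$, the assumption $(x_1,x_2)\subset(\supp\rho)^c$ together with the absence of an atom at $x_1$ forces $\rho|_A$ to be supported in $[x_1-\epsilon,x_1)$; for such $y$, both $x_n-y>x_1-y$ lie in $(0,1/2)$ once $n$ is large, so the monotonicity of $W$ gives $W(x_n-y)\le W(x_1-y)$. Since $V(x_1)<\infty$, the majorant $\max\{W(x_1-y),-M_0\}$ is $\rho$-integrable over $A$, and dominated convergence yields $\int_A W(x_n-y)\rho(y)\rd{y}\to \int_A W(x_1-y)\rho(y)\rd{y}$. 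Summing the two pieces proves $\limsup_n V(x_n)\le V(x_1)$, and hence right continuity at $x_1$. Left continuity at $x_2$ is the mirror argument, using that $W$ is strictly increasing on $(-1/2,0)$.
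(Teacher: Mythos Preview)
Your proof is correct and follows essentially the same approach as the paper. Parts (i) and (ii) are identical. For (iii), both arguments reduce to the one-sided upper bound (the lower bound being lower semicontinuity), observe that $V(x_1)=\infty$ is trivial and that finiteness excludes an atom at $x_1$, and then exploit the same key fact that $W$ is strictly decreasing on $(0,1/2)$ (equivalently $W'<0$ there). The only difference is organizational: the paper writes $V(y)-V(x_1)=\int_{[x_2-1,x_1)}\int_{x_1-z}^{y-z}W'(u)\,\rd u\,\rho(z)\,\rd z$ and bounds $W'(u)$ globally by $-W'\!\big(\tfrac{x_2-x_1}{2}\big)$ using the monotonicity of $W'$, obtaining the quantitative one-sided Lipschitz estimate $V(y)-V(x_1)\le -W'\!\big(\tfrac{x_2-x_1}{2}\big)(y-x_1)$, whereas you split into a near piece $A$ (handled by $W(x_n-y)\le W(x_1-y)$ and dominated convergence) and a far piece $A^c$ (uniform convergence). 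The paper's route is slightly slicker and yields a quantitative bound, but your dominated-convergence argument is equally valid and perhaps more transparent.

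One cosmetic remark: when you assert that $x_n-y$ and $x_1-y$ both lie in $(0,1/2)$, this requires $2\epsilon<1/2$, which is not automatic from $\epsilon<(x_2-x_1)/2$ alone; since you already wrote ``$\epsilon$ small'' this is implicit, but it would be cleaner to state $\epsilon<\min\!\big(\tfrac{x_2-x_1}{2},\tfrac14\big)$ explicitly.
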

\begin{proof}
\textbf{Proof of (i):}
The lower semicontinuity of $V$ for general $W$ can be found in \cite[Lemma 2]{Ca13} and for $W = -\ln|2\sin (\pi x)|$ is automatic. 
For the sake of completeness, we include the proof here. The continuity of $W$ away from 0, together with {\bf (H3)}, implies that $W$ is bounded from below, say, by $-C_1$. By including the possibility of $\infty$ value, we have $\lim_{x\rightarrow x_0}W(x) = W(x_0)$ for any $x_0\in\mathbb{T}$. Therefore, for any $x\in\mathbb{T}$ and sequence $\{x_n\}$ with $\lim_{n\rightarrow\infty}x_n = x$,
\begin{equation}\begin{split}
		V(x)+C_1 = & \int_{\mathbb{T}} (W(x-y)+C_1)\rho(y)\rd{y} = \int_{\mathbb{T}} \lim_{n\rightarrow\infty}(W(x_n-y)+C_1)\rho(y)\rd{y} \\
		\le & \liminf_{n\rightarrow\infty}\int_{\mathbb{T}}(W(x_n-y)+C_1)\rho(y)\rd{y} =  \liminf_{n\rightarrow\infty}V(x_n)+C_1.
\end{split}\end{equation}
The inequality uses Fatou's lemma on the nonnegative functions $W(x_n-y)+C_1$. The lower semicontinuity of $V$ then follows.

\textbf{Proof of (ii):} By {\bf (H4)}, $W$ is $C^2$ on $\mathbb{T}\backslash\{0\}$ with $W''(y)\ge C_W>0$ for any $y\in\mathbb{T}\backslash\{0\}$. If $x\notin\supp\rho$, then there exists $\epsilon>0$ such that $(x-\epsilon,x+\epsilon)\notin \supp\rho$, and
\begin{equation}
	V(x) = \int_{\mathbb{T}}W(x-y)\rho(y)\rd{y} = \int_{\mathbb{T}\backslash(x-\epsilon,x+\epsilon)}W(x-y)\rho(y)\rd{y}
\end{equation}
This shows that $V$ is $C^2$ in $(x-\epsilon,x+\epsilon)$ since $W(x-y)$ is $C^2$ on the domain of integral. Also, for
\begin{equation}
	V''(x) = \int_{\mathbb{T}\backslash(x-\epsilon,x+\epsilon)}W''(x-y)\rho(y)\rd{y} \ge C_W \int_{\mathbb{T}\backslash(x_0-\epsilon,x_0+\epsilon)}\rho(y)\rd{y}=C_W>0.
\end{equation}

\textbf{Proof of (iii):}
By reflection around $(x_1+x_2)/2$, it suffices to prove that $V$ is right continuous at $x_1$. Also, since we already know that $V$ is lower semicontinuous, it suffices to prove: for any $\epsilon>0$, there exists $\alpha>0$ such that
\begin{equation}
	V(y) < V(x_1)+\epsilon,\quad \forall y\in (x_1,x_1+\alpha).
\end{equation}
Also, it suffices to work with the case $V(x_1)<\infty$. First notice that $(x_1,x_2)$ is an interval in $\mathbb{T}$, and therefore we can take the representative $x_1<x_2<x_1+1$. We will take $\alpha\le\frac{x_2-x_1}{2}$. Since $\supp\rho\cap (x_1,x_2)=\emptyset$, we may write
\begin{equation}
	V(y) = \int_{[x_2-1,x_1]}W(y-z)\rho(z)\rd{z},
\end{equation}
where $[x_2-1,x_1]=(x_1,x_2)^c$ when considered as an interval of $\mathbb{T}$. Then
\begin{equation}
	V(y)-V(x_1) =  \int_{[x_2-1,x_1]}(W(y-z)-W(x_1-z))\rho(z)\rd{z} =  \int_{[x_2-1,x_1)}\int_{x_1-z}^{y-z}W'(u)\rd{u}\rho(z)\rd{z},
\end{equation}
where we dropped the point $z=x_1$ in the integral because $V(x_1)<\infty$ does not allow $\rho$ to have a Dirac mass at $x_1$. It then follows from the monotonicity of $W'$ that 
\begin{equation}
	V(y)-V(x_1)\le -W'(\frac{x_2-x_1}{2})(y-x_1),
\end{equation}
therefore when $\alpha$ is small enough, we have $V(y)-V(x_1)<\epsilon$. 
\end{proof}

\section{Appendix: Continuity of Energy}
In this part, we will study properties of the energy functional $\cE$, defined in \eqref{eqn:E}. In particular, we will prove that $\cE$ is continuous with respect to mollification. While doing so, we also give a version of Paserval's identity based on some assumptions of $W$.

\begin{proposition}\label{prop:energy-Fourier}
	Assume $W$ satisfies {\bf (H1)}-{\bf (H5)} and $U$ has the form \eqref{eqn:U}. Let $\rho_i$ for $i = 1,2$ and $\rho$ be probability measures on $\bT$. If $\cE[\rho]<\infty$, then
	\begin{equation}\label{lem_Fou1_0}
		\lim_{\alpha\rightarrow 0+} \cE[\rho*\psi_a] = \cE[\rho].
	\end{equation}	
	If $\cE[\rho_i]<\infty$, then
	\begin{equation}\label{lem_Fou1_1}
		\int_{\bT} (W*\rho_1)\cdot \rho_2 \rd{x} = \sum_{k\in\mathbb{Z}}\hat{ W}(k)\hat{\rho_1}(k)\bar{\hat{\rho_2}}(k).
	\end{equation}
\end{proposition}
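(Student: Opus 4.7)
The plan is to combine a uniform pointwise estimate $W*\psi_a \le A\,W+B$ (the only nontrivial use of (H5)) with the smooth-function Plancherel identity on $\bT$, and pass to the limit $a\to 0^+$ by dominated convergence. The finiteness of the energy is used throughout to supply an integrable dominating function. I would treat \eqref{lem_Fou1_0} and \eqref{lem_Fou1_1} in parallel since both reduce to the same two technical ingredients.

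\textbf{Step 1 (uniform mollifier bound).} Since $0\le \psi_a \le a^{-1}\chi_{[-a,a]}$, for every $x\in \bT$
\[
(W*\psi_a)(x)-\inf W \;\le\; 2\Big(\frac{1}{2a}\int_{x-a}^{x+a}W(y)\rd{y}-\inf W\Big),
\]
and (H5), applied with $r=a$, turns the right-hand side into an affine function of $W(x)$. Thus there exist absolute constants $A\ge 1$, $B\ge 0$, independent of $a\in(0,1/2)$, with $(W*\psi_a)(x)\le A\,W(x)+B$. Iterating gives $(W*\psi_a*\psi_a)(x)\le A^{2}W(x)+B(A+1)$, and $(W*\psi_a*\psi_a)(x)\to W(x)$ at every Lebesgue point of $W$, hence a.e.

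\textbf{Step 2 (proof of both identities).} For \eqref{lem_Fou1_0} split $\cE[\rho*\psi_a]=I_a+J_a$ with
\[
I_a=\tfrac12\iint_{\bT\times\bT}(W*\psi_a*\psi_a)(x-y)\rd\rho(x)\rd\rho(y),\qquad J_a=\int_{\bT}(U*\psi_a)\rd\rho.
\]
Since $\cE[\rho]<\infty$, $\rho$ has no atoms, so Step~1's pointwise convergence holds $\rho\otimes\rho$-a.e.; the majorant $A^{2}W(x-y)+\text{const}$ is $\rho\otimes\rho$-integrable and DCT yields $I_a\to \tfrac12\iint W\rd\rho\rd\rho$. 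For $J_a$, decompose $U=W*(\rho_+-\rho_-)+W*\rho_d$: the first piece is continuous and bounded on $\bT$ (since $\rho_\pm$ are continuous and $W\in L^{1}$), so its mollification converges uniformly; the Dirac part is a finite sum of translates of $W$, each dominated by $A\,W(\cdot-x_j)+B$ which is $\rho$-integrable because $\int U\rd\rho<\infty$, and DCT closes the argument. For \eqref{lem_Fou1_1}, Plancherel applied to the smooth functions $\rho_{i,a}=\rho_i*\psi_a$ reads
\[
\int_{\bT}(W*\rho_{1,a})\rho_{2,a}\rd{x}=\sum_{k\in\mathbb{Z}}\hat W(k)\hat\rho_1(k)\overline{\hat\rho_2(k)}\,|\hat\psi(ak)|^{2}.
\]
Taking $\rho_1=\rho_2=\rho_i$, Step~1's uniform bound together with Fatou in $a\to 0^+$ yields $\sum_k \hat W(k)|\hat\rho_i(k)|^{2}<\infty$ for $i=1,2$ (recall $\hat W(k)\ge 0$ by Lemma~\ref{lem:W-hat-positive}); Cauchy--Schwarz in the weighted $\ell^{2}$ inner product then makes $\sum_k \hat W(k)\hat\rho_1(k)\overline{\hat\rho_2(k)}$ absolutely convergent, and DCT in $k$ with dominator $\hat W(k)|\hat\rho_1(k)||\hat\rho_2(k)|$ sends the right-hand side to its limit. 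The left-hand side is treated by the same DCT argument as for $I_a$, now applied to the mixed pair $(\rho_1,\rho_2)$, producing the identity.

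\textbf{Main obstacle.} The entire argument rests on Step~1: getting an $a$-independent pointwise majorization of $W*\psi_a$ by an affine function of $W$ itself, so that one integrable dominating function works at every scale. This is exactly what (H5) provides; without it the singularity of $W$ at the origin could allow $\cE[\rho*\psi_a]$ to explode even when $\cE[\rho]$ is finite, and the dominated-convergence steps in both \eqref{lem_Fou1_0} and \eqref{lem_Fou1_1} would collapse.
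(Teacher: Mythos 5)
Your approach matches the paper's in all essential respects: the $(\mathbf{H5})$-based uniform pointwise bound on $W*\psi_a$ (the paper phrases it as $W_1*\Psi_a\le C_1W_1$ with $W_1=W+2C_0\ge 0$, but this is the same content), dominated convergence for $I_a$ using the no-atoms observation, the separate treatment of the continuous and Dirac pieces of $U$ for $J_a$, the Plancherel identity for mollified measures, and Fatou in $k$ to get $\sum_k\hat W(k)|\hat\rho_i(k)|^2<\infty$. The one place where you deviate is the off-diagonal case of \eqref{lem_Fou1_1}: the paper first proves $\cE[\rho_1+\rho_2]<\infty$ (via an auxiliary argument with the truncation $W_1^A=\min\{W_1,A\}$) and then polarizes, whereas you invoke Cauchy--Schwarz in the weighted $\ell^2(\hat W)$ space to control the Fourier side and then claim to reuse ``the same DCT argument as for $I_a$'' on the spatial side.

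That last step is where a genuine (though patchable) gap appears. To apply dominated convergence to $\iint (W_1*\Psi_a)(x-y)\,\rd\rho_1(y)\,\rd\rho_2(x)$ you need an integrable dominator, i.e.\ you need to already know $\iint W_1(x-y)\,\rd\rho_1\,\rd\rho_2<\infty$ — but that finiteness is precisely (the nontrivial half of) what \eqref{lem_Fou1_1} asserts. The diagonal hypothesis $\cE[\rho_i]<\infty$ does not yield the cross integrability for free. The fix is easy and stays within your framework: since $W_1*\Psi_a\ge 0$ converges $\rho_1\otimes\rho_2$-a.e.\ to $W_1$, Fatou in the spatial variables gives
\begin{equation*}
\iint W_1(x-y)\,\rd\rho_1\,\rd\rho_2 \;\le\; \liminf_{a\to 0^+}\iint (W_1*\Psi_a)\,\rd\rho_1\,\rd\rho_2 \;\le\; \sum_k \hat W_1(k)|\hat\rho_1(k)||\hat\rho_2(k)| \;<\;\infty,
\end{equation*}
with the last bound from $|\hat\psi|\le 1$ and your Cauchy--Schwarz estimate; only then can DCT be invoked. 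You should state this explicitly rather than gesture at ``the same DCT argument.'' (Minor remarks: with $\inf W<0$ the constant $B$ in $W*\psi_a\le AW+B$ need not be $\ge 0$ — this doesn't matter, but your claim $B\ge 0$ is not automatic; and the sums are best run with $\hat W_1$ rather than $\hat W$ to keep all Fourier coefficients nonnegative, though the two differ only at $k=0$ where $\hat W(0)=0$.)
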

\begin{proof}
	We first prove \eqref{lem_Fou1_0} in the case $U=0$. By assumption, $\inf W:=-C_0$ with $C_0>0$. To prove \eqref{lem_Fou1_0}, it suffices to prove
	\begin{equation}\label{Ealpha}
		\lim_{a\rightarrow0+}\int( W_1*\psi_a*\rho)(x)(\psi_a*\rho)(x)\rd{x} = \iint  W_1(x-y)\rho(y)\rd{y}\rho(x)\rd{x},
	\end{equation}
	where $ W_1= W+2C_0$ is bounded from below by $C_0$. Since $\psi_a$ is even, we obtain
	\begin{equation}\label{planc1}
		\int( W_1*\psi_a*\rho)(x)(\psi_a*\rho)(x)\rd{x} = \int (W_1*\Psi_a*\rho)(x)\rho(x)\rd{x} 
	\end{equation}
	where $\Psi_a=\psi_a*\psi_a$. The assumption $\cE[\rho]<\infty$ implies $\iint W_1(x-y)\rho(y)\rd{y}\rho(x)\rd{x} <\infty$. Therefore the measure of the line $x=y$ is $0$ since $ W_1(0)=\infty$. By {\bf (H1)} $W_1$ is continuous on $\mathbb{T}\backslash\{0\}$, then $\lim_{a\rightarrow0+}( W_1*\Psi_a)(x-y)= W_1(x-y)$ for any $x\ne y$, and therefore this convergence holds almost everywhere with respect to the measure $\rho(x)\rho(y)$ on $\mathbb{T}^2$. 
	
	Notice that $\psi_a(x)$ is a positive linear combination of the characteristic functions $\chi_{[-r,r]}$ with $r>0$, and same for $\Psi_a$. Combining with the assumption {\bf (H5)} (which is equivalent to $(W_1*\frac{1}{2r}\chi_{[-r,r]})(x) \le C_1W_1(x)$), we see that
	\begin{equation}\label{claim_wpsi}
		( W_1*\Psi_a)(x) \le  C_1 W_1(x),
	\end{equation} 
	for any $0<a<1/2$ and $x\in \mathbb{T}$. Therefore, the RHS integral of \eqref{planc1} is dominated by $C_1\iint  W_1(x-y)\rho(y)\rd{y}\rho(x)\rd{x}<\infty$. Combined with the convergence $\lim_{a\rightarrow0+}( W_1*\Psi_a)(x-y)= W_1(x-y)$ for  almost everywhere with respect to the measure $\rho(x)\rho(y)$, the dominated convergence theorem shows that 
	\begin{equation}
		\lim_{a\rightarrow0+}\iint ( W_1*\Psi_a)(x-y)\rho(y)\rd{y}\rho(x)\rd{x} = \iint  W_1(x-y)\rho(y)\rd{y}\rho(x)\rd{x}.
	\end{equation}
	which finishes the proof of \eqref{Ealpha}.
	
     Next we prove \eqref{lem_Fou1_0} for general $U$ and $\cE_U$. Since $U$ is bounded from below, the finiteness of $\cE[\rho]$ implies $\iint  W(x-y)\rho(y)\rd{y}\rho(x)\rd{x}<\infty$. We have shown the convergence of $W$-term in $\cE_{U}$ in $U=0$ case. Therefore, it suffices to show that
	\begin{equation}
		\lim_{a\rightarrow0+}\int U(x)(\rho*\psi_a)(x)\rd{x} = \int U(x)\rho(x)\rd{x},
	\end{equation}
	for $U= W*\rho_{U,+}$, $U= W*\rho_{U,-}$ or $U= W*\delta= W$, using the expression of $U$ in \eqref{eqn:U}. For the first two cases, we first observe
	\begin{equation}
		\int U(x)(\rho*\psi_a)(x)\rd{x} = \int (U*\psi_a)(x)\rho(x)\rd{x},
	\end{equation}
and then continuity of $U$ implies the uniform convergence of $U*\psi_a$ to $U$ on $\bT$ as $a\rightarrow0+$. For the case $U=W$, we again have 
	\begin{equation}
		\int  W(x)(\rho*\psi_a)(x)\rd{x} = \int ( W*\psi_a)(x)\rho(x)\rd{x}.
	\end{equation}
	Notice that $\cE[\rho]<\infty$ implies that $\int W \rho \rd{x} = \int U\rho\rd{x}<\infty$ since $W$ is bounded from below. Using \eqref{claim_wpsi} with $\Psi_a$ replaced by $\psi_a$, we see that $\lim_{a\rightarrow0+}\int ( W*\psi_a)(x)\rho(x)\rd{x} = \int W \rho \rd{x}$  by dominated convergence. 
	
	Finally we prove \eqref{lem_Fou1_1}. We first treat the case $\rho_1=\rho_2 = \rho$. We first notice that for any $0<a<1/2$,
	\begin{equation}\label{planc}
		\int( W_1*\psi_a*\rho)(x)(\psi_a*\rho)(x)\rd{x} = \sum_{k\in\mathbb{Z}}\hat{ W_1}(k)|\hat{\psi}(a k)|^2|\hat{\rho}(k)|^2,
	\end{equation}
	by Plancherel formula, since $W_1*\psi_a*\rho$ and $\psi_a*\rho$ are both continuous. 
	By \eqref{lem_Fou1_0}, the LHS of \eqref{planc} converges to $\iint  W_1(x-y)\rho(y)\rd{y}\rho(x)\rd{x}$. 
	
	To analyze the RHS of \eqref{planc}, we show that $\sum_{k\in\mathbb{Z}}\hat{ W_1}(k)|\hat{\rho}(k)|^2$ is finite. First notice that since $\hat{\psi}(0)=1$ and $\hat{\psi}(\xi)$ is a smooth even function, we have
	\begin{equation}\label{uniK}
		\lim_{a\rightarrow0+}|\hat{\psi}(a k)|^2 = 1,\quad \text{uniformly for $k\in [-K,K]$},
	\end{equation}
	for any $K\in\mathbb{Z}_+$. Therefore
	\begin{equation}\begin{split}
			\sum_{k\in\mathbb{Z},\,|k|\le K}\hat{ W_1}(k)|\hat{\rho}(k)|^2 = & \lim_{a\rightarrow0+} \sum_{k\in\mathbb{Z},\,|k|\le K}\hat{ W_1}(k)|\hat{\psi}(a k)|^2|\hat{\rho}(k)|^2 \\
			\le &  \lim_{a\rightarrow0+} \sum_{k\in\mathbb{Z}}\hat{ W_1}(k)|\hat{\psi}(a k)|^2|\hat{\rho}(k)|^2 \\
			 =  & \iint  W_1(x-y)\rho(y)\rd{y}\rho(x)\rd{x}<\infty.
	\end{split}\end{equation}
The first inequality uses the positivity of $\hat{ W}_1$ and $|\hat{\psi}|^2$, and the last equality uses 
\eqref{Ealpha} and \eqref{planc}. Therefore the RHS of \eqref{planc} is dominated by $\sum_{k\in\mathbb{Z}}\hat{ W_1}(k)|\hat{\rho}(k)|^2$ since $|\hat{\psi}(a k)|\le 1$ for any $a$ and $k$. Then we see that the RHS of \eqref{planc} converges to $\sum_{k\in\mathbb{Z}}\hat{ W_1}(k)|\hat{\rho}(k)|^2$.
	
	For the general case with possibly $\rho_1\ne \rho_2$, we use the bilinear property
	\begin{equation}\begin{split}
			2\iint &  W(x-y)\rho_1(y)\rd{y}\rho_2(x)\rd{x} = \iint  W(x-y)(\rho_1(y)+\rho_2(y))\rd{y}(\rho_1(x)+\rho_2(x))\rd{x} \\
			& - \iint  W(x-y)\rho_1(y)\rd{y}\rho_1(x)\rd{x} - \iint  W(x-y)\rho_2(y)\rd{y}\rho_2(x)\rd{x},
	\end{split}\end{equation}
	and the RHS of \eqref{lem_Fou1_1} can be written similarly. By assuming $\cE[\rho_i]<\infty$, we claim that $\cE[\rho_1+\rho_2]<\infty$. Suppose not, then $\int (W_1 * \rho)\rho \rd{x} = \infty$. We can define $W_1^A(x):= \min\{ W_1(x), A \}$ for $A>0$, then
	\begin{equation}
	\lim_{A\to \infty}	\int (W_1^A *\rho) \rho \rd{x} = \infty, 
	\end{equation}
     Notice that since $W_1^A$ is continuous, we have $\lim_{a\to 0+} \int (W_1^A*\psi_a*\rho)(\psi_a*\rho) \rd{x} = 	\int (W_1^A *\rho) \rho \rd{x}$, therefore we show that
   	\begin{equation}
   	\lim_{a\to 0+} \int (W_1*\psi_a*\rho)(\psi_a*\rho) \rd{x} = \infty.
   \end{equation}
	However, $\cE[\rho_i]<\infty$ implies the RHS of \eqref{planc} for $\rho = \rho_1+\rho_2$ is uniformly bounded as $a\to 0+$. Therefore we find a contradiction. Then the conclusion follows from the previous case applied to $\rho_1,\rho_2,\rho_1+\rho_2$. 
\end{proof}

\bibliographystyle{alpha}
\bibliography{set.bib}

\newcommand{\etalchar}[1]{$^{#1}$}
\begin{thebibliography}{CDF{\etalchar{+}}21}

\bibitem[AB97a]{curve}
V.~Andrievskii and H.-P. Blatt.
\newblock A discrepancy theorem on quasiconformal curves.
\newblock {\em Constructive approximation}, 13(3):363--379, 1997.

\bibitem[AB97b]{and97}
V.~Andrievskii and H.-P. Blatt.
\newblock Erd{\H{o}}s--{T}ur{\'a}n-{T}ype theorems on piecewise smooth curves
  and arcs.
\newblock {\em journal of approximation theory}, 88(1):109--134, 1997.

\bibitem[AB99]{andrievskii1999erdos}
V.~Andrievskii and H.-P. Blatt.
\newblock Erd{\H{o}}s--{T}ur{\'a}n type theorems on quasiconformal curves and
  arcs.
\newblock {\em Journal of approximation theory}, 97(2):334--365, 1999.

\bibitem[AB02]{Dbook}
V.~Andrievskii and H.-P. Blatt.
\newblock {\em Discrepancy of signed measures and polynomial approximation}.
\newblock Springer Monographs in Mathematics. Springer-Verlag, New York, 2002.

\bibitem[AM96]{AmMig}
F.~Amoroso and M.~Mignotte.
\newblock On the distribution of the roots of polynomials.
\newblock {\em Ann. Inst. Fourier (Grenoble)}, 46(5):1275--1291, 1996.

\bibitem[BCLR13]{Ca13}
D.~Balagu{\'e}, J.~A. Carrillo, T.~Laurent, and G.~Raoul.
\newblock Dimensionality of local minimizers of the interaction energy.
\newblock {\em Archive for Rational Mechanics and Analysis}, 209(3):1055--1088,
  2013.

\bibitem[Bil97]{Bilu}
Y.~Bilu.
\newblock Limit distribution of small points on algebraic tori.
\newblock {\em Duke Math. J.}, 89(3):465--476, 1997.

\bibitem[Bla92]{blatt1992distribution}
H.-P. Blatt.
\newblock On the distribution of simple zeros of polynomials.
\newblock {\em Journal of approximation theory}, 69(3):250--268, 1992.

\bibitem[BP32]{bloch}
A.~Bloch and G.~P{\'o}lya.
\newblock On the roots of certain algebraic equations.
\newblock {\em Proceedings of the London Mathematical Society}, 2(1):102--114,
  1932.

\bibitem[CDF{\etalchar{+}}21]{aim}
E.~Carneiro, M.~K. Das, A.~Florea, A.~V. Kumchev, A.~Malik, M.~B. Milinovich,
  C.~Turnage-Butterbaugh, and J.~Wang.
\newblock Hilbert transforms and the equidistribution of zeros of polynomials.
\newblock {\em To appear in Journal of Funtional Analysis}, 2021.

\bibitem[CGPS00]{Fekete}
B.~Conrey, A.~Granville, B.~Poonen, and K.~Soundararajan.
\newblock Zeros of fekete polynomials.
\newblock In {\em Annales de l'institut Fourier}, volume~50, pages 865--889,
  2000.

\bibitem[CS21]{ShuCar}
J.~A. Carrillo and R.~Shu.
\newblock From radial symmetry to fractal behavior of aggregation equilibria
  for repulsive-attractive potentials.
\newblock {\em arXiv preprint arXiv:2107.05079}, 2021.

\bibitem[Erd42]{Erdosearly}
P.~Erd\H{o}s.
\newblock On the uniform distribution of the roots of certain polynomials.
\newblock {\em Ann. of Math. (2)}, 43:59--64, 1942.

\bibitem[Erd08]{Tamas}
T.~Erd{\'e}lyi.
\newblock An improvement of the {E}rd{\H{o}}s--{T}ur{\'a}n theorem on the
  distribution of zeros of polynomials.
\newblock {\em Comptes Rendus Mathematique}, 346(5-6):267--270, 2008.

\bibitem[ET40]{ETReal}
P.~Erd\H{o}s and P.~Tur\'{a}n.
\newblock On the uniformly-dense distribution of certain sequences of points.
\newblock {\em Ann. of Math. (2)}, 41:162--173, 1940.

\bibitem[ET50]{ET}
P.~Erd\H{o}s and P.~Tur\'{a}n.
\newblock On the distribution of roots of polynomials.
\newblock {\em Ann. of Math. (2)}, 51:105--119, 1950.

\bibitem[G\"00]{gotz}
M.~G\"{o}tz.
\newblock A discrepancy theorem in {${\bf R}^d,\ d\geq 3$}.
\newblock {\em Analysis (Munich)}, 20(4):303--323, 2000.

\bibitem[Gan54]{Ganelius}
T.~Ganelius.
\newblock Sequences of analytic functions and their zeros.
\newblock {\em Ark. Mat.}, 3:1--50, 1954.

\bibitem[Gra07]{GranExpo}
A.~Granville.
\newblock The distribution of roots of a polynomial.
\newblock In {\em Equidistribution in number theory, an introduction}, volume
  237 of {\em NATO Sci. Ser. II Math. Phys. Chem.}, pages 93--102. Springer,
  Dordrecht, 2007.

\bibitem[Gra20]{Wass}
C.~Graham.
\newblock Irregularity of distribution in {W}asserstein distance.
\newblock {\em Journal of Fourier Analysis and Applications}, 26(5):1--21,
  2020.

\bibitem[HN08]{Hugh}
C.P. Hughes and A.~Nikeghbali.
\newblock The zeros of random polynomials cluster uniformly near the unit
  circle.
\newblock {\em Compositio Mathematica}, 144(3):734--746, 2008.

\bibitem[Hue01]{huesing}
J.~Huesing.
\newblock Estimates for the discrepancy of a signed measure using its energy
  norm.
\newblock {\em J. Approx. Theory}, 109(1):1--29, 2001.

\bibitem[Jen16]{jent}
R.~Jentzsch.
\newblock Untersuchungen zur {T}heorie der {F}olgen analytischer {F}unktionen.
\newblock {\em Acta mathematica}, 41(1):219--251, 1916.

\bibitem[Mig92]{Mignotte}
M.~Mignotte.
\newblock Remarque sur une question relative \`a des fonctions conjugu\'{e}es.
\newblock {\em C. R. Acad. Sci. Paris S\'{e}r. I Math.}, 315(8):907--911, 1992.

\bibitem[OP93]{Poonen}
M.A. Odlyzko and B.~Poonen.
\newblock Zeros of polynomials with {$0,1$} coefficients.
\newblock {\em Enseign. Math. (2)}, 39(3-4):317--348, 1993.

\bibitem[PS14]{Eofdisc}
I.~Pritsker and A.~Sola.
\newblock Expected discrepancy for zeros of random algebraic polynomials.
\newblock {\em Proceedings of the American Mathematical Society},
  142(12):4251--4263, 2014.

\bibitem[Sch32]{schmidt}
E.~Schmidt.
\newblock Uber algebraische {G}leichungen vom {P}{\'o}lya-{B}loch-{T}ypos.
\newblock {\em Sitz. Preuss. Akad. Wiss., Phys.-Math. Kl}, 321, 1932.

\bibitem[Sch33]{schur}
I.~Schur.
\newblock Untersuchungen {\"u}ber algebraische {G}leichungen {I}. {B}emerkungen
  zu einem {S}atz von {E}. {S}chmidt.
\newblock {\em Sitzungsber. Preuss. Akad. Wissens. Phys. Math. Klasse.}, X,
  1933.

\bibitem[Sj{\"o}72]{sjogrenhighdim}
P.~Sj{\"o}gren.
\newblock Estimates of mass distributions from their potentials and energies.
\newblock {\em Arkiv f{\"o}r Matematik}, 10(1):59--77, 1972.

\bibitem[Sou19]{Sound}
K.~Soundararajan.
\newblock Equidistribution of zeros of polynomials.
\newblock {\em Amer. Math. Monthly}, 126(3):226--236, 2019.

\bibitem[ST13]{Saffbook}
E.~B. Saff and V.~Totik.
\newblock {\em Logarithmic potentials with external fields}, volume 316.
\newblock Springer Science \& Business Media, 2013.

\bibitem[Ste21]{steinerwass}
S.~Steinerberger.
\newblock Wasserstein distance, {F}ourier series and applications.
\newblock {\em Monatshefte f{\"u}r Mathematik}, 194(2):305--338, 2021.

\bibitem[Sze34]{szego}
G.~Szeg{\"o}.
\newblock {\em Bemerkungen zu einem Satz von E. Schmidt {\"u}ber algebraische
  Gleichungen}.
\newblock Verlag der Akademie der Wissenschaften in Kommission bei Walter de
  Gruyter u.Company, 1934.

\bibitem[Tot93]{totik1993distribution}
V.~Totik.
\newblock Distribution of simple zeros of polynomials.
\newblock {\em Acta Mathematica}, 170(1):1--28, 1993.

\bibitem[vdVW96]{WeakConv}
A.~W. van~der Vaart and J.~A. Wellner.
\newblock {\em Weak convergence and empirical processes}.
\newblock Springer Series in Statistics. Springer-Verlag, New York, 1996.
\newblock With applications to statistics.

\bibitem[Zha95]{zhang1}
S.~W. Zhang.
\newblock Positive line bundles on arithmetic varieties.
\newblock {\em Journal of the American Mathematical Society}, 8(1):187--221,
  1995.

\bibitem[Zha98]{zhang}
S.~W. Zhang.
\newblock Equidistribution of small points on abelian varieties.
\newblock {\em Annals of mathematics}, 147(1):159--165, 1998.

\end{thebibliography}
\vspace{ 1cm}

\Addresses
\end{document}